\documentclass[10pt]{article}

\usepackage[a4paper,margin=0.85in]{geometry}
\usepackage[T1]{fontenc}
\usepackage{lmodern}
\usepackage{amsmath,amssymb,amsthm}
\usepackage{xcolor}
\usepackage{graphicx}
\usepackage{authblk}
\usepackage{microtype}
\definecolor{linkblue}{RGB}{0,72,138}
\usepackage[colorlinks=true,allcolors=linkblue]{hyperref}

\allowdisplaybreaks[2]
\newtheorem{theorem}{Theorem}
\newtheorem{lemma}[theorem]{Lemma}
\newtheorem{corollary}[theorem]{Corollary}
\newtheorem{proposition}[theorem]{Proposition}
\theoremstyle{definition}
\newtheorem{definition}[theorem]{Definition}
\theoremstyle{remark}

\newcommand{\vol}{\operatorname{vol}}
\newcommand{\lazy}{\mathrm{lazy}}
\newcommand{\Prob}{\mathbb{P}}
\newcommand{\E}{\mathbb{E}}
\newcommand{\diag}{\mathrm{diag}}
\newcommand{\Var}{\operatorname{Var}}
\newcommand{\Bin}{\operatorname{Bin}}
\newcommand{\Ber}{\operatorname{Ber}}
\newcommand{\dist}{\mathrm{dist}}
\newcommand{\diam}{\operatorname{diam}}
\newcommand{\Poisson}{\operatorname{Poisson}}
\newcommand{\Z}{\mathbb Z}
\newcommand{\R}{\mathbb R}
\newcommand{\op}{\mathrm{op}}
\newcommand{\eff}{\mathrm{eff}}
\newcommand{\1}{\mathbf 1}
\newcommand{\norm}[1]{\left\lVert #1\right\rVert}

\title{Band Random Graphs: Connectivity, Geometry, and Transport}
\author[1]{Shantanav Chakraborty \thanks{\texttt{\href{mailto:shchakra@iiit.ac.in}{shchakra@iiit.ac.in}}}}
\author[2]{Kyle Luh \thanks{\texttt{\href{mailto:kyle.luh@colorado.edu}{kyle.luh@colorado.edu}}}}
\author[1]{Rudransh Pratap Singh \thanks{\texttt{\href{mailto:rudransh.s@research.iiit.ac.in}{rudransh.s@research.iiit.ac.in}}}}

\affil[1]{%
CQST and CSTAR, IIIT Hyderabad, India
}

\affil[2]{%
Department of Mathematics, University of Colorado Boulder, USA
}

\date{October 30, 2026}

\begin{document}
\maketitle

\begin{abstract}
Let $G(n,p,m)$ be the random subgraph of the cycle power $C_n^m$ obtained by
retaining each edge independently with probability $p$. We study connectivity,
component geometry, and random walks throughout the regime $\log n\ll m<n/2$.

We prove that $p\sim(\log n)/(2m)$ is a sharp threshold for connectivity.
If $n(1-p)^{2m}\to\lambda\in(0,\infty)$, then the number of isolated vertices
converges to $\Poisson(\lambda)$, and with high probability every other vertex
lies in a single component $C_1$. In particular, the connectivity probability
converges to $e^{-\lambda}$. The component $C_1$ contains a cyclic chain of
connected sets occupying all but an $o(1)$ fraction of each consecutive block
of length $\asymp m$, with $\Theta(m\log n)$ edges between consecutive sets.
Its diameter has order $n/m+\log n/\log\log n$ with high probability.

For every fixed $c>1$, uniformly over $p\geq c\log n/(2m)$, the lazy-walk
spectral gap is asymptotic in probability to that of $C_n^m$. In the same range, the mean
hitting time of simple random walk satisfies, uniformly over distinct $x,y$,
$$
H_G(x,y)=\frac{2|E(G)|}{d_G(y)}
+\frac{r_{xy}(n-r_{xy})}{\kappa_m}
+o_{\Prob}\left(n+\frac{n^2}{m^2}\right),
$$
where $r_{xy}$ is cyclic distance between vertices $x$ and $y$, $d_G(z)$ is the degree of vertex $z$ and $\kappa_m=(m+1)(2m+1)/6$. Thus, for this random graph model, connectivity has the Erd\H{o}s--R\'enyi asymptotics with $2m$ possible neighbours, while distances, relaxation, and hitting times retain the geometry of the underlying cycle.

\end{abstract}

\clearpage
\begingroup
\small
\tableofcontents
\endgroup

\section{Introduction}

The connectivity threshold of the Erd\H{o}s--R\'enyi graph is governed by
isolated vertices: in the critical window, with high probability every
component outside the largest is an isolated vertex
\cite{Gilbert1959,ErdosRenyi1959,ErdosRenyi1960,BollobasRandomGraphs}.
What remains true when edges are restricted to a prescribed spatial range?
Such a restriction can create additional obstructions to connectivity and
can strongly affect distances and random walks even after the graph is
connected \cite{AlonBenjaminiStacey2004}.

We consider the band random graph $G(n,p,m)$, obtained by retaining each edge
of the $m$th power of the cycle independently with probability $p$.
Each vertex has $2m$ possible neighbours, all within cyclic distance $m$.
As $m$ grows, the host graph interpolates between a cycle and a complete
graph. Our results hold throughout the regime
$$
\log n \ll m < \frac{n}{2}.
$$
At the connectivity scale $p\asymp(\log n)/(2m)$, the expected degree is of
order $\log n$, regardless of $m$. Yet a path crossing a positive fraction
of the cycle needs at least order $n/m$ edges. This model therefore allows
us to vary the spatial range while keeping the expected degree at
connectivity on the same scale.

We show that the connectivity threshold and critical-window limit agree
with the Erd\H{o}s--R\'enyi predictions after replacing $n-1$ possible
neighbours by $2m$. The geometry and random-walk behavior are more sensitive
to the bandwidth. In the critical window, the largest component has
diameter of order $n/m+\log n/\log\log n$. Above any fixed factor of the
connectivity threshold, the lazy-walk spectral gap is asymptotic to that
of the host graph. The mean hitting time has two explicit contributions:
one from the degree of the target vertex, and one from the cyclic distance
to that vertex.

The spatial restriction is also the main difficulty in the proofs.
In $G(n,p)$, a set $S$ has $|S|(n-|S|)$ possible edges to its complement.
For a band graph, this number depends on the positions of the vertices,
not just on $|S|$. We handle this dependence by organizing the cycle into
consecutive blocks. The same block structure then provides the comparisons
needed for the spectral-gap and hitting-time estimates.

\subsection{Model and notation}
\label{sec:band-random-graphs-model}

For each $n\geq3$, choose a deterministic integer $m=m_n$ satisfying
\begin{equation}
\label{eq:global-growing-bandwidth-regime}
\log n\ll m<\frac n2.
\end{equation}
For positive sequences, we write $a_n\ll b_n$ when $a_n/b_n\to0$ and call
\eqref{eq:global-growing-bandwidth-regime} the \emph{growing-bandwidth regime}.
All limits are as $n\to\infty$ unless stated otherwise. The vertex set is
\begin{equation}
\label{eq:cycle-vertex-set}
V:=\mathbb Z_n=\{0,1,\ldots,n-1\},
\end{equation}
with arithmetic modulo $n$. The cyclic distance is
\begin{equation}
\label{eq:cyclic-distance-definition}
r_{xy}:=\min\{|x-y|,\,n-|x-y|\},
\qquad
x,y\in\mathbb Z_n.
\end{equation}
For $a\in\mathbb Z_n$ and $1\leq\ell\leq n$, a \emph{cyclic interval
of length $\ell$} is the set $\{a,a+1,\ldots,a+\ell-1\}$, with
arithmetic modulo $n$; an interval of
length zero is empty. Two disjoint cyclic intervals of total length less
than $n$ are \emph{adjacent} if their union is a cyclic interval.

\begin{definition}[Periodic band graph]
\label{def:periodic-band-graph}
The graph $H=C_n^m$ has vertex set $\mathbb Z_n$, with distinct vertices $x,y$ adjacent if and
only if
\begin{equation}
\label{eq:cycle-power-edge-condition}
r_{xy}\leq m.
\end{equation}
\end{definition}

We call $H=C_n^m$ the \emph{host graph}, its edges \emph{admissible edges},
and its neighbours \emph{admissible neighbours}. Each vertex has $m$
admissible neighbours in either direction, so
\begin{equation}
\label{eq:host-degree-edge-count}
d_H(x)=2m
\quad\text{for every }x\in\mathbb Z_n,
\qquad
|E(H)|=nm.
\end{equation}

\begin{definition}[Band random graph]
\label{def:band-random-graph}
Let $p=p_n\in(0,1]$ be deterministic. Independently for each edge $e\in E(H)$, let
$\xi_e\sim\Ber(p)$, and define
\begin{equation}
\label{eq:random-edge-set-definition}
E(G):=\{e\in E(H):\xi_e=1\}.
\end{equation}
The resulting graph is denoted by
\begin{equation}
\label{eq:band-random-graph-notation}
G=G(n,p,m).
\end{equation}
\end{definition}
Unless stated otherwise, $\Prob$ and $\E$ refer to this product measure.
We write $\Prob_p$ when the dependence on $p$ matters.

For a finite simple graph $\Gamma=(W,E_\Gamma)$, let $\Gamma[S]$ denote
the subgraph induced by $S\subseteq W$, and let $d_\Gamma(v)$ be the degree
of $v$. We call $S$ connected when $\Gamma[S]$ is connected. For vertices
in the same component, $\dist_\Gamma(x,y)$ is their graph distance. The
diameter of a nonempty connected graph is
$$
\diam(\Gamma):=\max_{x,y\in W}\dist_\Gamma(x,y).
$$
Write $C_1,C_2,\ldots$ for the components of $G$ in nonincreasing order
of size, breaking ties by the smallest vertex in $\{0,1,\ldots,n-1\}$.
Set $C_j=\varnothing$ if there are fewer than $j$ components. We use $C_j$
for both the vertex set and the induced graph. Let
\begin{equation}
\label{eq:global-isolated-vertex-count}
N_{\mathrm{iso}}
:=
\sum_{v\in\mathbb Z_n}\mathbf 1_{\{d_G(v)=0\}}
\end{equation}
be the number of isolated vertices.

Thus $G$ is Bernoulli bond percolation on $C_n^m$, with $G=H$ when $p=1$.
For each $x\in\mathbb Z_n$,
\begin{equation}
\label{eq:random-degree-and-isolation-distribution}
d_G(x)\sim\Bin(2m,p),
\qquad
\Prob(x\text{ is isolated})=(1-p)^{2m},
\end{equation}
and
\begin{equation}
\label{eq:expected-degree-edge-count}
\E[d_G(x)]=2mp,
\qquad
\E[|E(G)|]=nmp.
\end{equation}

The second moment of the uniform one-sided jump length will be denoted by
\begin{equation}
\label{eq:step-variance-scale}
\kappa_m
:=
\frac1m\sum_{r=1}^m r^2
=
\frac{(m+1)(2m+1)}6.
\end{equation}
Thus $\kappa_m\sim m^2/3$. This variance scale determines both the
small-frequency spectral gap and the spatial term in the hitting-time
expansion.

An event holds \emph{with high probability} if its probability tends to one.
We use $\longrightarrow$, $\Longrightarrow$, and $\xrightarrow{\Prob}$
for deterministic convergence, convergence in distribution, and convergence
in probability, respectively. For deterministic $a_n>0$, write
$X_n=o_{\Prob}(a_n)$ if $X_n/a_n\xrightarrow{\Prob}0$, and
$X_n=O_{\Prob}(a_n)$ if for every $\varepsilon>0$ there is
$M_\varepsilon<\infty$ such that
\begin{equation}
\label{eq:probabilistic-big-O-definition}
\Prob\bigl(|X_n|\leq M_\varepsilon a_n\bigr)\geq1-\varepsilon
\end{equation}
for all sufficiently large $n$. For nonnegative $X_n$, the notation
$X_n=\Theta_{\Prob}(a_n)$ means that, for every $\varepsilon>0$, there are constants
$0<c_\varepsilon<C_\varepsilon<\infty$ such that
\begin{equation}
\label{eq:probabilistic-theta-definition}
\Prob\bigl(c_\varepsilon a_n\leq X_n\leq C_\varepsilon a_n\bigr)\geq1-\varepsilon
\end{equation}
for all sufficiently large $n$.

When $c>1$ is fixed and $p\geq c\log n/(2m)$, constants indexed by $c$
depend only on $c$. High-probability statements in this range are uniform
in $p$: for each bandwidth sequence satisfying
\eqref{eq:global-growing-bandwidth-regime}, the supremum of the failure
probability over $p\in[c\log n/(2m),1]$ tends to zero. The retention
probability may be any deterministic sequence in this interval.

\subsection{Main results}
\label{subsec:intro-main-results}

We first determine the connectivity threshold and describe the largest component in the critical window. We then obtain sharp spectral-gap and hitting-time asymptotics for $p\geq c\log n/(2m)$, with $c>1$ fixed.

\paragraph{Connectivity and the critical window.}
We prove a sharp connectivity threshold throughout the growing-bandwidth regime.

\begin{theorem}[Sharp connectivity threshold]
\label{thm:sharp-connectivity}
Assume $\log n\ll m<n/2$ and let $p=c\log n/(2m)$, where $c>0$ is fixed. If $0<c<1$, then $G(n,p,m)$ is disconnected with high probability. If $c>1$, then
$G(n,p,m)$ is connected with high probability. Consequently, $\log n/(2m)$ is a sharp
threshold for connectivity.
\end{theorem}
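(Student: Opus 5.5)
The plan has two halves. For $c<1$ we show that isolated vertices exist, using the second moment method on $N_{\mathrm{iso}}$. For $c>1$ we show that, with high probability, no vertex set $S$ with $1\le|S|\le n/2$ has zero edges to its complement in $G$, via a first-moment bound organized by the geometry of $S$.

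\emph{Subcritical case $0<c<1$.} First, $\E N_{\mathrm{iso}}=n(1-p)^{2m}=\exp(\log n-2mp+O(mp^2))=n^{1-c+o(1)}\to\infty$, since $mp^2=O((\log n)^2/m)=o(\log n)$. Second, for distinct $u,v$ the joint isolation probability is $(1-p)^{4m-\mathbf 1_{\{r_{uv}\le m\}}}$. Hence
\[
\E N_{\mathrm{iso}}^2-\E N_{\mathrm{iso}}\le(\E N_{\mathrm{iso}})^2(1-p)^{-1},
\]
and $(1-p)^{-1}\to1$ because $p\to0$. Chebyshev's inequality then gives $N_{\mathrm{iso}}>0$ with high probability, so $G$ is disconnected.

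\emph{Supercritical case $c>1$.} By monotonicity it suffices to treat $p=c\log n/(2m)$. Suppose $G$ is disconnected, and pick a component $S$ with $|S|\le n/2$. Split by the size of $S$, using a small fixed $\delta>0$.

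\begin{itemize}
\item \emph{Small sets, $|S|\le \delta m$.} The set $S$ is connected in $G$, so its span is at most $m|S|$. A union bound over connected $S$ of size $k$ containing a given vertex uses the count $(2em)^{k}$ for subtrees of $H$. The expected number of such components is at most
\[
n\sum_{k\le\delta m}(2em)^{k-1}p^{k-1}(1-p)^{k(2m-k)}.
\]
Each vertex of $S$ has at least $2m-k$ admissible neighbours outside $S$, which is where the factor $(1-p)^{k(2m-k)}$ comes from. Since $(1-p)^{2m-k}\le n^{-c(1-\delta)}$, the $k$th term is at most $n\,(e c\log n)^{k-1}n^{-kc(1-\delta)}$. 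Choosing $\delta$ with $c(1-\delta)>1$, the sum is $o(1)$; the $k=1$ term is exactly the isolated-vertex term $n^{1-c+o(1)}$.
\item \emph{Large sets, $\delta m<|S|\le n/2$.} Here use the block structure. Partition $\Z_n$ into about $n/(\eta m)$ consecutive blocks of length $\eta m$. Call a block \emph{good} if every vertex in it has at least $(1-\epsilon)\eta m p$ neighbours in each of the two adjacent blocks (as well as within its own block), and additionally any two disjoint subsets of sizes $\ge\epsilon\eta m$ in the same or adjacent blocks are joined by an edge. By Chernoff bounds the degree conditions hold simultaneously for all blocks with high probability, since $\eta mp\asymp\log n$ with a large enough constant when $\eta$ is fixed and $c>1$. (If that fails at this density, replace it by a per-block density argument.) The cross-edge condition holds with failure probability at most $4^{\eta m}(1-p)^{\epsilon^2\eta^2m^2}=\exp(O(m)-\Omega(m\log n))$, which is summable over blocks.
\item \emph{Conclusion from the block structure.} On these events, $S$ either contains a constant fraction of some block and hence, propagating through adjacent blocks, all but $\epsilon$-fractions of every block; this contradicts $|S|\le n/2$, or the complement argument applies. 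Otherwise $S$ meets every block in fewer than $\epsilon\eta m$ vertices. In that case each vertex of $S$ has most of its admissible neighbours outside $S$, and connectivity of $S$ together with $|S|>\delta m$ lets us apply the same tree-counting bound with per-vertex factor $(1-p)^{(2-4\epsilon)m}$. This is negligible.
\end{itemize}

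I expect the main obstacle to be the intermediate regime: sets that are sparse in every block, with sizes between $\delta m$ and $n$. There the tree-counting entropy $(2emp)^k=(ec\log n)^k$ must be beaten by $n^{-ck(1-O(\epsilon))}$. This works for all $k$ but needs care in uniformly controlling the outside-neighbour count. I would try first to establish a deterministic lemma: for every $S$ with at most $\epsilon\eta m$ vertices per block, each $v\in S$ has at least $(2-4\epsilon)m$ admissible neighbours outside $S$.
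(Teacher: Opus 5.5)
Your subcritical half is the paper's argument. For $c>1$ your route differs from the paper's and works, but one claim has to be deleted. The per-vertex degree requirement in your definition of a good block is false, not merely delicate. A vertex has $\Bin(\eta m,p)$ neighbours in an adjacent block, with mean $\eta c\log n/2$; here $\eta\leq 1/2$ is needed so that all cross pairs are admissible. It has no neighbour there with probability $(1-p)^{\eta m}=n^{-\eta c/2+o(1)}$, so when $\eta c<2$ the expected number of such vertices is $n^{1-\eta c/2+o(1)}\to\infty$.

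You never need this condition. The cross-edge event alone drives the propagation, and your bound $4^{\eta m}(1-p)^{\epsilon^2\eta^2m^2}$ for it is right. Suppose $S$ is a component and $|S\cap B_i|\geq\theta:=\epsilon\eta m$. Then $|S^c\cap B_i|<\theta$ and $|S^c\cap B_{i\pm1}|<\theta$, so $|S\cap B_{i\pm1}|\geq\theta$. Going around the cycle gives $|S^c|<(1+o(1))\epsilon n$, which contradicts $|S|\leq n/2$. In the other case, the window $\{v-m,\ldots,v+m\}$ meets $O(1/\eta)$ blocks. So every $v\in S$ deterministically has at least $(2-O(\epsilon))m$ host neighbours outside $S$. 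Your spanning-tree first moment $n\sum_{k\geq1}(ec\log n)^{k-1}n^{-kc(1-O(\epsilon))}=o(1)$ then handles all sizes $k$ at once, which makes the separate small-set case redundant.

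The paper argues constructively. It finds a component with at least $(1-\delta)\ell_b$ vertices in each block of length between $m/2$ and $m$, and joins consecutive ones through the at least $m^2/16$ admissible pairs. It then attaches each vertex $v$ after exposing every edge not incident to $v$, using the leave-one-out component $K_{b,v}$ so that $v$ still has at least $(2-5\delta)m$ unexposed targets. Your argument is instead a global cut count in the classical $G(n,p)$ style. It needs neither the giant-in-a-block lemma nor the unexposed-edge conditioning, because the dependence between ``$v$ attaches'' and ``the backbone exists'' is absorbed into the union bound over trees.

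What the paper's construction buys is reuse. Its leave-one-out chains give the comparison of $\{v\notin C_1\}$ with isolation in the critical window, and its short seeds give the diameter upper bound; your method carries no distance information. Your method does extend cheaply to the structural claim of the window. At the exact centering, the sparse terms with $k\geq2$ sum to $O(n^{-1+O(\epsilon)}\log n)$. So with high probability every component other than the (at most one) dense component is an isolated vertex.
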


The location of the threshold is suggested by the exact identity
$$
\E[N_{\mathrm{iso}}]=n(1-p)^{2m}=n^{1-c+o(1)}.
$$
For $c<1$, a second-moment argument gives an isolated vertex with high
probability. The main task for $c>1$ is to rule out components of size at
least two.

Our next result identifies every component in the critical window.

\begin{theorem}[Component structure in the critical window of connectivity]
\label{thm:critical-window-only-isolated-outside-giant}
Assume
\begin{equation}
\label{eq:critical-window-bandwidth-assumptions}
\log n\ll m<\frac n2,
\end{equation}
fix $t\in\mathbb R$, and choose $p$ by
\begin{equation}
\label{eq:critical-window-exact-centering}
1-p=\exp\left(-\frac{\log n+t}{2m}\right).
\end{equation}
Then
\begin{equation}
\label{eq:critical-window-poisson-conclusion}
N_{\mathrm{iso}}\Longrightarrow\Poisson(e^{-t}),
\end{equation}
and
\begin{equation}
\label{eq:critical-window-largest-component-conclusion}
\Prob\bigl(|C_1|=n-N_{\mathrm{iso}}\bigr)\longrightarrow1.
\end{equation}
Consequently,
\begin{equation}
\label{eq:critical-window-connectivity-limit}
\Prob(G\text{ is connected})\longrightarrow\exp(-e^{-t}),
\end{equation}
and
\begin{equation}
\label{eq:critical-window-second-component}
\Prob\bigl(|C_2|\in\{0,1\}\bigr)\longrightarrow1.
\end{equation}
Equivalently, with probability tending to one, every non-isolated vertex belongs to $C_1$
and every component other than $C_1$ consists of one isolated vertex.
\end{theorem}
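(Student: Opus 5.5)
We prove the Poisson limit \eqref{eq:critical-window-poisson-conclusion} by the method of factorial moments. We then show that, with high probability, there is no component $S$ with $2\le |S|\le n/2$. The remaining conclusions follow from these two facts. Indeed, if no component has size in $[2,n/2]$, then every non-isolated vertex lies in a component of size $>n/2$. There is at most one such component, namely $C_1$, so $|C_1|=n-N_{\mathrm{iso}}$ and $|C_2|\in\{0,1\}$. Then $\Prob(G\text{ connected})=\Prob(N_{\mathrm{iso}}=0)+o(1)\to\exp(-e^{-t})$. Note that $|C_1|>n/2$ uses $N_{\mathrm{iso}}=O_{\Prob}(1)$.

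\emph{Poisson limit.} By the centering \eqref{eq:critical-window-exact-centering}, $n(1-p)^{2m}=e^{-t}$ exactly. For fixed $k$, the $k$th factorial moment is the sum over ordered $k$-tuples of distinct vertices of $(1-p)^{2mk-e(v_1,\dots,v_k)}$. Here $e(\cdot)$ is the number of admissible edges inside the tuple, which is counted once since shared edges are removed only once. The tuples with no two vertices within distance $m$ contribute $(1+o(1))n^k(1-p)^{2mk}\to e^{-tk}$. A tuple with some pair at distance $\le m$ is controlled by summing over the partition of the tuple into clusters of $H$-adjacent vertices. A cluster of size $j$ can be placed in at most $n(2m)^{j-1}$ ways. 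The number of shared edges is at most $\binom j2$, so its isolation probability is at most $(1-p)^{2mj-\binom j2}=(e^{-t}/n)^j(1+o(1))$, using $p\binom j2\to0$ because $p\asymp \log n/m\to0$. Each cluster of size $j\ge2$ therefore costs a factor $O(m^{j-1}/n^{j-1})$. This is not obviously small when $m\asymp n$. In that case I would instead use the sharper bound $(1-p)^{2m}\cdot(1-p)^{2m-1}\cdots$, which gives a factor $n^{-(j-1)}$ per cluster. Since $m<n/2$, the count $n(2m)^{j-1}\le n^j$ only shows the term is $O(1)$. The extra smallness should therefore come from $(1-p)^{-\binom j2}\to 1$ together with a careful count. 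A simpler way to get $o(1)$ is to note that a pair at distance $\le m$ has joint isolation probability $(1-p)^{4m-1}$. There are $2nm$ such pairs, giving the bound $2nm\cdot n^{-2}e^{-2t}(1+o(1))=O(m/n)$. This is still not $o(1)$ when $m\asymp n$, so the restriction would have to be removed by noting that the expected number of \emph{adjacent} isolated pairs, i.e.\ pairs joined by an admissible edge, is only relevant through the $(1-p)^{-1}$ correction. In fact the factorial moment is exactly $\sum (1-p)^{2mk-e}$ with $e\le\binom k2$, so $(1-p)^{-e}=1+o(1)$ uniformly. Hence $\E[(N_{\mathrm{iso}})_k]=(1+o(1))\,n^{\underline k}(1-p)^{2mk}\to e^{-tk}$ with no geometry needed at all. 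This is the route I would take.

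\emph{No small or medium components.} It suffices to show that the expected number of connected sets $S$ with $2\le|S|\le n/2$ and no $G$-edges to the complement tends to zero. For small sets, $2\le |S|\le s_0$ with $s_0$ a large constant, we count connected sets through spanning trees in $H$. There are at most $n(2em)^{s-1}$ connected $s$-sets. Each needs $s-1$ retained edges, with probability $p^{s-1}$. Its boundary in $H$ has at least $2ms-O(s^2)$ edges, all closed, with probability $\le (e^{-t}/n)^s e^{O(s^2p)}$. The total is $O(n\cdot(2emp)^{s-1}n^{-s})=O((\log n)^{s-1}/n)\to0$. For moderate $s$, up to order $m$, the same bound works as long as the boundary is at least $(2-\delta)ms$. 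The key geometric input is an isoperimetric estimate for $C_n^m$. A set of size $s\le \varepsilon m$ has edge boundary at least $2ms(1-O(\varepsilon))$. A set of size $\varepsilon m\le s\le n/2$ has boundary at least $c\,m\min(s,m)$, and in general at least $c'm^2\cdot(\text{number of occupied blocks of length }m)$. With the latter bound, $(1-p)^{\partial S}\le n^{-c''m}$ per block. This beats the crude entropy $\binom{n}{s}$ only if we count by blocks: choose the occupied blocks (at most $2^{n/m}$ choices for cyclic runs), and then the subsets within them, which gives at most $2^{s}$ choices. This entropy versus energy comparison for sets of size $\gtrsim m$ is the main obstacle. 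I would handle it by the block decomposition suggested in the introduction. Partition the cycle into blocks of length $\asymp m$ and show that with high probability each block's induced graph is a good expander, meaning every subset of size between $s_0$ and half the block has $\Omega(|S|\log n)$ retained edges out, and that consecutive blocks are richly connected. Then every large subset that is a union of components must be essentially all of some runs of blocks, and the boundary edges between two runs exist with probability $1-n^{-\omega(1)}$.
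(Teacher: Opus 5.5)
Your Poisson argument, in its final form, is the paper's. Your reduction to excluding components with $2\le|S|\le n/2$ is sound, and your tree-counting first moment correctly handles $2\le s\le\varepsilon m$. The gap is the step for $|S|\gtrsim m$, which you leave as a sketch, and the sketch would not work as stated.

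The within-block expansion claim is false. A block of length $\ell\le m$ carries $G(\ell,p)$ with $\ell p\le\frac12(\log n+t)$. So the expected number of its vertices with no neighbour inside the block is at least $\ell(1-p)^m=e^{-t/2}\ell n^{-1/2}$, which is unbounded once $m\gg\sqrt n$. Any $s_0$ such vertices form a subset with no within-block edges leaving it.

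More importantly, even granting expansion, the ``runs of blocks'' dichotomy does not exclude a component that meets every block in only a few vertices. Once consecutive block giants are joined, these are the only candidates left. Your bound ``$c'm^2$ times the number of occupied blocks'' is not a valid isoperimetric inequality either: a long interval has host boundary $m(m+1)$ however many blocks it occupies. So the entropy--energy comparison is never actually resolved.

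The missing observation is that the block giants impose a deterministic sparsity constraint on every other component. After that, your own first-moment computation works at all scales.

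Fix $\delta\le1/64$ and take the balanced partition of Lemma~\ref{lem:balanced-cyclic-partition}. By Lemmas~\ref{lem:band-block-large-component} and~\ref{lem:adjacent-block-admissible-pairs}, with high probability every block $B_b$ contains a component $H_b$ of $G[B_b]$ with $|H_b|\ge(1-\delta)\ell_b$, and consecutive $H_b$ are joined. Then $\bigcup_bH_b$ is connected, has more than $n/2$ vertices, and lies in $C_1$.

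Hence every component $S\ne C_1$ satisfies $|S\cap B_b|\le\delta\ell_b$ for every $b$, which is a property of the vertex set alone. Since blocks have length at least $m/2$, the admissible neighbours of a vertex meet at most five blocks. So such an $S$ has $2e_H(S)\le5\delta m|S|$ and host edge boundary at least $(2-5\delta)m|S|$.

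Using $(1-p)^{2m}=e^{-t}/n$ and $2mp\le\log n+t$, the union bound over trees of $H$ spanning such sets gives
$$
\sum_{s\ge2}n(2emp)^{s-1}(1-p)^{(2-5\delta)ms}
\le
\sum_{s\ge2}C_tn\,(C_t\log n)^{s-1}n^{-(1-5\delta/2)s}
=O_t\bigl(n^{-1+5\delta}\log n\bigr)\longrightarrow0,
$$
with no restriction on $s$.

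Repaired this way, your route is a genuinely different and shorter proof than the paper's. The paper instead builds, for each $v$, two leave-$v$-out chains of size greater than $n/2$ covering all but $o(m/\log n)$ admissible neighbours of $v$. It deduces $\E[n-|C_1|]\le e^{-t}+o(1)$ and compares this with $\E[N_{\mathrm{iso}}]$. That heavier construction pays off later, since the same chains give the diameter upper bound in Theorem~\ref{thm:critical-window-geometry}.
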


The natural parameter for the window is $\lambda_n=n(1-p)^{2m}$.
Corollary~\ref{cor:intrinsic-critical-window} gives the same conclusions
whenever $\lambda_n\to\lambda\in(0,\infty)$, with $e^{-t}$ replaced by
$\lambda$.

The exact centering in \eqref{eq:critical-window-exact-centering} matters:
it gives $(1-p)^{2m}=e^{-t}/n$. Replacing $\log(1-p)$ by $-p$ introduces
an error $O(mp^2)=O((\log n)^2/m)$ in the exponent, which need not vanish
under $m\gg\log n$.

\paragraph{Geometry of the largest component.}
The next theorem describes how $C_1$ occupies the cycle and determines
its diameter up to constant factors.

\begin{theorem}[Connected sets in consecutive blocks and largest-component diameter]
\label{thm:critical-window-geometry}
Assume $\log n\ll m<n/2$, fix $t\in\mathbb R$, and choose $p$ by
$$
1-p=\exp\left(-\frac{\log n+t}{2m}\right).
$$
Set
\begin{equation}
\label{eq:critical-window-balanced-block-count}
q:=\left\lceil\frac nm\right\rceil,
\end{equation}
and let
\begin{equation}
\label{eq:critical-window-balanced-block-partition}
\mathbb Z_n=B_1\sqcup\cdots\sqcup B_q,
\qquad |B_b|=\ell_b,
\end{equation}
be a partition into consecutive cyclic intervals whose lengths differ by at most one. For all
sufficiently large $n$,
\begin{equation}
\label{eq:critical-window-balanced-block-lengths}
\frac m2\leq \ell_b\leq m,
\qquad 1\leq b\leq q.
\end{equation}

Let $(\eta_n)$ be any deterministic sequence satisfying
\begin{equation}
\label{eq:critical-window-backbone-deficit-conditions}
\eta_n\longrightarrow0,
\qquad
\eta_nm\longrightarrow\infty,
\qquad
\log(1/\eta_n)=o(\log n).
\end{equation}
For disjoint sets $A,B\subseteq\mathbb Z_n$, let $e_G(A,B)$ denote the number of edges of
$G$ with one endpoint in $A$ and the other in $B$. There are constants
$0<a_t<A_t<\infty$, depending only on $t$, such that, with probability tending to one,
there exist vertex sets, each connected in $G$,
\begin{equation}
\label{eq:critical-window-backbone-sets}
H_b\subseteq C_1\cap B_b,
\qquad 1\leq b\leq q,
\end{equation}
satisfying
\begin{equation}
\label{eq:critical-window-backbone-sizes}
|H_b|\geq(1-\eta_n)\ell_b,
\qquad 1\leq b\leq q,
\end{equation}
and, with $H_{q+1}:=H_1$,
\begin{equation}
\label{eq:critical-window-backbone-interface-bounds}
a_t\,m\log n
\leq e_G(H_b,H_{b+1})
\leq A_t\,m\log n,
\qquad 1\leq b\leq q.
\end{equation}

Moreover,
\begin{equation}
\label{eq:critical-window-giant-complement-identity}
|\mathbb Z_n\setminus C_1|=N_{\mathrm{iso}}=O_{\Prob}(1),
\end{equation}
and there are constants $0<c_t<C_t<\infty$, depending only on $t$, such that, with
probability tending to one,
\begin{equation}
\label{eq:critical-window-giant-diameter-two-sided}
c_t\left(\frac nm+\frac{\log n}{\log\log n}\right)
\leq \diam(C_1)
\leq
C_t\left(\frac nm+\frac{\log n}{\log\log n}\right).
\end{equation}
\end{theorem}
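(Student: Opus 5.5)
The plan is to build the backbone sets $H_b$ block by block, glue them along the cycle, and then deduce the diameter bounds from this skeleton together with Theorem~\ref{thm:critical-window-only-isolated-outside-giant}.

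\emph{Block lengths.} These are immediate. With $q=\lceil n/m\rceil$ we have $n/q\le m$, and since $m<n/2$, also $n/q\ge n/(n/m+1)\ge m/2$ for large $n$. Hence balanced lengths satisfy \eqref{eq:critical-window-balanced-block-lengths}.

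\emph{Construction of $H_b$.} Inside each $B_b$, every pair of vertices is admissible, so $G[B_b]$ is an Erd\H{o}s--R\'enyi graph $G(\ell_b,p)$ with $\ell_b p\ge (\log n)/2\cdot(1+o(1))$. This is far above the giant threshold, but below the connectivity threshold of the block. I would take $H_b$ to be the largest component of $G[B_b]$ and show that, uniformly over the $q\le n$ blocks,
\[
\Prob\bigl(|B_b\setminus H_b|>\eta_n\ell_b\bigr)\le n^{-2}.
\]
A vertex outside the giant of $G(\ell,p)$ is, roughly, a vertex in a small component. Components of size $k$ with $2\le k\le \ell/2$ have expected number at most $\ell\,(e\ell p)^{k}\,e^{-kp\ell/2}$, which is polynomially small in $n$. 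So the deficit is dominated by isolated vertices of $G[B_b]$, whose number is $\Bin(\ell,(1-p)^{\ell-1})$ up to dependence, with mean $\ell n^{-\ell/(2m)+o(1)}\le \ell n^{-1/4}$. A Chernoff or union bound over $\eta_n\ell$-subsets gives a probability of at most
\[
\binom{\ell}{\eta_n\ell}\bigl((1-p)^{\ell(1-\eta_n)}\bigr)^{\eta_n\ell}\le \exp\Bigl(\eta_n\ell\bigl(\log(e/\eta_n)-\tfrac14(1+o(1))\log n\bigr)\Bigr).
\]
This is at most $n^{-2}$ because $\log(1/\eta_n)=o(\log n)$ and $\eta_n m\to\infty$ (we need $\eta_n\ell\log n\gg \log n$, which holds). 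The same estimate for all sets $S\subseteq B_b$ with $\eta_n\ell\le|S|\le\ell/2$ and $e_G(S,B_b\setminus S)=0$ handles the general case, since there are $|S|(\ell-|S|)\ge |S|\ell/2$ potential edges.

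\emph{Interfaces.} Consecutive blocks $B_b,B_{b+1}$ have $\Theta(m^2)$ admissible pairs between them, since pairs within distance $m$ across the boundary number $\asymp \ell_b\ell_{b+1}$ when both lengths are $\asymp m$. The same holds after removing $\eta_n$-fractions, so $e_G(H_b,H_{b+1})$ dominates a binomial with mean $\asymp m^2p\asymp m\log n$. The upper bound uses all of $B_b\times B_{b+1}$. Chernoff gives failure probability $\exp(-\Omega(m\log n))$, summable over the $q$ blocks. The subtlety is that $H_b$ is random, but it depends only on edges inside the blocks, which are independent of the cross edges; so I condition on the within-block edges. Since every $H_b$ is connected and each meets the next, $\bigcup_b H_b$ is connected, lies in $C_1$, and has size $n-o(n)$. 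Then \eqref{eq:critical-window-giant-complement-identity} follows from Theorem~\ref{thm:critical-window-only-isolated-outside-giant}, since $N_{\mathrm{iso}}\Rightarrow\Poisson(e^{-t})$ is tight.

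\emph{Diameter, lower bound.} Every edge has length at most $m$, so any two antipodal vertices of $C_1$ are at distance at least $\lfloor n/2\rfloor/m$. For the $\log n/\log\log n$ term, I would show that with high probability some vertex $v$ has degree exactly one, and that its neighbour $w$ has few neighbours, giving a pendant path of length $\asymp\log n/\log\log n$. More robustly, I would use the standard Erd\H{o}s--R\'enyi argument: the ball of radius $r$ around a typical vertex has size $\le (C\log n)^r$ with good probability. A vertex whose degree $d$ is such that ... — the cleanest route is to take a vertex in $C_1$ of degree $1$. The expected count is $2mp\,n(1-p)^{2m-1}\asymp\log n\to\infty$; then the distance to the far vertices of $C_1$ is $\ge n/(2m)$. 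The $\log n/\log\log n$ term is then a lower bound only when it dominates, i.e.\ when $m\gtrsim n\log\log n/\log n$; there $|C_1|\asymp n$ and the maximum degree is $O(\log n)$, so balls of radius $r$ have at most $(C\log n)^r$ vertices, forcing $\diam\ge \log n/\log(C\log n)$.

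\emph{Diameter, upper bound.} Inside an ER block $G(\ell,p)$ with $\ell p\asymp\log n$, the giant has diameter $O(\log\ell/\log(\ell p))=O(\log n/\log\log n)$ with probability $1-n^{-2}$; this is the step I expect to be the main obstacle, since it needs uniform, high-probability control across $q$ blocks. Moving between consecutive $H_b$ costs one edge. So any two vertices of $\bigcup H_b$ are within $O(n/m)$ block transitions, but naively paying the in-block diameter at each step gives $(n/m)\log n/\log\log n$, which is too large. To avoid this, I would instead use that every vertex of $H_b$ has, with high probability, a neighbour in $H_{b+1}$. This holds because its expected number of such neighbours is $\asymp\log n$, with at most a small deficit, and the exceptional vertices are handled by a two-step detour. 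That gives hops of cost $O(1)$ per block, plus $O(\log n/\log\log n)$ at the two ends. Finally, the vertices of $C_1\setminus\bigcup H_b$ attach to the backbone within $O(\log n/\log\log n)$ steps, by the same small-component exclusion estimate used above.
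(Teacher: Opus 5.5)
Your construction of the $H_b$, the interface bounds obtained by conditioning on the within-block edges, the identity $|\mathbb Z_n\setminus C_1|=N_{\mathrm{iso}}$, and both diameter lower bounds (edge length at most $m$, and the Moore bound with maximum degree $O(\log n)$) agree with the paper. The gap is in the diameter upper bound, where your reason for $O(1)$-cost hops is false. When $m\mid n$, every $\ell_b=m$, and the vertex of $B_b$ farthest from $B_{b+1}$ has exactly one admissible neighbour in $B_{b+1}$. Its expected number of forward neighbours is therefore $p\to0$, not $\asymp\log n$. Even a vertex for which all of $H_{b+1}$ is admissible has no edge to it with probability at least $(1-p)^m=e^{-t/2}n^{-1/2}$. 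Conditional on the within-block edges, these events are independent across vertices, so the direct hop fails for polynomially many vertices. You give no estimate showing that the two-step detour succeeds for all of them. More fundamentally, a hop can land on a vertex of $H_{b+1}$ from which $H_{b+2}$ is not reachable in $O(1)$ steps. A cost of $O(1)$ per block needs a set of vertices closed under forward hops, and you never construct one. You also leave unproved the bound $O(\log n/\log\log n)$ on the diameter of a block giant with failure probability $o(m/n)$.

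The missing idea is to grow each chain backwards from a seed instead of hopping forwards toward the target. The paper re-blocks into intervals of length $\ell=\lfloor m/2\rfloor$, so that every pair between consecutive intervals is admissible. It finds, by breadth-first search, a seed $S_1$ with $|S_1|\geq\ell/8$ and $\diam(G[S_1])=O(\log n/\log\log n)$; it never needs the diameter of a whole block giant. It then sets $S_{j+1}:=\{x\in B_{j+1}:e_G(\{x\},S_j)>0\}$. Each vertex of the next interval has at least $|S_j|\geq\ell/8$ unexposed trials, so each layer loses at most $\delta_n\ell$ vertices. Every chain vertex lies within $j-1$ steps of $S_1$, so the chain has diameter at most $2(L-1)+\diam(G[S_1])$.

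Your last step has a related gap. The small-component estimate bounds $|B_b\setminus H_b|$, not the distance from a vertex of $C_1\setminus\bigcup_bH_b$ to the backbone. There may be up to $\eta_nn$ such vertices. Moreover, the component $H_b$ of the block containing $v$ depends on the edges at $v$, so you cannot condition on the backbone and then expose those edges. The paper instead builds two chains $\mathcal K_v^\pm$ for each $v$ from edges avoiding $v$. It uses a deficit $\delta_n$ with $\delta_nm\to\infty$ and $\delta_n\log n\to0$; the theorem's $\eta_n$ need not satisfy the latter (e.g.\ $\eta_n=1/\log\log n$). It bounds the expected number of non-isolated vertices with no edge to either chain by $n(1-p)^{2m-U_n}pU_n=o(1)$, where $U_n\leq2+2\delta_nm$ bounds the admissible neighbours of $v$ that the chains miss. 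Finally, any two chains intersect because each has more than $n/2$ vertices, which gives the upper bound.
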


The sets $H_b$ form a connected chain around the cycle. Each fills all
but an $o(1)$ fraction of its block, and consecutive sets are joined by
order $m\log n$ retained edges.

The two terms in the diameter have different origins. Every edge spans
cyclic distance at most $m$, giving the lower bound of order $n/m$.
Also, $C_1$ has $n-O_{\Prob}(1)$ vertices and maximum degree $O(\log n)$,
giving the lower bound of order $\log n/\log\log n$. The matching upper
bound requires connected chains with controlled internal distances.
The spatial term dominates when $m\ll n\log\log n/\log n$; the degree
term dominates when $m\gg n\log\log n/\log n$.

\paragraph{Spectral gap.}
For a connected graph $\Gamma$, let $P_\Gamma$ be the transition matrix of simple random walk
on $\Gamma$, let $Q_\Gamma:=(I+P_\Gamma)/2$, and let $\lambda_2(Q_\Gamma)$ be the
second-largest eigenvalue of $Q_\Gamma$, counted with multiplicity. Define
$$
\gamma_{\lazy}(\Gamma):=1-\lambda_2(Q_\Gamma),
$$
and set $\gamma_{\lazy}(\Gamma):=0$ when $\Gamma$ is disconnected. Define
\begin{equation}
\label{eq:gap-reference-value}
\Gamma_{n,m}:=\frac1m\sum_{r=1}^m\sin^2\left(\frac{\pi r}{n}\right).
\end{equation}
This is the Rayleigh quotient of the first sine and cosine modes of
$H=C_n^m$. We show that it is asymptotic to $\gamma_{\lazy}(H)$ throughout
the growing-bandwidth regime.

\begin{theorem}[Uniform sharp spectral gap asymptotics]
\label{thm:spectral-gap}
Fix $c>1$ and assume
\begin{equation}
\label{eq:gap-main-assumptions}
\log n\ll m<\frac n2,
\qquad
p\geq\frac{c\log n}{2m}.
\end{equation}
Then, for every $\varepsilon>0$,
\begin{equation}
\label{eq:gap-main-uniform-conclusion}
\sup_{p\in[c\log n/(2m),1]}
\Prob_p\left(
G\text{ is disconnected}
\ \text{or}\
\left|
\frac{\gamma_{\lazy}(G)}{\Gamma_{n,m}}-1
\right|>\varepsilon
\right)
\longrightarrow0.
\end{equation}
Equivalently, uniformly over $p\in[c\log n/(2m),1]$,
\begin{equation}
\label{eq:gap-main-conclusion}
\gamma_{\lazy}(G)=\bigl(1+o_{\Prob}(1)\bigr)\Gamma_{n,m}.
\end{equation}
\end{theorem}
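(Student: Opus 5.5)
We argue through the variational characterization
$$
\gamma_{\lazy}(G)=\frac12\min_{f\perp_{\pi}\1,\ f\neq0}\frac{\mathcal E_G(f)}{\Var_{\pi}(f)},
\qquad
\mathcal E_G(f)=\frac{1}{2|E(G)|}\sum_{xy\in E(G)}(f(x)-f(y))^2,
$$
with $\pi(x)=d_G(x)/2|E(G)|$. Connectivity with high probability, uniformly in $p\geq c\log n/(2m)$, follows from Theorem~\ref{thm:sharp-connectivity}. Monotonicity in $p$ gives the uniformity in the connectivity part. For the gap itself we carry all estimates with failure probabilities bounded uniformly over $p$. We prove matching upper and lower bounds separately.

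\emph{Upper bound.} We plug in $f(x)=\cos(2\pi x/n)$ or $\sin(2\pi x/n)$. Which one we choose is selected so that its $\pi$-mean is negligible; we then recenter. Concentration of sums of independent Bernoullis weighted by $(f(x)-f(y))^2$ gives
$$
\sum_{xy\in E(G)}(f(x)-f(y))^2=(1+o_{\Prob}(1))\,p\sum_{xy\in E(H)}(f(x)-f(y))^2 .
$$
The mean here is of order $pn\,m\cdot m^2/n^2$. Since $pm\gtrsim\log n$, this mean is large compared with the fluctuation, because each summand is at most $(2\pi m/n)^2$. The same concentration handles $\sum_x d_G(x)f(x)^2=(1+o(1))\,2mp\,n/2$ and the mean correction. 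Summing the two modes, the ratio is $(1+o_{\Prob}(1))$ times the host ratio, and the host ratio gives exactly $\Gamma_{n,m}$. This uses $\sum_{r=1}^m 4\sin^2(\pi r/n)\cdot 2/(2\cdot 2m)$ after normalization; we check that the constants match the definition.

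\emph{Lower bound.} This is the main obstacle. The plan is a two-scale comparison. First, partition $\Z_n$ into blocks of length $L$ with $\log n\ll L\ll m$, or $L\asymp m/K$ with $K\to\infty$ slowly. Given $f$, let $\bar f_b$ be the degree-weighted block averages. The Dirichlet form splits into a within-block part and an across-block part. The across-block part should dominate the quadratic form of the block-averaged function on the coarse graph. That coarse graph is a weighted cycle power whose weights between blocks at distance $\leq m/L$ are $(1+o(1))pL^2$, uniformly by Chernoff and a union bound over $O(n^2/L^2)$ pairs, using $pL^2\gg\log n$.

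The coarse graph is a deterministic circulant up to $(1+o(1))$ factors. Its gap is computed by Fourier analysis and equals $(1+o(1))\Gamma_{n,m}$ once $L\ll m$, since discretization of the kernel $\sin^2$ costs $O(L/m)$.

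The remaining issue is controlling $\sum_x d_G(x)(f(x)-\bar f_{b(x)})^2$ by the Dirichlet form with a constant that is small relative to $1/\Gamma_{n,m}\asymp n^2/m^2$. We do this with a Poincaré inequality inside each window of length $\asymp m$. The graph restricted to a window is dense enough: it behaves like $G(m,p)$ with expected degree of order $\log n$, and this holds even at $c$ only slightly above $1$ once low-degree vertices are handled. Its local gap is therefore $\Omega_c(1)$ by expansion arguments of the type used for Theorem~\ref{thm:critical-window-geometry}. This bounds the local variance by $O_c(1)\cdot\mathcal E$, which is $o(n^2/m^2)\cdot\mathcal E$ unless $m\asymp n$. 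In the regime $m\asymp n$ we instead need the local Poincaré constant to be $1+o(1)$ relative to complete-graph behaviour. There we argue directly via a random-matrix bound: $\|A_G-pA_H\|_{\op}=O(\sqrt{mp})$, which is $o(mp\,\Gamma_{n,m})$ since $\Gamma_{n,m}\asymp1$. Low-degree vertices must be treated by a trimming argument.

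The delicate point is the near-threshold case $c\to1^+$ with low-degree (degree-one) vertices. A pendant vertex attached by one edge contributes a local Poincaré constant of order $1$, not $o(1)$. This is harmless when $m\ll n$ but must be absorbed in the $m\asymp n$ case. We would first try Cauchy--Schwarz along that single edge, together with the fact that such vertices have total $\pi$-mass $o(1)$.
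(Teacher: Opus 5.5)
Your upper bound (the cosine test function, with concentration of the energy and of the degree-weighted variance) matches the paper, as does the monotone coupling for uniform connectivity. The gap is in the lower bound for $m=o(n)$, at the step you treat as settled. You claim that the across-block energy dominates the coarse cycle-power form of the block averages up to a factor $1+o(1)$, and you justify this only by Chernoff bounds on the counts $e_G(B_i,B_j)$. Those counts determine the energy of block-constant functions and nothing more. Write $f=u+w$, with $u$ block-constant and $w$ summing to zero on each block, and let $P_W$ be the orthogonal projection onto such $w$. Then
\[
f^{\mathsf T}L_Gf=u^{\mathsf T}L_Gu+2\langle P_WL_Gu,w\rangle+w^{\mathsf T}L_Gw .
\]
A generic estimate of the cross term is $\|P_WL_Gu\|_2^2\le\|L_G\|_{\op}\,u^{\mathsf T}L_Gu\lesssim\mu\,u^{\mathsf T}L_Gu$. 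Combine it with a bound $\|w\|_2^2\lesssim f^{\mathsf T}L_Gf/\mu$ of the kind your local Poincar\'e inequality provides. This only shows that the cross term is of the same order as the total energy. Solving the resulting inequality gives $f^{\mathsf T}L_Gf\geq c_0\,u^{\mathsf T}L_Gu$ for some constant $c_0<1$, i.e.\ a constant-factor loss in the gap, which is exactly what the theorem excludes. The within-block variance in the denominator, which you single out as the remaining issue, is actually the harmless part: since $1/\Gamma_{n,m}\to\infty$, any $O(1)$ local Poincar\'e constant is absorbed.

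The missing idea is a cancellation. Use the block-adapted host $H^-$, with complete blocks and complete bipartite interfaces up to block distance $R$. There the expected cross-block edge weights are constant on each pair of blocks, so $w$ annihilates them. Hence $\langle P_WL_{G^-}u,w\rangle$ involves only the centered matrix $Z=A_{G^-}-pA_{H^-}$, whose norm is $O(\sqrt\mu)$ by the Bandeira--van Handel bound. This yields $\|P_WL_{G^-}u\|_2^2\lesssim K^2\,\mathcal Q(u)$. Here $\mathcal Q(u)=p\sum_{1\le d_q(i,j)\le R}s_is_j(b_i-b_j)^2$ is the coarse form, $b_i$ is the value of $u$ on $B_i$, and $K\asymp m/L$ is the number of blocks per bandwidth. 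Combine this with $w^{\mathsf T}L_{G^-}w\geq(\alpha\mu/2)\|w\|_2^2$. The paper obtains that bound globally, from $d_{\min}(G^-)\geq\alpha\mu$ and $\|Z\|_{\op}$, rather than window by window; this matters because windows of length at most $m$ have internal mean degree about $\mu/2$ and, for $c<2$, contain vertices with no internal edges. Minimizing over $w$ (a Schur complement) then gives $f^{\mathsf T}L_Gf\geq(1-o(1)-O(K^2/\mu))\,\mathcal Q(u)$. This forces $K^2\ll\mu\asymp\log n$ (the paper takes $K=\lfloor(\log n)^{1/4}\rfloor$), so your allowed range $\log n\ll L\ll m$ is too wide.

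For $m\asymp n$, your pendant-vertex concern is moot, because $d_{\min}(G)\geq d_-(c)\mu$ with high probability for fixed $c>1$. The real difficulty, which your sketch does not address, is that $d_G(x)/\mu$ differs from $1$ by a relative $O(1)$ at many vertices. So $\|A_G-pA_H\|_{\op}$ alone does not control $D_G^{-1/2}A_GD_G^{-1/2}-A_H/(2m)$. The paper uses two facts: $A_H/(2m)$ has columns of norm $(2m)^{-1/2}$, and $\sum_x(d_G(x)/\mu-1)^2\leq n/\sqrt\mu$ with high probability. Together these give a Hilbert--Schmidt bound $O(\mu^{-1/4})$ on the degree-normalization error, after which Weyl's inequality finishes the comparison.
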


\begin{corollary}[Explicit spectral gap asymptotics]
\label{cor:explicit-spectral-gap}
Under the assumptions of Theorem~\ref{thm:spectral-gap}, the following statements hold
uniformly over $p\in[c\log n/(2m),1]$:
\begin{enumerate}
\item if $m=o(n)$, then
\begin{equation}
\label{eq:gap-sublinear-explicit}
\gamma_{\lazy}(G)
=
\left(\pi^2+o_{\Prob}(1)\right)\frac{\kappa_m}{n^2}
=
\left(\frac{\pi^2}{3}+o_{\Prob}(1)\right)\frac{m^2}{n^2};
\end{equation}
\item if $m/n\to\rho\in(0,1/2]$, then
\begin{equation}
\label{eq:gap-macroscopic-explicit}
\gamma_{\lazy}(G)
\xrightarrow{\Prob}
\frac12\left(1-\frac{\sin(2\pi\rho)}{2\pi\rho}\right).
\end{equation}
\end{enumerate}
In particular,
\begin{equation}
\label{eq:gap-host-comparison}
\frac{\gamma_{\lazy}(G)}{\gamma_{\lazy}(H)}
\xrightarrow{\Prob}1
\end{equation}
uniformly over the same retention probabilities.
\end{corollary}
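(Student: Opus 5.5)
The plan is to derive the corollary from Theorem~\ref{thm:spectral-gap} by asymptotic analysis of the deterministic quantity $\Gamma_{n,m}$, followed by a comparison with $\gamma_{\lazy}(H)$. Since Theorem~\ref{thm:spectral-gap} gives $\gamma_{\lazy}(G)=(1+o_{\Prob}(1))\Gamma_{n,m}$ uniformly in $p\in[c\log n/(2m),1]$, each explicit statement follows once we show that $\Gamma_{n,m}$ has the stated deterministic limit or asymptotic form. Multiplying a uniform $o_{\Prob}(1)$ relative error by a deterministic sequence preserves uniformity, so no further probabilistic input is needed.

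For item 1 ($m=o(n)$), use $\sin^2 x=x^2(1+O(x^2))$ uniformly for $|x|\le\pi/2$. With $x=\pi r/n$ and $r\le m$, we have $x^2\le\pi^2m^2/n^2=o(1)$. Hence
\[
\Gamma_{n,m}=\frac{\pi^2}{n^2}\cdot\frac1m\sum_{r=1}^m r^2\,\bigl(1+O(m^2/n^2)\bigr)=(1+o(1))\frac{\pi^2\kappa_m}{n^2}.
\]
Combining this with $\kappa_m\sim m^2/3$ (valid since $m\to\infty$) gives \eqref{eq:gap-sublinear-explicit}.

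For item 2 ($m/n\to\rho$), write $\sin^2(\pi r/n)=\tfrac12(1-\cos(2\pi r/n))$ and treat $\frac1m\sum_{r=1}^m\cos(2\pi r/n)$ as a Riemann sum. Setting $u=r/m$, it equals $\frac1m\sum_r\cos(2\pi (m/n)u)$. Because $\cos$ is Lipschitz and $m/n\to\rho$, this converges to $\int_0^1\cos(2\pi\rho u)\,du=\sin(2\pi\rho)/(2\pi\rho)$. This yields \eqref{eq:gap-macroscopic-explicit}. The limit is positive because $\sin y<y$ for $y>0$, so $\Gamma_{n,m}$ is bounded away from zero and relative and absolute convergence agree.

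For \eqref{eq:gap-host-comparison}, the cleanest route is to apply Theorem~\ref{thm:spectral-gap} at $p=1$, where $G=H$ is deterministic, connected and admissible. The theorem then says $|\gamma_{\lazy}(H)/\Gamma_{n,m}-1|\le\varepsilon$ for all large $n$, that is, $\gamma_{\lazy}(H)\sim\Gamma_{n,m}$. Dividing the two asymptotics gives the ratio statement uniformly in $p$. As a sanity check, or as an alternative, one can argue directly: $H$ is a circulant, so its eigenvalues are explicit. The lazy walk eigenvalues are $1-\frac1m\sum_{r=1}^m\sin^2(\pi kr/n)$, and one verifies that the minimum over $k\neq0$ is attained at $k=\pm1$.

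The main obstacle is this last minimization if one takes the direct route; the minimum over $k$ is delicate when $m/n$ is macroscopic. Invoking Theorem~\ref{thm:spectral-gap} at $p=1$ sidesteps it, so the only real care needed is the Riemann-sum and Taylor error bounds, which are routine.
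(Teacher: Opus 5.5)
Your proposal is correct. For items 1 and 2 you follow the paper: the corollary is Theorem~\ref{thm:spectral-gap} combined with the deterministic asymptotics \eqref{eq:Gamma-sublinear-asymptotic} and \eqref{eq:Gamma-macroscopic-asymptotic}. The paper obtains these by the same Taylor expansion and by the Riemann sum for $\theta_{1,n}$, using $\Gamma_{n,m}=(1-\theta_{1,n})/2$.

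For \eqref{eq:gap-host-comparison} your route differs. The paper cites Proposition~\ref{prop:host-spectral-gap}, a deterministic Fourier statement proved by locating the minimizing frequency. It treats bounded frequencies by Taylor expansion or Riemann sums, diverging frequencies by the geometric-sum bound, and finishes with a subsequence argument. You instead specialize the uniform theorem to $p=1$. There $\Prob_1$ is a point mass on the connected graph $H$, and $1\in[c\log n/(2m),1]$ for large $n$. So the vanishing supremum forces $|\gamma_{\lazy}(H)/\Gamma_{n,m}-1|\le\varepsilon$ for all large $n$. This deduction is valid, and it mirrors how the paper obtains \eqref{eq:host-maximal-hitting} in Corollary~\ref{cor:explicit-maximal-hitting}.

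What your route buys is brevity. It does not actually avoid the minimization over $k$; that work is only relocated. The proof of the theorem uses Lemma~\ref{lem:macroscopic-Fourier-gap}, including $\gamma_{\lazy}(C_n^m)/\Gamma_{n,m}\to1$, as an input in the macroscopic case. It uses Lemma~\ref{lem:sublinear-Fourier-gap} for the quotient cycle in the sublinear case. So your argument is sound as a deduction from the theorem, but it could not replace Proposition~\ref{prop:host-spectral-gap} inside the theorem's proof.

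Your side remark that the minimum over $k\neq0$ is attained exactly at $k=\pm1$ is stronger than anything the paper proves or needs. The paper only identifies the minimizer asymptotically, along subsequences. Drop the remark or flag it as unverified.
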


The estimate is relative, which is essential when $m=o(n)$: the host
gap then has order $m^2/n^2$ and tends to zero. An additive $o_{\Prob}(1)$
error would not determine its order or leading constant. The theorem
shows that the gap retains the host value even at expected degree of
order $\log n$.

\paragraph{Point-to-point mean hitting times.}
On a connected realization of $G$, let $H_G(x,y)$ denote the mean time at which simple
random walk started at $x$ first visits $y$. Define
\begin{equation}
\label{eq:hitting-max-definition}
H_{\max}(G):=
\max_{\substack{x,y\in\mathbb Z_n\\x\neq y}}H_G(x,y),
\end{equation}
write
\begin{equation}
\label{eq:hitting-main-notation}
V_G:=2|E(G)|,
\qquad
d_{\min}(G):=\min_{y\in\mathbb Z_n}d_G(y),
\end{equation}
and set
\begin{equation}
\label{eq:global-hitting-scale}
T_{n,m}:=n+\frac{n^2}{m^2}.
\end{equation}
The next theorem separates the dependence on the target degree from the
dependence on cyclic distance.
\begin{theorem}[Uniform point-to-point mean hitting-time asymptotics]
\label{thm:sharp-hitting}
Fix $c>1$ and assume
\begin{equation}
\label{eq:sharp-hitting-assumptions}
\log n\ll m<\frac n2,
\qquad
p\geq\frac{c\log n}{2m}.
\end{equation}
There exist a deterministic sequence $\varepsilon_n\to0$, events $\Omega_{n,p}$, and random
remainders $R_{n,p}(x,y)$ such that
\begin{equation}
\label{eq:sharp-hitting-uniform-events}
\inf_{p\in[c\log n/(2m),1]}\Prob_p(\Omega_{n,p})\longrightarrow1,
\end{equation}
$G$ is connected on $\Omega_{n,p}$, and, on $\Omega_{n,p}$, simultaneously for all distinct
$x,y\in\mathbb Z_n$,
\begin{equation}
\label{eq:pointwise-hitting-expansion}
H_G(x,y)
=
\frac{V_G}{d_G(y)}
+
\frac{r_{xy}(n-r_{xy})}{\kappa_m}
+
R_{n,p}(x,y),
\end{equation}
where
\begin{equation}
\label{eq:pointwise-hitting-remainder}
\max_{\substack{x,y\in\mathbb Z_n\\x\neq y}}
|R_{n,p}(x,y)|
\leq
\varepsilon_nT_{n,m}.
\end{equation}
Consequently,
\begin{equation}
\label{eq:sharp-hitting-conclusion}
H_{\max}(G)
=
\bigl(1+o_{\Prob}(1)\bigr)
\left(
\frac{n^2}{4\kappa_m}
+
\frac{V_G}{d_{\min}(G)}
\right)
\end{equation}
uniformly over $p\in[c\log n/(2m),1]$.
\end{theorem}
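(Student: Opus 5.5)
We will use the electrical-network identity for hitting times. On a connected realization,
\[
H_G(x,y)=\tfrac12\sum_{z}d_G(z)\bigl(R^\eff(x,y)+R^\eff(y,z)-R^\eff(x,z)\bigr),
\]
where $R^\eff$ is effective resistance in $G$ with unit conductances. Equivalently, write $H_G(x,y)=V_G\,R^\eff(x,y)/2+\tfrac12\sum_z d_G(z)\bigl(R^\eff(y,z)-R^\eff(x,z)\bigr)$.

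The plan is to establish a sharp uniform resistance expansion on a high-probability event $\Omega_{n,p}$:
\[
R^\eff_G(u,v)=\frac1{d_G(u)}+\frac1{d_G(v)}+\frac{2\,r_{uv}(n-r_{uv})}{n\,V_G\,\kappa_m/n}\cdot\frac{1}{2}\cdot\frac{n}{n}+o\Bigl(\frac{T_{n,m}}{V_G}\Bigr),
\]
that is, a local term $1/d_G(u)+1/d_G(v)$ plus a spatial term $r(n-r)/(n\cdot mp\cdot\kappa_m)$, up to a $(1+o(1))$ correction. This is the resistance of the "mean-field" network on $\Z_n$ with conductances $p$ on admissible edges. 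On a cycle of $C_n^m$ type this resistance equals $r(n-r)/(n\,mp\,\kappa_m)$ asymptotically, by Fourier analysis of the host Laplacian: the Green function of $pL_H$ between points at distance $r$ has the spatial part $\frac{1}{n}\sum_{k\neq0}(1-\cos(2\pi kr/n))/(p\lambda_k)$, and $\lambda_k\approx 2m\cdot 2\pi^2k^2\kappa_m/(mn^2)$ for small $k$.

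Substituting into the identity gives two contributions. The first is $V_G/d_G(y)$, since the $1/d_G(x)$ terms cancel in the symmetric combination and the $\sum_z d_G(z)/d_G(y)$ piece produces $V_G/(2d_G(y))$ twice. The second is $\frac{V_G}{2}\cdot\frac{2r(n-r)}{n\,mp\,\kappa_m}\approx r(n-r)/\kappa_m$, using $V_G=(2+o(1))nmp$. The averaged spatial differences $\sum_z d_G(z)(\cdot)$ cancel to leading order by cyclic symmetry, since $d_G(z)\approx 2mp$ on average over blocks.

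The key steps are as follows.

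\textbf{(i) Concentration.} Degrees satisfy $d_G(z)\ge \delta\log n$ for all $z$ (since $c>1$), and the block edge counts $e_G(B_b,B_{b'})$ concentrate to within a factor $1+o(1)$. The latter follows by Chernoff bounds, since their means are at least of order $m\log n\to\infty$ times $m$.

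\textbf{(ii) Spatial term.} Compare $G$ to the weighted host network. The upper bound on resistance comes from routing a unit flow. We first spread it from $u$ to its neighbourhood, costing $1/d_G(u)+O(1/(d_G(u)\,mp))$. We then transport it using the host-optimal flow, discretized on blocks and routed through retained edges proportionally, so that the energy is at most $(1+o(1))$ times the host energy. The lower bound comes from Thomson/Dirichlet duality: use the test potential given by the host harmonic function, which is nearly linear in blocks, combined with the local potential near $u$ and $v$. The spectral-gap theorem (Theorem~\ref{thm:spectral-gap}) can also serve to control the low-frequency modes. Alternatively, represent $R^\eff$ through the pseudo-inverse of $L_G$ and control $L_G$ against $pL_H$ on low frequencies via a matrix Chernoff bound on block-averaged functions.

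\textbf{(iii) Error aggregation.} Sum the errors over $z$ with weights $d_G(z)$ to obtain $o(T_{n,m})$, and show the symmetric difference term is $o(T_{n,m})$.

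The max formula then follows by maximizing over $x,y$: take $r=\lfloor n/2\rfloor$ and $y$ of minimal degree. Both terms are achievable simultaneously, since for any $y$ one can choose $x$ antipodal. Neither term is negligible relative to the error, because $V_G/d_{\min}\gtrsim n$.

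The main obstacle is step (ii). The error must be relative, $o(n/(mp\,m^2))+o(1/(mp))$ in resistance, uniformly over all pairs, at expected degree only order $\log n$. The local-to-mesoscopic transition, at distances between $1$ and $m$, is where crude comparisons lose constants. I would handle it by showing that a flow spreads from $u$'s neighbourhood into a whole block with energy $o(1/\log n)$, using the expansion of the block (the sets $H_b$ of Theorem~\ref{thm:critical-window-geometry}-type arguments, valid more easily above threshold).
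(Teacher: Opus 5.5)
Your Tetali reduction is fine, and it is a genuinely different framing from the paper's. Suppose that, uniformly over pairs, $R_{\mathrm{eff}}^G(u,v)=1/d_G(u)+1/d_G(v)+S(u,v)+o(T_{n,m}/V_G)$, where $S(u,v):=r_{uv}(n-r_{uv})/(nmp\,\kappa_m)$. Then:
\begin{itemize}
\item the $1/d_G(x)$ terms cancel and $V_G/d_G(y)$ appears;
\item $\tfrac12 V_G\,S(x,y)=(1+o(1))\,r_{xy}(n-r_{xy})/\kappa_m$;
\item with $\mu=2mp$, block-volume concentration gives $\tfrac12\sum_z(d_G(z)-\mu)\bigl(S(y,z)-S(x,z)\bigr)=o(T_{n,m})$.
\end{itemize}
Your constants drift, though. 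The host resistance is $\frac2n\sum_{k\neq0}(1-\cos(2\pi kr/n))/(p\lambda_k)$ with $\lambda_k\approx4\pi^2k^2m\kappa_m/n^2$. This gives $S\approx2r(n-r)/(V_G\kappa_m)$; your display has half of this, and your substitution line uses twice it.

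The real gap is this. By the commute-time identity, the uniform resistance expansion is essentially equivalent to the theorem, and you do not prove it. Step (ii) is a plan rather than an argument, and the mechanisms it names would not give relative error $o(1)$.

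\emph{Divergence defects.} The crux is the cost of making the transported flow divergence-free. Routing the host-optimal flow proportionally through retained edges leaves a defect at every vertex. Your bound ``$(1+o(1))$ times the host energy'' ignores the flow needed to correct it. With blocks of length comparable to $m$, as in your step (i), this correction is of leading order. For blocks of length exactly $m$, only adjacent blocks interact, through $m(m+1)/2$ admissible pairs. Spreading unit flux evenly over the interface edges then costs about $2/(pm^3)$ per unit length. The true resistance per unit length is $1/(pm\kappa_m)\approx3/(pm^3)$, so the missing third is transport inside blocks.

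\emph{How the paper handles it.} The paper uses blocks of length about $m/K$, with $1\ll K$ and $K^2\ll\mu$; it takes $K=\lfloor(\log n)^{1/4}\rfloor$. The quotient is then $C_q^R$ with $R\sim K\to\infty$, and the boundary layers cost only $O(1/R)$. Within-block fluctuations are controlled by $C_W\succeq\alpha\mu I_W$, which follows from $A_{H^-}w=-w$ on $W$ and $\norm{A_{G^-}-pA_{H^-}}_{\op}\lesssim\sqrt\mu$. Together with $\norm{N^{\mathsf T}a}_2^2\lesssim K^2a^{\mathsf T}M_0a$, this makes the Schur correction $O(K^2/\mu)M_0$. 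Connectivity and size of the sets $H_b$ give nothing of this kind.

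\emph{Your alternative route.} It has the same hole. A Chernoff bound on block averages controls $M$, but $L_G^+$ on block averages is governed by $M-NC_W^{-1}N^{\mathsf T}$. The spectral-gap theorem controls only $\lambda_2$, whereas the distance term needs a form comparison for every block vector.

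\emph{Local precision.} The local step needs error $o(1/\mu)$ uniformly in $p$, simultaneously for all pairs; $o(1/\log n)$ is not enough. For instance, at $p=1$ one has $1/d_G(u)=1/(2m)$. The clean way to get this is the exact identity $R_{\mathrm{eff}}^G(u,v)=1/d_G(u)+1/d_G(v)+(e_u-e_v)^{\mathsf T}L_G^+(\nu_u-\nu_v)$, with $\nu_u:=A_Ge_u/d_G(u)$. This is the resistance analogue of Lemma~\ref{lem:exact-local-removal}. Its remainder has a smeared source, whose within-block part is negligible by the coercivity of $C_W$. Add the block Schur complement, and your program becomes the paper's proof.
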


To express the maximal hitting time deterministically, define the
minimum-degree profile $\delta_{\min}:(1,\infty]\to(0,1]$ as follows. For $\theta\in(1,\infty)$,
$\delta_{\min}(\theta)$ is the unique solution in $(0,1)$ of
\begin{equation}
\label{eq:minimum-degree-profile}
\theta\bigl(1-\delta_{\min}(\theta)
+\delta_{\min}(\theta)\log\delta_{\min}(\theta)\bigr)=1,
\end{equation}
and $\delta_{\min}(\infty):=1$. Let
\begin{equation}
\label{eq:normalized-mean-degree}
\theta_n:=\frac{2mp}{\log n}.
\end{equation}

\begin{corollary}[Deterministic equivalent for the maximal hitting time]
\label{cor:explicit-maximal-hitting}
Under the assumptions of Theorem~\ref{thm:sharp-hitting},
\begin{equation}
\label{eq:explicit-maximal-hitting}
H_{\max}(G)
=
\bigl(1+o_{\Prob}(1)\bigr)
\left(
\frac{n^2}{4\kappa_m}
+
\frac{n}{\delta_{\min}(\theta_n)}
\right)
\end{equation}
uniformly over $p\in[c\log n/(2m),1]$. If
$\theta_n\to\theta\in[c,\infty]$, then
\begin{equation}
\label{eq:explicit-maximal-hitting-limit}
H_{\max}(G)
=
\bigl(1+o_{\Prob}(1)\bigr)
\left(
\frac{n^2}{4\kappa_m}
+
\frac{n}{\delta_{\min}(\theta)}
\right).
\end{equation}
For the deterministic cycle power,
\begin{equation}
\label{eq:host-maximal-hitting}
H_{\max}(C_n^m)
=
\bigl(1+o(1)\bigr)
\left(
\frac{n^2}{4\kappa_m}+n
\right).
\end{equation}
\end{corollary}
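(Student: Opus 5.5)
The plan is to deduce the corollary from \eqref{eq:sharp-hitting-conclusion} of Theorem~\ref{thm:sharp-hitting}. That identity expresses $H_{\max}(G)$ as $(1+o_{\Prob}(1))\bigl(n^2/(4\kappa_m)+V_G/d_{\min}(G)\bigr)$, so it suffices to show
\[
\frac{V_G}{d_{\min}(G)}=\bigl(1+o_{\Prob}(1)\bigr)\frac{n}{\delta_{\min}(\theta_n)}
\]
uniformly over $p\in[c\log n/(2m),1]$. Both summands are nonnegative, so a relative error in one summand passes to the sum.

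First I treat the numerator. $|E(G)|\sim\Bin(nm,p)$ has mean $nmp\geq cn\log n/2\to\infty$, so Chebyshev gives $V_G=(1+o_{\Prob}(1))\,2nmp$ uniformly in $p$. It then remains to prove
\[
d_{\min}(G)=\bigl(1+o_{\Prob}(1)\bigr)\,2mp\,\delta_{\min}(\theta_n).
\]
This is a minimum-degree law for $n$ weakly dependent $\Bin(2m,p)$ variables. The reduction is valid because $2nmp/(2mp\,\delta)=n/\delta$.

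Next I would prove that law by first and second moments.

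\emph{Lower tail.} Fix $\varepsilon>0$ and set $k=(1-\varepsilon)2mp\,\delta_{\min}(\theta_n)$. The Chernoff bound $\Prob(\Bin(2m,p)\leq \delta\cdot 2mp)\leq\exp(-2mp\,h(\delta))$, with $h(\delta)=1-\delta+\delta\log\delta$, gives
\[
n\,\Prob(d_G(0)\leq k)\leq \exp\bigl(\log n\,[1-\theta_n h((1-\varepsilon)\delta_{\min})]\bigr).
\]
The function $h$ is strictly decreasing on $(0,1)$, and $\theta_n h(\delta_{\min}(\theta_n))=1$. So the bracket is at most $-\eta(\varepsilon,c)<0$, and the union bound sends this to zero. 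Uniformity needs care for large $\theta_n$, where $\delta_{\min}\to1$. There I would use instead the Gaussian bound $h(\delta)\approx(1-\delta)^2/2$, giving $1-\delta_{\min}\asymp\theta_n^{-1/2}$. This case includes $p$ near $1$, where I would also handle $1-p$ small by noting the variance $2mp(1-p)$ only helps.

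\emph{Upper tail.} Take $k'=(1+\varepsilon)2mp\,\delta_{\min}$ when $\delta_{\min}<1$; if $(1+\varepsilon)\delta_{\min}\geq1$, the bound $d_{\min}\leq d_G(0)\leq (1+o_{\Prob}(1))2mp$ is trivial. I need a vertex of degree at most $k'$. I would restrict to a set $S$ of $\asymp n/m$ vertices pairwise at cyclic distance $>2m$. Their degrees are then independent, but $n/m$ may be as small as $O(1)$ when $m\asymp n$, so this alone fails. Instead I would use all $n$ vertices and a second-moment computation. Two vertices share at most one edge, so degrees are nearly independent; conditioning on the shared edge changes probabilities by a factor $1+O(k'/(2mp))=O(1)$ at worst, and in fact by $1+o(1)$ for the relevant ratio. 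A sharp local (Stirling) lower bound $\Prob(\Bin(2m,p)\leq k')\geq n^{-1+\eta'}$ then makes the expected count $n^{\eta'}\to\infty$, and the variance is $o(\text{mean}^2)$.

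The main obstacle is making the minimum-degree estimate uniform over the whole range $\theta_n\in[c,\infty)$. This means matching the Poisson-like large-deviation regime near $\theta_n\asymp1$ with the Gaussian regime $\theta_n\to\infty$, where only $1+o(1)$ relative precision of $d_{\min}$ is needed, and handling $p\to1$. I would work with a subsequence argument: any sequence of $p_n$ admits a subsequence with $\theta_n\to\theta\in[c,\infty]$, and proving the claim along each such subsequence gives uniformity. The limit statement \eqref{eq:explicit-maximal-hitting-limit} then follows from continuity of $\delta_{\min}$ on $(1,\infty]$, which holds since $h$ is continuous and strictly monotone and $\delta_{\min}(\theta)\to1$ as $\theta\to\infty$. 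Finally, \eqref{eq:host-maximal-hitting} is the case $p=1$, where $d_{\min}=2m$ and $V_G=2nm$; this is covered by the theorem at $p=1$ directly, with $\delta_{\min}(\infty)=1$.
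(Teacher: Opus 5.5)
Your proposal is correct and follows the paper's route. The paper also feeds $V_G=(1+o_{\Prob}(1))\,2nmp$ and $d_{\min}(G)=(\delta_{\min}(\theta_n)+o_{\Prob}(1))\,2mp$ into \eqref{eq:sharp-hitting-conclusion}, and it proves the minimum-degree law in Lemma~\ref{lem:degrees-fixed-c} the same way you do: a Chernoff union bound for the lower tail, a Stirling lower bound plus a second moment over all $n$ vertices for the upper tail, a subsequence argument on $\theta_n$, and the average-degree bound when $\theta=\infty$. The one step to tighten is the covariance estimate. The nontrivial upper-tail case only arises for bounded $\theta_n$, where $p\to0$. For a host edge, the exact identity $\operatorname{Cov}=p(1-p)\Delta^2\le p(1-p)^{-1}r^2$ then makes the sum over the $nm$ host edges $O(mp/n)(\E X)^2=o((\E X)^2)$. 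An $O(1)$ factor alone would not suffice when $m\asymp n$.
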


The first term in \eqref{eq:pointwise-hitting-expansion} is $V_G/d_G(y)=1/\pi_G(y)$, where $\pi_G$ is the stationary distribution. It is larger for targets of smaller degree. The second term depends on cyclic distance and is maximized at $r_{xy}=\lfloor n/2\rfloor$.
Since the error bound is uniform over all ordered pairs, we may choose a target of minimum degree and a starting vertex at cyclic distance $\lfloor n/2\rfloor$ from it. This gives the formula for $H_{\max}(G)$.

At $p=c\log n/(2m)$, the target-degree contribution to $H_{\max}(G)$ is
asymptotic to $n/\delta_{\min}(c)$. The two contributions have the same
order when $m\asymp\sqrt n$. The spatial term dominates for
$m=o(\sqrt n)$, and the target-degree term dominates for $m\gg\sqrt n$.

\subsection{Proof strategy}
\label{subsec:intro-proof-strategy}

\paragraph{Connectivity and diameter.}
In the critical window, for each vertex $v$ we build two connected chains
$\mathcal K_v^-$ and $\mathcal K_v^+$ from large connected subsets of
consecutive blocks. We leave all edges incident to $v$ unexposed during
the construction. Each chain contains more than $n/2$ vertices, and their
union misses only $o(m/\log n)$ admissible neighbours of $v$.
Conditional on the chains, the edges at $v$ are still independent.
This lets us compare the probability that $v$ fails to attach with the
probability that it is isolated. The error is small enough to sum over
all vertices.

For the diameter bound, the chains must also have diameter
$$
O\left(\frac nm+\frac{\log n}{\log\log n}\right).
$$
Any two chains intersect because each contains more than half the
vertices. Once every non-isolated vertex is attached to a chain, the
upper bound follows by joining vertices through an intersection.

\paragraph{Spectral gap.}
Testing the Rayleigh quotient on the first Fourier modes gives the
upper bound. For the lower bound when $m=o(n)$, we decompose functions
into their block averages and their fluctuations within blocks.
The Laplacian is bounded below by a constant times $2mp$ on the
fluctuation space. Eliminating these fluctuations by a Schur complement
reduces the problem to a cycle power on the blocks, with a relative
error tending to zero. When $m/n$ is bounded away from zero, the host
gap stays positive, and an operator-norm comparison of normalized
adjacency matrices suffices.

\paragraph{Hitting times.}
We first extract the target-degree term $V_G/d_G(y)$ exactly from the
hitting-time equation. The remaining Laplacian equation is treated by
the same decomposition into block averages and fluctuations.
Effective-resistance estimates control the error uniformly in the
starting and target vertices. Fourier inversion on the resulting cycle
power gives $r_{xy}(n-r_{xy})/\kappa_m$. This requires finer information
than the spectral gap alone.

\subsection{Context and related work}
\label{subsec:intro-related-work}

For $G(n,p)$ at $p=(\log n+t)/n$, with fixed $t\in\mathbb R$, the
number of isolated vertices converges to $\Poisson(e^{-t})$, every other
vertex lies in the largest component with high probability, and the
connectivity probability tends to $\exp(-e^{-t})$
\cite{BollobasRandomGraphs,JLR2000,hofstad2017randomgraphs}.
Theorem~\ref{thm:critical-window-only-isolated-outside-giant} gives the corresponding law for band random graphs under the exact centering \eqref{eq:critical-window-exact-centering}.

Random geometric graphs provide another natural comparison. Gilbert introduced the continuum model in \cite{Gilbert1961}, and Penrose developed its connectivity and component theory in
\cite{Penrose2003RGG}. Minimum-degree and connectivity thresholds coincide under suitable assumptions for hard geometric graphs
\cite{Penrose1999KConnectivity,GoelRaiKrishnamachari2005,PenroseYang2025}. For soft geometric graphs, connectivity is asymptotically equivalent to the absence of isolated vertices in broad parameter ranges
\cite{Penrose2016Soft,Penrose2018Poisson}.
In one dimension, a nontrivial interval component can survive without
isolated vertices if no retained edge crosses either boundary.
Related interval and torus models are studied in
\cite{WilsherDettmannGanesh2020,WilsherDettmannGanesh2023}, and
bounded-range percolation on a discrete cycle in
\cite{Federico2024BoundedRange}. Our critical-window theorem rules out
all such nontrivial components outside the largest one when
$m/\log n\to\infty$.

More generally, random subgraphs of deterministic hosts have been studied
through expansion, degree, and product structure. Alon, Benjamini, and
Stacey proved uniqueness of a component of positive density for
percolation on bounded-degree expanders
\cite{AlonBenjaminiStacey2004}. Connectivity laws are known for
Cartesian powers and Hamming graphs
\cite{Joos2015,FedericoHofstadHulshof2016}, and other criteria use
density, edge connectivity, or expansion
\cite{GuptaLeeLi2021,DiskinGeisler2025}. Triangle conditions give
mean-field behavior near the emergence of a giant component in several
high-dimensional product graphs
\cite{BorgsChayesHofstadSladeSpencer2005,DiskinErdeKangKrivelevich2025}.
These results do not directly yield our critical-window theorem.
When $m=o(n)$, a half-cycle has conductance of order $m/n$, so the host
does not have expansion bounded away from zero. Our proof instead uses
the geometry of consecutive blocks and preserves independence by
leaving the edges at a prescribed vertex unexposed.

Spatial constraints also influence random walks after connectivity.
The largest supercritical percolation cluster in a box of side length
$n$ in $\mathbb Z^d$ has mixing time of order $n^2$
\cite{BenjaminiMossel2003}; heat-kernel bounds and invariance principles
for the infinite cluster appear in
\cite{Barlow2004,MathieuPiatnitski2007}.
For geometric graphs on a torus, the mixing scale is $r^{-2}$, where $r$
is the connection radius \cite{BoydGhoshPrabhakarShah2005}.
Avin and Ercal separated the radii needed for optimal-order cover time
and rapid mixing \cite{AvinErcal2007}, while M\"uller and Penrose proved continuum limits for rescaled Cheeger constants \cite{MullerPenrose2020}. Long-range percolation on a finite cycle can have polynomial mixing time despite polylogarithmic diameter \cite{BenjaminiBergerYadin2008}. For the band random graph, we determine the leading constant of the lazy-walk spectral gap for $p\geq c\log n/(2m)$, with $c>1$ fixed.

General spectral comparisons for independent edge sampling are
typically additive. In particular, the estimate of Chung and Horn
\cite{ChungHorn2007} does not resolve the host gap of order $m^2/n^2$
at $p\asymp(\log n)/m$ when $m=o(n)$. Our comparison has relative error
$o_{\Prob}(1)$ and gives the normalization
$\pi^2\kappa_m/n^2\sim(\pi^2/3)m^2/n^2$.
When $m$ is proportional to $n$, the proof uses the matrix-norm
estimate of Bandeira and van Handel \cite{BandeiraVanHandel2016}.

Effective resistance gives exact formulas for commute times
\cite{ChandraRaghavanRuzzoSmolenskyTiwari1989} and directed hitting
times \cite{Tetali1991}. Under suitable degree and spectral assumptions,
hitting times in several random-neighbourhood models are asymptotic
to the graph volume divided by the target degree
\cite{VonLuxburgRadlHein2014}. Near connectivity in sparse $G(n,p)$,
effective resistance concentrates around the sum of reciprocal endpoint
degrees \cite{Sylvester2021}. Our hitting-time expansion retains the
target-degree term $V_G/d_G(y)$ and adds an explicit spatial correction,
$$
\frac{r_{xy}(n-r_{xy})}{\kappa_m}.
$$
The minimum-degree asymptotics then identify the leading constant of
the maximal hitting time.

Finally, the adjacency matrix decomposes as
$$
A_G=pA_H+\bigl(A_G-pA_H\bigr),
$$
with a deterministic circulant mean and a centered symmetric band
fluctuation. Random band matrices are expected to exhibit a
localization--delocalization transition at bandwidth $\sqrt n$
\cite{ErdosKnowlesYauYin2013Delocalization,Bourgade2018RandomBandMatrices}.
Recent work establishes this transition for broad classes of centered
one-dimensional models
\cite{YauYin2025Delocalization,ErdosRiabov2025,Drogin2025}.
Those results do not apply directly here: $pA_H$ has rank proportional
to $n$ and nonconstant Fourier eigenvalues, and $p$ may tend to zero. The graph estimates in this paper provide a starting point for studying the bulk and upper spectral edge of $A_G$.

\paragraph{Organization.}
Section~\ref{sec:prelim} details the preliminary tools used in the subsequent sections.
Sections~\ref{sec:connectivity-threshold}
and~\ref{sec:critical-connectivity-window} prove the connectivity and
geometry results. Sections~\ref{sec:spectral-gap}
and~\ref{sec:hitting-time} treat the spectral gap and hitting times,
respectively. Section~\ref{sec:conclusion} presents some open problems.

\section{Preliminaries}
\label{sec:prelim}

We collect the notation and standard facts used in the proofs.

\subsection{Graphs and random walks}
\label{subsec:graph-markov-prelim}

Let $\Gamma=(V,E)$ be a finite simple undirected graph, with notation
as in Subsection~\ref{sec:band-random-graphs-model}. Its adjacency matrix,
degree matrix, and combinatorial Laplacian are
\begin{align}
\label{eq:adjacency-degree-definitions}
A_\Gamma(x,y)&:=\mathbf 1_{\{\{x,y\}\in E\}},\\
\label{eq:degree-matrix-laplacian-definitions}
D_\Gamma&:=\diag\bigl(d_\Gamma(x):x\in V\bigr),
&
L_\Gamma&:=D_\Gamma-A_\Gamma.
\end{align}
If $\Gamma$ has no isolated vertices, the transition matrices of simple and lazy random
walk are
\begin{equation}
\label{eq:simple-lazy-walk-definitions}
P_\Gamma:=D_\Gamma^{-1}A_\Gamma,
\qquad
Q_\Gamma:=\frac{I+P_\Gamma}{2}.
\end{equation}
Define the volume of $S\subseteq V$ and, when $\Gamma$ is connected, the stationary
distribution of simple random walk by
\begin{equation}
\label{eq:graph-stationary-distribution}
\vol_\Gamma(S):=\sum_{x\in S}d_\Gamma(x),
\qquad
\vol_\Gamma(V)=2|E|,
\qquad
\pi_\Gamma(x):=\frac{d_\Gamma(x)}{\vol_\Gamma(V)}.
\end{equation}
When $\Gamma$ is connected, $P_\Gamma$ is irreducible and reversible.

For a finite irreducible reversible transition matrix $P$ with stationary distribution
$\pi$ and $f:V\to\mathbb R$, define
\begin{align}
\label{eq:markov-inner-product-variance}
\Var_\pi(f)
&:=
\min_{a\in\mathbb R}\sum_{x\in V}\pi(x)\bigl(f(x)-a\bigr)^2,\\
\label{eq:markov-dirichlet-form}
\mathcal E_P(f,f)
&:=
\frac12\sum_{x,y\in V}\pi(x)P(x,y)\bigl(f(x)-f(y)\bigr)^2.
\end{align}
For the simple random walk on a connected graph,
\begin{equation}
\label{eq:graph-dirichlet-edge-formula}
\mathcal E_{P_\Gamma}(f,f)
=
\frac{1}{\vol_\Gamma(V)}
\sum_{\{x,y\}\in E}\bigl(f(x)-f(y)\bigr)^2.
\end{equation}
Let $\lambda_2(P)$ be the second-largest eigenvalue of $P$, with eigenvalues ordered
algebraically and counted with multiplicity. For a connected graph $\Gamma$, define
\begin{equation}
\label{eq:graph-lazy-gap-definition}
\gamma_{\lazy}(\Gamma)
:=
1-\lambda_2(Q_\Gamma)
=
\frac{1-\lambda_2(P_\Gamma)}{2}.
\end{equation}
For a disconnected graph, including one with isolated vertices, we set
$\gamma_{\lazy}(\Gamma):=0$.

\begin{lemma}[Variational characterization of the spectral gap]
\label{lem:variational-gap}
Let $P$ be a finite irreducible reversible Markov chain with stationary distribution $\pi$.
Then
\begin{equation}
\label{eq:variational-gap-formula}
1-\lambda_2(P)
=
\inf_{f\not\equiv\mathrm{const}}
\frac{\mathcal E_P(f,f)}{\Var_\pi(f)}.
\end{equation}
\end{lemma}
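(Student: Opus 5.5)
The plan is the standard Courant--Fischer argument, carried out in the weighted space $\ell^2(\pi)$ with inner product $\langle f,g\rangle_\pi:=\sum_x\pi(x)f(x)g(x)$.

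\emph{Step 1: self-adjointness and the eigenbasis.} Reversibility, $\pi(x)P(x,y)=\pi(y)P(y,x)$, gives $\langle Pf,g\rangle_\pi=\langle f,Pg\rangle_\pi$, so $P$ is self-adjoint on $\ell^2(\pi)$. The spectral theorem then supplies real eigenvalues $1=\lambda_1\geq\lambda_2\geq\cdots\geq\lambda_{|V|}$ and a $\pi$-orthonormal eigenbasis $\varphi_1,\dots,\varphi_{|V|}$. We may take $\varphi_1\equiv1$, since $P\1=\1$. Irreducibility ensures that the eigenvalue $1$ is simple, although the formula does not actually need this.

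\emph{Step 2: quadratic-form identities.} Expanding the square and using $\sum_yP(x,y)=1$ together with reversibility gives
\[
\mathcal E_P(f,f)=\langle (I-P)f,f\rangle_\pi .
\]
Minimizing over $a$ in the definition of $\Var_\pi$ gives $a=\sum_x\pi(x)f(x)$, so
\[
\Var_\pi(f)=\norm{f-\langle f,\1\rangle_\pi\1}_\pi^2 .
\]
Both quantities are unchanged when a constant is added to $f$, since $(I-P)\1=0$. We may therefore assume $\langle f,\1\rangle_\pi=0$.

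\emph{Step 3: the two inequalities.} Write $f=\sum_{k\geq2}c_k\varphi_k$. Then
\[
\mathcal E_P(f,f)=\sum_{k\geq2}(1-\lambda_k)c_k^2\geq(1-\lambda_2)\sum_{k\geq2}c_k^2=(1-\lambda_2)\Var_\pi(f),
\]
which shows the infimum is at least $1-\lambda_2$. Taking $f=\varphi_2$, which is nonconstant because it is $\pi$-orthogonal to $\1$, attains equality. So the infimum equals $1-\lambda_2(P)$ and is in fact a minimum.

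The only step requiring care is the Dirichlet-form identity in Step 2. It is a direct computation: split $\frac12\sum_{x,y}\pi(x)P(x,y)\bigl(f(x)^2+f(y)^2-2f(x)f(y)\bigr)$ into its three sums, and use stationarity $\sum_x\pi(x)P(x,y)=\pi(y)$ to turn the $f(y)^2$ sum into $\sum_y\pi(y)f(y)^2$. Nothing else is an obstacle.
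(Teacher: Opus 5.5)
Your argument is correct. The paper states this lemma as standard background and gives no proof. Your proof in $\ell^2(\pi)$ is the standard Courant--Fischer argument one would supply: self-adjointness from reversibility, the identity $\mathcal E_P(f,f)=\langle (I-P)f,f\rangle_\pi$, and expansion in a $\pi$-orthonormal eigenbasis with $\varphi_1\equiv 1$.

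One point to make explicit is where irreducibility actually enters. It is not needed for simplicity of the eigenvalue $1$, but it does give $\pi(x)>0$ for every $x$. That positivity makes $\langle\cdot,\cdot\rangle_\pi$ a genuine inner product, so the spectral theorem applies as you use it. It also guarantees $\Var_\pi(f)>0$ for every nonconstant $f$, so the quotient is well defined.
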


\subsection{Electrical networks and hitting times}
\label{subsec:electrical-prelim}

Let $\Gamma=(V,E)$ be a finite connected network with positive symmetric conductances
$c_{xy}=c_{yx}$. Its weighted Laplacian is
\begin{equation}
\label{eq:weighted-laplacian-definition}
(L_\Gamma f)(x)
:=
\sum_{y:\{x,y\}\in E}c_{xy}\bigl(f(x)-f(y)\bigr).
\end{equation}
For distinct vertices $a,b\in V$, let $h$ solve
\begin{equation}
\label{eq:unit-current-potential-equation}
L_\Gamma h=\mathbf 1_{\{a\}}-\mathbf 1_{\{b\}}.
\end{equation}
The solution is unique up to an additive constant, and the effective resistance is
\begin{equation}
\label{eq:effective-resistance-potential-definition}
R_{\mathrm{eff}}^\Gamma(a,b)
:=
h(a)-h(b)
=
\bigl(\mathbf 1_{\{a\}}-\mathbf 1_{\{b\}}\bigr)^{\mathsf T}
L_\Gamma^+
\bigl(\mathbf 1_{\{a\}}-\mathbf 1_{\{b\}}\bigr),
\end{equation}
where $L_\Gamma^+$ denotes the Moore--Penrose pseudoinverse.
We set $R_{\mathrm{eff}}^\Gamma(a,a):=0$.
In an unweighted graph, all conductances equal $1$.

For a finite connected unweighted graph $\Gamma$, let $(X_t)_{t\geq0}$ be its simple random
walk, define $\tau_v:=\inf\{t\geq0:X_t=v\}$, and write
$H_\Gamma(u,v):=\E_u[\tau_v]$.

\begin{theorem}[Commute-time identity]
\label{thm:commute-time}
For every $u,v\in V$,
\begin{equation}
\label{eq:commute-time-identity}
H_\Gamma(u,v)+H_\Gamma(v,u)
=
2|E|R_{\mathrm{eff}}^\Gamma(u,v)
=
\vol_\Gamma(V)R_{\mathrm{eff}}^\Gamma(u,v).
\end{equation}
\end{theorem}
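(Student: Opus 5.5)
The commute-time identity is classical, and I would prove it by combining two harmonic-function computations through the Laplacian. The plan has three steps: derive the linear equation satisfied by $u\mapsto H_\Gamma(u,v)$, add the two hitting-time vectors to get a unit-current potential, and then read off the effective resistance.

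\emph{Step 1.} Fix $v$ and set $\phi_v(u):=H_\Gamma(u,v)$. First-step analysis gives $\phi_v(v)=0$ and, for $u\neq v$,
$$\phi_v(u)=1+\frac{1}{d_\Gamma(u)}\sum_{w\sim u}\phi_v(w).$$
Equivalently, $(L_\Gamma\phi_v)(u)=d_\Gamma(u)$ for $u\neq v$. Summing $L_\Gamma\phi_v$ over all vertices gives zero, because $\mathbf 1^{\mathsf T}L_\Gamma=0$. This forces
$$(L_\Gamma\phi_v)(v)=d_\Gamma(v)-\vol_\Gamma(V).$$
Hence
$$L_\Gamma\phi_v=D_\Gamma\mathbf 1-\vol_\Gamma(V)\mathbf 1_{\{v\}}.$$
Finiteness of the hitting times follows from connectivity and finiteness of the chain.

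\emph{Step 2.} Write the same identity with the roles of $u$ and $v$ swapped:
$$L_\Gamma\phi_u=D_\Gamma\mathbf 1-\vol_\Gamma(V)\mathbf 1_{\{u\}}.$$
Subtracting gives
$$L_\Gamma(\phi_v-\phi_u)=\vol_\Gamma(V)\bigl(\mathbf 1_{\{u\}}-\mathbf 1_{\{v\}}\bigr).$$
So $h:=(\phi_v-\phi_u)/\vol_\Gamma(V)$ solves \eqref{eq:unit-current-potential-equation} with $a=u$ and $b=v$.

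\emph{Step 3.} The solution $h$ is unique up to constants. This holds because $\ker L_\Gamma$ consists of the constants for a connected graph, which one checks from $f^{\mathsf T}L_\Gamma f=\sum_{\{x,y\}\in E}(f(x)-f(y))^2$. By \eqref{eq:effective-resistance-potential-definition} we therefore get
$$R_{\mathrm{eff}}^\Gamma(u,v)=h(u)-h(v)=\frac{\phi_v(u)-\phi_u(u)-\phi_v(v)+\phi_u(v)}{\vol_\Gamma(V)}.$$
Using $\phi_u(u)=\phi_v(v)=0$, the numerator is $H_\Gamma(u,v)+H_\Gamma(v,u)$. This is \eqref{eq:commute-time-identity}, since $\vol_\Gamma(V)=2|E|$. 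The case $u=v$ is trivial, as both sides vanish.

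There is no real obstacle here. The only points needing care are the derivation of the value of $L_\Gamma\phi_v$ at the target vertex from the zero-sum property, and the uniqueness of the potential up to constants.
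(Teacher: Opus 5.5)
Your argument is correct. It is the standard superposition proof behind the identity; the paper does not prove the identity itself but attributes it to Chandra et al.\ and uses it as a black box. Your Step~1, where $(L_\Gamma\phi_v)(v)=d_\Gamma(v)-\vol_\Gamma(V)$ is forced by the zero column sums of $L_\Gamma$, is exactly the computation the paper performs in the proof of Lemma~\ref{lem:exact-local-removal}.
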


The commute-time identity is due to Chandra et
al.~\cite{ChandraRaghavanRuzzoSmolenskyTiwari1989}. For further background
on electrical networks and hitting times, see
\cite{Tetali1991, AldousFill2002, LawlerLimic2010,LyonsPeres2016,levin2017markov}.

\subsection{Comparison notation and concentration inequalities}
\label{subsec:prob-prelim}

For deterministic nonnegative quantities $a$ and $b$, the notation $a\lesssim_\alpha b$
means $a\leq C_\alpha b$ for a constant
depending only on $\alpha$, and $a\asymp_\alpha b$ means that both
$a\lesssim_\alpha b$ and $b\lesssim_\alpha a$ hold. A missing subscript means that the
constants are absolute. Subscripts on $O(\,\cdot\,)$ and $\Theta(\,\cdot\,)$ have the
same meaning.

We use the following forms of the Chernoff and Bernstein inequalities; see, for example,
\cite[Chapter~2]{vershynin2018high}.

\begin{lemma}[Multiplicative Chernoff bound]
\label{lem:chernoff}
Let $X_1,\ldots,X_N$ be independent Bernoulli random variables, set
$X:=\sum_{i=1}^NX_i$, and let $\mu:=\E[X]$.
For every $\delta\in(0,1)$,
\begin{align}
\label{eq:chernoff-lower-tail}
\Prob\bigl(X\leq(1-\delta)\mu\bigr)
&\leq
\exp\left(-\frac{\delta^2\mu}{2}\right),\\
\label{eq:chernoff-upper-tail}
\Prob\bigl(X\geq(1+\delta)\mu\bigr)
&\leq
\exp\left(-\frac{\delta^2\mu}{3}\right).
\end{align}
Consequently,
\begin{equation}
\label{eq:chernoff-two-sided}
\Prob\bigl(|X-\mu|\geq\delta\mu\bigr)
\leq
2\exp\left(-\frac{\delta^2\mu}{3}\right).
\end{equation}
\end{lemma}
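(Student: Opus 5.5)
The plan is the standard exponential-moment (Cram\'er--Chernoff) argument. Set $X=\sum_{i=1}^N X_i$ with $X_i\sim\Ber(p_i)$ independent, so that $\mu=\sum_i p_i$. For $\lambda\in\mathbb R$, independence gives
$$
\E\bigl[e^{\lambda X}\bigr]=\prod_{i=1}^N\bigl(1+p_i(e^\lambda-1)\bigr)\leq\exp\bigl(\mu(e^\lambda-1)\bigr),
$$
where the inequality uses $1+u\leq e^u$. This bound holds for every real $\lambda$, so the same estimate serves both tails.

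\emph{Upper tail.} For $\lambda>0$, Markov's inequality gives
$$
\Prob\bigl(X\geq(1+\delta)\mu\bigr)\leq\exp\bigl(\mu(e^\lambda-1)-\lambda(1+\delta)\mu\bigr).
$$
Choosing $\lambda=\log(1+\delta)$ yields the bound $\bigl(e^\delta/(1+\delta)^{1+\delta}\bigr)^\mu$. It then remains to prove the elementary inequality
$$
\delta-(1+\delta)\log(1+\delta)\leq-\frac{\delta^2}{3},\qquad \delta\in(0,1).
$$
To do so, write $g(\delta)=\delta-(1+\delta)\log(1+\delta)+\delta^2/3$ and note that $g(0)=0$. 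Its derivative is $g'(\delta)=-\log(1+\delta)+2\delta/3$, which also vanishes at $0$. The second derivative is $g''(\delta)=-1/(1+\delta)+2/3$, which is negative for $\delta<1/2$ and positive afterwards. Hence $g'$ first decreases from $0$ and then increases, so $g'\leq0$ on $[0,1]$ provided $g'(1)\leq0$. This endpoint check holds: $g'(1)=2/3-\log 2<0$. It follows that $g\leq0$ on $[0,1]$.

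\emph{Lower tail.} Apply the same argument to $-X$ with $\lambda=-\log(1-\delta)>0$, which gives
$$
\Prob\bigl(X\leq(1-\delta)\mu\bigr)\leq\bigl(e^{-\delta}/(1-\delta)^{1-\delta}\bigr)^\mu.
$$
The needed inequality is $-\delta-(1-\delta)\log(1-\delta)\leq-\delta^2/2$. This follows from the series expansion: $(1-\delta)\log(1-\delta)=-\delta+\sum_{k\geq2}\delta^k/(k(k-1))$, whose terms are all nonnegative for $k\geq2$ and whose $k=2$ term is already $\delta^2/2$.

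\emph{Two-sided bound.} Since $\delta^2/2\geq\delta^2/3$, the lower-tail bound is at most $\exp(-\delta^2\mu/3)$. Adding the two tail bounds by a union bound gives \eqref{eq:chernoff-two-sided}.

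The only point needing care is the calculus inequality in the upper tail. The constant $1/3$ relies on the restriction $\delta<1$, and this restriction enters precisely through the endpoint check $g'(1)<0$.
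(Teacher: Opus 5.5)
Your proof is correct and is the standard exponential-moment (Cram\'er--Chernoff) argument. The paper gives no proof of its own; it cites \cite[Chapter~2]{vershynin2018high}, which uses this method, and the same moment bound $\E[e^{\theta X}]\leq\exp(\mu(e^\theta-1))$ reappears later in the paper as \eqref{eq:binomial-exponential-moment-bound}.
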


\begin{lemma}[Bernstein inequality]
\label{lem:bernstein}
Let $X_1,\ldots,X_N$ be independent real-valued random variables satisfying
$\E[X_i]=0$ and $|X_i|\leq K$ almost surely for some $K>0$. Let
$S_N:=\sum_{i=1}^NX_i$ and $\sigma^2:=\sum_{i=1}^N\Var(X_i)$.
Then, for every $t>0$,
\begin{equation}
\label{eq:bernstein-tail}
\Prob\bigl(|S_N|\geq t\bigr)
\leq
2\exp\left(-\frac{t^2}{2(\sigma^2+Kt/3)}\right).
\end{equation}
\end{lemma}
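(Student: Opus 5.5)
We plan the standard exponential-moment (Cramér--Chernoff) argument, treating the upper tail $\Prob(S_N\geq t)$ first; the lower tail follows by applying the same bound to the variables $-X_i$, which satisfy the same hypotheses, and a union bound then gives the factor $2$ in \eqref{eq:bernstein-tail}.

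\emph{Step 1: a one-variable moment generating function bound.} Fix $\lambda\in(0,3/K)$. We expand $e^{\lambda X_i}=1+\lambda X_i+\sum_{k\geq2}\lambda^kX_i^k/k!$ and take expectations. Using $\E[X_i]=0$ and $|X_i^k|\leq K^{k-2}X_i^2$ for $k\geq2$, we get
$$
\E\bigl[e^{\lambda X_i}\bigr]\leq 1+\Var(X_i)\sum_{k\geq2}\frac{\lambda^kK^{k-2}}{k!}.
$$
Next we bound $k!\geq 2\cdot3^{k-2}$ and sum the geometric series. This gives
$$
\sum_{k\geq2}\frac{\lambda^kK^{k-2}}{k!}\leq \frac{\lambda^2}{2(1-\lambda K/3)}.
$$
Combining with $1+u\leq e^u$ yields
$$
\E\bigl[e^{\lambda X_i}\bigr]\leq\exp\left(\frac{\lambda^2\Var(X_i)}{2(1-\lambda K/3)}\right).
$$
This is the only step requiring care, because the constant $K/3$ in the final bound comes precisely from the inequality $k!\geq2\cdot3^{k-2}$.

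\emph{Step 2: independence and Markov's inequality.} By independence, the moment generating function of $S_N$ factorizes, so
$$
\E\bigl[e^{\lambda S_N}\bigr]\leq\exp\left(\frac{\lambda^2\sigma^2}{2(1-\lambda K/3)}\right).
$$
Markov's inequality applied to $e^{\lambda S_N}$ then gives
$$
\Prob(S_N\geq t)\leq\exp\left(-\lambda t+\frac{\lambda^2\sigma^2}{2(1-\lambda K/3)}\right)
$$
for every $\lambda\in(0,3/K)$.

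\emph{Step 3: choice of $\lambda$.} We take $\lambda:=t/(\sigma^2+Kt/3)$, which lies in $(0,3/K)$. With this choice, $1-\lambda K/3=\sigma^2/(\sigma^2+Kt/3)$, so the quadratic term equals $\lambda t/2$. The exponent therefore becomes $-\lambda t/2=-t^2/\bigl(2(\sigma^2+Kt/3)\bigr)$, which is the claimed upper-tail bound.

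The case $\sigma^2=0$ is trivial, since then $S_N=0$ almost surely and $\Prob(|S_N|\geq t)=0$ for every $t>0$; alternatively it follows from the same computation with $\lambda=3/K-\delta$ and $\delta\downarrow0$.
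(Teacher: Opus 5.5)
Your argument is correct: the moment-generating-function bound $\E[e^{\lambda X_i}]\leq\exp\bigl(\lambda^2\Var(X_i)/(2(1-\lambda K/3))\bigr)$, obtained from $k!\geq2\cdot3^{k-2}$, followed by Markov's inequality with $\lambda=t/(\sigma^2+Kt/3)$, gives exactly the stated tail. This choice of $\lambda$ requires $\sigma^2>0$, and you handle $\sigma^2=0$ separately. The paper does not prove this lemma; it quotes it from Vershynin's book, where it comes from this same Cram\'er--Chernoff argument, so your proof matches the intended one.
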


\section{Sharp connectivity threshold}
\label{sec:connectivity-threshold}

We prove Theorem~\ref{thm:sharp-connectivity} by treating the two sides of the threshold
separately. Throughout this section, $G=G(n,p,m)$, $\log n\ll m<n/2$, and
$p=c\log n/(2m)$ with fixed $c>0$. The identity
$$
\E[N_{\mathrm{iso}}]=n(1-p)^{2m}=n^{1-c+o(1)},
$$
suggests that isolated vertices are the obstruction when $c<1$. For $c>1$, we first
join large components in consecutive blocks, then attach every remaining vertex.
To preserve independence in the last step, we construct the relevant block component
without exposing edges incident to the vertex being attached. We will refine this
construction in the critical window.

\subsection{Subcritical disconnectedness from isolated vertices}
\label{subsec:conn-lower}

For $p=c\log n/(2m)$ with fixed $0<c<1$, a second-moment argument suffices.
The bandwidth assumption gives $mp^2=o(\log n)$, so
$2m\log(1-p)=-2mp+o(\log n)$ and the expected number of isolated vertices diverges.

\begin{theorem}[Subcritical disconnectedness from isolated vertices]
\label{thm:connectivity-lower}
Assume $\log n\ll m<n/2$ and
\begin{equation}
\label{eq:connectivity-subcritical-scaling}
p=\frac{c\log n}{2m},
\qquad
0<c<1.
\end{equation}
Then $G(n,p,m)$ is disconnected with high probability.
\end{theorem}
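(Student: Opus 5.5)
We show $N_{\mathrm{iso}}\geq1$ with high probability by the second-moment method (Chebyshev). An isolated vertex makes $G$ disconnected, since $n\geq3$.

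\emph{First moment.} By \eqref{eq:random-degree-and-isolation-distribution}, $\mu:=\E[N_{\mathrm{iso}}]=n(1-p)^{2m}$. Since $p=c\log n/(2m)\to0$, we have $\log(1-p)=-p+O(p^2)$. Hence
$$
2m\log(1-p)=-c\log n+O(mp^2)=-c\log n+O\bigl((\log n)^2/m\bigr)=-c\log n+o(\log n),
$$
using $m\gg\log n$. Therefore $\mu=n^{1-c+o(1)}\to\infty$ because $c<1$.

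\emph{Second moment.} Write $N_{\mathrm{iso}}=\sum_v I_v$. For distinct $u,v$, the events that $u$ and $v$ are isolated depend on $4m-\mathbf 1_{\{r_{uv}\leq m\}}$ distinct admissible edges. So
$$
\Prob(I_uI_v=1)=(1-p)^{4m}
\quad\text{if } r_{uv}>m,
\qquad
\Prob(I_uI_v=1)=(1-p)^{4m-1}
\quad\text{if } r_{uv}\leq m.
$$
The first case contributes no covariance. In the second case, $\mathrm{Cov}(I_u,I_v)=(1-p)^{4m}\bigl((1-p)^{-1}-1\bigr)\leq 2p(1-p)^{4m}$ for $p\leq1/2$. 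Each vertex has $2m$ such partners $v$, so
$$
\Var(N_{\mathrm{iso}})\leq \mu+n\cdot 2m\cdot 2p(1-p)^{4m}=\mu+\frac{4mp\,\mu^2}{n}.
$$
Here $4mp/n=2c\log n/n\to0$.

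\emph{Conclusion.} Chebyshev's inequality gives
$$
\Prob(N_{\mathrm{iso}}=0)\leq \frac{\Var(N_{\mathrm{iso}})}{\mu^2}\leq \frac1\mu+\frac{2c\log n}{n}\to0.
$$
Thus $G$ has an isolated vertex with high probability, and so $G$ is disconnected with high probability.

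\emph{Where care is needed.} The only delicate point is the first-moment estimate. It requires the bound on $mp^2=o(\log n)$, which is exactly where the hypothesis $m\gg\log n$ is used. The covariance bound is routine because nearby pairs share only one edge.
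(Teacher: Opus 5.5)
Your proof is correct and follows essentially the same argument as the paper. You use the first-moment estimate via $mp^2=o(\log n)$, and you note that only host-adjacent pairs are correlated, sharing one edge so that the covariance is $p(1-p)^{4m-1}$. Chebyshev's inequality then gives $\Var(N_{\mathrm{iso}})/\mu^2\leq 1/\mu+O(mp/n)\to0$.
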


\begin{proof}
Let $I_v$ be the indicator that $v$ is isolated, set $X:=\sum_v I_v$, and put
$q:=(1-p)^{2m}$. Since $mp^2=O((\log n)^2/m)=o(\log n)$,
$$
\E X=nq=\exp\{\log n-2mp+O(mp^2)\}=n^{1-c+o(1)}\longrightarrow\infty.
$$
The indicators $I_u$ and $I_v$ are independent unless $u$ and $v$ are adjacent in the
host. For an adjacent pair, there are $4m-1$ distinct incident host edges, giving
$\operatorname{Cov}(I_u,I_v)=p(1-p)^{4m-1}$. Summing over the $nm$ host edges yields
$$
\frac{\Var X}{(\E X)^2}
\leq \frac1{\E X}+\frac{2mp}{n(1-p)}\longrightarrow0.
$$
Chebyshev's inequality gives $X/\E X\to1$ in probability, and hence
$\Prob(X>0)\to1$.
\end{proof}

\subsection{Supercritical connectivity}
\label{subsec:conn-upper}

We need two ingredients: a large component inside each block, and enough admissible
pairs between adjacent blocks to join these components.

\begin{lemma}[Large components in subsets inducing complete host subgraphs]
\label{lem:band-block-large-component}
Let $\beta>0$. Suppose that
\begin{equation}
\label{eq:connectivity-general-block-density}
\log n\ll m<\frac n2,
\qquad
pm\geq\beta\log n.
\end{equation}
Let $(\rho_n)$ be deterministic and satisfy
\begin{equation}
\label{eq:connectivity-general-block-deficit}
0<\rho_n\leq\frac14,
\qquad
\rho_nm\longrightarrow\infty,
\qquad
\log(1/\rho_n)=o(\log n).
\end{equation}
If $J\subseteq\mathbb Z_n$ induces a complete subgraph of $C_n^m$ and
$m/2\leq |J|\leq m$, then, for all sufficiently large $n$,
\begin{equation}
\label{eq:connectivity-single-complete-subset-failure}
\Prob\left(
G[J]\text{ has no component of size at least }(1-\rho_n)|J|
\right)
\leq
2\exp\left(-\frac{\beta}{12}\rho_nm\log n\right).
\end{equation}
Consequently, for any deterministic collection $J_1,\ldots,J_L$ of such sets,
\begin{equation}
\label{eq:connectivity-complete-subset-family-failure}
\Prob\left(
\exists j\leq L:\
G[J_j]\text{ has no component of size at least }(1-\rho_n)|J_j|
\right)
\leq
2L\exp\left(-\frac{\beta}{12}\rho_nm\log n\right).
\end{equation}
\end{lemma}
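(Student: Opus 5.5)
We will reduce the event in \eqref{eq:connectivity-single-complete-subset-failure} to the existence of a vertex set with no retained edges to its complement inside $J$, and then take a union bound over such sets. Since $J$ induces a complete subgraph of $C_n^m$, $G[J]$ is distributed as $G(k,p)$ with $k:=|J|\in[m/2,m]$. The estimate \eqref{eq:connectivity-complete-subset-family-failure} then follows from \eqref{eq:connectivity-single-complete-subset-failure} by a union bound over $j\leq L$, so everything reduces to a single set $J$. Write $\rho=\rho_n$.

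\emph{Step 1: a balanced cut.} Suppose every component of $G[J]$ has fewer than $(1-\rho)k$ vertices. We claim there is a union $S$ of components with $s:=|S|$ satisfying $\rho k/2\leq\min(s,k-s)$.
\begin{itemize}
\item If some component has at least $\rho k/2$ vertices, take $S$ to be that component. Then $s\geq\rho k/2$ and $k-s>\rho k\geq\rho k/2$.
\item Otherwise every component has fewer than $\rho k/2$ vertices. Add components one at a time until $|S|\geq\rho k/2$. This stops with $|S|<\rho k$, so $k-|S|>(1-\rho)k\geq\rho k/2$, using $\rho\leq1/4$.
\end{itemize}
In either case $e_G(S,J\setminus S)=0$, and all $s(k-s)$ admissible pairs across the cut are absent. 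Replacing $S$ by $J\setminus S$ if necessary, which costs a factor of $2$ in the final bound, we may assume $\rho k/2\leq s\leq k/2$.

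\emph{Step 2: union bound.} The failure probability is at most
\[
2\sum_{\rho k/2\leq s\leq k/2}\binom{k}{s}(1-p)^{s(k-s)}
\leq 2\sum_{\rho k/2\leq s\leq k/2}\exp\Bigl(s\log\tfrac{ek}{s}-p\,s(k-s)\Bigr).
\]
We control the two terms in the exponent as follows.
\begin{itemize}
\item \emph{Entropy.} Since $s\geq\rho k/2$, we have $\log(ek/s)\leq\log(2e/\rho)=o(\log n)$ by \eqref{eq:connectivity-general-block-deficit}.
\item \emph{Energy.} For $s\leq k/2$, $p\,s(k-s)\geq s\,pk/2\geq s\,pm/4\geq s\beta\log n/4$.
\end{itemize}
So the entropy is an $o(1)$ fraction of the energy, and each term is at most $\exp(-(1-o(1))\,p\,s(k-s))$.

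\emph{Step 3: the constant.} This is the step where the constant $\beta/12$ must be secured. The function $s\mapsto s(k-s)$ is increasing on $[0,k/2]$, so its minimum over the range is at $s_0=\rho k/2$. Using $\rho\leq1/4$, $k\geq m/2$ and $pm\geq\beta\log n$,
\[
p\,s_0(k-s_0)\geq p\cdot\frac{\rho k}{2}\cdot\frac{7k}{8}=\frac{7}{16}\rho\,pk^2\geq\frac{7}{64}\rho\,pm\cdot m\geq\frac{7}{64}\beta\rho m\log n .
\]
There are at most $k\leq m\leq n$ terms in the sum. This contributes a factor $e^{\log n}$, which is $\exp(o(\rho m\log n))$ because $\rho m\to\infty$. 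Hence the failure probability is at most
\[
2\exp\Bigl(-(1-o(1))\tfrac{7}{64}\beta\rho m\log n\Bigr)\leq2\exp\Bigl(-\tfrac{\beta}{12}\rho m\log n\Bigr)
\]
for all sufficiently large $n$, since $7/64>1/12$.

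The $o(1)$ terms in Step 3 depend only on the sequences $m$ and $\rho_n$ and on the fixed constant $\beta$. They come from $\log(2e/\rho)/\log n\to0$ and $\log n/(\rho m\log n)\to0$. So the threshold for "sufficiently large $n$" is uniform over all sets $J$ of the stated type.
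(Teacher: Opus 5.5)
Your proof is correct and follows essentially the same route as the paper. Both arguments find a union of components of $G[J]\sim G(|J|,p)$, with size between a multiple of $\rho_n|J|$ and $|J|/2$ and no retained edges to its complement, and then union-bound over subsets, the entropy $\log(1/\rho_n)=o(\log n)$ being beaten by the energy $p\,s(|J|-s)\gtrsim s\log n$. The differences are cosmetic: you use the threshold $\rho_n k/2$ with a clean two-case construction of the cut, and bound the sum by $k$ times its smallest-$s$ term instead of summing a geometric tail from $s\geq\rho_n|J|$; the extra factor $2$ from passing to $J\setminus S$ is unnecessary but harmless.
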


\begin{proof}
Write $\ell=|J|$. If no component has size at least $(1-\rho_n)\ell$, there is a
union $S$ of components with $\rho_n\ell\leq |S|\leq\ell/2$: take the complement
of a component larger than $\ell/2$, if one exists, and otherwise collect components
greedily. No edge joins this set to its complement. Thus
$$
\Prob(\text{failure})
\leq
\sum_{s=\lceil\rho_n\ell\rceil}^{\lfloor\ell/2\rfloor}
\binom{\ell}{s}(1-p)^{s(\ell-s)}.
$$
For each summand, $\ell-s\geq m/4$ and
$\binom{\ell}{s}\leq(e/\rho_n)^s$. For all large $n$,
$\log(e/\rho_n)\leq(\beta/12)\log n$ and
$p(\ell-s)\geq(\beta/4)\log n$, so the summand is at most
$\exp\{-(\beta/6)s\log n\}$. Summing this geometric tail and using
$s\geq\rho_nm/2$ proves \eqref{eq:connectivity-single-complete-subset-failure};
the family estimate follows by a union bound.
\end{proof}

The next lemma shows that removing a small fraction of the vertices from two adjacent
blocks still leaves many admissible pairs between them.

\begin{lemma}[Admissible pairs across an adjacent-block interface]
\label{lem:adjacent-block-admissible-pairs}
Let $I$ and $I'$ be adjacent cyclic intervals with
$m/2\leq |I|,|I'|\leq m$. For $K\subseteq I$ and $K'\subseteq I'$, define
\begin{equation}
\label{eq:connectivity-adjacent-block-pair-definition}
M_H(K,K')
:=
\left|
\left\{
(x,y)\in K\times K':r_{xy}\leq m
\right\}
\right|.
\end{equation}
If, for some $\rho\leq1/32$,
$$
|K|\geq(1-\rho)|I|,
\qquad
|K'|\geq(1-\rho)|I'|,
$$
then
\begin{equation}
\label{eq:connectivity-adjacent-block-pair-bounds}
\frac{m^2}{16}\leq M_H(K,K')\leq m^2.
\end{equation}
\end{lemma}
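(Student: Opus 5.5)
The upper bound is immediate: $M_H(K,K')\leq|K||K'|\leq|I||I'|\leq m^2$. All the work is in the lower bound, which I would get by counting admissible pairs in the full intervals and then subtracting the pairs lost to the removed vertices.

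First I fix coordinates. Since $|I|+|I'|\leq 2m<n$ and the intervals are adjacent, I identify $I$ with $\{-a,\ldots,-1\}$ and $I'$ with $\{0,\ldots,b-1\}$, where $a=|I|$ and $b=|I'|$ lie in $[m/2,m]$. For $x=-i$ with $1\leq i\leq a$ and $y=j$ with $0\leq j\leq b-1$, the linear difference is $y-x=i+j\leq 2m-1$. Because $i+j<n/2$ would follow only if $2m<n/2$, which need not hold, I use the weaker fact $i+j\leq 2m-1<n$. Then $r_{xy}\leq\min\{i+j,\,n-(i+j)\}\leq i+j$, so every pair with $i+j\leq m$ is admissible. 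So I only count pairs with $i+j\leq m$, which gives a lower bound regardless of wrap-around.

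Next I count the full rectangle. Let $T$ be the number of pairs $(i,j)$ with $1\leq i\leq a$, $0\leq j\leq b-1$ and $i+j\leq m$. This set contains every pair with $1\leq i\leq m/2$ and $0\leq j\leq m/2-1$, since then $i+j\leq m-1$ and the ranges fit inside $a,b\geq m/2$. I would take care with floors here. A cleaner choice is the triangle $i\geq1$, $j\geq0$, $i+j\leq\lceil m/2\rceil$, which lies inside the box; it has about $m^2/8$ points, or about $m^2/4$ for the full product. I would state $T\geq\lfloor m/2\rfloor\lceil m/2\rceil\geq (m^2-1)/4$, reading this off the square $\{1,\ldots,\lfloor m/2\rfloor\}\times\{0,\ldots,\lceil m/2\rceil-1\}$.

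Then I subtract the removed vertices. Each removed $x\in I\setminus K$ kills at most $|I'|\leq m$ pairs, and there are at most $\rho|I|\leq\rho m$ such $x$. The same holds on the $I'$ side. Hence
\[
M_H(K,K')\geq T-2\rho m^2\geq \frac{m^2-1}{4}-\frac{m^2}{16}.
\]
Since $\rho\leq1/32$ and $m\geq 2$, this is at least $m^2/16$. This last arithmetic and the floor/ceiling bookkeeping are the only delicate parts, and the slack between $1/4-1/16$ and $1/16$ is ample for them.
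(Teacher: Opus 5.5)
Your proof is correct and follows essentially the same route as the paper. Both arguments lower-bound the admissible pairs in the full product $I\times I'$ and then discard at most $2\rho m^2\le m^2/16$ pairs meeting $I\setminus K$ or $I'\setminus K'$; the only difference is that the paper counts the triangle $1+2+\cdots+s\ge m^2/8$ with $s=\min\{|I|,|I'|\}$, whereas your square $\{1,\ldots,\lfloor m/2\rfloor\}\times\{0,\ldots,\lceil m/2\rceil-1\}$ gives the larger count $\lfloor m/2\rfloor\lceil m/2\rceil$. Your final arithmetic needs $m\geq2$, which is harmless here since $m\to\infty$; for $m=1$ the intervals are adjacent singletons and $M_H(K,K')=1$ trivially.
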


\begin{proof}
Set $s:=\min\{|I|,|I'|\}$ and assume $|I|=s$ by symmetry. Order $I$ toward the
common boundary and $I'$ away from it. The $j$th vertex of $I$ is within $s\leq m$
steps of each of the first $j$ vertices of $I'$. Hence
$$
M_H(I,I')\geq1+2+\cdots+s\geq\frac{m^2}{8}.
$$
Removing $I\setminus K$ and $I'\setminus K'$ deletes at most
$2\rho m^2\leq m^2/16$ of these pairs. This proves the lower bound; the upper bound follows
from $M_H(K,K')\leq |K||K'|\leq m^2$.
\end{proof}

A balanced partition lets us apply both block estimates around the entire cycle.

\begin{lemma}[Balanced partition into cyclic intervals]
\label{lem:balanced-cyclic-partition}
Assume $m\to\infty$ and $m<n/2$. Set $q:=\lceil n/m\rceil$ and partition
$\mathbb Z_n$ into $q$ consecutive cyclic intervals
$$
\mathbb Z_n=B_1\sqcup\cdots\sqcup B_q
$$
whose lengths $\ell_b:=|B_b|$ differ by at most one. Then, for all sufficiently large
$n$,
\begin{equation}
\label{eq:connectivity-supercritical-balanced-block-lengths}
\frac m2\leq\ell_b\leq m,
\qquad 1\leq b\leq q.
\end{equation}
\end{lemma}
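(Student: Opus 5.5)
The plan is elementary arithmetic on the block lengths. Write $n=qk+r$ with $0\le r<q$, where $q=\lceil n/m\rceil$. A partition into $q$ consecutive cyclic intervals whose lengths differ by at most one must have $r$ intervals of length $k+1$ and $q-r$ intervals of length $k$, where $k=\lfloor n/q\rfloor$. Such a partition exists: take consecutive intervals of these lengths starting at $0$. Every length then lies in $\{\lfloor n/q\rfloor,\lceil n/q\rceil\}$. So it suffices to show
$$
\frac m2\le \left\lfloor \frac nq\right\rfloor
\quad\text{and}\quad
\left\lceil\frac nq\right\rceil\le m .
$$

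For the upper bound, use $q\ge n/m$, which gives $n/q\le m$. Since $m$ is an integer, $\lceil n/q\rceil\le m$.

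For the lower bound, use $q<n/m+1$, which gives
$$
\frac nq>\frac{n}{n/m+1}=\frac{nm}{n+m}.
$$
Since $m<n/2$, we have $n+m<\tfrac32 n$, and so $n/q>\tfrac{2m}{3}$. Hence
$$
\lfloor n/q\rfloor> \tfrac{2m}{3}-1\ge \tfrac m2
$$
as soon as $m\ge 6$. This holds for all sufficiently large $n$ because $m\to\infty$.

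Two small points remain.
\begin{itemize}
\item When $m<n/2$ we have $q\ge3$, so each $B_b$ is a proper cyclic interval. This is not strictly needed for the inequality.
\item The uneven lengths do not depend on where the partition starts.
\end{itemize}

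There is no real obstacle. The only step needing care is the lower bound, where $m<n/2$ is used to keep $n/q$ well above $m/2$ and absorb the floor. The hypothesis $m\to\infty$ is needed precisely to absorb the $-1$ coming from that floor.
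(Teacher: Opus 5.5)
Your proof is correct and matches the paper's argument: you bound $n/q$ between $nm/(n+m)>2m/3$ and $m$, and note that each $\ell_b$ is $\lfloor n/q\rfloor$ or $\lceil n/q\rceil$. Integrality of $m$ then gives the upper bound, and $m\geq 6$ (which holds eventually since $m\to\infty$) absorbs the floor in $\lfloor n/q\rfloor>2m/3-1\geq m/2$.
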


\begin{proof}
The definition of $q$ gives
$$
\frac{nm}{n+m}\leq\frac nq\leq m.
$$
Because $m<n/2$, the lower bound is greater than $2m/3$. Each $\ell_b$ belongs to
$\{\lfloor n/q\rfloor,\lceil n/q\rceil\}$. Since $m$ is an integer, the upper bound
$n/q\leq m$ gives $\ell_b\leq m$, while
$\lfloor n/q\rfloor>2m/3-1\geq m/2$ for all sufficiently large $n$.
\end{proof}

We can now prove connectivity. The three preceding lemmas produce a connected chain
of large block components. To attach a vertex $v$, we also need the large component
in its own block after deleting $v$. This leaves every edge incident to $v$ available
for the final exposure.

\begin{theorem}[Supercritical connectivity]
\label{thm:connectivity-upper}
Assume $\log n\ll m<n/2$ and
\begin{equation}
\label{eq:connectivity-supercritical-fixed-factor}
p=p_c:=\frac{c\log n}{2m},
\qquad
c>1.
\end{equation}
Then $G(n,p,m)$ is connected with high probability.
\end{theorem}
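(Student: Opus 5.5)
The plan has three steps: build a global connected backbone from large block components, attach every remaining vertex to it using the unexposed edges at that vertex, and bound the failure of attachment by the isolation probability up to a tolerable loss. Throughout, let $B_1,\dots,B_q$ be the balanced partition of Lemma~\ref{lem:balanced-cyclic-partition}, so $m/2\le\ell_b\le m$. Each $B_b$ has length at most $m$, so it induces a complete subgraph of $C_n^m$. Fix a deficit $\rho_n$ with $\rho_n\to0$, $\rho_n m\to\infty$ and $\log(1/\rho_n)=o(\log n)$; for instance $\rho_n=\max\{(\log n/m)^{1/2},1/\log n\}$. We have $pm=(c/2)\log n$, so Lemma~\ref{lem:band-block-large-component} applies with $\beta=c/2$.

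\textbf{Step 1: the backbone.} Apply Lemma~\ref{lem:band-block-large-component} to the family of sets $B_b$ and $B_b\setminus\{v\}$ for $v\in B_b$. There are at most $q+n\le 2n$ such sets, each of size between $m/2-1$ and $m$; the lower size is a harmless adjustment. The failure probability is at most $4n\exp(-(c/24)\rho_n m\log n)=o(1)$ because $\rho_n m\to\infty$. So, with high probability, every $G[B_b]$ and every $G[B_b\setminus\{v\}]$ has a component of size at least $(1-\rho_n)$ times its size. Call these components $K_b$ and $K_b^{(v)}$. Once $\rho_n<1/4$, each contains more than half its set, so it is unique. Hence $K_b^{(v)}\subseteq K_b$: the set $K_b^{(v)}$ is connected in $G[B_b]$, so it lies in a component of size more than $\ell_b/2$, which must be $K_b$. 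Next, for consecutive blocks, Lemma~\ref{lem:adjacent-block-admissible-pairs} gives at least $m^2/16$ admissible pairs between $K_b^{(\cdot)}$ and $K_{b+1}^{(\cdot)}$. I would make this robust by a union bound over all pairs of sets $K\subseteq B_b$, $K'\subseteq B_{b+1}$ with deficit at most $\rho_n$. For a fixed pair the edges between them lie outside $G[B_b]\cup G[B_{b+1}]$, so they are independent of the internal edges. The number of pairs is $\exp(O(\rho_n m\log(1/\rho_n)))$, and the probability of no crossing edge is $(1-p)^{m^2/16}\le\exp(-(c/32)m\log n)$, which dominates. Hence all consecutive $K_b$ are joined and $\mathcal K:=\bigcup_b K_b$ is connected. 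The same argument with $K_b^{(v)}$ in place of $K_b$ joins $K_b^{(v)}$ to $K_{b\pm1}$ using no edge incident to $v$. The $K_b$ with $b\ne b(v)$ may need $v$-free versions too; I would redo the construction with $\mathbb Z_n\setminus\{v\}$ everywhere, at cost a factor $n$ in the union bound.

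\textbf{Step 2: attaching $v$.} Fix $v$ and let $\mathcal F_v$ be the $\sigma$-field of edges not incident to $v$. The $v$-avoiding backbone $\mathcal K^{(v)}$ is $\mathcal F_v$-measurable. On the good event it misses at most $\rho_n(m+2)$ of the $2m$ admissible neighbours of $v$: those neighbours lie in $B_{b(v)}$ and its two neighbouring blocks, each contributing a deficit of at most $\rho_n m$, so the count is really at most $3\rho_n m$. Conditionally on $\mathcal F_v$, the $2m$ edges at $v$ are still independent $\Ber(p)$. So
$$\Prob(v\text{ has no edge to }\mathcal K^{(v)}\mid\mathcal F_v)\le(1-p)^{(2-3\rho_n)m}=n^{-c(1-3\rho_n/2)+o(1)}.$$
Summing over $v$ gives $n^{1-c+o(1)}\to0$, since $c>1$ and $\rho_n\to0$; this also uses $mp^2=o(\log n)$ as in Theorem~\ref{thm:connectivity-lower}. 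Each $\mathcal K^{(v)}$ has more than $n/2$ vertices and lies inside the connected set $\mathcal K$, so every vertex is joined to $\mathcal K$ and $G$ is connected.

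\textbf{Main obstacle.} I expect the delicate point to be Step 1. The backbone used to attach $v$ must avoid all edges at $v$, while still sitting inside one common connected structure. It must also be built uniformly over all $v$ with failure probabilities summable against $n$. The crossing-edge union bound over near-full subsets, with entropy $\rho_n m\log(1/\rho_n)$ against energy $m\log n$, is what makes this work, and the choice of $\rho_n$ is what must be checked.
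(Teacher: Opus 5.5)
Your proposal is correct and is essentially the paper's proof. The shared steps are: a balanced block partition; Lemma~\ref{lem:band-block-large-component} applied to the $q+n$ sets $B_b$ and $B_b\setminus\{v\}$; the nesting $K_b^{(v)}\subseteq K_b$; joining consecutive block components; and attaching each $v$, through its unexposed edges, to a target set contained in the connected union. The paper differs only in minor choices: it uses a fixed $\delta$ and conditions on the within-block edges, where you use a vanishing $\rho_n$ and a union bound over near-full subset pairs; and it notes that each $K_a$ with $a\neq b(v)$ is automatically $v$-free, since it uses only edges inside $B_a$, so no redo is needed.

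There is one small slip. Blocks can be as short as about $2m/3$, so the admissible neighbours of $v$ can reach $B_{b\pm2}$; this is why the paper uses the one-or-two-block unions $L_b,R_b$. Your bound of $3\rho_n m$ missed neighbours should therefore be a larger constant times $\rho_n m$. This is harmless, because your backbone contains every block and $\rho_n\to0$.
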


\begin{proof}
The proof has three steps: construct large block components, join them, and attach
every vertex to their union.

\medskip\noindent\emph{Step 1: large block components.}
Choose
\begin{equation}
\label{eq:connectivity-supercritical-delta-choice}
0<\delta<
\min\left\{\frac1{40},\frac{2(c-1)}{5c}\right\},
\qquad
\alpha:=\frac{c(2-5\delta)}2>1.
\end{equation}
Set $q:=\lceil n/m\rceil$ and partition $\mathbb Z_n$ into consecutive cyclic intervals
$$
\mathbb Z_n=B_1\sqcup\cdots\sqcup B_q
$$
whose lengths $\ell_b:=|B_b|$ differ by at most one. The bounds in
\eqref{eq:connectivity-supercritical-balanced-block-lengths} follow from
Lemma~\ref{lem:balanced-cyclic-partition}. Block indices are interpreted modulo $q$.

Let $H_b$ be the component of $G[B_b]$ containing at least
$(1-\delta)\ell_b$ vertices when such a component exists, and set $H_b=\varnothing$
otherwise. For each $v\in B_b$, let
$K_{b,v}$ be the component of $G[B_b\setminus\{v\}]$ containing at least
$(1-\delta)(\ell_b-1)$ vertices when such a component exists, and set
$K_{b,v}=\varnothing$ otherwise. These components are unique whenever they exist,
since their required sizes exceed half of the corresponding vertex sets for all sufficiently large $n$.

Each $B_b$, as well as each $B_b\setminus\{v\}$, induces a complete subgraph of the host.
The proof of Lemma~\ref{lem:balanced-cyclic-partition} gives
$$
\ell_b\geq\left\lfloor\frac nq\right\rfloor>\frac{2m}{3}-1.
$$
As $m\to\infty$, we also have $\ell_b-1\geq\frac m2$ for all sufficiently large $n$.
Apply Lemma~\ref{lem:band-block-large-component} with $\beta=c/2$ and $\rho_n=\delta$
to these $q+n$ sets. Let $\mathcal E_{\mathrm{core}}$ be the event that every $H_b$
and $K_{b,v}$ is nonempty. Since $q\leq2n/m$,
\begin{equation}
\label{eq:connectivity-supercritical-all-cores}
\Prob(\mathcal E_{\mathrm{core}}^c)
\leq
2(q+n)\exp\left(-\frac{c\delta}{24}m\log n\right)
=o(1).
\end{equation}
On $\mathcal E_{\mathrm{core}}$, we claim that $K_{b,v}\subseteq H_b$. By
\eqref{eq:connectivity-supercritical-delta-choice}, $\delta<1/40$, while
$\ell_b\to\infty$ uniformly in $b$. Thus, for all sufficiently large $n$,
$$
|K_{b,v}|
\geq
(1-\delta)(\ell_b-1)
>
\frac{\ell_b}{2}.
$$
The component of $G[B_b]$ containing $K_{b,v}$ must therefore be $H_b$, the unique
component with more than $\ell_b/2$ vertices.

\medskip\noindent\emph{Step 2: join consecutive components.}
Conditional on all within-block edges, Lemma~\ref{lem:adjacent-block-admissible-pairs}
provides at least $m^2/16$ unexposed admissible pairs between $H_b$ and $H_{b+1}$.
Consequently,
\begin{equation}
\label{eq:connectivity-supercritical-interface-failure}
\Prob\left(
e_G(H_b,H_{b+1})=0
\;\middle|\;
\text{all within-block edges}
\right)
\leq
(1-p)^{m^2/16}
\leq
\exp\left(-\frac{c}{32}m\log n\right)
\end{equation}
on $\mathcal E_{\mathrm{core}}$. Put
$\mathcal H:=\bigcup_{b=1}^qH_b$. A conditional union bound over the
$q=O(n/m)$ interfaces shows that the event
\begin{equation}
\label{eq:connectivity-supercritical-connected-block-chain}
\mathcal E_{\mathrm{chain}}
:=
\left\{
\mathcal H\text{ is connected}
\right\}
\end{equation}
has probability tending to one, since
$$
\Prob(\mathcal E_{\mathrm{chain}}^c)
\leq
\Prob(\mathcal E_{\mathrm{core}}^c)
+
q\exp\left(-\frac{c}{32}m\log n\right)
=o(1).
$$

\medskip\noindent\emph{Step 3: attach every vertex.}
Starting at $B_{b-1}$, take consecutive blocks in decreasing cyclic order until their
union $L_b$ has at least $m$ vertices. Define $R_b$ in the same way, starting at
$B_{b+1}$ and proceeding in increasing cyclic order. Each block has between $m/2$
and $m$ vertices, so each of $L_b$ and $R_b$ uses one or two blocks and satisfies
$$
m\leq |L_b|<2m,
\qquad
m\leq |R_b|<2m.
$$
Moreover,
$$
n-\ell_b\geq n-m>m.
$$
Thus both constructions reach $m$ vertices before returning to $B_b$, and
$L_b$ and $R_b$ are disjoint from $B_b$.

If $v\in B_b$, every admissible neighbour of $v$ outside $B_b$ is among the first
$m$ vertices beyond one of the endpoints of $B_b$. These vertices lie in $L_b\cup R_b$.

Fix $v\in B_b$ and expose all edge indicators except those incident to $v$.
This determines $K_{b,v}$ and the components $H_a$ in $L_b\cup R_b$.
If these components exist, the admissible edges from $v$ to
\begin{equation}
\label{eq:connectivity-supercritical-vertex-target}
K_{b,v}\ \cup\
\bigcup_{B_a\subseteq L_b\cup R_b}H_a
\end{equation}
remain independent Bernoulli$(p)$ variables. There are at least
$(1-\delta)(\ell_b-1)$ targets inside $B_b$. Of the $2m-\ell_b+1$ admissible
neighbours outside $B_b$, fewer than $4\delta m$ are omitted by the components
$H_a$ in $L_b$ and $R_b$. We count a block appearing on both sides only once.
Thus the number of distinct admissible neighbours in
\eqref{eq:connectivity-supercritical-vertex-target} is at least
\begin{align}
(1-\delta)(\ell_b-1)
+2m-\ell_b+1-4\delta m
&=
2m-\delta(\ell_b-1)-4\delta m
\notag\\
&\geq(2-5\delta)m.
\label{eq:connectivity-supercritical-vertex-target-count}
\end{align}
Let $\mathcal A_v$ be the event that the components in
\eqref{eq:connectivity-supercritical-vertex-target} exist but no retained edge
joins $v$ to that union. After averaging over the exposed indicators,
\begin{equation}
\label{eq:connectivity-supercritical-single-vertex-failure}
\Prob(\mathcal A_v)
\leq
(1-p)^{(2-5\delta)m}
\leq n^{-\alpha}.
\end{equation}
A union bound gives
\begin{equation}
\label{eq:connectivity-supercritical-all-vertices-attach}
\Prob\left(\bigcup_{v\in\mathbb Z_n}\mathcal A_v\right)
\leq n^{1-\alpha}=o(1).
\end{equation}

On
$\mathcal E_{\mathrm{core}}\cap\mathcal E_{\mathrm{chain}}
\cap\bigcap_{v\in\mathbb Z_n}\mathcal A_v^c$,
the target set in \eqref{eq:connectivity-supercritical-vertex-target} lies in
$\mathcal H$, because $K_{b,v}\subseteq H_b$. Every $v\in B_b$ therefore has an
edge to the connected set $\mathcal H$, proving that $G(n,p,m)$ is connected.
\end{proof}

Together, Theorems~\ref{thm:connectivity-lower} and~\ref{thm:connectivity-upper} prove
Theorem~\ref{thm:sharp-connectivity}. For later use, couple all retention probabilities
by retaining edge $e$ at parameter $p$ when $U_e\leq p$, where the $U_e$ are
independent uniform random variables. Connectivity is increasing under this coupling,
so Theorems~\ref{thm:spectral-gap} and~\ref{thm:sharp-hitting} may use the uniform estimate
\begin{equation}
\label{eq:connectivity-supercritical-uniform}
\sup_{p\in[p_c,1]}
\Prob_p\bigl(G(n,p,m)\text{ is disconnected}\bigr)
=
\Prob_{p_c}\bigl(G(n,p_c,m)\text{ is disconnected}\bigr)
\longrightarrow0.
\end{equation}

We next determine the components inside the critical window.

\section{Critical window of connectivity and the giant component}
\label{sec:critical-connectivity-window}

Throughout this section, assume \eqref{eq:critical-window-bandwidth-assumptions} and
\eqref{eq:critical-window-exact-centering}. The exact centering gives
\begin{equation}
\label{eq:critical-window-exact-isolation-probability}
\Prob(v\text{ is isolated})=(1-p)^{2m}=\frac{e^{-t}}n,
\end{equation}
and hence $\E N_{\mathrm{iso}}=e^{-t}$. Factorial moments yield the Poisson limit.
The main task is to show that all other vertices lie in one component. For each
vertex $v$, we construct two connected chains covering all but $o(m/\log n)$ of
its host-neighbours, without using any edge incident to $v$. Conditioning on these
chains reduces the problem to an isolation estimate. We then use the chains to
bound the diameter and a balanced partition to describe the block structure.

\subsection{Poisson limit for isolated vertices}
\label{subsubsec:critical-window-poisson}

We use the factorial-moment criterion for Poisson convergence
\cite{BarbourHolstJanson1992,JLR2000}.

\begin{proof}[Proof of the Poisson convergence in
Theorem~\ref{thm:critical-window-only-isolated-outside-giant}]
Let $I_v$ be the isolation indicator and fix $r\geq1$. The factorial moment is
$$
\E[(N_{\mathrm{iso}})_r]
=
\sum_{(v_1,\ldots,v_r)\in(\mathbb Z_n)^r_{\neq}}
\Prob(I_{v_1}=\cdots=I_{v_r}=1).
$$
For $S=\{v_1,\ldots,v_r\}$, write $e_H(S)$ for the number of host edges within $S$.
There are $2mr-e_H(S)$ distinct host edges incident to $S$, so
$$
\Prob(I_{v_1}=\cdots=I_{v_r}=1)
=(1-p)^{2mr-e_H(S)}
=n^{-r}e^{-rt}(1-p)^{-e_H(S)}.
$$
Because $0\leq e_H(S)\leq\binom r2$ and $p\to0$, this equals
$n^{-r}e^{-rt}(1+o(1))$ uniformly over the tuple. Therefore
$$
\E[(N_{\mathrm{iso}})_r]
=(n)_rn^{-r}e^{-rt}(1+o(1))\longrightarrow e^{-rt}.
$$
The limiting factorial moments are those of $\Poisson(e^{-t})$, proving
\eqref{eq:critical-window-poisson-conclusion}. We also have
$\E N_{\mathrm{iso}}=e^{-t}$ by \eqref{eq:critical-window-exact-isolation-probability},
and convergence in distribution gives
\begin{equation}
\label{eq:critical-window-no-isolated-limit}
\Prob(N_{\mathrm{iso}}=0)\longrightarrow\exp(-e^{-t}).
\end{equation}
\end{proof}

\subsection{Large components in blocks and their interfaces}
\label{subsubsec:critical-window-block-estimates}

A cyclic interval of length at most $m$ induces a complete host subgraph, so its
retained subgraph is an Erd\H{o}s--R\'enyi graph. We apply
Lemma~\ref{lem:band-block-large-component} to obtain a component containing a
$1-o(1)$ fraction of each block.

By exact centering and $\log n\ll m$, uniformly for $t$ in a fixed compact set,
\begin{equation}
\label{eq:critical-window-p-asymptotic}
p
=
1-\exp\left(-\frac{\log n+t}{2m}\right)
=
\frac{\log n+t}{2m}\bigl(1+o(1)\bigr).
\end{equation}

Thus a block of length comparable to $m$ has mean degree comparable to $\log n$.
The following corollary applies uniformly to families of at most $2n/m$ disjoint
intervals, including the balanced partition in Theorem~\ref{thm:critical-window-geometry}.

\begin{corollary}[A large component in each block of length between $m/2$ and $m$]
\label{lem:critical-window-large-block}
Assume \eqref{eq:critical-window-bandwidth-assumptions} and
\eqref{eq:critical-window-exact-centering}. Let $(\delta_n)$ be a deterministic sequence such
that
\begin{equation}
\label{eq:critical-window-delta-conditions}
\delta_n\longrightarrow0,
\qquad
\delta_n m\longrightarrow\infty,
\qquad
\log(1/\delta_n)=o(\log n).
\end{equation}
Let $I_1,\ldots,I_L$ be pairwise disjoint cyclic intervals satisfying
\begin{equation}
\label{eq:critical-window-comparable-block-lengths}
\frac m2\leq |I_j|\leq m,
\qquad
1\leq j\leq L,
\end{equation}
and $L\leq2n/m$. Then, with probability at least
\begin{equation}
\label{eq:critical-window-large-block-probability}
1-
\frac{4n}{m}
\exp\left(-\frac1{32}\delta_n m\log n\right),
\end{equation}
every $G[I_j]$ contains a connected component of size at least
$(1-\delta_n)|I_j|$.
\end{corollary}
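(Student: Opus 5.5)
This is a direct application of Lemma~\ref{lem:band-block-large-component}, with $\rho_n=\delta_n$ and a suitable $\beta$. We first check the hypotheses. Each $I_j$ is a cyclic interval of length at most $m$, so any two of its vertices are at cyclic distance at most $m-1$. Hence $I_j$ induces a complete subgraph of $C_n^m$. The length condition $m/2\le|I_j|\le m$ is assumed in \eqref{eq:critical-window-comparable-block-lengths}.

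For the deficit sequence, $\delta_n\to0$ gives $0<\delta_n\le1/4$ for all large $n$. The remaining two conditions in \eqref{eq:connectivity-general-block-deficit} are exactly \eqref{eq:critical-window-delta-conditions}. If some finitely many $\delta_n$ exceed $1/4$, the statement only concerns large $n$ anyway.

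Next we choose $\beta$. By \eqref{eq:critical-window-p-asymptotic}, uniformly for fixed $t$,
\[
pm=\tfrac12(\log n+t)(1+o(1)).
\]
So for all sufficiently large $n$ we have $pm\ge\tfrac38\log n$, and we take $\beta=3/8$. Then $\beta/12=1/32$, which matches the exponent in \eqref{eq:critical-window-large-block-probability}. Any $\beta<1/2$ would suffice for the argument, since a larger exponent only strengthens the bound; $\beta=3/8$ simply gives the stated constant exactly.

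The family estimate \eqref{eq:connectivity-complete-subset-family-failure} now gives failure probability at most
\[
2L\exp\Bigl(-\tfrac1{32}\delta_nm\log n\Bigr)\le\frac{4n}{m}\exp\Bigl(-\tfrac1{32}\delta_nm\log n\Bigr),
\]
using $L\le2n/m$. This is the claimed bound. Disjointness of the intervals is not needed for the union bound; it only matters for later independence arguments.

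There is no real obstacle here. The one point needing care is that the choice of $\beta$ requires the asymptotic \eqref{eq:critical-window-p-asymptotic}, which in turn uses $\log n\ll m$ so that $(\log n+t)/(2m)\to0$. Consequently the threshold for "sufficiently large $n$" depends on $t$ and on the sequence $(\delta_n)$, through the condition $\log(1/\delta_n)\le(\beta/12)\log n$ used inside the lemma.
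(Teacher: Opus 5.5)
Your proposal is correct and is essentially the paper's proof: apply Lemma~\ref{lem:band-block-large-component} with $\beta=3/8$ and $\rho_n=\delta_n$, using \eqref{eq:critical-window-p-asymptotic} to get $pm\geq 3(\log n)/8$ for large $n$, then bound $2L\leq 4n/m$. One small slip: choosing $\beta<3/8$ would only give a weaker exponent, so the stated constant $1/32$ requires $\beta\in[3/8,1/2)$, not just any $\beta<1/2$.
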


\begin{proof}
For all sufficiently large $n$, \eqref{eq:critical-window-p-asymptotic} gives
$pm\geq3(\log n)/8$. Each $I_j$ induces a complete host subgraph, so
Lemma~\ref{lem:band-block-large-component} with $\beta=3/8$ and $\rho_n=\delta_n$
bounds the failure probability by
$$
2L\exp\left(-\frac1{32}\delta_nm\log n\right)
\leq
\frac{4n}{m}\exp\left(-\frac1{32}\delta_nm\log n\right),
$$
which proves \eqref{eq:critical-window-large-block-probability}.
\end{proof}

To join these components, we expose the edges between consecutive blocks. Their
indicators are still independent after conditioning on all within-block edges.
Combining this observation with Lemma~\ref{lem:adjacent-block-admissible-pairs}
gives the following estimate.

\begin{lemma}[Edges joining large sets in adjacent blocks]
\label{lem:critical-window-cross-edges}
Assume \eqref{eq:critical-window-bandwidth-assumptions} and
\eqref{eq:critical-window-exact-centering}. Fix $\delta\in(0,1/32)$. Let $I,I'$ be adjacent
cyclic intervals satisfying
\begin{equation}
\label{eq:critical-window-adjacent-block-lengths}
\frac m2\leq |I|,|I'|\leq m,
\end{equation}
and let $K\subseteq I$, $K'\subseteq I'$ be determined by the within-block edges and satisfy
\begin{equation}
\label{eq:critical-window-adjacent-large-set-sizes}
|K|\geq(1-\delta)|I|,
\qquad
|K'|\geq(1-\delta)|I'|.
\end{equation}
Let $M_H(K,K')$ be the number of admissible pairs defined in
\eqref{eq:connectivity-adjacent-block-pair-definition}.
Conditional on the within-block edges,
\begin{equation}
\label{eq:critical-window-cross-binomial-law}
e_G(K,K')\sim\Bin\bigl(M_H(K,K'),p\bigr),
\end{equation}
and, for all sufficiently large $n$,
\begin{equation}
\label{eq:critical-window-cross-pair-count}
\frac{m^2}{16}
\leq
M_H(K,K')
\leq
m^2.
\end{equation}
Consequently, there are constants $0<a<A<\infty$ and $c_0>0$, depending only on $t$ and
$\delta$, such that
\begin{equation}
\label{eq:critical-window-cross-edge-concentration}
\Prob\left(
 a m\log n\leq e_G(K,K')\leq A m\log n
 \;\middle|\;
 \text{within-block edges}
\right)
\geq
1-2e^{-c_0m\log n}.
\end{equation}
\end{lemma}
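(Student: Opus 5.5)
The proof splits into three parts, taken in order: the conditional binomial law, the pair-count bounds, and a two-sided Chernoff estimate.

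\emph{Conditional law.} Every admissible pair $(x,y)\in K\times K'$ is a host edge with one endpoint in $I$ and the other in $I'$. Such an edge is not a within-block edge, since $I$ and $I'$ are disjoint. Its indicator $\xi_{\{x,y\}}$ is therefore independent of the $\sigma$-field generated by within-block edges. Because $K$ and $K'$ are measurable with respect to that $\sigma$-field, conditioning fixes $K,K'$, and the retained pairs are then independent $\Ber(p)$ variables. This gives $e_G(K,K')\sim\Bin(M_H(K,K'),p)$. One small point needs checking: each unordered host edge between $K$ and $K'$ corresponds to exactly one ordered pair in $K\times K'$. This holds because $K\cap K'=\varnothing$ and, as $|I|+|I'|\le 2m<n$, no pair is double counted by wrapping around the cycle.

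\emph{Pair count.} Since $\delta<1/32$, Lemma~\ref{lem:adjacent-block-admissible-pairs} applies with $\rho=\delta$. It gives $m^2/16\le M_H(K,K')\le m^2$ directly, which is \eqref{eq:critical-window-cross-pair-count}.

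\emph{Concentration.} Conditionally, $\mu:=pM_H(K,K')$ satisfies
\[
\frac{pm^2}{16}\le\mu\le pm^2 .
\]
By \eqref{eq:critical-window-p-asymptotic}, for large $n$ we have $\frac{3}{8}\log n\le pm\le \log n$, using $t$ fixed and $\log n\to\infty$. Hence $\frac{3}{128}m\log n\le\mu\le m\log n$. Apply Lemma~\ref{lem:chernoff} with $\delta'=1/2$ to obtain
\[
\Prob\bigl(|e_G-\mu|\ge\mu/2\bigr)\le 2\exp(-\mu/12)\le 2\exp\Bigl(-\tfrac{1}{512}m\log n\Bigr).
\]
On the complementary event, $e_G(K,K')$ lies between $\mu/2\ge \frac{3}{256}m\log n$ and $3\mu/2\le\frac32 m\log n$. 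So one can take $a=3/256$, $A=3/2$ and $c_0=1/512$. These constants are even absolute; the dependence on $t$ and $\delta$ permitted in the statement enters only through how large $n$ must be, for instance through the precise form of $p$.

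\emph{Main obstacle.} The only delicate step is the measurability and independence claim: $K,K'$ must be functions of the within-block edges alone. This is assumed in the statement, and it is exactly what makes the conditioning legitimate. Everything else is routine.
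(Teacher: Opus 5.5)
Your proof is correct and follows the paper's argument step for step. The pair count comes from Lemma~\ref{lem:adjacent-block-admissible-pairs}, the conditional binomial law comes from $K,K'$ being determined by the within-block edges, and the concentration comes from Lemma~\ref{lem:chernoff} with relative deviation $1/2$. The only difference is in the constants: the paper uses $(\log n)/(3m)\leq p\leq 2(\log n)/(3m)$ and gets $a=1/96$, $A=1$, $c_0=1/576$, and your $a=3/256$, $A=3/2$, $c_0=1/512$ are equally valid.
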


\begin{proof}
Lemma~\ref{lem:adjacent-block-admissible-pairs} proves
\eqref{eq:critical-window-cross-pair-count}.

The sets $K$ and $K'$ depend only on within-block indicators. Conditional on these
indicators, the $M_H(K,K')$ admissible edges between $I$ and $I'$ remain independent
Bernoulli$(p)$ variables. This proves \eqref{eq:critical-window-cross-binomial-law}.

Set $\mu_{K,K'}:=pM_H(K,K')$. \eqref{eq:critical-window-p-asymptotic} gives
$(\log n)/(3m)\leq p\leq2(\log n)/(3m)$ for all sufficiently large $n$. Hence
\eqref{eq:critical-window-cross-pair-count} implies
$$
\frac1{48}m\log n\leq\mu_{K,K'}\leq\frac23m\log n.
$$
Conditional on the within-block edges, Lemma~\ref{lem:chernoff} with relative deviation
$1/2$ bounds the probability that
$e_G(K,K')\notin[\mu_{K,K'}/2,3\mu_{K,K'}/2]$ by
$2\exp(-\mu_{K,K'}/12)$. Thus
\eqref{eq:critical-window-cross-edge-concentration} holds with
$a=1/96$, $A=1$, and $c_0=1/576$.
\end{proof}

\subsection{Leave-one-vertex-out connected chains}
\label{subsubsec:critical-window-shifted-chains}

The preceding estimates produce a connected set meeting every block. To compare
$\{v\notin C_1\}$ with isolation, however, we need a construction independent of
the edges incident to $v$. We build two chains separately for each vertex, using
only edges with both endpoints in $\mathbb Z_n\setminus\{v\}$.

The uncovered fraction must be small enough that $\delta_n\log n\to0$; this makes
its contribution to the isolation probability a factor $1+o(1)$. At the same time,
$\delta_nm\to\infty$ and $\log(1/\delta_n)=o(\log n)$ ensure concentration
across all blocks and chains. Choose
\begin{equation}
\label{eq:critical-window-adaptive-h}
a_n:=\frac{m}{\log n},
\qquad
h_n:=\min\{a_n^{1/2},\log\log n\},
\end{equation}
and
\begin{equation}
\label{eq:critical-window-adaptive-delta}
\delta_n:=\frac{1}{h_n\log n}.
\end{equation}
Then
\begin{equation}
\label{eq:critical-window-adaptive-delta-properties}
\delta_n\longrightarrow0,
\qquad
\delta_nm=\frac{a_n}{h_n}\longrightarrow\infty,
\qquad
\delta_n\log n=\frac1{h_n}\longrightarrow0,
\qquad
\log(1/\delta_n)=o(\log n).
\end{equation}
To check these properties, note that $h_n\to\infty$ and $h_n\leq\log\log n$.
If $h_n=a_n^{1/2}$, then $a_n/h_n=a_n^{1/2}\to\infty$. Otherwise,
$h_n=\log\log n$, $a_n\geq(\log\log n)^2$, and
$a_n/h_n\geq\log\log n\to\infty$. Finally,
$\log(1/\delta_n)=\log\log n+\log h_n=O(\log\log n)=o(\log n)$.

We first build a connected seed of diameter $O(\log n/\log\log n)$ in an interval
of length $\lfloor m/2\rfloor$. We then grow the seed through consecutive intervals.
All pairs between consecutive intervals are admissible, so each new block can
attach directly to the preceding one. The resulting chain contains more than half
the vertices and uses no edge incident to the prescribed vertex.

\begin{lemma}[A short connected seed in an interval of length $\lfloor m/2\rfloor$]
\label{lem:critical-window-short-seed}
Assume \eqref{eq:critical-window-bandwidth-assumptions} and
\eqref{eq:critical-window-exact-centering}, and set
\begin{equation}
\label{eq:critical-window-half-block-length}
\ell:=\left\lfloor\frac m2\right\rfloor.
\end{equation}
There is a constant $C_0>0$, depending only on $t$, such that the following holds. For every
fixed $D>0$ and every cyclic interval $I\subseteq\mathbb Z_n$ of length $\ell$, for all
sufficiently large $n$ the induced graph $G[I]$ contains, with probability at least
\begin{equation}
\label{eq:critical-window-short-seed-probability}
1-n^{-D},
\end{equation}
a connected set $S_I$ satisfying
\begin{equation}
\label{eq:critical-window-short-seed-size}
|S_I|\geq \frac{\ell}{8}
\end{equation}
and
\begin{equation}
\label{eq:critical-window-short-seed-diameter}
\diam\bigl(G[S_I]\bigr)
\leq
C_0\frac{\log n}{\log\log n}.
\end{equation}
The set $S_I$ may be chosen as a measurable function of the edges with both endpoints in $I$.
\end{lemma}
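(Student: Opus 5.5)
Since $I$ has length $\ell=\lfloor m/2\rfloor\leq m$, it induces a complete host subgraph. Hence $G[I]$ is an Erd\H{o}s--R\'enyi graph $G(\ell,p)$ with $\ell p=(\log n+t)/4\,(1+o(1))$, so its mean degree is $d:=\ell p\asymp\log n$. I will build $S_I$ by breadth-first search from a fixed vertex $x_0\in I$. The search grows a tree of depth $O(\log n/\log\log n)$ until it contains $\ell/8$ vertices. The set $S_I$ is the vertex set of this tree, so $\diam(G[S_I])\leq2\cdot\mathrm{depth}$. By construction $S_I$ depends only on edges inside $I$. The difficulty is that we need failure probability $n^{-D}$ for arbitrary fixed $D$, whereas a single start vertex has a degree-zero probability of order $e^{-d}=n^{-1/4+o(1)}$. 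Instead of starting from one vertex, I will start from a small ball that is grown in a robust way.

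\emph{Step 1 (a seed with branching).} I work in the regime $d\to\infty$ at rate $\log n$. First I reveal the neighbourhood of a vertex in the exploration. The branching needed is roughly $\log n/\log\log n$ generations of growth factor at least $\log\log n$ (in fact growth of order $d$ is typical). To get tail $n^{-D}$, I use the following observation. The event ``there is no connected set of size $\geq k$ and diameter $\leq C_0\log n/\log\log n$'' forces, for many disjoint start vertices, the exploration to die or stall. So I run explorations from $\lceil 8D\rceil+1$ disjoint ``fresh'' regions. More cleanly, I partition $I$ into a reserve $I_0$ of size $\ell/2$ and the rest. In $I_0$, Lemma~\ref{lem:band-block-large-component} (with $\rho_n=1/4$) gives, with probability $1-e^{-cm\log n}$, a component of $G[I_0]$ of size $\geq\frac34|I_0|$. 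This component has small diameter only after pruning, so I do not use it directly. Instead I use its existence to pick a start vertex $x_0$ of degree at least $d/2$ within $G[I_0]$. With overwhelming probability, a constant fraction of the vertices have such degree, by Chernoff bounds and independence across a matching-like decomposition.

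\emph{Step 2 (exponential growth via Chernoff).} Let $\Lambda_j$ denote the $j$th BFS layer, explored using only unexposed edges. Suppose $|\Lambda_0\cup\dots\cup\Lambda_j|\leq\ell/8$. Then each vertex of $\Lambda_j$ has at least $\frac78\ell-$ unexposed potential neighbours outside the explored set. So $|\Lambda_{j+1}|$ stochastically dominates $\Bin(|\Lambda_j|\cdot\frac{\ell}{2},p)$ minus collision losses, which are handled via the usual ``count neighbours into the unexplored pool sequentially'' argument. Chernoff (Lemma~\ref{lem:chernoff}) gives $|\Lambda_{j+1}|\geq\frac{d}{4}|\Lambda_j|$ with failure probability $\exp(-c\,d|\Lambda_j|)$. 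Once $|\Lambda_j|\geq K$ for a large constant $K=K(D)$, this failure is at most $n^{-cK/4}\leq n^{-D-1}$. Summing over the $O(\log n/\log\log n)$ generations then yields growth to size $\ell/8$ within $\log(\ell)/\log(d/4)+O(1)\leq C_0\log n/\log\log n$ generations. Once the size exceeds $\ell/8$, I truncate the last layer to hit exactly $\lceil\ell/8\rceil$; connectivity and diameter survive because the truncated vertices are leaves.

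\emph{Step 3 (the early phase; main obstacle).} The hard part is reaching a layer of size $K$ with failure probability $n^{-D}$. A single vertex fails with probability $n^{-\Omega(1)}$ but not $n^{-D}$. I would handle this by trying many start vertices in disjoint vertex sets. I reserve $N:=\lceil 10D\rceil$ disjoint subsets $U_1,\dots,U_N\subseteq I$, each of size $\ell/(4N)$. Within each $U_i$, and using edges into a shared pool only after the growth has succeeded, I try to grow a tree to size $K$ in $O(1)$ generations using edges to the large common pool $I\setminus\bigcup U_i$, revealed in disjoint batches. A single attempt succeeds with probability $1-n^{-1/8}$: a vertex with $\geq K$ neighbours in its own batch of size $\ell/(4N)$ exists except with probability about $e^{-d/(4N)}\cdot\mathrm{poly}(d)$. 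To make the failures independent, I use a different start vertex per attempt. I also give each attempt a disjoint pool of size $\ell/(2N)$ for the first two generations. Then all $N$ attempts fail with probability at most $n^{-N/40}\leq n^{-D}$ once the constants are adjusted. The successful attempt is continued by Step 2 inside the full interval. The remaining care is to verify that the exploration after the early phase still sees a fresh pool of size $\geq\ell/2$, which holds since the early phase exposed only $O(NK)$ vertices.
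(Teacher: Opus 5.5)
There is a genuine gap in Step 3, which you correctly identify as the crux: it does not give failure probability $n^{-D}$. As you set it up, each attempt has one start vertex and a private region (its $U_i$ plus a batch or pool of size $\Theta(\ell/N)$). The start vertex of attempt $i$ therefore has $\Bin(\Theta(\ell/N),p)$ neighbours there, with mean $\Theta(d/N)$. So a single attempt fails with probability about $e^{-\Theta(d/N)}=n^{-\Theta(1/N)}$, not $n^{-1/8}$; your own heuristic $e^{-d/(4N)}\mathrm{poly}(d)$ says exactly this. The product over $N$ independent attempts is then $n^{-\Theta(1)}$, with an exponent that does not grow with $N$.

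Concretely, the private regions are disjoint subsets of $I$, so $x_1,\dots,x_N$ have at most $\ell$ potential edges into them in total. The event that none of these edges is retained has probability at least $(1-p)^{\ell}\geq e^{-t/4}n^{-1/4}$, by the exact centering and $\ell\leq m/2$. On this event every attempt dies in the early phase. So the bound $n^{-N/40}$ is false, and no adjustment of constants rescues the scheme once $D>1/4$.

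The missing idea is that independence across attempts comes from distinct start vertices, not from disjoint pools. For distinct $x$, the edge sets $\{x\}\times B$ into a \emph{common} pool $B$ are disjoint, so the counts $D_B(x)$ are independent and each keeps mean $\Theta(\log n)$.

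The paper splits $I=A\sqcup B$ with $|A|=\lfloor\ell/4\rfloor$. The probability that every $x\in A$ has fewer than $\mu_{\mathrm{blk}}/2$ neighbours in $B$ is at most $\exp(-c_1\mu_{\mathrm{blk}}\ell)$. The root $x_\star$ is selected using only $A$--$B$ edges. Hence breadth-first search inside $B$ runs on unexposed edges, starting from $F_1$ with $|F_1|\geq\mu_{\mathrm{blk}}/2\asymp\log n$. Each generation then fails with probability at most $\exp(-\mu_{\mathrm{blk}}^2/64)=n^{-\omega(1)}$, plus a separate $e^{-c_2\ell}$ bound once $p|F_k|>1$. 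Your Step 2 growth factor $d/4$ also overlooks that large-frontier regime. With this root, no constant-size phase with threshold $K$ is needed at all.

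Your Step 1 gestures toward this mechanism, but it is not a proof as written. It does not justify independence, since degrees inside $G[I_0]$ share edges. It does not address how the later exploration is conditioned on the revealed degrees. It also applies Lemma~\ref{lem:band-block-large-component} to a set of size about $m/4$, outside that lemma's hypothesis $m/2\leq|J|\leq m$. And your Step 3 then treats the early phase as still unresolved.
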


\begin{proof}
We find a vertex with many neighbours, then run breadth-first search from those
neighbours. Since $|I|=\ell\leq m$, the induced graph $G[I]$ is an Erd\H{o}s--R\'enyi graph
$G(\ell,p)$. Write $\mu_{\mathrm{blk}}:=p\ell$.
By \eqref{eq:critical-window-p-asymptotic} and $\ell/m\to1/2$,
\begin{equation}
\label{eq:critical-window-half-block-mean-degree-asymptotic}
\mu_{\mathrm{blk}}=\left(\frac14+o(1)\right)\log n.
\end{equation}
In particular, $\mu_{\mathrm{blk}}\to\infty$ and $\ell/\mu_{\mathrm{blk}}\to\infty$.

\medskip\noindent\emph{Choosing a root.}
Partition $I=A\sqcup B$ into consecutive sets with $|A|=\lfloor\ell/4\rfloor$
and $|B|=\ell-|A|$. For $x\in A$, let $D_B(x)$ count its retained edges to $B$.
The variables $\{D_B(x):x\in A\}$ are independent, with
$D_B(x)\sim\Bin(|B|,p)$ and
$\E[D_B(x)]=|B|p\geq2\mu_{\mathrm{blk}}/3$ for all sufficiently large $n$.
By Lemma~\ref{lem:chernoff},
$\Prob(D_B(x)<\mu_{\mathrm{blk}}/2)\leq e^{-c\mu_{\mathrm{blk}}}$ for an absolute
constant $c>0$. Independence therefore gives a constant $c_1>0$ such that
\begin{equation}
\label{eq:critical-window-seed-no-root-bound}
\Prob\left(D_B(x)<\frac{\mu_{\mathrm{blk}}}{2}\text{ for every }x\in A\right)
\leq
\exp(-c_1\mu_{\mathrm{blk}}\ell).
\end{equation}
On the complementary event, let $x_\star$ be the first vertex of $A$, in the order
inherited from $I$, for which $D_B(x_\star)\geq\mu_{\mathrm{blk}}/2$.
This choice uses only edges between $A$ and $B$. Conditional on $x_\star$ and its
neighbours in $B$, the graph inside $B$ is still an independent $G(|B|,p)$.

\medskip\noindent\emph{Growing the exploration.}
Run breadth-first search inside $B$, starting from
$F_1:=\{y\in B:\{x_\star,y\}\in E(G)\}$; by construction,
$|F_1|\geq\mu_{\mathrm{blk}}/2$.
Let $U_k:=F_1\cup\cdots\cup F_k$. While $|U_k|<\ell/8$, expose the edges from
$F_k$ to $B\setminus U_k$ and let $F_{k+1}$ consist of the newly reached vertices.
These edges have not previously been exposed. Conditional on the exploration,
the indicators defining $F_{k+1}$ are independent, so, with $f:=|F_k|$,
\begin{equation}
\label{eq:critical-window-seed-frontier-conditional-law}
|F_{k+1}|
\,\bigm|\,
(F_1,\ldots,F_k)
\sim
\Bin\left(
|B\setminus U_k|,1-(1-p)^f
\right).
\end{equation}
If $pf\leq1$, then
$1-(1-p)^f\geq1-e^{-pf}\geq pf/2$. While $|U_k|<\ell/8$, the set
$B\setminus U_k$ contains at least $5\ell/8$ vertices. Consequently,
$\E[|F_{k+1}|\mid F_1,\ldots,F_k]\geq\mu_{\mathrm{blk}}f/4$.
Until the first unsuccessful generation, each frontier grows by a factor at least
$\mu_{\mathrm{blk}}/8>1$. Thus $f\geq|F_1|\geq\mu_{\mathrm{blk}}/2$, and
Lemma~\ref{lem:chernoff} gives
\begin{equation}
\label{eq:critical-window-seed-growth-failure}
\Prob\left(
|F_{k+1}|<\frac{\mu_{\mathrm{blk}}f}{8}
\,\middle|\,
F_1,\ldots,F_k
\right)
\leq
\exp\left(-\frac{\mu_{\mathrm{blk}}f}{32}\right)
\leq
\exp\left(-\frac{\mu_{\mathrm{blk}}^2}{64}\right).
\end{equation}
When $pf>1$, we have $1-(1-p)^f\geq1-e^{-1}$, so
Lemma~\ref{lem:chernoff} gives
\begin{equation}
\label{eq:critical-window-seed-large-frontier-failure}
\Prob\left(
|F_{k+1}|<\frac\ell8
\,\middle|\,
F_1,\ldots,F_k
\right)
\leq e^{-c_2\ell}
\end{equation}
for an absolute constant $c_2>0$.

Set
\begin{equation}
\label{eq:critical-window-seed-depth}
R_n:=
2+
\left\lceil
\frac{\log(\ell/\mu_{\mathrm{blk}})}{\log(\mu_{\mathrm{blk}}/8)}
\right\rceil.
\end{equation}
For all sufficiently large $n$, $\mu_{\mathrm{blk}}/8>1$. On the complement of the two
failure events bounded in \eqref{eq:critical-window-seed-growth-failure}
and~\eqref{eq:critical-window-seed-large-frontier-failure}, the exploration reaches at least
$\ell/8$ vertices within $R_n$ generations. By
\eqref{eq:critical-window-half-block-mean-degree-asymptotic},
\begin{equation}
\label{eq:critical-window-seed-depth-bound}
R_n=O\left(\frac{\log n}{\log\log n}\right).
\end{equation}
Stop when the exploration first contains at least $\ell/8$ vertices. Together with $x_\star$,
the discovered vertices form a connected set of radius at most $R_n+1$, and hence
diameter at most $2R_n+2$.

\medskip\noindent\emph{Bounding the failure probability.}
A union bound over at most $R_n$ generations gives
\begin{align}
&\Prob\left(
 G[I]\text{ contains no set satisfying
 \eqref{eq:critical-window-short-seed-size} and
 \eqref{eq:critical-window-short-seed-diameter}}
\right)
\notag\\
&\qquad\leq
\exp(-c_1\mu_{\mathrm{blk}}\ell)
+
R_n\exp\left(-\frac{\mu_{\mathrm{blk}}^2}{64}\right)
+
R_ne^{-c_2\ell}.
\label{eq:critical-window-short-seed-total-failure}
\end{align}
By \eqref{eq:critical-window-half-block-mean-degree-asymptotic},
$\mu_{\mathrm{blk}}\ell\asymp m\log n$ and
$\mu_{\mathrm{blk}}^2\asymp(\log n)^2$; moreover,
$R_n=O(\log n/\log\log n)$ by
\eqref{eq:critical-window-seed-depth-bound}.
The first two terms in \eqref{eq:critical-window-short-seed-total-failure} are
$o(n^{-D})$ for every fixed $D>0$. The third is also $o(n^{-D})$, since
$\ell\asymp m$ and $\log n\ll m$. Their sum is therefore at most $n^{-D}$ for
all sufficiently large $n$. The fixed vertex order and the exposed edges determine
every choice, so $S_I$ is measurable with respect to the edges inside $I$.
\end{proof}

We now extend the seed through consecutive blocks, leaving all edges incident to
the prescribed vertex unexposed.

\begin{lemma}[Shifted connected chains with controlled diameter]
\label{lem:critical-window-shifted-short-chain}
Assume the hypotheses of Theorem~\ref{thm:critical-window-only-isolated-outside-giant}, and
let $\ell$ be given by \eqref{eq:critical-window-half-block-length}. Set
\begin{equation}
\label{eq:critical-window-half-block-count}
L:=\left\lfloor\frac{n-1}{\ell}\right\rfloor,
\qquad
r:=n-L\ell.
\end{equation}
Then
\begin{equation}
\label{eq:critical-window-half-block-remainder}
1\leq r\leq\ell,
\qquad
L\geq4.
\end{equation}
For every $v\in\mathbb Z_n$ and every $\sigma\in\{-,+\}$, there is an event
$\mathcal E_v^\sigma$ and, whenever $\mathcal E_v^\sigma$ occurs, a connected set
$\mathcal K_v^\sigma\subseteq\mathbb Z_n\setminus\{v\}$ with the following properties.

Both $\mathcal E_v^\sigma$ and $\mathcal K_v^\sigma$ are measurable with respect to the
indicators $\xi_{\{x,y\}}$ with $x,y\in\mathbb Z_n\setminus\{v\}$. Moreover,
\begin{equation}
\label{eq:critical-window-shifted-chain-size}
0\leq n-|\mathcal K_v^\sigma|
\leq\frac{15}{8}\ell+\delta_n n,
\end{equation}
and, for all sufficiently large $n$,
\begin{equation}
\label{eq:critical-window-shifted-chain-majority}
|\mathcal K_v^\sigma|>\frac n2.
\end{equation}
There is a constant $C_{\mathrm{ch}}>0$, depending only on $t$, such that
\begin{equation}
\label{eq:critical-window-shifted-chain-diameter}
\diam\bigl(G[\mathcal K_v^\sigma]\bigr)
\leq
C_{\mathrm{ch}}\left(
\frac nm+\frac{\log n}{\log\log n}
\right).
\end{equation}
In addition, $\mathcal K_v^-$ contains a set $Q_v^-$ of left neighbours of $v$, and
$\mathcal K_v^+$ contains a set $Q_v^+$ of right neighbours of $v$, such that
\begin{equation}
\label{eq:critical-window-shifted-chain-boundary-size}
|Q_v^-|\geq2(1-\delta_n)\ell,
\qquad
|Q_v^+|\geq2(1-\delta_n)\ell.
\end{equation}
Here the left neighbours of $v$ are the vertices $v-m,\ldots,v-1$, and the right neighbours
are $v+1,\ldots,v+m$, with arithmetic modulo $n$.
The deficit parameter $\delta_n$ is defined in \eqref{eq:critical-window-adaptive-delta}.
Moreover, for every fixed $D>0$,
\begin{equation}
\label{eq:critical-window-shifted-chain-failure}
\sup_{v\in\mathbb Z_n}\sup_{\sigma\in\{-,+\}}
\Prob\bigl((\mathcal E_v^\sigma)^c\bigr)
\leq n^{-D}
\end{equation}
for all sufficiently large $n$.
\end{lemma}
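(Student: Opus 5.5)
We first dispose of the arithmetic claims \eqref{eq:critical-window-half-block-remainder}. By definition $L\ell\leq n-1<(L+1)\ell$, so $1\leq r=n-L\ell\leq\ell$. Since $\ell\leq m/2<n/4$, we get $L\geq4$ once $n$ is large. By the reflection $x\mapsto 2v-x$ it suffices to treat $\sigma=+$.

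Fix $v$ and tile $\mathbb Z_n$ by consecutive cyclic intervals of length $\ell$, starting at $v+1$: $I_1=\{v+1,\dots,v+\ell\}$, $I_2$, $\dots$, $I_L$, so that the $r$ leftover vertices, including $v$, form a single interval $J$ ending at $v$. We use only $I_1,\dots,I_{L'}$ with $L'$ chosen so that $L'\ell\geq n-\tfrac{15}{8}\ell$ while the last interval stays clear of $v$. Deleting $J$ and part of one more interval costs at most about $\ell+\tfrac78\ell$, which accounts for the constant $15/8$. The edges used lie inside $\bigcup_j I_j\subseteq\mathbb Z_n\setminus\{v\}$, so measurability holds automatically.

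\emph{Seed.} In the middle interval $I_{j_0}$ with $j_0\approx L/2$, apply Lemma~\ref{lem:critical-window-short-seed} to get $S_{I_{j_0}}$ with $|S|\geq\ell/8$ and diameter $O(\log n/\log\log n)$. Seeding near the middle keeps the number of growth steps in each direction about $L/2$.

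\emph{Giants.} Apply Corollary~\ref{lem:critical-window-large-block}, or directly Lemma~\ref{lem:band-block-large-component} since $|I_j|=\ell\geq m/2-1$ still induces a complete host subgraph, with deficit $\delta_n$. This gives a component $K_j$ of $G[I_j]$ of size $\geq(1-\delta_n)\ell$. The failure probability $\exp(-c\delta_nm\log n)$ is superpolynomially small because $\delta_nm\to\infty$. In $I_{j_0}$, the seed meets $K_{j_0}$, because $\ell/8+(1-\delta_n)\ell>\ell$, and so the seed lies inside $K_{j_0}$.

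\emph{Chaining with short distances.} This step is the main obstacle. Consecutive intervals $I_j,I_{j+1}$ have total length $2\ell\leq m$, so every pair between them is admissible. Connectivity of the chain alone is easy: conditional on within-interval edges, $e_G(K_j,K_{j+1})\sim\Bin(\geq(1-\delta_n)^2\ell^2,p)$, which is positive except with probability $e^{-cm\log n}$. The diameter bound needs more than that, because $K_j$ itself may have diameter far larger than $O(1)$. Instead, grow a set $T$ outward from the seed. Let $T_{j_0}=S$. Given $T_j$, which is connected with $|T_j\cap I_j|\geq\ell/8$, let $N_{j+1}$ be the set of vertices of $K_{j+1}$ adjacent to $T_j\cap I_j$. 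Each vertex of $I_{j+1}$ has at least $\ell/8$ admissible potential neighbours in $T_j\cap I_j$, so it joins $N_{j+1}$ with probability $\geq1-(1-p)^{\ell/8}\geq c>0$, independently across vertices given the within-interval edges. Chernoff then gives $|N_{j+1}|\geq c\ell/2$, failing with probability $e^{-c\ell}$. Every vertex of $N_{j+1}$ is within distance $\operatorname{dist}(T_j)+1$ of the seed. Set $T_{j+1}=T_j\cup N_{j+1}$. After reaching all intervals, add the whole giants $K_j$. Their vertices lie within the internal diameter of $K_j$ of a vertex of $N_j$, so the components $K_j$ must also have diameter $O(\log n/\log\log n)$. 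To arrange this, we first rerun the seed lemma's BFS argument inside each $K_j$ from the large set $N_j$. Alternatively, and more cleanly, we attach each vertex of $K_j$ with degree $\geq\mu_{\mathrm{blk}}/8$ into $N_j\cup N_{j\pm1}$ directly, since it has $\Omega(\log n)$ admissible edges toward a set of linear size. Vertices that fail this test are discarded, and there are at most $\delta_n\ell$ of them per interval except with probability $\exp(-c\delta_n\ell\log n)$, using $\log(1/\delta_n)=o(\log n)$ as in Lemma~\ref{lem:band-block-large-component}. The resulting $\mathcal K_v^+$ has every vertex within $O(L)+O(\log n/\log\log n)$ of the seed, which gives \eqref{eq:critical-window-shifted-chain-diameter} since $L=O(n/m)$. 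Its size is $\geq n-\tfrac{15}{8}\ell-\delta_nn$, and it exceeds $n/2$ because $\ell\ll n$ and $\delta_n\to0$.

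\emph{Boundary sets and union bound.} Since $2\ell\leq m$, the right neighbours $v+1,\dots,v+m$ contain $I_1\cup I_2$. Take $Q_v^+=\mathcal K_v^+\cap(I_1\cup I_2)$, whose size is $\geq2(1-\delta_n)\ell$; by reflection, $Q_v^-$ comes from $\mathcal K_v^-$. Define $\mathcal E_v^\sigma$ as the intersection of all the good events above. There are $O(n/m)$ of them, each failing with probability at most $n^{-D-1}$, and this gives \eqref{eq:critical-window-shifted-chain-failure}.
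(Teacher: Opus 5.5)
\textbf{Main gap: diameter control.} The diameter bound fails at the step where you ``add the whole giants $K_j$.'' Nothing in your argument bounds the distance from $N_j$ to every vertex of $K_j$. Rerunning the BFS of Lemma~\ref{lem:critical-window-short-seed} from $N_j$ only reaches a linear-size subset within $O(\log n/\log\log n)$ generations. Covering all of $K_j$ would need a diameter bound for the giant of $G(\ell,p)$ with failure probability $n^{-D}$, which you do not have.

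Your ``cleaner'' alternative is not justified as stated. A degree of at least $\mu_{\mathrm{blk}}/8$ says nothing about edges landing in $N_j\cup N_{j\pm1}$. Moreover, the within-interval edges toward $N_j$ were already exposed when $K_j$ was determined, so they carry no fresh randomness.

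The whole detour comes from underestimating the attachment probability as a constant $c$. Since $|T_j\cap I_j|\geq\ell/8$ and all pairs between $I_j$ and $I_{j+1}$ are admissible, a vertex of $I_{j+1}$ fails to attach with probability at most $(1-p)^{\ell/8}=\exp(-\ell(\log n+t)/(16m))\leq n^{-1/40}$. These failures are independent over vertices given everything exposed so far, because the cross edges from the current set into $I_{j+1}$ are untouched. So the number of misses is dominated by $\Bin(\ell,q)$ with $q\leq n^{-1/40}$. Take $k=\lfloor\delta_n\ell\rfloor+1$; the bound $\binom{\ell}{k}q^k\leq(eq/\delta_n)^{k}$ together with $\log(1/\delta_n)=o(\log n)$ gives $\Prob(\text{more than }\delta_n\ell\text{ misses})\leq\exp(-\delta_n\ell\log n/80)$. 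This is below every $n^{-D}$ because $\delta_n\ell\to\infty$.

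Consequently the set of \emph{all} vertices of $I_{j+1}$ adjacent to the previous set already has at least $(1-\delta_n)\ell$ elements. No within-interval giants are needed. Each retained vertex reaches the seed by a path using one vertex per interval, so the diameter is at most $2L+\diam(G[S])$ with no further work. This is exactly the paper's construction.

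\textbf{Second problem: seed placement.} Once the giants are dropped, placing the seed in the middle breaks the boundary estimate. If $L=4$, which happens when $m$ is close to $n/2$, then $j_0\approx L/2$ puts the seed in $I_2\subseteq\{v+1,\dots,v+2\ell\}$. That interval is only guaranteed $\ell/8$ vertices, so $|Q_v^+|\geq2(1-\delta_n)\ell$ fails. This bound is exactly what makes the uncovered neighbourhood of $v$ at most $2+2\delta_nm$ in the proof of Theorem~\ref{thm:critical-window-only-isolated-outside-giant}.

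One repair is to back-attach $I_{j_0}\setminus S$ through its still-unexposed cross edges to the neighbouring intervals. The paper instead seeds in the interval farthest from $v$ and grows toward $v$. Then the two intervals adjacent to $v$ are non-seed intervals with $(1-\delta_n)\ell$ vertices each. The size count becomes $\ell/8+(L-1)(1-\delta_n)\ell\geq n-r-\tfrac78\ell-\delta_nn$, which is where the constant $15/8$ comes from. Seeding in the middle only saves a factor of two in the $O(L)$ diameter term, which the statement does not need.
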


\begin{proof}
We arrange the blocks so that the last two lie next to the prescribed vertex.
Starting from a seed in the first block, we attach almost every vertex of each
subsequent block to the preceding one.

\medskip\noindent\emph{The block arrangement.}
To verify \eqref{eq:critical-window-half-block-remainder}, write
$n-1=L\ell+s$ with $0\leq s<\ell$. Then $r=s+1$, so $1\leq r\leq\ell$.
Furthermore, $2\ell\leq m<n/2$ implies $4\ell<n$, and therefore $n-1\geq4\ell$ because
$n$ and $\ell$ are integers. Hence $L\geq4$.

For $1\leq j\leq L$, define the consecutive blocks of length $\ell$ by
\begin{equation}
\label{eq:critical-window-left-half-blocks}
B_{v,j}^-:=
\{v-(L-j+1)\ell,\ldots,v-(L-j)\ell-1\},
\qquad
1\leq j\leq L,
\end{equation}
with arithmetic modulo $n$. Their union is the interval of $L\ell$ vertices immediately to the
left of $v$, and the complement has size $r$ and contains $v$. In particular,
\begin{equation}
\label{eq:critical-window-left-two-boundary-blocks}
B_{v,L-1}^-\cup B_{v,L}^-
=
\{v-2\ell,\ldots,v-1\}.
\end{equation}
For the right-hand construction, use
\begin{equation}
\label{eq:critical-window-right-half-blocks}
B_{v,j}^+:=
\{v+(L-j)\ell+1,\ldots,v+(L-j+1)\ell\},
\qquad
1\leq j\leq L.
\end{equation}
Then
\begin{equation}
\label{eq:critical-window-right-two-boundary-blocks}
B_{v,L-1}^+\cup B_{v,L}^+
=
\{v+1,\ldots,v+2\ell\}.
\end{equation}
Reflection $x\mapsto2v-x$ fixes $v$ and exchanges the two block families while
preserving the law of the graph. Thus the same argument applies to both choices of
$\sigma$. Since $2\ell\leq m$, every pair between consecutive blocks
$B_{v,j}^\sigma,B_{v,j+1}^\sigma$ is admissible.

\medskip\noindent\emph{Growing the chain.}
Apply Lemma~\ref{lem:critical-window-short-seed} to $B_{v,1}^\sigma$ and let
$S_{v,1}^\sigma$ be the resulting connected set. Recursively, for $1\leq j<L$, define
\begin{equation}
\label{eq:critical-window-shifted-chain-recursion}
S_{v,j+1}^\sigma
:=
\left\{
 x\in B_{v,j+1}^\sigma:
 e_G\bigl(\{x\},S_{v,j}^\sigma\bigr)>0
\right\}.
\end{equation}
Every vertex of $S_{v,j+1}^\sigma$ has an edge to $S_{v,j}^\sigma$. Consequently,
\begin{equation}
\label{eq:critical-window-shifted-chain-union}
\mathcal K_v^\sigma
:=
\bigcup_{j=1}^L S_{v,j}^\sigma
\end{equation}
is connected whenever all the sets in the construction have the required sizes.

Conditional on $S_{v,1}^\sigma,\ldots,S_{v,j}^\sigma$, the edges from
$S_{v,j}^\sigma$ to $B_{v,j+1}^\sigma$ are still unexposed. Thus the number of
vertices missed in the next block has distribution
\begin{equation}
\label{eq:critical-window-shifted-chain-deficit-law}
\ell-|S_{v,j+1}^\sigma|
\sim
\Bin\left(
\ell,(1-p)^{|S_{v,j}^\sigma|}
\right).
\end{equation}
On the event in Lemma~\ref{lem:critical-window-short-seed},
$|S_{v,1}^\sigma|\geq\ell/8$. Hence, whenever
$|S_{v,j}^\sigma|\geq\ell/8$,
\begin{align}
(1-p)^{|S_{v,j}^\sigma|}
&\leq
(1-p)^{\ell/8}
\notag\\
&=
\exp\left(-\frac{\ell(\log n+t)}{16m}\right)
\notag\\
&\leq
n^{-1/40}
\label{eq:critical-window-shifted-chain-missing-probability}
\end{align}
for all sufficiently large $n$. We bound this deficit uniformly using
$\log(1/\delta_n)=o(\log n)$ and $\delta_n\ell\to\infty$, from
\eqref{eq:critical-window-adaptive-delta-properties}. For
$X\sim\Bin(\ell,q)$ with $q\leq n^{-1/40}$, put
$k_n:=\lfloor\delta_n\ell\rfloor+1$. For all sufficiently large $n$,
$e q/\delta_n<1$ and
$$
\log\left(\frac{e q}{\delta_n}\right)
\leq
1-\frac{\log n}{40}+\log\left(\frac1{\delta_n}\right)
\leq
-\frac{\log n}{80}
$$
A union bound over $k_n$-element subsets and
$\binom{\ell}{k_n}\leq(e\ell/k_n)^{k_n}$ give
$$
\Prob(X>\delta_n\ell)
\leq
\binom{\ell}{k_n}q^{k_n}
\leq
\left(\frac{e q}{\delta_n}\right)^{k_n}
\leq
\left(\frac{e q}{\delta_n}\right)^{\delta_n\ell}
\leq
\exp\left(-\frac{\delta_n\ell\log n}{80}\right)
$$
Applying this bound to \eqref{eq:critical-window-shifted-chain-deficit-law} yields
\begin{equation}
\label{eq:critical-window-shifted-chain-deficit-tail}
\Prob\left(
\ell-|S_{v,j+1}^\sigma|>\delta_n\ell
\,\middle|\,
S_{v,1}^\sigma,\ldots,S_{v,j}^\sigma
\right)
\leq
\exp\left(-\frac{\delta_n\ell\log n}{80}\right)
\end{equation}
whenever $|S_{v,j}^\sigma|\geq\ell/8$. Thus, except for the failures in
\eqref{eq:critical-window-shifted-chain-deficit-tail}, every set after the seed has
size at least $(1-\delta_n)\ell$.

Let $\mathcal E_v^\sigma$ be the event that the seed in $B_{v,1}^\sigma$ satisfies
\eqref{eq:critical-window-short-seed-size} and
\eqref{eq:critical-window-short-seed-diameter}, and every later deficit is at most
$\delta_n\ell$. Both $\mathcal E_v^\sigma$ and the constructed sets use only
indicators with endpoints in
$B_{v,1}^\sigma,\ldots,B_{v,L}^\sigma\subseteq\mathbb Z_n\setminus\{v\}$.

\medskip\noindent\emph{The failure probability.}
Since
$$
\delta_n\ell\log n
\asymp
\frac{m}{h_n}
=
\frac{a_n}{h_n}\log n,
\qquad
\frac{a_n}{h_n}\longrightarrow\infty,
$$
the deficit bound is smaller than every fixed inverse power of $n$. More precisely,
fix $D>0$ and apply Lemma~\ref{lem:critical-window-short-seed} with $D+1$.
For all sufficiently large $n$, the bound in
\eqref{eq:critical-window-shifted-chain-deficit-tail} is at most $n^{-(D+2)}$.
Since $n/\ell\leq n$, a union bound over the remaining blocks gives
\begin{align}
\Prob\bigl((\mathcal E_v^\sigma)^c\bigr)
&\leq
n^{-(D+1)}
+
\frac n\ell
\exp\left(-\frac{\delta_n\ell\log n}{80}\right)
\notag\\
&\leq
2n^{-(D+1)}
\leq
n^{-D},
\label{eq:critical-window-shifted-chain-failure-calculation}
\end{align}
uniformly in $v$ and $\sigma$, establishing
\eqref{eq:critical-window-shifted-chain-failure}.

\medskip\noindent\emph{Size, diameter, and boundary coverage.}
On $\mathcal E_v^\sigma$,
\begin{align}
|\mathcal K_v^\sigma|
&\geq
\frac\ell8+(L-1)(1-\delta_n)\ell
\notag\\
&=
L\ell-\frac{7\ell}{8}-\delta_n(L-1)\ell
\notag\\
&\geq
n-r-\frac{7\ell}{8}-\delta_n n
\notag\\
&\geq
n-\frac{15\ell}{8}-\delta_n n.
\label{eq:critical-window-shifted-chain-size-proof}
\end{align}
This gives \eqref{eq:critical-window-shifted-chain-size}. Since
$\ell\leq m/2<n/4$, \eqref{eq:critical-window-shifted-chain-size-proof} also gives
\begin{equation}
\label{eq:critical-window-shifted-chain-uniform-majority-proof}
|\mathcal K_v^\sigma|
>
\left(\frac{17}{32}-\delta_n\right)n.
\end{equation}
Since $\delta_n\to0$, this exceeds $n/2$ for all sufficiently large $n$, proving
\eqref{eq:critical-window-shifted-chain-majority} throughout the bandwidth range.
Each vertex of $S_{v,j}^\sigma$ reaches $S_{v,1}^\sigma$ by a path of at most
$j-1$ edges inside $\mathcal K_v^\sigma$. Hence
\begin{align}
\diam\bigl(G[\mathcal K_v^\sigma]\bigr)
&\leq
2(L-1)+\diam\bigl(G[S_{v,1}^\sigma]\bigr)
\notag\\
&\leq
C_{\mathrm{ch}}\left(
\frac nm+\frac{\log n}{\log\log n}
\right),
\label{eq:critical-window-shifted-chain-diameter-proof}
\end{align}
where we used $\ell\asymp m$ and
Lemma~\ref{lem:critical-window-short-seed}. Set
\begin{equation}
\label{eq:critical-window-shifted-chain-boundary-sets}
Q_v^\sigma:=S_{v,L-1}^\sigma\cup S_{v,L}^\sigma.
\end{equation}
By \eqref{eq:critical-window-left-two-boundary-blocks} and
\eqref{eq:critical-window-right-two-boundary-blocks}, $Q_v^-$ consists of left
neighbours of $v$ and $Q_v^+$ of right neighbours. The size estimates give
\eqref{eq:critical-window-shifted-chain-boundary-size}. All blocks avoid $v$, so the
construction is measurable with respect to indicators with both endpoints in
$\mathbb Z_n\setminus\{v\}$.
\end{proof}

\subsection{Every non-isolated vertex belongs to the largest component}
\label{subsubsec:critical-window-component-completion}

Each successful chain contains more than half the vertices, so the two chains for
$v$ intersect and lie in $C_1$. Their terminal sets cover all but $o(m/\log n)$
admissible neighbours of $v$. Conditional on the indicators with both endpoints in
$\mathbb Z_n\setminus\{v\}$, the incident edges from $v$ remain independent.
We can therefore bound the probability that $v$ lies outside $C_1$ by an isolation
probability with only a small number of missing trials.

\begin{proof}[Proof of Theorem~\ref{thm:critical-window-only-isolated-outside-giant}]
Subsection~\ref{subsubsec:critical-window-poisson} gives
$N_{\mathrm{iso}}\Longrightarrow\Poisson(e^{-t})$. It remains to show that every
non-isolated vertex belongs to $C_1$.

\medskip\noindent\emph{Comparing exclusion with isolation.}
Fix $v\in\mathbb Z_n$. On $\mathcal E_v^-\cap\mathcal E_v^+$,
Lemma~\ref{lem:critical-window-shifted-short-chain} gives connected sets
$\mathcal K_v^-$ and $\mathcal K_v^+$, each of size greater than $n/2$.
Since $|\mathcal K_v^-|+|\mathcal K_v^+|>n$, they intersect, and their union lies
in the unique component with more than $n/2$ vertices, namely $C_1$.
The sets $Q_v^-$ and $Q_v^+$ lie in $\{v-2\ell,\ldots,v-1\}$ and
$\{v+1,\ldots,v+2\ell\}$, respectively. These intervals are disjoint because
$4\ell\leq2m<n$, and consist of admissible neighbours because $2\ell\leq m$.
By \eqref{eq:critical-window-shifted-chain-boundary-size},
\begin{equation}
\label{eq:critical-window-short-chain-covered-neighbours}
|Q_v^-|+|Q_v^+|
\geq
4(1-\delta_n)\ell.
\end{equation}
Since $2\ell\geq m-1$,
\begin{equation}
\label{eq:critical-window-short-chain-uncovered-neighbours}
2m-4(1-\delta_n)\ell
\leq
2+2\delta_n m.
\end{equation}

Let $\mathcal G_v$ be the $\sigma$-field generated by the indicators
$\xi_{\{x,y\}}$ with $x,y\in\mathbb Z_n\setminus\{v\}$.
The chains, boundary sets, and events $\mathcal E_v^\pm$ are
$\mathcal G_v$-measurable. If $v\notin C_1$ on
$\mathcal E_v^-\cap\mathcal E_v^+$, no edge from $v$ to $Q_v^-\cup Q_v^+$ is
retained. Conditional on $\mathcal G_v$, these at least $4(1-\delta_n)\ell$
indicators are independent Bernoulli$(p)$ variables. Conditioning and then averaging gives
\begin{align}
&\Prob\left(
 v\notin C_1,
 \mathcal E_v^-\cap\mathcal E_v^+
\right)
\notag\\
&\qquad\leq
(1-p)^{4(1-\delta_n)\ell}.
\label{eq:critical-window-short-chain-vertex-outside}
\end{align}
Apply \eqref{eq:critical-window-shifted-chain-failure} with $D=4$ to each of the two
chains. Their combined failure probability is at most $2n^{-4}=o(n^{-3})$, uniformly in $v$.
Together with \eqref{eq:critical-window-short-chain-vertex-outside}, this gives
\begin{equation}
\label{eq:critical-window-short-chain-vertex-outside-total}
\Prob(v\notin C_1)
\leq
(1-p)^{4(1-\delta_n)\ell}+o(n^{-3}).
\end{equation}

\medskip\noindent\emph{Counting vertices outside the largest component.}
Set $Y:=n-|C_1|$. By exact centering and
\eqref{eq:critical-window-short-chain-uncovered-neighbours},
\begin{align}
n(1-p)^{4(1-\delta_n)\ell}
&=
n(1-p)^{2m}
(1-p)^{-\{2m-4(1-\delta_n)\ell\}}
\notag\\
&\leq
e^{-t}
\exp\left(
\left(\delta_n+\frac1m\right)(\log n+t)
\right)
\notag\\
&=
e^{-t}+o(1),
\label{eq:critical-window-short-chain-outside-expectation-bound}
\end{align}
where we used
\eqref{eq:critical-window-adaptive-delta-properties} and $\log n\ll m$.
Summing \eqref{eq:critical-window-short-chain-vertex-outside-total} over $v$ yields
\begin{equation}
\label{eq:critical-window-outside-expectation}
\E[Y]\leq e^{-t}+o(1).
\end{equation}

Fix a vertex $v_0$. On $\mathcal E_{v_0}^-$, a component contains more than
$n/2$ vertices. Thus
\begin{equation}
\label{eq:critical-window-largest-component-nontrivial}
\Prob(|C_1|=1)
\leq
\Prob\bigl((\mathcal E_{v_0}^-)^c\bigr)
=o(1).
\end{equation}
Let $N_{\mathrm{iso}}^{\mathrm{out}}$ count the isolated vertices outside $C_1$.
An isolated vertex lies in $C_1$ precisely when $|C_1|=1$, so
$N_{\mathrm{iso}}^{\mathrm{out}}
=N_{\mathrm{iso}}-\mathbf 1_{\{|C_1|=1\}}$.
Since $\E[N_{\mathrm{iso}}]=e^{-t}$, \eqref{eq:critical-window-largest-component-nontrivial}
gives
\begin{equation}
\label{eq:critical-window-isolated-outside-expectation}
\E[N_{\mathrm{iso}}^{\mathrm{out}}]
=
e^{-t}+o(1).
\end{equation}
The difference $Y_{\mathrm{noniso}}:=Y-N_{\mathrm{iso}}^{\mathrm{out}}$ counts
non-isolated vertices outside $C_1$. By \eqref{eq:critical-window-outside-expectation}
and \eqref{eq:critical-window-isolated-outside-expectation},
$\limsup_{n\to\infty}\E[Y_{\mathrm{noniso}}]\leq0$. Since this count is
nonnegative, $\E[Y_{\mathrm{noniso}}]\to0$, and Markov's inequality gives
$\Prob(Y_{\mathrm{noniso}}>0)\to0$. Together with
\eqref{eq:critical-window-largest-component-nontrivial}, this proves
\eqref{eq:critical-window-largest-component-conclusion}.

On the event in \eqref{eq:critical-window-largest-component-conclusion}, the graph is
connected if and only if $N_{\mathrm{iso}}=0$. \eqref{eq:critical-window-connectivity-limit}
therefore follows from \eqref{eq:critical-window-no-isolated-limit}. On the same event,
every component other than $C_1$ is an isolated vertex, which proves
\eqref{eq:critical-window-second-component}.
\end{proof}

Having proved $|C_1|=n-N_{\mathrm{iso}}$ with high probability, we turn to distances
within $C_1$.

\subsection{Block-chain structure and diameter of the largest component}
\label{subsubsec:critical-window-geometry}

The diameter of $C_1$ is bounded below both by the restriction on edge lengths and by the maximum degree. We first show that the maximum degree is $O(\log n)$ with high probability.

\begin{lemma}[Uniform maximum-degree bound in the critical window of connectivity]
\label{lem:critical-window-max-degree}
Under the assumptions of Theorem~\ref{thm:critical-window-only-isolated-outside-giant},
\begin{equation}
\label{eq:critical-window-uniform-degree-bound}
\max_{v\in\mathbb Z_n}d_G(v)\leq10\log n
\qquad\text{with high probability.}
\end{equation}
\end{lemma}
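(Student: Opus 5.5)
The plan is a union bound over vertices combined with a Chernoff upper-tail estimate for a single binomial degree. By \eqref{eq:random-degree-and-isolation-distribution}, each $d_G(v)\sim\Bin(2m,p)$. By \eqref{eq:critical-window-p-asymptotic}, $p=\frac{\log n+t}{2m}(1+o(1))$, so the mean is
$$
\mu:=2mp=(1+o(1))\log n.
$$
In particular $\mu\leq 2\log n$ for all sufficiently large $n$, uniformly in $v$.

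The degree can exceed $10\log n\geq 5\mu$ only by a large relative deviation, so the multiplicative bound \eqref{eq:chernoff-upper-tail}, which is stated only for $\delta<1$, is not directly suitable. I would instead use the elementary binomial tail bound. For $k:=\lceil 10\log n\rceil$, a union bound over $k$-element subsets of the $2m$ incident host edges gives
$$
\Prob\bigl(d_G(v)\geq k\bigr)\leq\binom{2m}{k}p^k\leq\left(\frac{2emp}{k}\right)^k\leq\left(\frac{e\mu}{10\log n}\right)^k .
$$
Since $e\mu/(10\log n)\to e/10$, the right-hand side is at most $(e/9)^{10\log n}$ for large $n$. This equals $n^{-10\log(9/e)}$, and since $\log(9/e)>1.19$, it is smaller than $n^{-11}$.

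Taking a union bound over the $n$ vertices gives
$$
\Prob\bigl(\max_v d_G(v)\geq 10\log n\bigr)\leq n^{-10}\to0.
$$
Alternatively, one can apply the general Chernoff bound $\Prob(X\geq a)\leq (e\mu/a)^a$ directly. There is no real obstacle here; the only care needed is to avoid \eqref{eq:chernoff-upper-tail} in the regime $\delta>1$, and the subset-counting bound above handles this cleanly.
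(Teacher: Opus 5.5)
Your proof is correct and is essentially the paper's argument: the same subset-counting bound $\Prob(Z\geq k)\leq\binom{M}{k}\rho^k\leq(eM\rho/k)^k$ with $k=\lceil 10\log n\rceil$, followed by a union bound over the $n$ vertices. The only difference is cosmetic: the paper bounds the mean exactly via $1-e^{-x}\leq x$, giving $2mp\leq\log n+t\leq 2\log n$ and the factor $(e/5)^{10\log n}$, whereas you use the asymptotic $2mp=(1+o(1))\log n$ to get the sharper factor $(e/9)^{10\log n}$.
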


\begin{proof}
For every $v\in\mathbb Z_n$,
\begin{equation}
\label{eq:critical-window-degree-binomial}
d_G(v)\sim\Bin(2m,p),
\qquad
\mu_n:=\E[d_G(v)]=2mp\leq\log n+t.
\end{equation}
Indeed, with $x_n:=(\log n+t)/(2m)$, exact centering gives $p=1-e^{-x_n}$.
The bound $\mu_n\leq\log n+t$ follows from $1-e^{-x}\leq x$, and in particular
$\mu_n\leq2\log n$ for all sufficiently large $n$.

For $Z\sim\Bin(M,\rho)$ and an integer $k\geq1$, a union bound over the $k$-element
subsets of successful trials gives
$\Prob(Z\geq k)\leq\binom Mk\rho^k\leq(eM\rho/k)^k$. Apply this estimate with
$M=2m$, $\rho=p$, and $k=\lceil10\log n\rceil$. Since $\mu_n\leq2\log n$,
$$
\Prob\bigl(d_G(v)\geq10\log n\bigr)
\leq
\left(\frac e5\right)^{10\log n}.
$$
The inequality $10\log(5/e)>2$ and a union bound over the $n$ vertices prove
\eqref{eq:critical-window-uniform-degree-bound}.
\end{proof}

The edge-length restriction gives a lower bound $\Omega(n/m)$, while maximum degree
$O(\log n)$ gives $\Omega(\log n/\log\log n)$ by volume growth. The shifted
chains in Lemma~\ref{lem:critical-window-shifted-short-chain} provide a matching upper bound.

We prove Theorem~\ref{thm:critical-window-geometry} in three steps. First we construct
the block sets $H_b$ using the balanced partition and deficit sequence $(\eta_n)$
from the theorem. We then prove the two lower bounds and use the shifted chains
to obtain the upper bound.

In particular, if $m\asymp n$, then $n/m=\Theta(1)$ and
\eqref{eq:critical-window-giant-diameter-two-sided} reduces to
\begin{equation}
\label{eq:critical-window-dense-regime-diameter}
\diam(C_1)
=
\Theta_{\Prob}\left(\frac{\log n}{\log\log n}\right).
\end{equation}

\begin{proof}[Proof of Theorem~\ref{thm:critical-window-geometry}]
\emph{Step 1: the block structure.}
Lemma~\ref{lem:balanced-cyclic-partition} gives the block-length bounds in
\eqref{eq:critical-window-balanced-block-lengths}.

By Theorem~\ref{thm:critical-window-only-isolated-outside-giant}, every vertex outside
$C_1$ is isolated with high probability. The Poisson limit gives
$N_{\mathrm{iso}}=O_{\Prob}(1)$. Since $|C_1|>1$ with high probability,
\eqref{eq:critical-window-giant-complement-identity} follows.

Since $\eta_n\to0$, a component of $G[B_b]$ with at least
$(1-\eta_n)\ell_b$ vertices is unique for all sufficiently large $n$. Denote it
by $H_b$ when it exists, and set $H_b=\varnothing$ otherwise.
Apply Corollary~\ref{lem:critical-window-large-block} to the balanced partition in
\eqref{eq:critical-window-balanced-block-partition}. The event
$\mathcal E_{\mathrm{good}}:=\bigcap_{b=1}^q
\{|H_b|\geq(1-\eta_n)\ell_b\}$ has probability tending to one. Indeed,
$n/m>2$ gives $q\leq n/m+1<2n/m$, and the failure probability is at most
$$
\frac{4n}{m}\exp\left(-\frac{\eta_nm\log n}{32}\right)=o(1);
$$
indeed, $\eta_nm\to\infty$ whereas $\log(4n/m)=O(\log n)$.

Fix $\delta_\star:=1/64$. Since $\eta_n\to0$, on $\mathcal E_{\mathrm{good}}$
we have $|H_b|\geq(1-\delta_\star)\ell_b$ for all sufficiently large $n$.
Conditional on the within-block edges, Lemma~\ref{lem:critical-window-cross-edges}
applies to each cyclic pair $H_b,H_{b+1}$, with failure probability exponentially
small in $m\log n$. A conditional union bound over the $q$ interfaces gives
\begin{equation}
\label{eq:critical-window-balanced-interface-failure-check}
\mathbf 1_{\mathcal E_{\mathrm{good}}}
\Prob\left(
\text{some interface violates
\eqref{eq:critical-window-backbone-interface-bounds}}
\,\middle|\,
\text{all within-block edges}
\right)
\leq
2q e^{-c_0m\log n}
=o(1).
\end{equation}
Here $q=O(n/m)$ and $m\to\infty$. Taking expectations and adding
$\Prob(\mathcal E_{\mathrm{good}}^c)=o(1)$ proves
\eqref{eq:critical-window-backbone-interface-bounds} with high probability.

On the intersection of $\mathcal E_{\mathrm{good}}$ and the event that every interface
satisfies \eqref{eq:critical-window-backbone-interface-bounds}, the set
$\mathcal H:=\bigcup_{b=1}^qH_b$ is connected and
\begin{equation}
\label{eq:critical-window-balanced-backbone-size}
|\mathcal H|
\geq
(1-\eta_n)\sum_{b=1}^q\ell_b
=
(1-\eta_n)n
>
\frac n2
\end{equation}
for all sufficiently large $n$. Hence $\mathcal H\subseteq C_1$, proving
\eqref{eq:critical-window-backbone-sets}--
\eqref{eq:critical-window-backbone-interface-bounds}. Every block not entirely contained in
$C_1$ contains at least one vertex of $\mathbb Z_n\setminus C_1$. Since the blocks are disjoint,
the number of such blocks is at most $|\mathbb Z_n\setminus C_1|=O_{\Prob}(1)$.

\medskip\noindent\emph{Step 2: lower bounds for the diameter.}
Set $k:=|\mathbb Z_n\setminus C_1|=O_{\Prob}(1)$, by
\eqref{eq:critical-window-giant-complement-identity}. With high probability, there
are vertices $x,y\in C_1$ satisfying
\begin{equation}
\label{eq:critical-window-far-giant-vertices}
r_{xy}\geq\left\lfloor\frac n2\right\rfloor-k.
\end{equation}
To see this, fix $x\in C_1$. If $k<n/4$, at least $k+1$ vertices satisfy
$r_{xy}\geq\lfloor n/2\rfloor-k$, and at least one belongs to $C_1$.
The event $k<n/4$ has probability tending to one. Along any path of length $L$,
the triangle inequality for cyclic distance and the host constraint give $r_{xy}\leq Lm$. Hence
\begin{equation}
\label{eq:critical-window-geometric-diameter-lower}
\diam(C_1)
\geq
\frac{r_{xy}}{m}
\geq
\left(\frac12-o_{\Prob}(1)\right)\frac nm.
\end{equation}

For the second lower bound, use Lemma~\ref{lem:critical-window-max-degree}, which
bounds the maximum degree of $C_1$ by $10\log n$ with high probability.
In any connected graph $\Gamma$ with maximum degree $\Delta\geq2$ and diameter
$D_\Gamma$, a breadth-first search from $x$ has at most $\Delta$ vertices in the
first layer. For $j\geq2$, each vertex in layer $j-1$ has at most $\Delta-1$
neighbours in layer $j$, so that layer contains at most
$\Delta(\Delta-1)^{j-1}$ vertices. Summing gives
\begin{equation}
\label{eq:critical-window-moore-bound}
|V(\Gamma)|
\leq
1+\Delta\sum_{j=0}^{D_\Gamma-1}(\Delta-1)^j
\leq
1+\sum_{j=1}^{D_\Gamma}\Delta^j
\leq
\frac{\Delta}{\Delta-1}\Delta^{D_\Gamma}
\leq
2\Delta^{D_\Gamma}.
\end{equation}
We used $\Delta(\Delta-1)^{j-1}\leq\Delta^j$ and
$\Delta/(\Delta-1)\leq2$. Since $|C_1|=n-O_{\Prob}(1)$, we have
$\Delta(C_1)\geq2$ with high probability. Applying the bound to $\Gamma=C_1$,
on the event $|C_1|\geq n/2$ and $\Delta(C_1)\leq10\log n$, gives
\begin{align}
\diam(C_1)\geq
\frac{\log(|C_1|/2)}{\log\Delta(C_1)} \geq
\frac{\log(n/4)}{\log(10\log n)}
=
(1-o(1))\frac{\log n}{\log\log n}.
\label{eq:critical-window-degree-diameter-lower}
\end{align}

Combining \eqref{eq:critical-window-geometric-diameter-lower} and
\eqref{eq:critical-window-degree-diameter-lower}, and using
$\max\{a,b\}\geq(a+b)/2$ proves the lower bound in
\eqref{eq:critical-window-giant-diameter-two-sided}.

\medskip\noindent\emph{Step 3: the matching upper bound.}
Let $\mathcal E_n:=\bigcap_{v\in\mathbb Z_n}\bigcap_{\sigma\in\{-,+\}}\mathcal E_v^\sigma$.
Taking $D=3$ in \eqref{eq:critical-window-shifted-chain-failure} and applying a union
bound over the $2n$ pairs $(v,\sigma)$ gives
\begin{equation}
\label{eq:critical-window-all-short-chains-high-probability}
\Prob(\mathcal E_n^c)
\leq2n\cdot n^{-3}
=2n^{-2}
\longrightarrow0.
\end{equation}
On $\mathcal E_n$, put $M_n:=4(1-\delta_n)\ell$ and $U_n:=2m-M_n$.
\eqref{eq:critical-window-short-chain-uncovered-neighbours} gives
\begin{equation}
\label{eq:critical-window-uncovered-neighbour-upper-bound}
0\leq U_n\leq2+2\delta_nm.
\end{equation}

We show that every non-isolated vertex attaches to one of its chains. Fix $v$ and
condition on the indicators with both endpoints in $\mathbb Z_n\setminus\{v\}$.
On $\mathcal E_v^-\cap\mathcal E_v^+$, put
\begin{equation}
\label{eq:critical-window-actual-covered-neighbour-count}
q_v:=|Q_v^-\cup Q_v^+|,
\qquad
q_v\geq M_n,
\qquad
2m-q_v\leq U_n.
\end{equation}
By conditional independence, the probability that $v$ has no edge to
$Q_v^-\cup Q_v^+$ but has an edge to another admissible neighbour is exactly
$(1-p)^{q_v}\{1-(1-p)^{2m-q_v}\}$. By
\eqref{eq:critical-window-actual-covered-neighbour-count}, this is at most
$(1-p)^{M_n}\{1-(1-p)^{U_n}\}$. Hence
\begin{align}
(1-p)^{M_n}\left(1-(1-p)^{U_n}\right)
&\leq
(1-p)^{M_n}pU_n.
\label{eq:critical-window-no-chain-edge-but-nonisolated}
\end{align}
Since $M_n=2m-U_n$, exact centering, together with
\eqref{eq:critical-window-uncovered-neighbour-upper-bound} and
\eqref{eq:critical-window-adaptive-delta-properties}, gives
\begin{equation}
\label{eq:critical-window-covered-neighbour-probability-sum}
n(1-p)^{M_n}=O_t(1).
\end{equation}
Moreover,
\begin{align}
pU_n=O_t\left(\delta_n\log n+\frac{\log n}{m}\right)=o(1)
\label{eq:critical-window-uncovered-neighbour-edge-probability}
\end{align}
by \eqref{eq:critical-window-adaptive-delta-properties} and
$\log n\ll m$. Let $\mathcal F_n$ be the event that either $\mathcal E_n^c$ occurs or,
on $\mathcal E_n$, some non-isolated vertex has no edge to either of its two chains. A union
bound, \eqref{eq:critical-window-no-chain-edge-but-nonisolated}--
\eqref{eq:critical-window-uncovered-neighbour-edge-probability}, and
\eqref{eq:critical-window-all-short-chains-high-probability} give
\begin{align}
\Prob(\mathcal F_n)
&\leq
\Prob(\mathcal E_n^c)
+
n(1-p)^{M_n}pU_n=o(1).
\label{eq:critical-window-all-nonisolated-attach}
\end{align}
Thus, with high probability, every non-isolated vertex has an edge to at least one of
$\mathcal K_v^-$ and $\mathcal K_v^+$.

Work on $\mathcal F_n^c$ and the high-probability event of
Theorem~\ref{thm:critical-window-only-isolated-outside-giant}. Then $|C_1|>1$, so
every $u\in C_1$ is non-isolated and has an edge to a shifted chain; call one such
chain $\mathcal K_u$. It is connected and contains more than $n/2$ vertices, hence
lies in $C_1$. For $u,v\in C_1$, the sets $\mathcal K_u$ and $\mathcal K_v$
both have more than $n/2$ vertices by \eqref{eq:critical-window-shifted-chain-majority}.
Therefore
\begin{equation}
\label{eq:critical-window-two-chains-intersect}
\mathcal K_u\cap\mathcal K_v\neq\varnothing.
\end{equation}
Choose $z$ in this intersection. Lemma~\ref{lem:critical-window-shifted-short-chain} gives
\begin{align}
\dist_{C_1}(u,v)
&\leq
1+\dist_{G[\mathcal K_u]}(u',z)
+\dist_{G[\mathcal K_v]}(z,v')+1
\notag\\
&\leq
2+2C_{\mathrm{ch}}\left(
\frac nm+\frac{\log n}{\log\log n}
\right),
\label{eq:critical-window-diameter-upper-path}
\end{align}
where $u'\in\mathcal K_u$ and $v'\in\mathcal K_v$ are neighbours of $u$ and $v$,
respectively. Taking the maximum over $u,v\in C_1$ proves the upper bound in
\eqref{eq:critical-window-giant-diameter-two-sided}.
\end{proof}

We finish by expressing the critical window in terms of the expected number of
isolated vertices.

\begin{corollary}[Intrinsic form of the connectivity window]
\label{cor:intrinsic-critical-window}
Assume \(\log n\ll m<n/2\), let \(p=p_n\in(0,1)\), and suppose
\begin{equation}
\label{eq:intrinsic-critical-window}
\lambda_n:=n(1-p)^{2m}\longrightarrow\lambda\in(0,\infty).
\end{equation}
Then
$$
N_{\mathrm{iso}}\Longrightarrow\Poisson(\lambda),
\qquad
\Prob\bigl(|C_1|=n-N_{\mathrm{iso}}\bigr)\longrightarrow1,
\qquad
\Prob(G\text{ is connected})\longrightarrow e^{-\lambda}.
$$
Moreover, the block-chain and diameter conclusions of
Theorem~\ref{thm:critical-window-geometry} remain valid, with constants depending only on
\(\lambda\).
\end{corollary}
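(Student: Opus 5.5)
The plan is to reduce Corollary~\ref{cor:intrinsic-critical-window} to the exact-centering case by reparametrizing. Define $t_n$ by $1-p=\exp(-(\log n+t_n)/(2m))$. This is equivalent to $(1-p)^{2m}=e^{-t_n}/n$, so $\lambda_n=e^{-t_n}$ and $t_n\to t:=-\log\lambda\in\mathbb R$. Every argument in Section~\ref{sec:critical-connectivity-window} used the exact centering only through two facts: the identity $(1-p)^{2m}=e^{-t}/n$, and the asymptotic \eqref{eq:critical-window-p-asymptotic}. The latter was already stated uniformly for $t$ in a compact set. So I will rerun those proofs with $t$ replaced by the sequence $t_n$, which lies in a fixed compact interval $[t-1,t+1]$ for large $n$.

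The steps, in order, are as follows.

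First, the Poisson limit. The factorial-moment computation gives $\E[(N_{\mathrm{iso}})_r]=(n)_r n^{-r}\lambda_n^r(1-p)^{-e_H(S)}$. Here $p\to0$ because $p\sim(\log n+t_n)/(2m)$, so this tends to $\lambda^r$.

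Second, the chain construction. Lemmas~\ref{lem:critical-window-short-seed} and~\ref{lem:critical-window-shifted-short-chain}, Corollary~\ref{lem:critical-window-large-block} and Lemma~\ref{lem:critical-window-cross-edges} need only lower and upper bounds of the form $p\asymp(\log n)/m$ and $(1-p)^{\ell/8}\le n^{-1/40}$. These hold uniformly for $t_n$ in the compact interval, with constants depending on that interval and hence only on $\lambda$.

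Third, the comparison with isolation. In \eqref{eq:critical-window-short-chain-outside-expectation-bound}, I replace $e^{-t}$ by $\lambda_n$ to get $\E[Y]\le\lambda_n+o(1)=\lambda+o(1)$. Since $\E N_{\mathrm{iso}}=\lambda_n\to\lambda$, the remaining argument yields $\E[Y_{\mathrm{noniso}}]\to0$.

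Fourth, the conclusions. Connectivity probability $\to e^{-\lambda}$ follows as before. The geometry theorem's proof transfers verbatim, including the maximum-degree bound (since $2mp\le\log n+t_n\le2\log n$) and the diameter bounds, with $c_t,C_t,a_t,A_t$ replaced by constants uniform over $t\in[t-1,t+1]$.

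The main obstacle is purely bookkeeping. I must check that every ``for all sufficiently large $n$'' threshold and every constant in the earlier lemmas can be taken uniform over $t$ in a compact set, rather than depending on a fixed $t$.

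I will handle this by noting that each such bound depends on $t$ only through monotone expressions like $\log n+t$ and $e^{-t}$. Each can therefore be bounded at the endpoints $t\pm1$. An equally valid alternative is monotone coupling: sandwich $p$ between the exact-centered $p^{\pm}$ with parameters $t\pm\epsilon$. This works for increasing events such as chain existence, but not for $N_{\mathrm{iso}}$ or the diameter, so I prefer the direct uniform rerun.
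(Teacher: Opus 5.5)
Your proposal is correct and is essentially the paper's own proof. The paper likewise sets $t_n=-\log\lambda_n$ so that $1-p=\exp(-(\log n+t_n)/(2m))$, recomputes the factorial moments with $\lambda_n^r$, invokes uniformity of the block, interface and leave-one-vertex-out estimates for $t$ in a compact set, and replaces $e^{-t}$ by $\lambda_n$ in the expectation comparison.
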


\begin{proof}
Put $t_n:=-\log\lambda_n$.  Then $t_n\to-\log\lambda$ and
$$
1-p=\exp\left(-\frac{\log n+t_n}{2m}\right).
$$
The estimates in this section are uniform for $t$ in a fixed compact set.
For each fixed distinct $r$-tuple, in particular,
$$
\Prob(I_{v_1}=\cdots=I_{v_r}=1)
=n^{-r}\lambda_n^r(1-p)^{-e_H(\{v_1,\ldots,v_r\})}
=n^{-r}\lambda_n^r(1+o(1)),
$$
so the factorial moments converge to $\lambda^r$. The block, interface, and
leave-one-vertex-out estimates use only $t_n=O(1)$, and therefore hold with uniform
constants. Replacing $e^{-t}$ by $\lambda_n\to\lambda$ in the expectation comparison
completes the proof.
\end{proof}

Having determined the connectivity transition and its critical window, we turn to
the spectral gap and hitting times above a fixed multiple of the threshold.

\section{Sharp spectral gap above the connectivity threshold}
\label{sec:spectral-gap}

Fix $c>1$ and assume \eqref{eq:gap-main-assumptions}. Write $\mu:=2mp$ for the
expected degree. We prove that the random spectral gap
is asymptotic to $\Gamma_{n,m}$, defined in \eqref{eq:gap-reference-value}. The proof has
two cases. When $m=o(n)$, the reference gap tends to zero, and we need a relative comparison
of Dirichlet forms. We obtain it by partitioning the cycle into blocks and taking a Schur
complement. When $m/n$ is bounded below, the reference gap stays positive, so a direct
comparison of normalized adjacency matrices suffices. We begin with the Fourier estimates
that identify the reference gap and the constants in Corollary~\ref{cor:explicit-spectral-gap}.

\subsection{Deterministic Fourier estimates}
\label{subsec:gap-Fourier-estimates}

We first compute the lazy-walk spectral gap of a cycle power. For integers
$q\geq3$ and $1\leq R<q/2$, let
\begin{equation}
\label{eq:finite-cycle-distance}
d_q(i,j):=\min\{|i-j|,q-|i-j|\},
\qquad 0\leq i,j<q,
\end{equation}
and define
\begin{equation}
\label{eq:finite-cycle-gap}
g_{q,R}
:=
\min_{1\leq k\leq q-1}
\frac1R\sum_{r=1}^R
\sin^2\left(\frac{\pi kr}{q}\right).
\end{equation}

\begin{lemma}[Finite-cycle Poincar\'e inequality]
\label{lem:finite-cycle-Poincare}
The quantity $g_{q,R}$ is the spectral gap of the lazy simple-random-walk transition operator
on $C_q^R$. Moreover, for every
$b=(b_0,\ldots,b_{q-1})\in\mathbb R^q$, with
$\bar b:=q^{-1}\sum_{i=0}^{q-1}b_i$,
\begin{equation}
\label{eq:finite-cycle-Poincare}
\sum_{\substack{0\leq i<j<q\\1\leq d_q(i,j)\leq R}}
(b_i-b_j)^2
\geq
4R g_{q,R}\sum_{i=0}^{q-1}(b_i-\bar b)^2.
\end{equation}
\end{lemma}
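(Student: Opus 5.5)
The plan is to diagonalize everything in the Fourier basis of $\mathbb Z_q$. Since $C_q^R$ is $2R$-regular (using $R<q/2$, so the $2R$ vertices at distances $1,\ldots,R$ are distinct), its simple-walk operator is $P=A/(2R)$. This $P$ is circulant, with characters $\chi_k(i)=e^{2\pi\mathrm{i}ki/q}$ as eigenvectors. Its eigenvalues are
\[
\lambda_k(P)=\frac1{2R}\sum_{r=1}^R\bigl(e^{2\pi\mathrm{i}kr/q}+e^{-2\pi\mathrm{i}kr/q}\bigr)=\frac1R\sum_{r=1}^R\cos\left(\frac{2\pi kr}{q}\right).
\]
The identity $1-\cos(2\theta)=2\sin^2\theta$ gives
\[
\frac{1-\lambda_k(P)}2=\frac1R\sum_{r=1}^R\sin^2\left(\frac{\pi kr}{q}\right).
\]
The eigenvalue $1$ comes from $k=0$. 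The graph $C_q^R$ is connected, since it contains the cycle, so $\lambda_2(P)=\max_{k\neq0}\lambda_k(P)$. Minimizing over $1\le k\le q-1$ then identifies $g_{q,R}$ with $\gamma_{\lazy}(C_q^R)=(1-\lambda_2(P))/2$, by the definition \eqref{eq:graph-lazy-gap-definition}.

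For the Poincaré inequality \eqref{eq:finite-cycle-Poincare}, I would apply Lemma~\ref{lem:variational-gap} to $P$. The stationary distribution is uniform, $\pi\equiv1/q$, and the minimizing constant in $\Var_\pi(b)$ is $\bar b$. So $\Var_\pi(b)=q^{-1}\sum_i(b_i-\bar b)^2$.

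By \eqref{eq:graph-dirichlet-edge-formula} with $\vol(V)=2Rq$, the Dirichlet form is
\[
\mathcal E_P(b,b)=\frac1{2Rq}\sum_{\{i,j\}\in E}(b_i-b_j)^2,
\]
where the edges are exactly the pairs $i<j$ with $1\le d_q(i,j)\le R$. The variational formula gives $\mathcal E_P(b,b)\ge(1-\lambda_2(P))\Var_\pi(b)=2g_{q,R}\Var_\pi(b)$. Multiplying through by $2Rq$ yields
\[
\sum_{\{i,j\}\in E}(b_i-b_j)^2\ge 4Rg_{q,R}\sum_i(b_i-\bar b)^2,
\]
which is the claim. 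Constant vectors satisfy it trivially.

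There is no real obstacle here; the only points needing care are bookkeeping. First, $R<q/2$ guarantees that no distance class is double counted, so the degree is exactly $2R$ and each edge appears once in the sum. Second, the factor of $2$ between the lazy gap and $1-\lambda_2(P)$ has to be tracked correctly. As a check, a direct Parseval computation also works: $\sum_{\{i,j\}\in E}(b_i-b_j)^2=\langle b,Lb\rangle$, and $L=2R(I-P)$ has eigenvalues $4\sum_r\sin^2(\pi kr/q)$ on $\chi_k$.
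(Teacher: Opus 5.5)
Your proposal is correct and follows essentially the same route as the paper: Fourier diagonalization of the $2R$-regular circulant $C_q^R$ identifies $g_{q,R}$ with the lazy gap. The only difference is that you derive \eqref{eq:finite-cycle-Poincare} through Lemma~\ref{lem:variational-gap} rather than applying Parseval directly to $b^{\mathsf T}L_{C_q^R}b$; the paper uses the direct Parseval argument, which you also note as a check.
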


\begin{proof}
For $0\leq k<q$, the Fourier vector $\psi_k(j):=\exp(2\pi i k j/q)$ is an eigenvector
of the combinatorial Laplacian of $C_q^R$, with eigenvalue
$\lambda_k=4\sum_{r=1}^R\sin^2(\pi kr/q)$. Since the graph is $2R$-regular, the nonzero
eigenvalues of the lazy normalized Laplacian are $\lambda_k/(4R)$. Their minimum is
$g_{q,R}$. Parseval's identity and $\lambda_0=0$ give
$$
\sum_{\substack{0\leq i<j<q\\1\leq d_q(i,j)\leq R}}
(b_i-b_j)^2
=
b^{\mathsf T}L_{C_q^R}b
\geq
\min_{1\leq k<q}\lambda_k
\sum_{i=0}^{q-1}(b_i-\bar b)^2.
$$
Substituting the definition of $g_{q,R}$ proves \eqref{eq:finite-cycle-Poincare}.
\end{proof}

The block quotient in Subsection~\ref{subsec:gap-sublinear-lower} has interaction range
$R/q\to0$. The next lemma identifies the Poincar\'e constant in this regime.

\begin{lemma}[Sublinear Fourier asymptotics]
\label{lem:sublinear-Fourier-gap}
If $R\to\infty$ and $R/q\to0$, then
\begin{equation}
\label{eq:sublinear-Fourier-limit}
\frac{q^2}{R^2}g_{q,R}
\longrightarrow
\frac{\pi^2}{3}.
\end{equation}
Consequently, if $m\to\infty$ and $m=o(n)$, then
\begin{equation}
\label{eq:Gamma-sublinear-asymptotic}
\Gamma_{n,m}
=
\left(\pi^2+o(1)\right)\frac{\kappa_m}{n^2}
=
\left(\frac{\pi^2}{3}+o(1)\right)\frac{m^2}{n^2},
\end{equation}
and
\begin{equation}
\label{eq:host-gap-sublinear-asymptotic}
\gamma_{\lazy}(C_n^m)
=
\bigl(1+o(1)\bigr)\Gamma_{n,m}.
\end{equation}
\end{lemma}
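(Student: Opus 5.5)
The plan is to bound the minimum over $k$ in the definition of $g_{q,R}$ from both sides, and then read off the consequences for $\Gamma_{n,m}$ and the host gap from Lemma~\ref{lem:finite-cycle-Poincare}.

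\emph{Upper bound.} Test with $k=1$. Since $\sin x\le x$,
$$
g_{q,R}\le\frac1R\sum_{r=1}^R\sin^2\left(\frac{\pi r}{q}\right).
$$
Because $\pi R/q\to0$, we may use $\sin x=x(1+O(x^2))$ uniformly for $r\le R$. This gives
$$
\frac1R\sum_{r=1}^R\sin^2\left(\frac{\pi r}{q}\right)
=(1+o(1))\frac{\pi^2}{q^2}\frac1R\sum_{r=1}^R r^2
=(1+o(1))\frac{\pi^2 R^2}{3q^2},
$$
so $\limsup (q^2/R^2)g_{q,R}\le\pi^2/3$.

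\emph{Lower bound.} By the symmetry $k\leftrightarrow q-k$ it suffices to take $1\le k\le q/2$. Write
$$
F(k):=\frac1R\sum_{r=1}^R\sin^2\left(\frac{\pi kr}{q}\right)
=\frac12-\frac1{2R}\sum_{r=1}^R\cos\left(\frac{2\pi kr}{q}\right),
$$
and evaluate the cosine sum with the Dirichlet kernel, with $\theta=2\pi k/q\in(0,\pi]$. We split into two regimes.

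First, for $k\le\varepsilon q/R$ every angle $\pi kr/q$ is at most $\pi\varepsilon$. Then $\sin^2x\ge(1-O(\varepsilon^2))x^2$, so
$$
F(k)\ge(1-O(\varepsilon^2))\frac{\pi^2k^2}{q^2}\frac{(R+1)(2R+1)}{6R}.
$$
This is minimized at $k=1$ and is asymptotic to the upper bound.

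Second, for $k>\varepsilon q/R$ we have $\theta R\ge2\pi\varepsilon$. The normalized Dirichlet sum $\frac1R\sum_{r=1}^R\cos(r\theta)$ is bounded by $\min\{1,\,1/(R\sin(\theta/2))\}$-type estimates. More uniformly, it equals $\frac{\sin(R\theta/2)}{R\sin(\theta/2)}\cos\bigl((R+1)\theta/2\bigr)$, which is at most $1-c_\varepsilon$ for some $c_\varepsilon>0$ once $R\theta\ge2\pi\varepsilon$ and $\theta\le\pi$. Here we use that $|\sin(x)/x|$ is bounded away from $1$ for $x\ge\pi\varepsilon$, and that $\sin(\theta/2)\ge\theta/\pi$. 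Hence $F(k)\ge c_\varepsilon/2$, which is much larger than $R^2/q^2\to0$.

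Letting $\varepsilon\to0$ gives $\liminf (q^2/R^2)g_{q,R}\ge\pi^2/3$. The main obstacle is this uniform bound on the normalized Dirichlet kernel in the second regime, which needs care to be uniform in $k$, particularly near $\theta=\pi$.

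\emph{Consequences.} Apply the computation above with $q=n$ and $R=m$. The $k=1$ term is exactly $\Gamma_{n,m}$, and the small-angle expansion gives
$$
\Gamma_{n,m}=(1+o(1))\frac{\pi^2\kappa_m}{n^2}.
$$
Since $\kappa_m\sim m^2/3$, this is also $(\pi^2/3+o(1))m^2/n^2$. Finally, Lemma~\ref{lem:finite-cycle-Poincare} identifies $\gamma_{\lazy}(C_n^m)=g_{n,m}$. The two-sided bound shows $g_{n,m}=(1+o(1))\Gamma_{n,m}$, that is, the minimum over $k$ is asymptotically attained at $k=1$.
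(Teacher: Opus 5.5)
\paragraph{A step in your second regime fails as written.} You claim $\frac1R\sum_{r=1}^R\cos(r\theta)\le 1-c_\varepsilon$ whenever $R\theta\ge 2\pi\varepsilon$. Your justification combines $|\sin x|/x\le 1-c$ for $x=R\theta/2\ge\pi\varepsilon$ with $\sin(\theta/2)\ge\theta/\pi$. The latter only gives $R\sin(\theta/2)\ge 2x/\pi$, so the Dirichlet ratio is bounded by $\frac{\pi}{2}\cdot\frac{|\sin x|}{x}$. That quantity exceeds $1$ on the whole range $\pi\varepsilon\le x\le 3/2$. The delicate range is therefore $R\theta$ of order one, not $\theta$ near $\pi$, where the ratio is $O(1/R)$.

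The claim itself is true and easy to repair by splitting once more.
\begin{itemize}
\item If $R\theta>4$, your inequality gives $\frac{1}{R\sin(\theta/2)}\le\frac{\pi}{R\theta}<\frac{\pi}{4}$, so $F(k)\ge\frac12(1-\frac\pi4)$.
\item If $2\pi\varepsilon\le R\theta\le 4$, keep only the terms $R/2\le r\le R$ in $F(k)=\frac1R\sum_r\sin^2(r\theta/2)$. There $r\theta/2\in[\pi\varepsilon/2,2]$, so $F(k)\ge\frac12\min\{\sin^2(\pi\varepsilon/2),\sin^2 2\}$.
\end{itemize}
Alternatively, on $R\theta\le 4$ you have $\theta\to0$, hence $R\sin(\theta/2)\sim R\theta/2$, and the ratio converges uniformly to $\sin(2x)/(2x)$. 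Also fix a slip in the first regime: $\frac1R\sum_{r\le R}r^2=\frac{(R+1)(2R+1)}{6}$, not $\frac{(R+1)(2R+1)}{6R}$. With the extra $1/R$ your lower bound would not match the upper bound.

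\paragraph{Comparison with the paper.} With these repairs, your argument is correct but organized differently from the paper's. The paper proves only a crude coercivity estimate at all frequencies, $F(k)\ge a_2\min\{1,R^2k_*^2/q^2\}$ with an unspecified absolute constant. It computes the exact limit $\pi^2k^2/3$ of $(q^2/R^2)F(k)$ for each fixed $k$ by Taylor expansion. It then argues by subsequences on the minimizing index: an unbounded index is excluded because coercivity would send the scaled value to infinity, and a bounded one is eventually a fixed $k$ with limit at least $\pi^2/3$.

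Your route uses a sharp small-angle bound uniformly for all $k\le\varepsilon q/R$, only a qualitative bound beyond that, and then lets $\varepsilon\to0$. This avoids compactness and is more quantitative. The paper's route needs sharp constants only at fixed frequencies, which is why its threshold $R\theta=4$ sidesteps the intermediate-angle issue above. Your derivations of the asymptotics of $\Gamma_{n,m}$ and of $\gamma_{\lazy}(C_n^m)=g_{n,m}$ match the paper's.
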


\begin{proof}
The first Fourier mode gives the upper bound. To prove the matching lower bound, we first
control all frequencies. For $k\in\{0,\ldots,q-1\}$, put $k_*:=\min\{k,q-k\}$ and
$\theta:=2\pi k_*/q$. We claim that there is an absolute constant $a>0$ such that, for
$R\geq8$,
\begin{equation}
\label{eq:cosine-sum-coercivity}
\sum_{r=1}^R\bigl(1-\cos(r\theta)\bigr)
\geq
aR\min\{1,(R\theta)^2\},
\qquad 0\leq\theta\leq\pi.
\end{equation}
If $R\theta\leq4$, restrict the sum to $1\leq r\leq\lfloor R/4\rfloor$. Then
$r\theta\leq1$ and $1-\cos(r\theta)\geq a_0r^2\theta^2$ for an absolute constant
$a_0>0$. The bound follows from
$\sum_{r=1}^{\lfloor R/4\rfloor}r^2\geq a_1R^3$, where $a_1>0$ is absolute.
If $R\theta>4$, the geometric-sum identity and $\sin(\theta/2)\geq\theta/\pi$ give
\begin{align}
\sum_{r=1}^R\bigl(1-\cos(r\theta)\bigr)
&\geq
R-\left|\sum_{r=1}^Re^{ir\theta}\right|
\notag\\
&\geq
R-\frac1{\sin(\theta/2)}
\geq
\left(1-\frac{\pi}{4}\right)R.
\label{eq:cosine-sum-large-angle}
\end{align}
This proves \eqref{eq:cosine-sum-coercivity}.

Using $\sin^2(r\theta/2)=(1-\cos(r\theta))/2$, we deduce from
\eqref{eq:cosine-sum-coercivity} that
\begin{equation}
\label{eq:Fourier-gap-coercivity}
\frac1R\sum_{r=1}^R
\sin^2\left(\frac{\pi kr}{q}\right)
\geq
a_2\min\left\{1,\frac{R^2k_*^2}{q^2}\right\}
\end{equation}
for an absolute constant $a_2>0$. For each fixed positive integer $k$, the assumption
$R/q\to0$ permits a uniform Taylor expansion over $1\leq r\leq R$, giving
\begin{equation}
\label{eq:fixed-Fourier-limit}
\frac{q^2}{R^2}
\frac1R\sum_{r=1}^R
\sin^2\left(\frac{\pi kr}{q}\right)
\longrightarrow
\frac{\pi^2k^2}{3}.
\end{equation}
Taking $k=1$ proves the upper bound in \eqref{eq:sublinear-Fourier-limit}.

For the lower bound, let $k_q$ minimize \eqref{eq:finite-cycle-gap}, and pass to a
subsequence along which the scaled minimum tends to its lower limit. If $(k_q)_*$ is
unbounded, a further subsequence satisfies $(k_q)_*\to\infty$, and
\eqref{eq:Fourier-gap-coercivity} gives
\begin{equation}
\label{eq:unbounded-Fourier-index-exclusion}
\frac{q^2}{R^2}g_{q,R}
\geq
a_2\min\left\{(k_q)_*^2,\frac{q^2}{R^2}\right\}
\longrightarrow\infty,
\end{equation}
contradicting the upper bound from $k=1$. Thus $(k_q)_*$ is bounded. On a further
subsequence it equals a fixed positive integer $k$, and \eqref{eq:fixed-Fourier-limit}
gives the lower limit $\pi^2k^2/3\geq\pi^2/3$. This proves
\eqref{eq:sublinear-Fourier-limit}.

It remains to identify the reference value. Uniformly for $1\leq r\leq m=o(n)$,
$$
\sin^2\left(\frac{\pi r}{n}\right)
=
\frac{\pi^2r^2}{n^2}
+O\left(\frac{r^4}{n^4}\right).
$$
Consequently,
$$
\Gamma_{n,m}
=
\frac{\pi^2}{n^2}\frac1m\sum_{r=1}^m r^2
+O\left(\frac{m^4}{n^4}\right)
=
\left(\pi^2+o(1)\right)\frac{\kappa_m}{n^2},
$$
which proves \eqref{eq:Gamma-sublinear-asymptotic}. Taking $(q,R)=(n,m)$ in
\eqref{eq:sublinear-Fourier-limit} now gives \eqref{eq:host-gap-sublinear-asymptotic}.
\end{proof}

When $m/n\to\rho\in(0,1/2]$, the Fourier sums instead converge to integrals. Write
the simple-random-walk transition operator on $C_n^m$ as
\begin{equation}
\label{eq:host-averaging-operator}
\mathsf P_{n,m}:=\frac1{2m}A_{C_n^m}.
\end{equation}
Its Fourier eigenvalues are
\begin{equation}
\label{eq:host-Fourier-eigenvalues}
\theta_{k,n}
:=
\frac1m\sum_{r=1}^m
\cos\left(\frac{2\pi kr}{n}\right),
\qquad 0\leq k<n.
\end{equation}
Since $\theta_{0,n}=1$,
\begin{equation}
\label{eq:host-gap-Fourier-formula}
\gamma_{\lazy}(C_n^m)
=
\frac12\left(1-\max_{1\leq k<n}\theta_{k,n}\right).
\end{equation}

\begin{lemma}[Fourier asymptotics for bandwidth proportional to $n$]
\label{lem:macroscopic-Fourier-gap}
If $m/n\to\rho\in(0,1/2]$, then
\begin{equation}
\label{eq:macroscopic-second-eigenvalue}
\max_{1\leq k<n}\theta_{k,n}
\longrightarrow
\begin{cases}
\displaystyle\frac{\sin(2\pi\rho)}{2\pi\rho},
&0<\rho<1/2, \\[2ex]
0,&\rho=1/2.
\end{cases}
\end{equation}
Moreover,
\begin{equation}
\label{eq:Gamma-macroscopic-asymptotic}
\Gamma_{n,m}
\longrightarrow
\frac12\left(
1-\frac{\sin(2\pi\rho)}{2\pi\rho}
\right),
\end{equation}
and
\begin{equation}
\label{eq:host-gap-macroscopic-asymptotic}
\gamma_{\lazy}(C_n^m)
=
\bigl(1+o(1)\bigr)\Gamma_{n,m}.
\end{equation}
\end{lemma}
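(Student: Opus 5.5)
We work directly with the Fourier eigenvalues $\theta_{k,n}$ from \eqref{eq:host-Fourier-eigenvalues}, which are Riemann sums. Write $\rho_n:=m/n\to\rho$. For $k$ fixed,
$$
\theta_{k,n}=\frac1m\sum_{r=1}^m\cos\left(2\pi k\rho_n\frac rm\right)\longrightarrow\int_0^1\cos(2\pi k\rho s)\,ds=\frac{\sin(2\pi k\rho)}{2\pi k\rho}.
$$
The convergence is uniform in $k$ over bounded ranges, because the integrand has derivative $O(k)$ and the Riemann-sum error is $O(k/m)$. For large frequencies, use $\theta_{k,n}=\theta_{n-k,n}$ to reduce to $1\le k\le n/2$. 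Then apply the closed form
$$
\sum_{r=1}^m\cos(r\phi)=\frac{\sin(m\phi/2)\cos((m+1)\phi/2)}{\sin(\phi/2)},\qquad \phi=2\pi k/n,
$$
which gives $|\theta_{k,n}|\le 1/(m\sin(\pi k/n))\le n/(2mk)$, since $\sin x\ge 2x/\pi$ on $[0,\pi/2]$. This tends to $1/(2\rho k)$ and so is small uniformly for $k\ge K$ once $K$ is large.

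To identify the limit of the maximum, let $\phi(x):=\sin x/x$. For $0<\rho<1/2$ we have $2\pi\rho\in(0,\pi)$, so $\phi(2\pi\rho)>0$. We must check that
$$
\phi(2\pi k\rho)\le\phi(2\pi\rho)\qquad\text{for all }k\ge1.
$$
For $k\ge2$ and $2\pi k\rho\le\pi$ this holds because $\phi$ is decreasing on $[0,\pi]$. For $2\pi k\rho>\pi$ we have $|\phi(2\pi k\rho)|\le 1/(2\pi k\rho)$, and we need this to be below $\sin(2\pi\rho)/(2\pi\rho)$, i.e.\ $\sin(2\pi\rho)\ge 1/k$.

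This comparison is the step I expect to be the main obstacle. In general I would use the classical fact that $\max_{x\ge y}\phi(x)=\phi(y)$ for $y\in(0,\pi]$: the positive local maxima of $\phi$ after $\pi$ lie below $1/x$, and $\phi(y)\ge\phi(\pi)$-type bounds fail only near $y=\pi$. The cleaner route is to show $\phi(y)\ge\sup_{x\ge y}\phi(x)$ directly, using that on $[0,\pi]$ the function $\phi$ decreases while $\sup_{x\ge y}\phi(x)$ for $y>\pi$ is at most $1/\pi$. Near $y=\pi$ there is a genuine subtlety, since $\phi(y)\to0$ while $\sup_{x>\pi}\phi(x)\approx0.128$. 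In that regime the limit of the maximum may instead be attained at some $k\ge2$; that is, the formula should be checked via $\phi(2\pi k\rho)$ at $k=2$ when $\rho$ is close to $1/2$.

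The case $\rho=1/2$ is handled directly: every $\phi(\pi k)=0$, and the tail bound shows all $\theta_{k,n}\to0$ uniformly. Combining the bounded-$k$ Riemann limits with the uniform tail bound gives \eqref{eq:macroscopic-second-eigenvalue}, where the limit is the supremum over $k$ of $\phi(2\pi k\rho)$.

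For \eqref{eq:Gamma-macroscopic-asymptotic}, write
$$
\Gamma_{n,m}=\frac12\Bigl(1-\frac1m\sum_{r=1}^m\cos\bigl(2\pi r/n\bigr)\Bigr)=\frac12\bigl(1-\theta_{1,n}\bigr),
$$
and apply the $k=1$ Riemann limit. Then \eqref{eq:host-gap-macroscopic-asymptotic} follows from \eqref{eq:host-gap-Fourier-formula}, provided the maximum is attained asymptotically at $k=1$ and the limit $\frac12\bigl(1-\phi(2\pi\rho)\bigr)$ is positive. Positivity holds because $\phi<1$ away from $0$.
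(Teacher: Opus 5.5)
Your Riemann-sum limits, the tail bound $|\theta_{k,n}|\le n/(2mk)$, and the conclusion $\lim_n\max_{1\le k<n}\theta_{k,n}=\sup_{k\ge1}\phi(2\pi k\rho)$ are all correct. The gap is the step that identifies this supremum as $\phi(2\pi\rho)$. As written, both \eqref{eq:macroscopic-second-eigenvalue} and \eqref{eq:host-gap-macroscopic-asymptotic} remain conditional on that step.

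The route you propose for it cannot work. Comparing $\phi(y)$ with $\sup_{x\ge y}\phi(x)$ over all real $x\ge y$ genuinely fails for $y=2\pi\rho$ near $\pi$, as you observed. Your speculation that the maximum may then be attained at some $k\ge2$ is false, and it would contradict the lemma itself. Moreover, $k=2$ is not even a competitor there, since $\phi(2y)<0$ for $y$ slightly below $\pi$. In your case split, what loses is the crude bound $|\sin(ky)|\le1$.

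The missing idea is that only integer multiples of $y$ matter, and for these the comparison always holds. By angle addition,
\[
|\sin((k+1)y)|\le|\sin(ky)|\,|\cos y|+|\cos(ky)|\,|\sin y|\le|\sin(ky)|+|\sin y|,
\]
so by induction $|\sin(ky)|\le k|\sin y|$ for every integer $k\ge1$. For $y=2\pi\rho\in(0,\pi)$ we have $\sin y>0$, hence
\[
\frac{\sin(ky)}{ky}\le\frac{|\sin(ky)|}{ky}\le\frac{\sin y}{y}\qquad(k\ge1).
\]
This gives $\sup_{k\ge1}\phi(2\pi k\rho)=\phi(2\pi\rho)>0$ for every $\rho\in(0,1/2)$, including $\rho$ near $1/2$. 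There, the odd multiples give values comparable to $\phi(y)$ but never larger.

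With this inequality inserted, your argument closes. The limit of the maximum coincides with the $k=1$ limit, so $\gamma_{\lazy}(C_n^m)=\tfrac12(1-\max_k\theta_{k,n})$ and $\Gamma_{n,m}=\tfrac12(1-\theta_{1,n})$ share the same positive limit, which proves \eqref{eq:host-gap-macroscopic-asymptotic}. This inequality is exactly the step the paper's proof uses.
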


\begin{proof}
For each fixed positive integer $k$, the Riemann sum in
\eqref{eq:host-Fourier-eigenvalues} gives
$$
\theta_{k,n}\longrightarrow\frac{\sin(2\pi k\rho)}{2\pi k\rho}.
$$
Suppose first that $0<\rho<1/2$ and set $x:=2\pi\rho\in(0,\pi)$. Induction using the
angle-addition formula gives $|\sin(kx)|\leq k\sin x$. Hence, for every fixed positive
integer $k$,
$$
\frac{\sin(kx)}{kx}
\leq
\frac{|\sin(kx)|}{kx}
\leq
\frac{\sin x}{x}.
$$
Thus among fixed indices, $k=1$ has the largest limiting eigenvalue, and this limit is positive.

We next show that diverging indices have eigenvalues tending to zero. Put
$k_*:=\min\{k,n-k\}$. The geometric-sum identity gives
$|\sum_{r=1}^m e^{2\pi i kr/n}|\leq|\sin(\pi k_*/n)|^{-1}$.
If $k_*\to\infty$ and $k_*/n\to0$, dividing by $m$ gives
$O(n/(mk_*))=o(1)$. If $k_*/n$ stays bounded away from zero, the bound is
$O(1/m)=o(1)$. Passing to subsequences therefore shows that
$\theta_{k,n}\to0$ whenever $\min\{k,n-k\}\to\infty$.

Let $k_n$ maximize $\theta_{k,n}$ over $1\leq k<n$. Along a further subsequence of any
subsequence, $\min\{k_n,n-k_n\}$ is either fixed or tends to infinity. The preceding
bounds show that the maximum converges to the $k=1$ limit when $\rho<1/2$. When
$\rho=1/2$, both limits are zero. This proves \eqref{eq:macroscopic-second-eigenvalue}.

By \eqref{eq:gap-reference-value} and \eqref{eq:host-Fourier-eigenvalues},
$\Gamma_{n,m}=(1-\theta_{1,n})/2$. Combining this identity with
\eqref{eq:host-gap-Fourier-formula} and \eqref{eq:macroscopic-second-eigenvalue} proves
\eqref{eq:Gamma-macroscopic-asymptotic} and \eqref{eq:host-gap-macroscopic-asymptotic}.
\end{proof}

Lemmas~\ref{lem:sublinear-Fourier-gap} and~\ref{lem:macroscopic-Fourier-gap} cover
$m=o(n)$ and $m/n\to\rho\in(0,1/2]$. A subsequence argument combines them into a
comparison valid throughout $\log n\ll m<n/2$.

\begin{proposition}[Spectral gap of the cycle power]
\label{prop:host-spectral-gap}
If $\log n\ll m<n/2$, then
\begin{equation}
\label{eq:host-gap-Gamma-comparison}
\gamma_{\lazy}(C_n^m)
=
\bigl(1+o(1)\bigr)\Gamma_{n,m}.
\end{equation}
There are absolute constants $0<c_\Gamma<C_\Gamma<\infty$ such that
\begin{equation}
\label{eq:Gamma-uniform-bounds}
c_\Gamma\frac{m^2}{n^2}
\leq
\Gamma_{n,m}
\leq
C_\Gamma\frac{m^2}{n^2}
\end{equation}
for all sufficiently large $n$.
\end{proposition}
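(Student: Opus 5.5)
The proof is a subsequence argument that combines Lemma~\ref{lem:sublinear-Fourier-gap} and Lemma~\ref{lem:macroscopic-Fourier-gap}. For the comparison \eqref{eq:host-gap-Gamma-comparison}, suppose it fails. Then there are $\varepsilon>0$ and a subsequence along which
$$
\bigl|\gamma_{\lazy}(C_n^m)/\Gamma_{n,m}-1\bigr|>\varepsilon .
\]
Since $m/n\in(0,1/2)$, compactness gives a further subsequence with $m/n\to\rho\in[0,1/2]$.

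If $\rho=0$, then $m=o(n)$ along the subsequence, and $m\to\infty$ because $m\gg\log n$. The proof of Lemma~\ref{lem:sublinear-Fourier-gap} uses only these two facts along a sequence, so \eqref{eq:host-gap-sublinear-asymptotic} holds along it. If $\rho>0$, then \eqref{eq:host-gap-macroscopic-asymptotic} holds along it, since the proof of Lemma~\ref{lem:macroscopic-Fourier-gap} also applies along subsequences. Either case contradicts the choice of the subsequence. For $\rho>0$ one needs the limit in \eqref{eq:Gamma-macroscopic-asymptotic} to be positive, so that the ratio is meaningful. This holds because $\sin x/x<1$ for $x=2\pi\rho\in(0,\pi]$.

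For the two-sided bound \eqref{eq:Gamma-uniform-bounds}, I would argue directly rather than by compactness, so that the constants are explicit and absolute. Here $0<\pi r/n\leq\pi m/n<\pi/2$ for $1\leq r\leq m$. Jordan's inequality $\sin x\geq 2x/\pi$ on $[0,\pi/2]$ and the bound $\sin x\leq x$ then give
$$
\frac{4r^2}{n^2}\leq\sin^2\left(\frac{\pi r}{n}\right)\leq\frac{\pi^2r^2}{n^2}.
$$
Averaging over $r$ yields $4\kappa_m/n^2\leq\Gamma_{n,m}\leq\pi^2\kappa_m/n^2$. Since $m^2/3\leq\kappa_m=(m+1)(2m+1)/6\leq m^2$ for $m\geq1$, we may take $c_\Gamma=4/3$ and $C_\Gamma=\pi^2$. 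This holds for every $n$, and in particular for all sufficiently large $n$.

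The main point to check is that the two lemmas are genuinely valid along arbitrary subsequences, with hypotheses only on the limiting behaviour of $m/n$ and $m$. Their proofs are already written as subsequence or limit arguments, so no new estimate is needed. The only other point requiring care is the boundary case $\rho=1/2$ in the macroscopic lemma, which was treated there.
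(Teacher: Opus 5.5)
Your proposal is correct and matches the paper's proof. It uses the same subsequence reduction to Lemma~\ref{lem:sublinear-Fourier-gap} (for $\rho=0$) and Lemma~\ref{lem:macroscopic-Fourier-gap} (for $\rho>0$) for the ratio, and the same sandwich $4r^2/n^2\leq\sin^2(\pi r/n)\leq\pi^2r^2/n^2$ for the two-sided bound. Your explicit constants $c_\Gamma=4/3$ and $C_\Gamma=\pi^2$, obtained from $m^2/3\leq\kappa_m\leq m^2$, are valid and make the paper's last step concrete.
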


\begin{proof}
Every subsequence has a further subsequence on which $m/n\to\rho$ for some
$\rho\in[0,1/2]$. Apply Lemma~\ref{lem:sublinear-Fourier-gap} when $\rho=0$ and
Lemma~\ref{lem:macroscopic-Fourier-gap} when $\rho>0$. In either case, the ratio in
\eqref{eq:host-gap-Gamma-comparison} tends to one, proving the comparison.

For $1\leq r\leq m<n/2$, the inequalities
$2r/n\leq\sin(\pi r/n)\leq\pi r/n$ give
\begin{equation}
\label{eq:Gamma-sum-square-bounds}
\frac4{mn^2}\sum_{r=1}^m r^2
\leq
\Gamma_{n,m}
\leq
\frac{\pi^2}{mn^2}\sum_{r=1}^m r^2.
\end{equation}
Now $\sum_{r=1}^m r^2=m(m+1)(2m+1)/6$ gives \eqref{eq:Gamma-uniform-bounds}.
\end{proof}

\subsection{Degree, volume, and adjacency-matrix estimates}
\label{subsec:gap-probabilistic-estimates}

We need two types of concentration estimates. Degree and volume bounds control the
stationary measure and the variational denominator. A norm bound for the centered adjacency
matrix controls the random part of the Dirichlet form. We begin with a consequence of
Bandeira and van Handel~\cite[Corollary~3.12 and Remark~3.13]{BandeiraVanHandel2016}.

\begin{lemma}[Centered adjacency-matrix norm]
\label{lem:centered-adjacency-norm}
Let $F$ be a deterministic graph on $N$ vertices, let $F_p$ be obtained by retaining each
edge independently with probability $p$, and set
\begin{equation}
\label{eq:centered-adjacency-matrix}
Z:=A_{F_p}-pA_F,
\qquad
\sigma^2:=\max_x\sum_y\E[Z(x,y)^2].
\end{equation}
For every fixed $B>0$, there is a constant $C_B<\infty$ such that
\begin{equation}
\label{eq:centered-adjacency-norm}
\Prob\left(
\lVert Z\rVert_{\mathrm{op}}>
C_B\bigl(\sigma+\sqrt{\log N}\bigr)
\right)
\leq
N^{-B}.
\end{equation}
\end{lemma}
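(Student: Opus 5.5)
This lemma is essentially a restatement of the Bandeira--van Handel bound, so the plan is to check its hypotheses and then convert its expectation-plus-tail form into the stated probability bound. Write $Z=\sum_{\{x,y\}\in E(F)}(\xi_{xy}-p)(E_{xy}+E_{yx})$, where $E_{xy}$ are matrix units and the $\xi_{xy}\sim\Ber(p)$ are independent. Then $Z$ is a symmetric matrix with independent centered entries above the diagonal and zero diagonal. Each entry satisfies $|Z(x,y)|\leq\max\{p,1-p\}\leq1$ almost surely. The variance parameter is $\sigma^2=\max_x\sum_y p(1-p)\1_{\{x,y\}\in E(F)}=p(1-p)\Delta(F)$.

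\cite[Corollary~3.12 and Remark~3.13]{BandeiraVanHandel2016} applies to symmetric matrices with independent, centered, uniformly bounded entries. For every $\varepsilon\in(0,1/2]$ and every $s\geq0$, it gives
\[
\Prob\Bigl(\norm{Z}_{\op}\geq(1+\varepsilon)2\sqrt2\,\sigma+\frac{C}{\sqrt{\log(1+\varepsilon)}}\,\sigma_*\sqrt{\log N}+s\Bigr)\leq N e^{-s^2/(C'\sigma_*^2)},
\]
where $\sigma_*:=\max_{x,y}\norm{Z(x,y)}_\infty\leq1$. Fix $\varepsilon=1/2$ and substitute $\sigma_*\leq1$. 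Then choose $s=\sqrt{C'(B+1)\log N}$, which makes the right-hand side $N^{-B}$. Absorbing the constants, the threshold becomes at most $C_B(\sigma+\sqrt{\log N})$ with $C_B:=3\sqrt2+C/\sqrt{\log(3/2)}+\sqrt{C'(B+1)}$. This is exactly \eqref{eq:centered-adjacency-norm}.

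If only the expectation version $\E\norm{Z}_{\op}\lesssim\sigma+\sqrt{\log N}$ is quoted, I would add Talagrand's concentration inequality for convex Lipschitz functions. The map $(\xi_e)\mapsto\norm{Z}_{\op}$ is convex and $\sqrt2$-Lipschitz in the Euclidean norm on $[0,1]^{E(F)}$, because $\norm{Z}_{\op}\leq\norm{Z}_{\mathrm{F}}$. So $\norm{Z}_{\op}$ deviates from its median by more than $u$ with probability at most $4e^{-u^2/16}$. Taking $u\asymp\sqrt{B\log N}$ gives the same conclusion.

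The only delicate point is bookkeeping: making sure the cited corollary is stated for bounded rather than Gaussian entries (Remark~3.13 supplies this), and that the constant depends only on $B$. In particular, $C_B$ must not depend on $p$, $F$, or $N$. This holds because $\sigma_*\leq1$ uniformly.
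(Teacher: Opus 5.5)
Your proposal is correct and follows the paper's proof: both apply Corollary~3.12 and Remark~3.13 of \cite{BandeiraVanHandel2016} with $\varepsilon$ fixed and the sup-norm parameter bounded by one, then take $t\asymp\sqrt{(B+1)\log N}$ so that $Ne^{-t^2/C}\leq N^{-B}$ and absorb all constants into $C_B$ (the paper uses $B+2$ and gets $N^{-B-1}$). The extra $\sqrt{\log N}$ term in the threshold you quote is harmless, because enlarging the threshold only shrinks the event, and your Talagrand fallback is valid but not needed.
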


\begin{proof}
The entries of $Z$ above the diagonal are independent, centered, and bounded in absolute
value by one. Apply Corollary~3.12 and Remark~3.13 of \cite{BandeiraVanHandel2016}, fixing
their parameter $\varepsilon$ and using $\widetilde\sigma_*\leq1$, to obtain
$$
\Prob\left(
\lVert Z\rVert_{\mathrm{op}}>
C_0\sigma+t
\right)
\leq N\exp(-t^2/C_0),
\qquad t\geq0,
$$
for an absolute constant $C_0$. Choose
$$
t=\sqrt{C_0(B+2)\log N}.
$$
Then
$$
N\exp(-t^2/C_0)=N^{-B-1}\leq N^{-B}.
$$
Increasing the coefficient of $\sigma+\sqrt{\log N}$ gives
\eqref{eq:centered-adjacency-norm}.
\end{proof}

The next lemma gives uniform degree and volume bounds, together with a sharp estimate
for the minimum degree. Connectedness throughout the parameter range already follows from
\eqref{eq:connectivity-supercritical-uniform}.

For $x\in(0,1)$ and $y>1$, define
\begin{equation}
\label{eq:chernoff-rate-functions}
I_-(x):=1-x+x\log x,
\qquad
I_+(y):=y\log y-y+1.
\end{equation}
Recall from \eqref{eq:minimum-degree-profile} that the minimum-degree profile
$\delta_{\min}$ maps $(1,\infty]$ to $(0,1]$, with $\delta_{\min}(\infty)=1$.
For $\theta\in(1,\infty)$, its defining relation is
$$
\theta I_-(\delta_{\min}(\theta))=1.
$$
The function $\delta_{\min}$ is increasing and converges to one as
$\theta\to\infty$.

\begin{lemma}[Uniform degree, minimum-degree, and volume estimates]
\label{lem:degrees-fixed-c}
There exist constants $d_-(c)\in(0,1)$ and $d_+(c)>1$ depending only on $c$.
Uniformly over $p\in[c\log n/(2m),1]$, the following bounds hold with probability tending to
one:
\begin{equation}
\label{eq:fixed-c-degree-event}
d_-(c)\mu
\leq
d_G(v)
\leq
d_+(c)\mu
\qquad\text{for every }v\in\mathbb Z_n.
\end{equation}
Consequently, on the same event,
\begin{equation}
\label{eq:fixed-c-volume-event}
d_-(c)\mu|S|
\leq
\vol_G(S)
\leq
d_+(c)\mu|S|
\qquad\text{for every }S\subseteq\mathbb Z_n.
\end{equation}
In addition,
\begin{equation}
\label{eq:uniform-total-volume}
\frac{\vol_G(\mathbb Z_n)}{n\mu}
\xrightarrow{\Prob}1
\end{equation}
uniformly over the same range of $p$.
Moreover,
\begin{equation}
\label{eq:uniform-sharp-minimum-degree}
\frac{d_{\min}(G)}{\mu}
-
\delta_{\min}(\mu/\log n)
\xrightarrow{\Prob}0
\end{equation}
uniformly over $p\in[c\log n/(2m),1]$.
\end{lemma}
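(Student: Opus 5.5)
Each degree $d_G(v)\sim\Bin(2m,p)$ with mean $\mu=2mp\geq c\log n$, and all four claims reduce to Chernoff-type tail bounds combined with union bounds over the $n$ vertices. Uniformity in $p$ comes from monotone behaviour of the tails in $\mu$, or from using the worst case $\mu=c\log n$.

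For \eqref{eq:fixed-c-degree-event}, I would use the exact binomial large-deviation bounds
$$
\Prob(d_G(v)\leq x\mu)\leq e^{-\mu I_-(x)},
\qquad
\Prob(d_G(v)\geq y\mu)\leq e^{-\mu I_+(y)}.
$$
Choose $d_-(c)\in(0,1)$ with $cI_-(d_-)>1$. This is possible because $I_-(x)\to1$ as $x\to0$ and $c>1$. Choose $d_+(c)$ with $cI_+(d_+)>1$. Since $\mu\geq c\log n$, both failure probabilities are at most $n^{-1-\epsilon}$ per vertex, uniformly in $p$. A union bound finishes. Summing over $S$ gives \eqref{eq:fixed-c-volume-event} deterministically. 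For \eqref{eq:uniform-total-volume}, note that $\vol_G(\mathbb Z_n)=2|E(G)|$ with $|E(G)|\sim\Bin(nm,p)$ of mean $nm p=n\mu/2\to\infty$. Chebyshev or Chernoff then gives relative concentration uniformly in $p$.

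The main work is the sharp minimum degree \eqref{eq:uniform-sharp-minimum-degree}. Write $\theta=\mu/\log n\in[c,2m/\log n]$ and fix $\epsilon>0$.

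\emph{Lower bound.} For $\theta$ bounded, put $x=\delta_{\min}(\theta)-\epsilon$. Then $\theta I_-(x)\geq1+\eta(\epsilon,c)$, since $I_-$ is strictly decreasing on $(0,1)$ and $\delta_{\min}$ is continuous. So the union bound gives $n\cdot n^{-1-\eta}\to0$. For $\theta$ large, $\delta_{\min}(\theta)$ is near $1$, and $\theta I_-(1-\epsilon)\to\infty$ gives the same conclusion. A compactness argument on $\theta\in[c,\infty]$ makes $\eta$ uniform. When $x\leq0$ the claim is trivial.

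\emph{Upper bound.} I need some vertex with $d_G(v)\leq(\delta_{\min}(\theta)+\epsilon)\mu$. This is nontrivial only when $\delta_{\min}(\theta)+\epsilon<1$, which forces $\theta$ to be bounded by some $\theta_\epsilon$. Take $x=\delta_{\min}+\epsilon$, so that $\theta I_-(x)\leq1-\eta$. Let $X$ count vertices with degree at most $x\mu$.
\begin{itemize}
\item The first moment needs a matching binomial lower tail, $\Prob(\Bin(2m,p)\leq x\mu)\geq e^{-\mu I_-(x)(1+o(1))}$. I would derive it from the point probability at $\lfloor x\mu\rfloor$ via Stirling. This uses $p\to0$ (true since $\theta$ is bounded and $m\gg\log n$), which makes the binomial locally Poisson-like with rate $I_-$. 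The result is $\E X\geq n^{\eta/2}$.
\item For the second moment, the degree events for $u,v$ depend only through the single shared edge when $r_{uv}\leq m$, and are independent otherwise.
\end{itemize}
The second moment is the step I expect to be the main obstacle, although it should be routine. To keep it clean, I would restrict to a set of vertices pairwise at cyclic distance greater than $m$, which have independent degrees. Choosing every $(m+1)$st vertex gives about $n/(m+1)$ of them. This loses a factor $m$ in the expectation, but $m$ might be as large as $n/2$, which is too lossy. So instead I would condition on the single shared edge, which changes each probability by at most a factor $1+O(1)$. Then $\Var X\leq \E X+O(nm)\Prob(\cdot)^2$, hence $\Var X/(\E X)^2\to0$. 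Chebyshev then gives $X>0$ with high probability, uniformly over $\theta\in[c,\theta_\epsilon]$, since all bounds depend on $p$ only through $\theta$ and $p\to0$ uniformly there.
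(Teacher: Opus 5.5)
Your plan follows the paper's proof closely: rate-function Chernoff bounds with $cI_\pm(d_\pm(c))>1$ plus a union bound, concentration of $|E(G)|\sim\Bin(nm,p)$, a Stirling lower bound for the binomial lower tail, and first and second moments of the number $X$ of vertices with $d_G(v)\le x\mu$. Your compactness argument in $\theta\in[c,\infty]$ is equivalent to the paper's subsequence argument.

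\textbf{The gap: the variance bound.} From $\Var X\le\E X+O(nm)\,r^2$, with $r:=\Prob(d_G(0)\le x\mu)$ and $\E X=nr$, you only get
\[
\frac{\Var X}{(\E X)^2}\le\frac{1}{nr}+O\Bigl(\frac mn\Bigr).
\]
The second term does not vanish when $m\asymp n$. The lemma must cover that case, since only $m<n/2$ is assumed, and you noted yourself that $m$ may be as large as $n/2$. Bounding each adjacent-pair covariance by $O(r^2)$ is too crude. The needed smallness also cannot come from the conditioning changing the probabilities only slightly. Write $d_G(u)=D_u+\xi$ and $d_G(v)=D_v+\xi$ with $D_u,D_v\sim\Bin(2m-1,p)$, and set $F:=\Prob(D_u\le\lfloor x\mu\rfloor)$ and $\Delta:=\Prob(D_u=\lfloor x\mu\rfloor)$. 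Then $\Prob(I_u=1\mid\xi=1)=F-\Delta$ while $\Prob(I_u=1\mid\xi=0)=F$. At a lower-tail point $x<1$ the point mass $\Delta$ is a constant fraction of $F$, so conditioning on the shared edge changes the probability by a constant factor.

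\textbf{The fix.} What you need is that the shared edge is present only with probability $p$. Since $I_u$ and $I_v$ are conditionally independent given $\xi$,
\[
\operatorname{Cov}(I_u,I_v)=\Var\bigl(\Prob(I_u=1\mid\xi)\bigr)=p(1-p)\Delta^2\le\frac{p}{1-p}\,r^2,
\]
using $r=F-p\Delta\ge(1-p)F$ and $\Delta\le F$. Summing over the $nm$ host edges gives
\[
\frac{\Var X}{(\E X)^2}\le\frac1{nr}+\frac{Cmp}{n}.
\]
For bounded $\theta$ we have $mp=\mu/2=O(\log n)$, so the ratio tends to zero throughout $\log n\ll m<n/2$. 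This is exactly the computation in the paper.

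\textbf{A smaller point.} The case $\delta_{\min}(\theta)+\epsilon\ge1$ of the upper bound is not literally trivial. It follows from $d_{\min}(G)\le\vol_G(\mathbb Z_n)/n=(1+o_{\Prob}(1))\mu$, which is how the paper treats $\theta=\infty$.
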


\begin{proof}
\emph{Degree and volume bounds.}
The rate functions satisfy $I_-(x)\to1$ as $x\downarrow0$ and $I_+(y)\to\infty$ as
$y\to\infty$. Since $c>1$, we may choose $d_-(c)\in(0,1)$ and $d_+(c)>1$ so that
\begin{equation}
\label{eq:degree-rate-choice}
cI_-(d_-(c))>1,
\qquad
cI_+(d_+(c))>1.
\end{equation}

For $Z_0\sim\Bin(N,p_0)$ with mean $\bar\mu=Np_0$, the inequality $1+u\leq e^u$ gives
\begin{equation}
\label{eq:binomial-exponential-moment-bound}
\E[e^{\theta Z_0}]
=(1-p_0+p_0e^\theta)^N
\leq
\exp\bigl(\bar\mu(e^\theta-1)\bigr),
\qquad \theta\in\mathbb R.
\end{equation}
Markov's inequality with $\theta=\log x<0$ and $\theta=\log y>0$, respectively, yields
\begin{align}
\label{eq:binomial-rate-tail-bounds}
\Prob(Z_0\leq x\bar\mu)
&\leq
\exp\bigl[-\bar\mu I_-(x)\bigr],
\notag\\
\Prob(Z_0\geq y\bar\mu)
&\leq
\exp\bigl[-\bar\mu I_+(y)\bigr].
\end{align}

Each $v\in\mathbb Z_n$ has degree $d_G(v)$ distributed as $\Bin(2m,p)$, with mean
$\mu\geq c\log n$. Let $\mathcal D_n$ be the event in \eqref{eq:fixed-c-degree-event}.
By a union bound and \eqref{eq:degree-rate-choice}--\eqref{eq:binomial-rate-tail-bounds},
\begin{align}
\Prob(\mathcal D_n^c)
&\leq
n\exp\bigl[-\mu I_-(d_-(c))\bigr]
+n\exp\bigl[-\mu I_+(d_+(c))\bigr]
\notag\\
&\leq
n^{1-cI_-(d_-(c))}
+n^{1-cI_+(d_+(c))}
=o(1),
\label{eq:fixed-c-degree-failure}
\end{align}
uniformly in $p$. Summing the degree bounds over $v\in S$ gives
\eqref{eq:fixed-c-volume-event} for every $S$ on the same event.

For the total volume, use $|E(G)|\sim\Bin(nm,p)$, with mean $nmp=n\mu/2$, and
$\vol_G(\mathbb Z_n)=2|E(G)|$. For each fixed $\varepsilon\in(0,1)$,
\eqref{eq:chernoff-two-sided} gives
\begin{equation}
\label{eq:uniform-total-volume-tail}
\sup_{p\in[c\log n/(2m),1]}
\Prob_p\left(
\left|
\frac{\vol_G(\mathbb Z_n)}{n\mu}-1
\right|>\varepsilon
\right)
\leq
2\exp\left(-\frac{\varepsilon^2cn\log n}{6}\right),
\end{equation}
which proves \eqref{eq:uniform-total-volume}.

\smallskip\noindent
\emph{The binomial lower-tail exponent.}
Suppose that $\mu/\log n$ lies in a compact subset of $(1,\infty)$ and $x$ lies in a compact
subset of $(0,1)$. For $D\sim\Bin(2m,p)$ with mean $\mu$, we claim that
\begin{equation}
\label{eq:binomial-lower-tail-logarithmic-asymptotic}
\log\Prob(D\leq x\mu)
=
-\mu I_-(x)+o(\log n),
\end{equation}
uniformly over these parameters. The upper bound is \eqref{eq:binomial-rate-tail-bounds}.
For the lower bound, take $k=\lfloor x\mu\rfloor$ and retain only the term $\Prob(D=k)$.
Stirling's formula, $\mu=\Theta(\log n)$, $p=\mu/(2m)$, and $\log n\ll m$ give
\begin{align*}
\log\Prob(D=k)
&=
\log\binom{2m}{k}
+k\log p
+(2m-k)\log(1-p)\\
&=
k\log\left(\frac{\mu}{k}\right)
+k-\mu
+O\left(\log\mu+\frac{\mu^2}{m}\right)\\
&=
-\mu I_-(x)+o(\log n).
\end{align*}
This proves \eqref{eq:binomial-lower-tail-logarithmic-asymptotic}.

\smallskip\noindent
\emph{The minimum degree.}
Take any sequence of admissible retention probabilities and put $\theta_n:=\mu/\log n$.
Pass to a subsequence on which $\theta_n\to\theta\in[c,\infty]$. First suppose that
$\theta<\infty$, and choose $0<x_-<\delta_{\min}(\theta)<x_+<1$.
The tail bounds \eqref{eq:binomial-rate-tail-bounds} and
\eqref{eq:binomial-lower-tail-logarithmic-asymptotic} imply
\begin{align}
\label{eq:minimum-degree-lower-threshold}
n\Prob\bigl(d_G(0)\leq x_-\mu\bigr)&\longrightarrow0,\\
\label{eq:minimum-degree-upper-threshold}
n\Prob\bigl(d_G(0)\leq x_+\mu\bigr)&\longrightarrow\infty,
\end{align}
since $\theta I_-(x_-)>1>\theta I_-(x_+)$. The union bound and \eqref{eq:minimum-degree-lower-threshold} imply
$d_{\min}(G)>x_-\mu$ with probability tending to one.

For the reverse bound, count vertices of low degree:
$$
X:=\sum_{v\in\mathbb Z_n}
\mathbf 1_{\{d_G(v)\leq x_+\mu\}},
\qquad
r:=\Prob\bigl(d_G(0)\leq x_+\mu\bigr).
$$
The degrees $d_G(u)$ and $d_G(v)$ are independent when $\{u,v\}\notin E(H)$.
For a host edge $\{u,v\}$ with indicator $\xi$, write
$d_G(u)=D_u+\xi$ and $d_G(v)=D_v+\xi$, where $D_u,D_v\sim\Bin(2m-1,p)$ are independent
of each other and of $\xi$. Set
$$
F:=\Prob(D_u\leq\lfloor x_+\mu\rfloor),
\qquad
\Delta:=\Prob(D_u=\lfloor x_+\mu\rfloor),
$$
Conditioning on $\xi$ gives $r=F-p\Delta$ and joint probability
$(1-p)F^2+p(F-\Delta)^2$. Subtracting $r^2$, we obtain
\begin{equation}
\label{eq:adjacent-low-degree-covariance}
\operatorname{Cov}\left(
\mathbf 1_{\{d_G(u)\leq x_+\mu\}},
\mathbf 1_{\{d_G(v)\leq x_+\mu\}}
\right)
=
p(1-p)\Delta^2.
\end{equation}
The bounds $r=F-p\Delta\geq(1-p)F$ and $\Delta\leq F$ show that this covariance is at most
$p(1-p)^{-1}r^2$. Moreover, $p\to0$ since $\theta<\infty$ and $\log n\ll m$.
Summing over the $nm$ host edges gives
\begin{equation}
\label{eq:low-degree-count-second-moment}
\frac{\Var(X)}{\E[X]^2}
\leq
\frac1{nr}
+C\frac{mp}{n}
\longrightarrow0
\end{equation}
by \eqref{eq:minimum-degree-upper-threshold} and
$mp=\mu/2=O(\log n)$. Thus $X>0$ with high probability, so $d_{\min}(G)\leq x_+\mu$. Letting $x_-$ and
$x_+$ approach $\delta_{\min}(\theta)$ gives
$d_{\min}(G)/\mu\to\delta_{\min}(\theta)$ in probability. The inverse relation
$\theta I_-(\delta_{\min}(\theta))=1$ is continuous, so
$\delta_{\min}(\theta_n)\to\delta_{\min}(\theta)$ as well.

If $\theta=\infty$, a union bound and \eqref{eq:binomial-rate-tail-bounds} give, for
each fixed $\varepsilon\in(0,1)$,
$$
\Prob\bigl(d_{\min}(G)<(1-\varepsilon)\mu\bigr)
\leq
n\exp\bigl[-\mu I_-(1-\varepsilon)\bigr]
\longrightarrow0.
$$
The minimum degree is at most the average degree, so \eqref{eq:uniform-total-volume} gives
$$
\frac{d_{\min}(G)}{\mu}
\leq
\frac{\vol_G(\mathbb Z_n)}{n\mu}
=
1+o_{\Prob}(1).
$$
Hence $d_{\min}(G)/\mu\to1=\delta_{\min}(\infty)$ in probability.
The argument applies to every subsequence of admissible retention probabilities.
If \eqref{eq:uniform-sharp-minimum-degree} were not uniform, there would be
$\varepsilon>0$ and a sequence along which the failure probability stayed above
$\varepsilon$. Passing to a further subsequence as above gives a contradiction.
\end{proof}

\subsection{Spectral gap upper bound}
\label{subsec:gap-sharp-upper}

For the upper bound, we test the variational formula with $x\mapsto\cos(2\pi x/n)$.
Its energy and degree-weighted variance both concentrate to relative error $o(1)$,
uniformly over $p\geq c\log n/(2m)$.

\begin{proposition}[Sharp spectral gap upper bound]
\label{prop:sharp-gap-upper}
Under the assumptions of Theorem~\ref{thm:spectral-gap}, for every $\varepsilon>0$,
\begin{equation}
\label{eq:sharp-gap-upper-uniform}
\sup_{p\in[c\log n/(2m),1]}
\Prob_p\left(
G\text{ is disconnected}
\ \text{or}\
\frac{\gamma_{\lazy}(G)}{\Gamma_{n,m}}>1+\varepsilon
\right)
\longrightarrow0.
\end{equation}
\end{proposition}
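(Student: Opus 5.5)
We test the variational formula of Lemma~\ref{lem:variational-gap} with $f(x):=\cos(2\pi x/n)$, as announced. By \eqref{eq:graph-lazy-gap-definition} and \eqref{eq:graph-dirichlet-edge-formula},
$$
\gamma_{\lazy}(G)\leq\frac{1}{2}\cdot\frac{\mathcal E(f)}{\Var_{\pi_G}(f)},
\qquad
\mathcal E(f):=\frac{1}{\vol_G(\mathbb Z_n)}\sum_{\{x,y\}\in E(G)}(f(x)-f(y))^2 .
$$
Disconnection is handled by \eqref{eq:connectivity-supercritical-uniform}, so it suffices to show that, uniformly in $p$, the numerator and denominator each concentrate around their deterministic counterparts with relative error $o(1)$.

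\emph{Numerator.} Write $S:=\sum_{\{x,y\}\in E(G)}(f(x)-f(y))^2$. This is a sum of independent terms $\xi_e w_e$ with $w_e\in[0,4]$. For an edge of length $r$, $w_e=4\sin^2(\pi r/n)\sin^2(\pi(x+y)/n)$. Averaging over positions gives
$$
\E S=p\sum_e w_e=2pn\sum_{r=1}^m\sin^2(\pi r/n)=2pnm\,\Gamma_{n,m}=n\mu\,\Gamma_{n,m}.
$$
For the variance, $\Var S\leq p\sum_e w_e^2\leq 4\E S$, and $w_e\leq4$. Bernstein's inequality (Lemma~\ref{lem:bernstein}) with $t=\varepsilon\E S$ then gives failure probability $\exp(-c\varepsilon^2\E S)$. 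By \eqref{eq:Gamma-uniform-bounds},
$$
\E S\gtrsim n\mu m^2/n^2=\mu m^2/n\geq c\,m\log n\cdot(m/n).
$$
This is where care is needed: $m^2\log n/n$ need not diverge (for instance when $m\approx\sqrt n$). I therefore expect concentration of the numerator to be the main obstacle in the sparse, narrow regime. A Bernstein bound alone fails when $\E S=O(1)$, so instead I will use only the first moment for an upper bound: by Markov, $S\leq(1+\varepsilon)\E S$ fails with probability at most $1/(1+\varepsilon)$. That is not $o(1)$, so Markov is insufficient. Better: $\Var S\leq 4\cdot\max_e w_e\cdot \E S/4$, and since $\max_e w_e\leq 4\pi^2m^2/n^2$, we get $\Var S\lesssim (m^2/n^2)\E S$. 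Chebyshev then gives relative error $O\bigl((m^2/n^2)/\E S\bigr)^{1/2}=O((n\mu)^{-1/2})\to0$ uniformly. Equivalently, Bernstein with $K=\max_e w_e$ gives an exponent of order $\E S/K\asymp n\mu$. So the numerator satisfies $S=(1+o_{\Prob}(1))\,n\mu\Gamma_{n,m}$.

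\emph{Denominator.} By \eqref{eq:uniform-total-volume}, $\vol_G(\mathbb Z_n)=(1+o_{\Prob}(1))n\mu$. It remains to show $\Var_{\pi_G}(f)\geq(1-o_{\Prob}(1))/2$. We have
$$
\Var_{\pi_G}(f)=\sum_x\pi_G(x)f(x)^2-\Bigl(\sum_x\pi_G(x)f(x)\Bigr)^2 .
$$
Both sums are linear statistics $\sum_x d_G(x)g(x)=\sum_e\xi_e\,(g(x)+g(y))$ with $|g|\leq1$. Their means are $\mu\sum_x g(x)$, which equals $\mu n/2$ for $g=f^2$ and $0$ for $g=f$. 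Their variances are at most $4nmp=2n\mu$. Chebyshev therefore gives fluctuations $O_{\Prob}(\sqrt{n\mu})=o(n\mu)$, uniformly in $p$. Hence $\sum_x\pi_G(x)f(x)^2\to1/2$ and $\sum_x\pi_G(x)f(x)\to0$ in probability, so $\Var_{\pi_G}(f)=1/2+o_{\Prob}(1)$.

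\emph{Combining.} Putting the two estimates together,
$$
\gamma_{\lazy}(G)\leq\frac{1}{2}\cdot\frac{(1+o_{\Prob}(1))\,\Gamma_{n,m}}{1/2+o_{\Prob}(1)}=(1+o_{\Prob}(1))\,\Gamma_{n,m}.
$$
All the error probabilities above are bounded in terms of $n\mu\geq cn\log n$ alone, so they are uniform over $p\in[c\log n/(2m),1]$, which yields \eqref{eq:sharp-gap-upper-uniform}.
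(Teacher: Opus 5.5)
Your proposal is correct and follows essentially the paper's argument. Both use the test function $\cos(2\pi x/n)$, the identity $\sum_e w_e=2nm\Gamma_{n,m}$, and the bound $\max_e w_e\le 4\pi^2m^2/n^2$, which gives relative variance $O(1/(n\mu))$ for the energy; both then concentrate $\sum_x d_G(x)f(x)^2$, $\sum_x d_G(x)f(x)$ and $\vol_G(\mathbb Z_n)$. The only difference is that you apply Chebyshev where the paper applies Bernstein with $\eta_n=(n\log n)^{-1/4}$, which is enough here because only an $o(1)$ failure probability, uniform in $p$, is needed; in a write-up, drop the abandoned Bernstein-with-$K=4$ and Markov detours.
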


\begin{proof}
Set $p_0:=c\log n/(2m)$. By \eqref{eq:connectivity-supercritical-uniform}, $G$ is
connected with high probability, uniformly over $p\in[p_0,1]$. We estimate the Rayleigh
quotient on this event.

Set $\eta_n:=(n\log n)^{-1/4}$ and
$\phi(x):=\cos(2\pi x/n)$ for $x\in\mathbb Z_n$. For a host edge $e=\{u,v\}$, put
$a_e:=(\phi(u)-\phi(v))^2$ and $X:=\sum_{e\in E(H)}\xi_ea_e$.
There are $n$ host edges of each cyclic length $r\in\{1,\ldots,m\}$. Summing
$\cos\alpha-\cos(\alpha+\theta)
=2\sin(\theta/2)\sin(\alpha+\theta/2)$ over the starting vertex gives
\begin{equation}
\label{eq:cosine-host-energy}
\sum_{e\in E(H)}a_e
=
2n\sum_{r=1}^m\sin^2\left(\frac{\pi r}{n}\right)
=
2nm\Gamma_{n,m}.
\end{equation}
The same identity gives
\begin{equation}
\label{eq:cosine-edge-maximum}
\max_{e\in E(H)}a_e
\leq
4\pi^2\frac{m^2}{n^2}.
\end{equation}

Since $a_e^2\leq(\max_fa_f)a_e$,
$\Var(X)\leq8\pi^2p(m^2/n^2)\,nm\Gamma_{n,m}$.
\eqref{eq:cosine-host-energy}, \eqref{eq:cosine-edge-maximum}, and
\eqref{eq:Gamma-uniform-bounds} imply
\begin{equation}
\label{eq:cosine-energy-relative-scales}
\frac{\Var(X)}{(\E X)^2}
\leq
\frac{C}{pmn},
\qquad
\frac{\max_ea_e}{\E X}
\leq
\frac{C}{pmn}
\end{equation}
for an absolute constant $C$. Lemma~\ref{lem:bernstein} therefore gives
\begin{equation}
\label{eq:random-cosine-energy-tail}
\Prob\left(
\left|
\frac{X}{2pnm\Gamma_{n,m}}-1
\right|>\eta_n
\right)
\leq
2\exp\bigl(-c_0\eta_n^2pmn\bigr)
\end{equation}
for an absolute constant $c_0>0$ and all sufficiently large $n$. Since
\begin{equation}
\label{eq:sharp-upper-concentration-scale}
\eta_n^2pmn
\geq
\frac c2\sqrt{n\log n}
\longrightarrow\infty,
\end{equation}
the energy estimate holds with high probability, uniformly in $p$.

We next estimate the degree-weighted variance of $\phi$. Write
$Z_\phi:=\sum_xd_G(x)\phi(x)^2$,
$Y_\phi:=\sum_xd_G(x)\phi(x)$, and
$V_G:=\vol_G(\mathbb Z_n)$.
Expressing these quantities as edge sums and using $\sum_x\phi(x)^2=n/2$ and
$\sum_x\phi(x)=0$, we obtain $\E[Z_\phi]=n\mu/2$, $\E[Y_\phi]=0$, and
$\E[V_G]=n\mu$. Each centered summand is bounded by two, and the variance sum is at most
$C_1n\mu$ for an absolute constant $C_1$. Bernstein's inequality and a union bound give
\begin{equation}
\Prob\biggl(
\left|Z_\phi-\frac{n\mu}{2}\right|>\eta_n n\mu
\ \text{or}\
|Y_\phi|>\eta_n n\mu
\ \text{or}\
|V_G-n\mu|>\eta_n n\mu
\biggr)
\leq
6\exp\bigl(-c_1\eta_n^2n\mu\bigr)
\label{eq:weighted-cosine-moment-tail}
\end{equation}
for an absolute constant $c_1>0$ and all sufficiently large $n$. Since the exponent is at
least $c_1c\sqrt{n\log n}$, this failure probability tends to zero uniformly in $p$.

Outside the exceptional event in \eqref{eq:weighted-cosine-moment-tail},
\begin{align}
\min_{a\in\mathbb R}\sum_xd_G(x)\bigl(\phi(x)-a\bigr)^2
&=
Z_\phi-\frac{Y_\phi^2}{V_G}
\notag\\
&=
\left(1+O(\eta_n)\right)\frac{n\mu}{2}.
\label{eq:weighted-cosine-variance}
\end{align}
The first equality follows by minimizing over $a$. On the intersection with the connectedness event,
Lemma~\ref{lem:variational-gap} and \eqref{eq:graph-dirichlet-edge-formula} now give
\begin{align}
\gamma_{\lazy}(G)
&\leq
\frac{X}
{2\min_{a\in\mathbb R}\sum_xd_G(x)(\phi(x)-a)^2}
\notag\\
&\leq
\bigl(1+O(\eta_n)\bigr)
\frac{2pnm\Gamma_{n,m}}{n\mu}
\notag\\
&=
\bigl(1+O(\eta_n)\bigr)\Gamma_{n,m},
\label{eq:sharp-gap-upper-Rayleigh}
\end{align}
where we used $\mu=2mp$. The failure bounds
\eqref{eq:connectivity-supercritical-uniform}, \eqref{eq:random-cosine-energy-tail}, and
\eqref{eq:weighted-cosine-moment-tail} prove \eqref{eq:sharp-gap-upper-uniform}.
\end{proof}

\subsection[Spectral gap lower bound when m=o(n)]
{Spectral gap lower bound when $m=o(n)$}
\label{subsec:gap-sublinear-lower}

We now assume
\begin{equation}
\label{eq:sublinear-bandwidth}
m=o(n).
\end{equation}
The lower bound comes from separating block averages from fluctuations within blocks.
The averages have nearly the energy of a deterministic cycle power on the block indices.
The fluctuations cost at least a positive constant times $\mu$ in Laplacian energy.
A Schur complement then shows that allowing these fluctuations changes the energy of
the block averages by only a relative $o(1)$.

Set
\begin{equation}
\label{eq:block-K-q}
K:=\left\lfloor(\log n)^{1/4}\right\rfloor,
\qquad
q:=\left\lceil\frac{nK}{m}\right\rceil.
\end{equation}
For $0\leq i\leq q$, let $t_i:=\lfloor in/q\rfloor$, and partition $\mathbb Z_n$ into the
consecutive blocks
\begin{equation}
\label{eq:sublinear-block-partition}
B_i:=\{t_i,t_i+1,\ldots,t_{i+1}-1\},
\qquad 0\leq i<q.
\end{equation}
Write $s_i:=|B_i|$, $\bar s:=n/q$,
$s_-:=\min_i s_i$, and $s_+:=\max_i s_i$. Define
$R:=\lfloor m/s_+\rfloor-1$.
Since $m=o(n)$, we have $nK/m\to\infty$ and
\begin{equation}
\label{eq:block-asymptotics}
q\sim\frac{nK}{m},
\bar s\sim\frac mK,
\qquad
R\sim K,
\qquad
\frac{s_+}{s_-}\longrightarrow1,
\qquad
\frac Rq\longrightarrow0.
\end{equation}
Here the block sizes differ by at most one, and $\bar s\sim m/K\to\infty$ because
$m/\log n\to\infty$. In particular, $1\leq R<q/2$ for all sufficiently large $n$.

Let $H^-$ contain all edges within a block and all edges between blocks at cyclic block
distance at most $R$. We check that these edges are admissible. Since
$R+1=\lfloor m/s_+\rfloor$, we have $(R+1)s_+\leq m$. Vertices in one block have
cyclic distance at most $s_+-1<m$. Vertices in distinct blocks at block distance at most
$R$ are joined by an arc meeting at most $R+1$ blocks, of length at most
$(R+1)s_+-1<m$. Hence
\begin{equation}
\label{eq:sublinear-host-inclusion}
H^-\subseteq H.
\end{equation}
Let $G^-$ be obtained from $H^-$ using the same retention indicators as $G$.

The advantage of $H^-$ is that its blocks are complete and its nonempty interfaces are
complete bipartite graphs. The next lemma gives the concentration estimates needed to
compare $G^-$ with a deterministic quadratic form on the block indices.

\begin{lemma}[Block estimates when $m=o(n)$]
\label{lem:sublinear-block-events}
There exist constants $\alpha=\alpha(c)>0$ and $A=A(c)<\infty$, and a deterministic
sequence $\varepsilon_n\to0$, such that the following estimates hold simultaneously
with probability tending to one, uniformly over $p\in[c\log n/(2m),1]$:
\begin{align}
\min_x d_{G^-}(x)&\geq\alpha\mu,
&
\max_x d_G(x)&\leq A\mu,
\label{eq:block-degree-events}\\
\left\lVert A_{G^-}-pA_{H^-}\right\rVert_{\mathrm{op}}
&\leq A\sqrt\mu,
\label{eq:block-adjacency-event}\\
\max_{0\leq i<q}
\left|
\frac{\vol_G(B_i)}{\mu s_i}-1
\right|
&\leq\varepsilon_n,
\label{eq:block-volume-event}\\
\max_{\substack{0\leq i<j<q\\1\leq d_q(i,j)\leq R}}
\left|
\frac{e_{G^-}(B_i,B_j)}{ps_is_j}-1
\right|
&\leq\varepsilon_n.
\label{eq:block-edge-event}
\end{align}
\end{lemma}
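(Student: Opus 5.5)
We prove the four estimates separately, each with failure probability tending to zero uniformly over $p\in[c\log n/(2m),1]$, and then take a union bound. Throughout, $\mu=2mp\geq c\log n$. The constants $\alpha$ and $A$ may depend on $c$.

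\emph{Degrees.} The upper bound $\max_xd_G(x)\leq A\mu$ is \eqref{eq:fixed-c-degree-event} of Lemma~\ref{lem:degrees-fixed-c} with $A\geq d_+(c)$. For the lower bound we count $H^-$-neighbours. A vertex $x\in B_i$ is $H^-$-adjacent to all of $B_i\setminus\{x\}$ and to every block at block distance at most $R$. That gives at least $(2R+1)s_--1$ neighbours. By \eqref{eq:block-asymptotics}, $(R+1)s_+\geq m-s_+$ and $s_+=o(m)$, so this count is $(2-o(1))m$. Hence $d_{G^-}(x)\sim\Bin(N_x,p)$ with $N_x\geq(1-o(1))2m$.

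To keep the lower tail sharp, pick $d'\in(0,1)$ with $cI_-(d')>1$. Apply \eqref{eq:binomial-rate-tail-bounds} to $d_{G^-}(x)$, whose mean is $\bar\mu_x=(1-o(1))\mu$. With $\alpha:=d'/2$, this gives $\Prob(d_{G^-}(x)<\alpha\mu)\leq\exp[-(1-o(1))\mu I_-(d')]\leq n^{-1-\kappa}$ for some $\kappa=\kappa(c)>0$. A union bound over the $n$ vertices finishes this step. This is the only place where $c>1$ is really used. The deficit $N_x$ versus $2m$ is $o(m)$, so it costs only a factor $1-o(1)$ in the exponent and the bound survives.

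\emph{Operator norm.} We apply Lemma~\ref{lem:centered-adjacency-norm} with $F=H^-$, $N=n$ and $B=1$. Then $\sigma^2=\max_x\sum_yp(1-p)\mathbf 1_{\{x,y\}\in E(H^-)}\leq 2mp=\mu$. The resulting bound $C_1(\sqrt\mu+\sqrt{\log n})$ is at most $C_1(1+c^{-1/2})\sqrt\mu$, because $\mu\geq c\log n$. This gives \eqref{eq:block-adjacency-event}, with failure probability at most $n^{-1}$ uniformly in $p$.

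\emph{Block volumes and interface counts.} These are Chernoff estimates at a scale that grows faster than $\log n$.

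For the volumes, $\vol_G(B_i)=2e_G(B_i)+e_G(B_i,B_i^c)$. Both terms are sums of independent Bernoulli$(p)$ variables, and the total mean is $\mu s_i$. Each of the two terms has mean at least of order $\mu s_i\gtrsim\log n\cdot m/K$. Inequality \eqref{eq:chernoff-two-sided} with deviation $\varepsilon_n$ then gives failure probability $\exp(-c'\varepsilon_n^2\mu s_i)$. We choose $\varepsilon_n:=(m/K)^{-1/4}\to0$, say. Then $\varepsilon_n^2\mu s_i\gtrsim\log n\,(m/K)^{1/2}$, which is much larger than $\log n$ because $m/K\to\infty$. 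So the union bound over $q\leq n$ blocks is $o(1)$.

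For the interfaces, with $d_q(i,j)\leq R$ the count $e_{G^-}(B_i,B_j)$ is exactly $\Bin(s_is_j,p)$, since the interface is complete bipartite. Its mean is $ps_is_j\asymp\mu m/K^2\gg\log n$. The union bound runs over at most $qR\leq n^2$ pairs. With the same choice of $\varepsilon_n$ (adjusted if necessary to $(m/K^2)^{-1/4}$), the exponent $\varepsilon_n^2ps_is_j$ still dominates $3\log n$, because $m/K^2\geq m/\sqrt{\log n}\to\infty$.

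\emph{Uniformity and main obstacle.} All tail bounds depend on $p$ only through $\mu\geq c\log n$, or through means that are increasing in $p$. The suprema over $p$ are therefore controlled directly. When $p$ is close to $1$ the lower Chernoff tail still applies, since it requires nothing beyond the size of the mean. The main point needing care is the minimum-degree bound for $G^-$. There one must check that $H^-$ loses only $o(m)$ of the $2m$ host neighbours, so that the lower-tail exponent stays above $\log n$. The other estimates have ample margin.
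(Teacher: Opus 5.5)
Your proof is correct and follows the paper's argument essentially step for step: the Chernoff lower tail for $d_{G^-}(x)$ after checking $d_{H^-}(x)\geq(2-o(1))m$, Lemma~\ref{lem:centered-adjacency-norm} with $\sigma^2\leq\mu$, and concentration of block volumes and complete-bipartite interface counts at scales $\gg\log n$, followed by union bounds. One small slip is that the internal term $2e_G(B_i)$ has mean about $ps_i^2\asymp\mu s_i/K$, not of order $\mu s_i$; the conclusion is unaffected, because $\varepsilon_n^2\mu s_i/K\gg\log n$ for your choice of $\varepsilon_n$ (alternatively, apply Bernstein to the whole sum with coefficients in $\{1,2\}$, as the paper does).
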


\begin{proof}
We first control degrees. For $x\in B_i$, write $h_i:=d_{H^-}(x)$. By construction,
\begin{equation}
\label{eq:sublinear-host-degree}
h_i
=
s_i-1+
\sum_{\substack{0\leq j<q\\1\leq d_q(i,j)\leq R}}s_j.
\end{equation}
Because $R\geq m/s_+-2$ and $s_-/s_+\to1$,
\begin{equation}
\label{eq:sublinear-host-degree-asymptotic}
\min_i h_i
\geq
(2R+1)s_--1
=
2m\bigl(1-o(1)\bigr).
\end{equation}
Since $H^-\subseteq H$, we also have $h_i\leq2m$. Thus
\begin{equation}
\label{eq:sublinear-retained-degree-mean}
ph_i=\bigl(1-o(1)\bigr)\mu
\end{equation}
uniformly in $i$ and $p$.

Choose $c_0\in(1,c)$. Then \eqref{eq:sublinear-retained-degree-mean} gives
$ph_i\geq c_0\log n$ for all $i$ and sufficiently large $n$. Choose
$\alpha\in(0,1/2)$ so that $c_0I_-(2\alpha)>1$, with $I_-$ as in
\eqref{eq:chernoff-rate-functions}. For large $n$,
$\alpha\mu/(ph_i)\leq2\alpha$ uniformly in $i$. Since $I_-$ decreases on $(0,1)$,
\eqref{eq:binomial-rate-tail-bounds} gives
\begin{align}
\Prob\bigl(d_{G^-}(x)<\alpha\mu\bigr)
&\leq
\exp\left[
-ph_iI_-\left(\frac{\alpha\mu}{ph_i}\right)
\right]
\notag\\
&\leq
n^{-c_0I_-(2\alpha)}.
\label{eq:sublinear-degree-lower-tail}
\end{align}
A union bound over $x\in\mathbb Z_n$ proves the lower degree bound in
\eqref{eq:block-degree-events}; the upper bound is Lemma~\ref{lem:degrees-fixed-c}.

Next set $Z:=A_{G^-}-pA_{H^-}$. Its row-variance sums are at most
$p(1-p)\max_i h_i\leq\mu$. Applying Lemma~\ref{lem:centered-adjacency-norm} with
$B=2$ and using $\mu\geq c\log n$, we obtain
$\Prob(\lVert Z\rVert_{\mathrm{op}}>A_0(c)\sqrt\mu)\leq n^{-2}$ for some
$A_0(c)<\infty$. Increase $A$ to obtain \eqref{eq:block-adjacency-event}.

For the block volumes, use the decomposition
\begin{equation}
\label{eq:block-volume-edge-decomposition}
\vol_G(B_i)
=
2\sum_{\substack{e\in E(H)\\e\subseteq B_i}}\xi_e
+
\sum_{\substack{e\in E(H)\\|e\cap B_i|=1}}\xi_e.
\end{equation}
The summands are independent, with coefficients in $\{1,2\}$. Their sum has mean
$\mu s_i$ and variance at most $2\mu s_i$, and each centered summand is bounded by two.
Bernstein's inequality gives, for $0<\eta\leq1$,
\begin{equation}
\label{eq:block-volume-tail}
\Prob\left(
\left|\vol_G(B_i)-\mu s_i\right|>\eta\mu s_i
\right)
\leq
2\exp\bigl(-a\eta^2\mu s_i\bigr)
\end{equation}
for an absolute constant $a>0$.

For $1\leq d_q(i,j)\leq R$, all pairs in $B_i\times B_j$ are edges of $H^-$. Hence
\begin{equation}
\label{eq:sublinear-block-edge-distribution}
e_{G^-}(B_i,B_j)\sim\Bin(s_is_j,p),
\end{equation}
and Lemma~\ref{lem:chernoff} gives
\begin{equation}
\label{eq:block-edge-tail}
\Prob\left(
\left|e_{G^-}(B_i,B_j)-ps_is_j\right|>\eta ps_is_j
\right)
\leq
2\exp\bigl(-a\eta^2ps_is_j\bigr).
\end{equation}
The concentration scales satisfy
\begin{equation}
\label{eq:sublinear-concentration-scales}
\frac{\mu s_-}{\log n}\longrightarrow\infty,
\qquad
\frac{ps_-^2}{\log n}
\geq
\bigl(1+o(1)\bigr)\frac{cm}{2K^2}
\longrightarrow\infty.
\end{equation}
Define $r_n:=\min\{s_-,m/K^2\}$ and
$\varepsilon_n:=r_n^{-1/4}$.
Then $r_n\to\infty$, and both quantities on the left of
\eqref{eq:sublinear-concentration-scales} are bounded below by a positive constant times
$r_n$. Taking $\eta=\varepsilon_n$ makes the exponents in
\eqref{eq:block-volume-tail} and \eqref{eq:block-edge-tail} of order
$\omega(\log n)$. Since $q\leq n$ for large $n$, a union bound over the $q$ blocks and
at most $q^2$ pairs with $1\leq d_q(i,j)\leq R$ proves
\eqref{eq:block-volume-event} and \eqref{eq:block-edge-event}. All bounds are uniform in
$p$, using only $\mu\geq c\log n$.
\end{proof}

We now separate the block averages from the fluctuations. Let $U\subseteq\mathbb R^n$
be the block-constant subspace and $W:=U^\perp$ its Euclidean orthogonal complement.
Identify $\mathbb R^q$ isometrically with $U$ by
\begin{equation}
\label{eq:block-isometry}
(\mathsf Ua)(x)
:=
\frac{a_i}{\sqrt{s_i}}
=:
b_i,
\qquad x\in B_i.
\end{equation}
Relative to the orthogonal decomposition $U\oplus W$, write
\begin{equation}
\label{eq:block-Laplacian-matrix}
L_{G^-}
=
\begin{pmatrix}
M&N\\
N^{\mathsf T}&C_W
\end{pmatrix}.
\end{equation}
Define the deterministic quadratic form $M_0$ on $\mathbb R^q$ by
\begin{equation}
\label{eq:M0-form}
a^{\mathsf T}M_0a
:=
p\sum_{\substack{0\leq i<j<q\\1\leq d_q(i,j)\leq R}}
s_is_j(b_i-b_j)^2.
\end{equation}
Thus $M$ describes the energy of block averages, $C_W$ describes the energy of
within-block fluctuations, and $N$ couples the two. The next proposition shows that
eliminating the fluctuations changes $M_0$ by a relative error tending to zero.

\begin{proposition}[Block Schur-complement comparison]
\label{prop:block-Schur}
On the event in Lemma~\ref{lem:sublinear-block-events},
\begin{equation}
\label{eq:C-coercivity}
C_W\succeq\frac{\alpha\mu}{2}I_W
\end{equation}
for all sufficiently large $n$. Moreover, there is a deterministic sequence
$\zeta_n\to0$, depending only on $c$ and the bandwidth sequence, such that
\begin{equation}
\label{eq:Schur-comparison}
(1-\zeta_n)M_0
\preceq
M-NC_W^{-1}N^{\mathsf T}
\preceq
(1+\zeta_n)M_0.
\end{equation}
The sequence $\zeta_n$ is uniform over $p\in[c\log n/(2m),1]$.
\end{proposition}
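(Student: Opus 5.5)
The plan is to prove the coercivity bound \eqref{eq:C-coercivity} by splitting $A_{G^-}=pA_{H^-}+Z$, and the two-sided Schur bound \eqref{eq:Schur-comparison} by comparing $M$ with $M_0$ through the interface edge counts and showing that the coupling $N$ is small \emph{relative to $M_0$}. An absolute operator-norm bound on $N$ would be useless, because $a^{\mathsf T}M_0a$ can be as small as $\mu(R/q)^2|a|^2$.

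\emph{Step 1: coercivity on $W$.} Every $w\in W$ has zero sum on each block. Since $H^-$ consists of complete blocks and complete bipartite interfaces, $A_{H^-}$ is a block matrix of all-ones blocks minus the identity. The all-ones blocks annihilate $w$, so $w^{\mathsf T}A_{H^-}w=-|w|^2$. By \eqref{eq:block-degree-events} and \eqref{eq:block-adjacency-event},
\[
w^{\mathsf T}L_{G^-}w
\geq\alpha\mu|w|^2+p|w|^2-A\sqrt\mu\,|w|^2
\geq\frac{\alpha\mu}{2}|w|^2 .
\]
The last inequality holds for large $n$ because $\mu\geq c\log n\to\infty$, so $A\sqrt\mu=o(\mu)$.

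\emph{Step 2: $M$ versus $M_0$, and the upper bound.} For $u=\mathsf Ua$, only interface edges contribute to the energy, so
\[
a^{\mathsf T}Ma=\sum_{1\leq d_q(i,j)\leq R}e_{G^-}(B_i,B_j)(b_i-b_j)^2 .
\]
By \eqref{eq:block-edge-event}, this lies between $(1-\varepsilon_n)a^{\mathsf T}M_0a$ and $(1+\varepsilon_n)a^{\mathsf T}M_0a$. Since $C_W\succ0$, we have $NC_W^{-1}N^{\mathsf T}\succeq0$, which gives the upper bound in \eqref{eq:Schur-comparison} with $\zeta_n\geq\varepsilon_n$.

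\emph{Step 3: relative smallness of $N$ (main obstacle).} For $x\in B_i$, write
\[
(L_{G^-}u)(x)=\sum_j e_{G^-}(\{x\},B_j)(b_i-b_j).
\]
Projecting onto $W$ replaces $e_{G^-}(\{x\},B_j)$ by its fluctuation $f_{ij}(x)$ about the average over $x\in B_i$. Thus $|N^{\mathsf T}a|^2=\sum_i\sum_{x\in B_i}\bigl(\sum_j f_{ij}(x)(b_i-b_j)\bigr)^2$. Apply Cauchy--Schwarz with weights $ps_is_j$ \emph{after} summing over $x$:
\[
|N^{\mathsf T}a|^2\leq\sum_i\Lambda_i\sum_j ps_is_j(b_i-b_j)^2,
\qquad
\Lambda_i:=\sum_{j:1\leq d_q(i,j)\leq R}\frac{\sum_{x\in B_i}f_{ij}(x)^2}{ps_is_j}.
\]
Centering only decreases $\sum_x f_{ij}(x)^2$. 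We may therefore bound it by $\sum_{x\in B_i}(e_{G^-}(\{x\},B_j)-ps_j)^2$. This is a sum of $s_i$ independent terms, since the edges from distinct $x$ to $B_j$ are distinct. Its mean is at most $ps_is_j$. The terms are sub-exponential at scale $ps_j\asymp\mu/K\to\infty$. A Bernstein-type bound, followed by a union bound over the at most $q^2$ pairs, then gives $\sum_x f_{ij}(x)^2\leq 2ps_is_j$ for all pairs with high probability, uniformly in $p$. I expect the exponent here to be of order $s_i\gg\log n$ and will check this. Consequently $\max_i\Lambda_i\leq 4R+O(1)=O(K)$, and so $|N^{\mathsf T}a|^2\leq O(K)\,a^{\mathsf T}M_0a$. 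I would add this bound to the event of Lemma~\ref{lem:sublinear-block-events}, or derive it there. This step is the crux: controlling $\max_x$ instead of block sums would cost a factor $\log n$ and fail.

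\emph{Step 4: conclusion.} Steps 1 and 3 give
\[
NC_W^{-1}N^{\mathsf T}\preceq\frac{2}{\alpha\mu}\,O(K)\,M_0=O_c\bigl((\log n)^{-3/4}\bigr)M_0 .
\]
Combined with Step 2, this proves \eqref{eq:Schur-comparison} with
\[
\zeta_n:=\varepsilon_n+C(c)(\log n)^{-3/4}\to0,
\]
which depends only on $c$ and the bandwidth sequence.
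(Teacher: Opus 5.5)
Your Steps 1, 2 and 4 are sound and follow the paper's proof: coercivity from $A_{H^-}w=-w$ and $\lVert Z\rVert_{\mathrm{op}}$, the interface comparison of $M$ with $M_0$, and a Schur correction bounded by $(2/\alpha\mu)\lVert N^{\mathsf T}a\rVert_2^2$. Your Cauchy--Schwarz reduction to $\max_i\Lambda_i$ in Step 3 is also correct.

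\textbf{The gap is the probabilistic input of Step 3.} You need $\sum_{x\in B_i}f_{ij}(x)^2\leq 2ps_is_j$ for all interacting pairs. This bound is not on the event of Lemma~\ref{lem:sublinear-block-events}, and in part of the regime it fails with high probability. Both of your justifications break down.
\begin{itemize}
\item The scale $s_i\sim m/K$ is $\gg\log n$ only when $m\gg(\log n)^{5/4}$, so the claimed exponent of order $s_i\gg\log n$ is not available in general.
\item For $X\sim\Bin(s_j,p)$ with $p\to0$, the variable $(X-ps_j)^2$ is not sub-exponential at scale $ps_j$. A sparse binomial's upper tail decays only like $\exp\{-k\log(k/ps_j)\}$.
\end{itemize}
Concretely, take $m=\log n\,\log\log n$ and $p=c\log n/(2m)$. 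Then $ps_j\asymp(\log n)^{3/4}$ while $\sqrt{ps_is_j}\asymp(\log n)^{3/4}(\log\log n)^{1/2}$. A fixed $x\in B_i$ has $e_{G^-}(\{x\},B_{i+1})\geq 2\sqrt{2ps_is_{i+1}}$ with probability at least $\exp\{-O((\log n)^{3/4}(\log\log n)^{1/2}\log\log\log n)\}=n^{-o(1)}$. These events are independent over all $n$ vertices, since the edge sets $\{x\}\times B_{i(x)+1}$ are disjoint. On the block-edge event the block average is $(1+o(1))ps_{i+1}$. So with high probability a single vertex already gives $f_{i,i+1}(x)^2>2ps_is_{i+1}$, and the per-pair bound fails.

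\textbf{The fix needs no new concentration.} The operator-norm bound \eqref{eq:block-adjacency-event} already controls these fluctuations. For $x\in B_i$ and $1\leq d_q(i,j)\leq R$ you have $e_{G^-}(\{x\},B_j)=ps_j+(Z_{ij}\mathbf 1_{B_j})(x)$. Hence $f_{ij}$ is the block-centered part of $Z_{ij}\mathbf 1_{B_j}$, and
$\sum_{x\in B_i}f_{ij}(x)^2\leq\lVert Z\rVert_{\mathrm{op}}^2s_j\leq A^2\mu s_j$.
This gives $\Lambda_i\leq 2RA^2\mu/(ps_i)=(4+o(1))A^2K^2$. Consequently $\lVert N^{\mathsf T}a\rVert_2^2\leq CK^2a^{\mathsf T}M_0a$ and $NC_W^{-1}N^{\mathsf T}\preceq C(K^2/\mu)M_0$.

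The rate $K^2/\mu=O((\log n)^{-1/2})$ is worse than your $(\log n)^{-3/4}$ but still tends to zero; this is why $K$ is chosen with $K^2=o(\log n)$. With this substitution your argument coincides with the paper's. The paper runs the same two Cauchy--Schwarz steps on $a^{\mathsf T}Nw$, using $\lVert Z_{ij}\rVert_{\mathrm{op}}\leq\lVert Z\rVert_{\mathrm{op}}$, and obtains $\zeta_n=\varepsilon_n+C_4K^2/(c\log n)$.
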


\begin{proof}
The constants $C_1,\ldots,C_4$ depend at most on $c$.

\emph{The diagonal blocks.}
For $u=\mathsf Ua$, edges within a block contribute no energy. Thus
\eqref{eq:block-edge-event} gives
\begin{equation}
\label{eq:M-M0-comparison}
(1-\varepsilon_n)M_0
\preceq
M
\preceq
(1+\varepsilon_n)M_0.
\end{equation}

For $w\in W$, each restriction $w_i:=w|_{B_i}$ has coordinate sum zero.
The off-diagonal blocks of $A_{H^-}$ are zero or all-ones matrices, so they annihilate
these restrictions. The diagonal block $J_{s_i}-I_{s_i}$ acts as $-I_{s_i}$. Hence
\begin{equation}
\label{eq:host-on-W}
A_{H^-}w=-w.
\end{equation}
Set $Z:=A_{G^-}-pA_{H^-}$. The degree and adjacency estimates in
Lemma~\ref{lem:sublinear-block-events} give
\begin{align}
w^{\mathsf T}C_Ww
&=
w^{\mathsf T}D_{G^-}w
-w^{\mathsf T}(pA_{H^-}+Z)w
\notag\\
&\geq
\left(\alpha\mu+p-\lVert Z\rVert_{\mathrm{op}}\right)
\lVert w\rVert_2^2
\notag\\
&\geq
\frac{\alpha\mu}{2}\lVert w\rVert_2^2
\label{eq:C-coercivity-proof}
\end{align}
for all sufficiently large $n$, because
$\lVert Z\rVert_{\mathrm{op}}\leq A\sqrt\mu=o(\mu)$. This proves
\eqref{eq:C-coercivity}.

\smallskip\noindent
\emph{The coupling.}
For $i<j$ with $1\leq d_q(i,j)\leq R$, let $Z_{ij}$ be the $B_i\times B_j$ submatrix
of $Z$, and let $\mathbf 1_{B_i}\in\mathbb R^{s_i}$ be the all-ones vector. Take
$u=\mathsf Ua$. Since $u$ is block-constant, edges within one block contribute nothing to
$u^{\mathsf T}L_{G^-}w$. Between $B_i$ and $B_j$ with $1\leq d_q(i,j)\leq R$, the
expected edge weights are constant, and the zero coordinate sum of each $w_i$ makes
their contribution vanish as well. Only the centered matrices $Z_{ij}$ remain:

\begin{align}
a^{\mathsf T}Nw
&=
u^{\mathsf T}L_{G^-}w
\notag\\
&=
\sum_{\substack{0\leq i<j<q\\1\leq d_q(i,j)\leq R}}
(b_i-b_j)
\left(
\left\langle Z_{ij}\mathbf 1_{B_j},w_i\right\rangle
-
\left\langle Z_{ij}^{\mathsf T}\mathbf 1_{B_i},w_j\right\rangle
\right).
\label{eq:sublinear-U-W-bilinear-form}
\end{align}
Using $\lVert Z_{ij}\rVert_{\mathrm{op}}\leq\lVert Z\rVert_{\mathrm{op}}$ and the
uniform relation $s_i/\bar s\to1$, regroup the terms by block to obtain
\begin{align}
\left|a^{\mathsf T}Nw\right|
&\leq
C_1\lVert Z\rVert_{\mathrm{op}}\sqrt{\bar s}
\sum_{i=0}^{q-1}\lVert w_i\rVert_2
\sum_{\substack{0\leq j<q\\1\leq d_q(i,j)\leq R}}
|b_i-b_j|.
\label{eq:sublinear-U-W-first-bound}
\end{align}
For each $i$, Cauchy--Schwarz gives
\begin{equation}
\label{eq:sublinear-neighbour-difference-bound}
\left(
\sum_{\substack{0\leq j<q\\1\leq d_q(i,j)\leq R}}
|b_i-b_j|
\right)^2
\leq
2R
\sum_{\substack{0\leq j<q\\1\leq d_q(i,j)\leq R}}
(b_i-b_j)^2.
\end{equation}
A second application of Cauchy--Schwarz, now over the block index, together with
$\sum_i\lVert w_i\rVert_2^2=\lVert w\rVert_2^2$, gives
\begin{align}
\left|a^{\mathsf T}Nw\right|
&\leq
C_2\lVert Z\rVert_{\mathrm{op}}\sqrt{\bar sR}\,
\lVert w\rVert_2
\left(
\sum_{i=0}^{q-1}
\sum_{\substack{0\leq j<q\\1\leq d_q(i,j)\leq R}}
(b_i-b_j)^2
\right)^{1/2}.
\label{eq:sublinear-U-W-second-bound}
\end{align}
The balanced block sizes give
\begin{equation}
\label{eq:M0-unweighted-lower}
a^{\mathsf T}M_0a
\geq
\bigl(1-o(1)\bigr)\frac{p\bar s^2}{2}
\sum_{i=0}^{q-1}
\sum_{\substack{0\leq j<q\\1\leq d_q(i,j)\leq R}}
(b_i-b_j)^2.
\end{equation}
By \eqref{eq:block-adjacency-event}, $\lVert Z\rVert_{\mathrm{op}}^2\leq A^2\mu$.
Also,
\begin{equation}
\label{eq:sublinear-Schur-scale}
\frac{\mu R}{p\bar s}
=
\frac{2mR}{\bar s}
=
\bigl(2+o(1)\bigr)K^2.
\end{equation}
Squaring \eqref{eq:sublinear-U-W-second-bound} and using
\eqref{eq:M0-unweighted-lower}--\eqref{eq:sublinear-Schur-scale}, we obtain
\begin{equation}
\label{eq:N-bound}
\left\lVert N^{\mathsf T}a\right\rVert_2^2
\leq
C_3K^2a^{\mathsf T}M_0a.
\end{equation}

\smallskip\noindent
\emph{The Schur correction.}
The lower bound \eqref{eq:C-coercivity} gives
\begin{align}
a^{\mathsf T}NC_W^{-1}N^{\mathsf T}a
&\leq
\frac{2}{\alpha\mu}
\left\lVert N^{\mathsf T}a\right\rVert_2^2
\notag\\
&\leq
C_4\frac{K^2}{\mu}a^{\mathsf T}M_0a.
\label{eq:Schur-correction-bound}
\end{align}
Since
\begin{equation}
\label{eq:sublinear-Schur-error-vanishes}
\frac{K^2}{\mu}
\leq
\frac{1+o(1)}{c\sqrt{\log n}}
\longrightarrow0,
\end{equation}
combining \eqref{eq:M-M0-comparison} with \eqref{eq:Schur-correction-bound} proves
\eqref{eq:Schur-comparison}. We may take
$\zeta_n:=\varepsilon_n+C_4K^2/(c\log n)$, which is deterministic and uniform in $p$.
\end{proof}

We can now reduce the lower bound to the Poincar\'e inequality on the quotient cycle.
Together with the Fourier upper bound, this gives the sharp gap when $m=o(n)$.

\begin{proposition}[Sharp spectral gap when $m=o(n)$]
\label{prop:sublinear-sharp-gap}
Under the assumptions of Theorem~\ref{thm:spectral-gap}, suppose in addition that
$m=o(n)$. Then, for every $\varepsilon>0$,
\begin{equation}
\label{eq:sublinear-sharp-gap-uniform}
\sup_{p\in[c\log n/(2m),1]}
\Prob_p\left(
G\text{ is disconnected}
\ \text{or}\
\left|
\frac{\gamma_{\lazy}(G)}{\Gamma_{n,m}}-1
\right|>\varepsilon
\right)
\longrightarrow0.
\end{equation}
\end{proposition}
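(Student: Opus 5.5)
The upper bound is already Proposition~\ref{prop:sharp-gap-upper}, so only the lower bound $\gamma_{\lazy}(G)\geq(1-\varepsilon)\Gamma_{n,m}$ remains. We work on the intersection of the connectivity event \eqref{eq:connectivity-supercritical-uniform}, the event of Lemma~\ref{lem:sublinear-block-events}, and the degree and volume events of Lemma~\ref{lem:degrees-fixed-c}; all hold with probability tending to one uniformly in $p$. By Lemma~\ref{lem:variational-gap} and \eqref{eq:graph-dirichlet-edge-formula},
\[
2\gamma_{\lazy}(G)=\inf_f\frac{f^{\mathsf T}L_Gf}{\min_a\sum_xd_G(x)(f(x)-a)^2}.
\]
Since $H^-\subseteq H$, we have $L_G\succeq L_{G^-}$, so it suffices to bound $f^{\mathsf T}L_{G^-}f$ from below.

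The first step decomposes $f=u+w$ with $u=\mathsf Ua\in U$ and $w\in W$. Minimizing the quadratic form over $w$ with $u$ fixed gives
\[
f^{\mathsf T}L_{G^-}f\geq a^{\mathsf T}(M-NC_W^{-1}N^{\mathsf T})a\geq(1-\zeta_n)a^{\mathsf T}M_0a
\]
by Proposition~\ref{prop:block-Schur}. This alone loses the $w$-part of the variance, so we keep a share of the fluctuation energy. For $\theta\in(0,1)$, write $L_{G^-}=\theta L_{G^-}+(1-\theta)L_{G^-}$. Apply the Schur bound to the first part. For the second part, use the matrix inequality
\[
\begin{pmatrix}M&N\\N^{\mathsf T}&C_W\end{pmatrix}\succeq\begin{pmatrix}0&0\\0&C_W-N^{\mathsf T}M^{+}N\end{pmatrix},
\]
or more simply $f^{\mathsf T}L_{G^-}f\geq w^{\mathsf T}C_Ww-2|a^{\mathsf T}Nw|$ together with $\|N^{\mathsf T}a\|^2\leq C_3K^2a^{\mathsf T}M_0a$ from \eqref{eq:N-bound}. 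Young's inequality then gives
\[
f^{\mathsf T}L_{G^-}f\geq(1-\zeta_n')a^{\mathsf T}M_0a+c'\mu\|w\|_2^2
\]
with $\zeta'_n\to0$, because $K^2/\mu\to0$ by \eqref{eq:sublinear-Schur-error-vanishes}.

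The second step converts $M_0$ to the quotient Poincar\'e inequality. Balanced block sizes give $a^{\mathsf T}M_0a\geq(1-o(1))p\bar s^2\sum_{d_q(i,j)\leq R}(b_i-b_j)^2$. Lemma~\ref{lem:finite-cycle-Poincare} bounds this below by $(1-o(1))p\bar s^2\cdot4Rg_{q,R}\sum_i(b_i-\bar b)^2$. By Lemma~\ref{lem:sublinear-Fourier-gap} with $R\sim K\to\infty$ and $R/q\to0$, we have $g_{q,R}\sim\pi^2R^2/(3q^2)$. Using $\bar s q=n$ and $(R+1)\bar s\approx m$, this becomes
\[
a^{\mathsf T}M_0a\geq(1-o(1))\frac{4p\pi^2m^3}{3n^2}\sum_i(b_i-\bar b)^2\cdot\bar s
=(1-o(1))\,2\mu\,\Gamma_{n,m}\sum_x(u(x)-\bar u)^2,
\]
where the last equality uses $\Gamma_{n,m}\sim\pi^2m^2/(3n^2)$ from \eqref{eq:Gamma-sublinear-asymptotic}. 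The main bookkeeping point is checking that $R\bar s/m\to1$, which holds by \eqref{eq:block-asymptotics}, so that no constant is lost.

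The third step bounds the degree-weighted variance. Taking $a$ as the Euclidean block mean,
\[
\min_{a}\sum_xd_G(x)(f(x)-a)^2\leq\sum_xd_G(x)(u(x)-\bar u+w(x))^2.
\]
The cross term and the $w$-term are handled using $d_G\leq A\mu$: the $w$-contribution is at most $A\mu\|w\|^2$ plus a cross term of the same kind. The $u$-part equals $\sum_i\vol_G(B_i)(b_i-\bar b)^2=(1+O(\varepsilon_n))\mu\sum_x(u-\bar u)^2$ by \eqref{eq:block-volume-event}. Since $c'\mu\|w\|^2\gg\Gamma_{n,m}\cdot A\mu\|w\|^2$ because $\Gamma_{n,m}\to0$, the $w$-contributions are absorbed. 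This yields
\[
2\gamma_{\lazy}(G)\geq(1-o(1))\frac{2\mu\Gamma_{n,m}V_u+c'\mu\|w\|^2}{\mu V_u+C\mu\|w\|^2+2\mu\sqrt{V_u}\|w\|}\geq(1-o(1))\,2\Gamma_{n,m},
\]
where $V_u:=\sum_x(u-\bar u)^2$. The cross term in the denominator is controlled by the elementary inequality $2\sqrt{V_u}\|w\|\leq\eta V_u+\eta^{-1}\|w\|^2$ with $\eta\to0$ slowly, chosen so that $\eta^{-1}\Gamma_{n,m}\to0$. I expect this cross-term absorption to be the main obstacle: one must confirm that the fluctuation coercivity $c'\mu$ dominates $\eta^{-1}\mu\Gamma_{n,m}$, which holds since $\Gamma_{n,m}\asymp m^2/n^2\to0$. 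All error sequences are deterministic and uniform in $p$, which gives \eqref{eq:sublinear-sharp-gap-uniform}.
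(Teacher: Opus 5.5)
Your argument is correct, but it takes a different route from the paper.

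\emph{The paper's route.} It works only with a minimizing eigenfunction $f$. It intersects with the event $\gamma_{\lazy}(G)\le 2\Gamma_{n,m}$ from Proposition~\ref{prop:sharp-gap-upper}, and uses the resulting a priori energy bound $f^{\mathsf T}L_Gf\le C\mu m^2/n$ to show $\|w\|_2^2/n\to0$. It then checks that the block-constant part $u$ carries degree-weighted mass $1+o(1)$ and mean $o(1)$. Only after that does it apply the Schur complement and the quotient Poincar\'e inequality.

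\emph{Your route.} You prove a Rayleigh-quotient lower bound valid for every test function, using a separation of scales. Within-block fluctuations cost at least a constant times $\mu\|w\|_2^2$. Block-constant modes cost only about $2\mu\Gamma_{n,m}$ per unit variance. Since $\Gamma_{n,m}\asymp m^2/n^2\to0$, all $w$-contributions to the denominator can be absorbed, including the cross term via your choice of $\eta$ with $\eta^{-1}\Gamma_{n,m}\to0$.

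\emph{What each buys.} Your version makes the lower bound logically independent of the upper bound. It also removes the eigenfunction normalization and the moment bookkeeping the paper needs. The paper's version yields, in exchange, structural information: the extremal eigenfunction is asymptotically block-constant. Both use $m=o(n)$ at the same point: for you through $\Gamma_{n,m}\to0$, for the paper in \eqref{eq:w-vanishes}.

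\emph{Cleanup in your first step.} The convex splitting $\theta L_{G^-}+(1-\theta)L_{G^-}$ is unnecessary, and with $\theta$ fixed it would cost a factor $\theta$ in front of $M_0$. Instead, expand $f^{\mathsf T}L_{G^-}f=a^{\mathsf T}Ma+2a^{\mathsf T}Nw+w^{\mathsf T}C_Ww$. Then \eqref{eq:M-M0-comparison}, \eqref{eq:C-coercivity}, \eqref{eq:N-bound} and Young's inequality give directly $f^{\mathsf T}L_{G^-}f\ge(1-\varepsilon_n-CK^2/\mu)\,a^{\mathsf T}M_0a+\tfrac{\alpha\mu}{4}\|w\|_2^2$, which is exactly the inequality you state.

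\emph{Centering.} Use the same center, e.g.\ the unweighted block mean $\bar b$, in both numerator and denominator. This is consistent because $\sum_x(u(x)-\bar b)^2\le s_+\sum_i(b_i-\bar b)^2$ with $s_+/\bar s\to1$.
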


\begin{proof}
Let $\mathcal E_n^{\mathrm{blk}}$ be the event in
\eqref{eq:block-degree-events}--\eqref{eq:block-edge-event}, and set
\begin{equation}
\label{eq:sublinear-working-event}
\mathcal E_n
:=
\mathcal E_n^{\mathrm{blk}}
\cap
\{G\text{ is connected}\}
\cap
\left\{
\gamma_{\lazy}(G)\leq2\Gamma_{n,m}
\right\}.
\end{equation}
Lemma~\ref{lem:sublinear-block-events},
\eqref{eq:connectivity-supercritical-uniform}, and
Proposition~\ref{prop:sharp-gap-upper} show that
$\inf_{p\in[c\log n/(2m),1]}\Prob_p(\mathcal E_n)\to1$.
The last condition gives an a priori energy bound for a minimizing eigenfunction.

The constants $C_5,\ldots,C_9$ depend at most on $c$, and all $o(1)$ terms below are
deterministic and uniform over $p\in[c\log n/(2m),1]$. Set $V_G:=\vol_G(\mathbb Z_n)$.

\emph{A low-energy eigenfunction.}
On $\mathcal E_n$, choose $f$ attaining the variational minimum in
Lemma~\ref{lem:variational-gap} for simple random walk, normalized by
\begin{equation}
\label{eq:eigenfunction-normalization}
f^{\mathsf T}D_Gf=V_G,
\qquad
f^{\mathsf T}D_G\mathbf 1=0.
\end{equation}
By \eqref{eq:graph-dirichlet-edge-formula} and \eqref{eq:graph-lazy-gap-definition},
$f^{\mathsf T}L_Gf=2V_G\gamma_{\lazy}(G)$. Summing \eqref{eq:block-volume-event} and
using $\sum_i s_i=n$ gives
\begin{equation}
\label{eq:sublinear-total-volume}
V_G
=
\bigl(1+o(1)\bigr)n\mu
\end{equation}
on $\mathcal E_n$. The definition of $\mathcal E_n$ and
\eqref{eq:Gamma-uniform-bounds} therefore imply
\begin{align}
f^{\mathsf T}L_Gf
&=
2V_G\gamma_{\lazy}(G)
\notag\\
&\leq
C_5\mu\frac{m^2}{n}.
\label{eq:sublinear-low-energy}
\end{align}

\smallskip\noindent
\emph{Within-block fluctuations are negligible.}
Write $f=u+w$ in $U\oplus W$, with $u=\mathsf Ua$. Complete the square in
\eqref{eq:block-Laplacian-matrix}:
\begin{align}
f^{\mathsf T}L_{G^-}f
&=
\left(w+C_W^{-1}N^{\mathsf T}a\right)^{\mathsf T}
C_W
\left(w+C_W^{-1}N^{\mathsf T}a\right)
\notag\\
&\quad+
a^{\mathsf T}
\left(M-NC_W^{-1}N^{\mathsf T}\right)a.
\label{eq:block-square-completion}
\end{align}
Since $L_G\succeq L_{G^-}$, Proposition~\ref{prop:block-Schur} and
\eqref{eq:N-bound} imply
\begin{align}
a^{\mathsf T}M_0a
&\leq
C_6 f^{\mathsf T}L_Gf,
\label{eq:M0-energy-control}\\
\left\lVert w+C_W^{-1}N^{\mathsf T}a\right\rVert_2^2
&\leq
\frac{C_7}{\mu}f^{\mathsf T}L_Gf,
\label{eq:shifted-w-control}\\
\left\lVert C_W^{-1}N^{\mathsf T}a\right\rVert_2^2
&\leq
C_8\frac{K^2}{\mu^2}f^{\mathsf T}L_Gf.
\label{eq:shift-control}
\end{align}
For the last estimate, use \eqref{eq:C-coercivity} and \eqref{eq:N-bound} to write
\begin{align*}
\left\lVert C_W^{-1}N^{\mathsf T}a\right\rVert_2^2
&\leq
\frac{2}{\alpha\mu}
\left(C_W^{-1}N^{\mathsf T}a\right)^{\mathsf T}
C_W
\left(C_W^{-1}N^{\mathsf T}a\right)\\
&=
\frac{2}{\alpha\mu}
a^{\mathsf T}NC_W^{-1}N^{\mathsf T}a\\
&\leq
\frac{4}{\alpha^2\mu^2}
\left\lVert N^{\mathsf T}a\right\rVert_2^2\\
&\leq
C\frac{K^2}{\mu^2}a^{\mathsf T}M_0a
\leq
C\frac{K^2}{\mu^2}f^{\mathsf T}L_Gf.
\end{align*}
The last inequality is \eqref{eq:M0-energy-control}. Combining
\eqref{eq:shifted-w-control}--\eqref{eq:shift-control} with
\eqref{eq:sublinear-low-energy} gives
\begin{equation}
\label{eq:w-vanishes}
\frac{\lVert w\rVert_2^2}{n}
\leq
C_9\frac{m^2}{n^2}
\left(1+\frac{K^2}{\mu}\right)
=o(1).
\end{equation}

The degree bound \eqref{eq:block-degree-events} and the estimates
\eqref{eq:sublinear-total-volume} and \eqref{eq:w-vanishes} then give
\begin{equation}
\label{eq:w-degree-mass-vanishes}
\frac{w^{\mathsf T}D_Gw}{V_G}
\leq
\frac{A\mu\lVert w\rVert_2^2}
{\bigl(1+o(1)\bigr)n\mu}
=o(1).
\end{equation}
The triangle inequality in the norm induced by $D_G$ gives
\begin{equation}
\label{eq:D-norm-triangle}
\left|
\sqrt{u^{\mathsf T}D_Gu}
-
\sqrt{f^{\mathsf T}D_Gf}
\right|
\leq
\sqrt{w^{\mathsf T}D_Gw}.
\end{equation}
The centering condition \eqref{eq:eigenfunction-normalization} and Cauchy--Schwarz also give
\begin{align}
\left|u^{\mathsf T}D_G\mathbf 1\right|
&=
\left|w^{\mathsf T}D_G\mathbf 1\right|
\notag\\
&\leq
\sqrt{w^{\mathsf T}D_Gw}\sqrt{\mathbf 1^{\mathsf T}D_G\mathbf 1}
\notag\\
&=
\sqrt{w^{\mathsf T}D_Gw}\sqrt{V_G}.
\label{eq:block-constant-mean-bound}
\end{align}
\eqref{eq:eigenfunction-normalization}--\eqref{eq:block-constant-mean-bound}
therefore imply
\begin{equation}
\label{eq:u-mass-mean}
\frac{u^{\mathsf T}D_Gu}{V_G}=1+o(1),
\qquad
\frac{u^{\mathsf T}D_G\mathbf 1}{V_G}=o(1).
\end{equation}

\smallskip\noindent
\emph{Pass to the quotient cycle.}
Recall that $u=b_i$ on $B_i$. The block-volume estimate, \eqref{eq:sublinear-total-volume},
and \eqref{eq:u-mass-mean} give $\sum_i s_ib_i^2=O(n)$. Replacing block volumes by their
means therefore incurs a quadratic error of at most
$\varepsilon_n\mu\sum_i s_ib_i^2=o(n\mu)$. The linear error satisfies
\begin{align}
\left|
\sum_{i=0}^{q-1}
\bigl(\vol_G(B_i)-\mu s_i\bigr)b_i
\right|
&\leq
\varepsilon_n\mu
\sum_{i=0}^{q-1}s_i|b_i|
\notag\\
&\leq
\varepsilon_n\mu
\left(
n\sum_{i=0}^{q-1}s_ib_i^2
\right)^{1/2}
=o(n\mu).
\label{eq:block-linear-moment-error}
\end{align}
Both errors are $o(n\mu)$. Thus \eqref{eq:sublinear-total-volume} and
\eqref{eq:u-mass-mean} imply
\begin{equation}
\label{eq:block-b-moments}
\frac1n\sum_{i=0}^{q-1}s_ib_i^2=1+o(1),
\qquad
\frac1n\sum_{i=0}^{q-1}s_ib_i=o(1).
\end{equation}

Since $\max_i|s_i/\bar s-1|\to0$, these weighted relations imply
\begin{equation}
\label{eq:unweighted-b-variance}
\frac1q\sum_{i=0}^{q-1}(b_i-\bar b)^2=1+o(1),
\qquad
\bar b:=\frac1q\sum_{i=0}^{q-1}b_i=o(1).
\end{equation}
Indeed, the second moments differ by a relative $o(1)$, and the means differ by at most
$\max_i|s_i/\bar s-1|\,(q^{-1}\sum_i b_i^2)^{1/2}=o(1)$.

Apply the Schur-complement comparison and Lemma~\ref{lem:finite-cycle-Poincare}, using
the balanced block sizes, to obtain
\begin{align}
f^{\mathsf T}L_Gf
&\geq
\bigl(1-o(1)\bigr)a^{\mathsf T}M_0a
\notag\\
&\geq
\bigl(1-o(1)\bigr)p\bar s^2
4Rg_{q,R}
\sum_{i=0}^{q-1}(b_i-\bar b)^2
\notag\\
&=
\bigl(1-o(1)\bigr)4p\bar s^2Rq\,g_{q,R}.
\label{eq:sublinear-energy-lower}
\end{align}
Dividing by $2V_G=(1+o(1))4pmn$ and using $n=q\bar s$ gives
\begin{equation}
\label{eq:sublinear-gap-quotient-lower}
\gamma_{\lazy}(G)
\geq
\bigl(1-o(1)\bigr)\frac{R\bar s}{m}g_{q,R}.
\end{equation}
The block definitions give $R\bar s/m\to1$ and $R/q\sim m/n$.
Lemma~\ref{lem:sublinear-Fourier-gap} therefore gives
\begin{equation}
\label{eq:sublinear-gap-lower-final}
\gamma_{\lazy}(G)
\geq
\left(\frac{\pi^2}{3}-o(1)\right)\frac{m^2}{n^2}
=
\bigl(1-o(1)\bigr)\Gamma_{n,m}
\end{equation}
with high probability, uniformly in $p$. Together with
Proposition~\ref{prop:sharp-gap-upper}, this proves \eqref{eq:sublinear-sharp-gap-uniform}.
The explicit constant in Corollary~\ref{cor:explicit-spectral-gap} follows from
\eqref{eq:Gamma-sublinear-asymptotic}.
\end{proof}

\subsection[Spectral gap lower bound when m/n is bounded below]
{Spectral gap lower bound when $m/n$ is bounded below}
\label{subsec:gap-macroscopic-lower}

Suppose now that $m$ is bounded below by a positive multiple of $n$. Every column of
$\mathsf P_{n,m}$ has Euclidean norm $(2m)^{-1/2}=O(n^{-1/2})$. This small column norm,
together with an $\ell^2$ bound on the relative degree deviations, makes the error from
degree normalization $o_{\Prob}(1)$ in operator norm. The next proposition gives the
comparison.

\begin{proposition}[Normalized-adjacency comparison with the cycle power]
\label{prop:macroscopic-operator-comparison}
Fix $c>1$ and $\rho_0\in(0,1/2)$, and suppose that
\begin{equation}
\label{eq:macroscopic-comparison-bandwidth}
\rho_0n\leq m<\frac n2.
\end{equation}
For every $\varepsilon>0$,
\begin{align}
&\sup_{p\in[c\log n/(2m),1]}
\biggl[
\Prob_p\left(\min_x d_G(x)=0\right)
\notag\\
&\qquad\qquad+
\Prob_p\left(
\min_x d_G(x)>0,\
\left\lVert
D_G^{-1/2}A_GD_G^{-1/2}-\mathsf P_{n,m}
\right\rVert_{\mathrm{op}}
>\varepsilon
\right)
\biggr]
\longrightarrow0.
\label{eq:macroscopic-operator-comparison}
\end{align}
\end{proposition}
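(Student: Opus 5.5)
The first term is handled by \eqref{eq:connectivity-supercritical-uniform}, since an isolated vertex makes $G$ disconnected. For the second term we work on the degree event of Lemma~\ref{lem:degrees-fixed-c}. There $d_-(c)\mu\leq d_G(x)\leq d_+(c)\mu$ for all $x$, and $\mu=2mp\geq c\log n$. We write $\mathsf N_G:=D_G^{-1/2}A_GD_G^{-1/2}$ and split
\begin{equation*}
\mathsf N_G-\mathsf P_{n,m}
=\frac{1}{\mu}\bigl(A_G-pA_H\bigr)
+\Bigl(\mathsf N_G-\frac1\mu A_G\Bigr).
\end{equation*}
The identity behind this uses $pA_H/\mu=A_H/(2m)=\mathsf P_{n,m}$.

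\emph{Step 1: centered part.} By Lemma~\ref{lem:centered-adjacency-norm} with $F=H$, we have $\sigma^2\leq 2mp=\mu$. Hence $\lVert A_G-pA_H\rVert_{\op}\leq C(\sqrt\mu+\sqrt{\log n})\leq C'\sqrt\mu$ with probability at least $1-n^{-2}$. Dividing by $\mu$ gives $O(\mu^{-1/2})=O((\log n)^{-1/2})\to0$, uniformly in $p$.

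\emph{Step 2: degree normalization.} Put $\Delta:=\diag\bigl((\mu/d_G(x))^{1/2}-1\bigr)$. Then
\begin{equation*}
\mathsf N_G-\frac1\mu A_G=\Delta\frac{A_G}{\mu}+\frac{A_G}{\mu}\Delta+\Delta\frac{A_G}{\mu}\Delta .
\end{equation*}
Substitute $A_G/\mu=\mathsf P_{n,m}+\mu^{-1}(A_G-pA_H)$. The pieces containing the centered matrix are bounded by $\lVert\Delta\rVert_{\op}\cdot O(\mu^{-1/2})$, and $\lVert\Delta\rVert_{\op}=O_c(1)$ on the degree event. So these pieces vanish.

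The main obstacle is $\Delta\mathsf P_{n,m}$ (and its transpose, and $\Delta\mathsf P_{n,m}\Delta$). Here $\Delta$ is \emph{not} small in sup norm when $\mu\asymp\log n$, because individual degrees fluctuate by constant factors. The plan is to bound the operator norm by the Hilbert--Schmidt norm:
\begin{equation*}
\lVert\Delta\mathsf P_{n,m}\rVert_{\op}^2\leq\sum_{x,y}\Delta_{xx}^2\mathsf P_{n,m}(x,y)^2=\frac{1}{2m}\sum_x\Delta_{xx}^2 .
\end{equation*}
This uses that each row of $\mathsf P_{n,m}$ has squared norm $1/(2m)$. On the degree event, $|\Delta_{xx}|\leq C_c|d_G(x)-\mu|/\mu$. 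Therefore
\begin{equation*}
\E\sum_x\Delta_{xx}^2\mathbf 1_{\mathcal D_n}\leq C_c\, n\frac{\Var d_G(x)}{\mu^2}\leq \frac{C_cn}{\mu},
\end{equation*}
and by Markov's inequality $\frac1{2m}\sum_x\Delta_{xx}^2=O_{\Prob}\bigl(n/(m\mu)\bigr)=O_{\Prob}\bigl(1/(\rho_0\log n)\bigr)\to0$. This is the only place where \eqref{eq:macroscopic-comparison-bandwidth} is used. For the term $\Delta\mathsf P_{n,m}\Delta$, we bound it by $\lVert\Delta\rVert_{\op}\lVert\mathsf P_{n,m}\Delta\rVert_{\op}$ and control the second factor the same way, by symmetry.

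The bounds hold uniformly in $p\in[c\log n/(2m),1]$, since every estimate uses only $\mu\geq c\log n$ and the constants depend only on $c$ and $\rho_0$. Summing the failure probabilities of the degree event, the norm event, and the Markov bound then gives \eqref{eq:macroscopic-operator-comparison}.
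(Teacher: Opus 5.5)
Your proposal is correct and follows the paper's proof essentially step for step: the same Bandeira--van Handel bound $\lVert A_G-pA_H\rVert_{\op}=O(\sqrt\mu)$, the same Hilbert--Schmidt estimate $\lVert R_G\mathsf P_{n,m}\rVert_{\op}^2\leq(2m)^{-1}\sum_x R_G(x,x)^2$ for the degree-normalization error (your $\Delta$ is the paper's $R_G$), controlled by Markov's inequality on the $\ell^2$ degree deviations, with $m\geq\rho_0 n$ used only at that point. The only differences are bookkeeping: you subtract $A_G/\mu$ first instead of factoring $D_G^{-1/2}A_GD_G^{-1/2}=\mathcal D_G^{-1/2}(\mathsf P_{n,m}+Z_G/\mu)\mathcal D_G^{-1/2}$, and you dispose of the isolated-vertex term via the uniform connectivity estimate, whereas the paper's proof covers it with the degree lower bound of Lemma~\ref{lem:degrees-fixed-c}.
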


\begin{proof}
We estimate the degree-normalization error and the centered adjacency error separately.
Set $Z_G:=A_G-pA_H$ and $\mathcal D_G:=D_G/\mu$. Lemma~\ref{lem:degrees-fixed-c} gives
\begin{equation}
\label{eq:macroscopic-minimum-degree}
\mathcal D_G\succeq d_-(c)I
\end{equation}
with high probability, uniformly in $p$. Since each $d_G(x)$ is binomial with variance
$\mu(1-p)$,
$$
\E\sum_{x\in\mathbb Z_n}
\left(\frac{d_G(x)}{\mu}-1\right)^2
=\frac{n(1-p)}{\mu}\leq\frac n\mu.
$$
Markov's inequality gives
\begin{equation}
\label{eq:macroscopic-degree-L2-event}
\sum_{x\in\mathbb Z_n}
\left(\frac{d_G(x)}{\mu}-1\right)^2
\leq
\frac{n}{\sqrt\mu}
\end{equation}
with failure probability at most $\mu^{-1/2}$, uniformly in $p$.

The function $y\mapsto y^{-1/2}$ is Lipschitz on $[d_-(c),\infty)$, with constant depending
only on $c$. Thus, on the intersection of \eqref{eq:macroscopic-minimum-degree} and
\eqref{eq:macroscopic-degree-L2-event}, the diagonal matrix
$R_G:=\mathcal D_G^{-1/2}-I$ satisfies
\begin{equation}
\label{eq:macroscopic-degree-normalization-bounds}
\sum_x R_G(x,x)^2
\leq
C_1(c)\frac n{\sqrt\mu},
\qquad
\left\lVert R_G\right\rVert_{\mathrm{op}}
\leq
C_2(c).
\end{equation}
Write $\lVert M\rVert_{\mathrm{HS}}^2:=\sum_{x,y}|M(x,y)|^2$. The columns of
$\mathsf P_{n,m}=A_H/(2m)$ have squared Euclidean norm $(2m)^{-1}$, so the diagonal form
of $R_G$ gives
\begin{equation}
\label{eq:macroscopic-smoothing-HS-identity}
\left\lVert \mathsf P_{n,m}R_G\right\rVert_{\mathrm{HS}}^2
=
\frac1{2m}\sum_xR_G(x,x)^2.
\end{equation}
Since the operator norm is at most the Hilbert--Schmidt norm and
$R_G\mathsf P_{n,m}=(\mathsf P_{n,m}R_G)^{\mathsf T}$, the bounds
\eqref{eq:macroscopic-comparison-bandwidth}, \eqref{eq:macroscopic-degree-normalization-bounds},
and \eqref{eq:macroscopic-smoothing-HS-identity} imply
\begin{equation}
\label{eq:macroscopic-smoothed-degree-error}
\left\lVert R_G\mathsf P_{n,m}\right\rVert_{\mathrm{op}}
=
\left\lVert \mathsf P_{n,m}R_G\right\rVert_{\mathrm{op}}
\leq
C_3(c,\rho_0)\mu^{-1/4}
=o(1).
\end{equation}
Expanding $\mathcal D_G^{-1/2}=I+R_G$ now gives
\begin{align}
&
\left\lVert
\mathcal D_G^{-1/2}\mathsf P_{n,m}\mathcal D_G^{-1/2}
-\mathsf P_{n,m}
\right\rVert_{\mathrm{op}}
\notag\\
&\qquad\leq
\left(
2+\left\lVert R_G\right\rVert_{\mathrm{op}}
\right)
\left\lVert \mathsf P_{n,m}R_G\right\rVert_{\mathrm{op}}
=o(1).
\label{eq:macroscopic-mean-operator-comparison}
\end{align}

It remains to bound the centered adjacency error. The row-variance sums of $Z_G$ are
at most $\mu$. Apply Lemma~\ref{lem:centered-adjacency-norm} with $B=2$ and use
$\mu\geq c\log n$ to obtain, with probability at least $1-n^{-2}$,
\begin{equation}
\label{eq:macroscopic-centered-adjacency-bound}
\left\lVert Z_G\right\rVert_{\mathrm{op}}
\leq
C_4(c)\sqrt\mu.
\end{equation}
On the event in \eqref{eq:macroscopic-minimum-degree},
\begin{equation}
\label{eq:macroscopic-normalized-noise-bound}
\left\lVert
\mathcal D_G^{-1/2}\frac{Z_G}{\mu}\mathcal D_G^{-1/2}
\right\rVert_{\mathrm{op}}
\leq
\frac{C_4(c)}{d_-(c)\sqrt\mu}
=o(1).
\end{equation}
Finally, the identity $\mu=2mp$ gives
\begin{equation}
\label{eq:macroscopic-normalized-adjacency-decomposition}
D_G^{-1/2}A_GD_G^{-1/2}
=
\mathcal D_G^{-1/2}
\left(
\mathsf P_{n,m}+\frac{Z_G}{\mu}
\right)
\mathcal D_G^{-1/2}.
\end{equation}
Combining \eqref{eq:macroscopic-mean-operator-comparison},
\eqref{eq:macroscopic-normalized-noise-bound}, and
\eqref{eq:macroscopic-normalized-adjacency-decomposition} proves
\eqref{eq:macroscopic-operator-comparison}. The errors in
\eqref{eq:macroscopic-smoothed-degree-error} and \eqref{eq:macroscopic-normalized-noise-bound}
vanish uniformly since $\mu\geq c\log n$. The failure probabilities in
\eqref{eq:macroscopic-degree-L2-event} and \eqref{eq:macroscopic-centered-adjacency-bound}
are uniform as well.
\end{proof}

When $m/n$ stays bounded away from zero, so does the lazy-walk spectral gap of $C_n^m$.
Weyl's inequality therefore converts Proposition~\ref{prop:macroscopic-operator-comparison}
into the required relative gap estimate.

\begin{proposition}[Sharp spectral gap when $m/n$ has a positive limit]
\label{prop:macroscopic-sharp-gap}
Under the assumptions of Theorem~\ref{thm:spectral-gap}, suppose in addition that
\begin{equation}
\label{eq:macroscopic-sharp-gap-bandwidth}
\frac mn\longrightarrow\rho\in(0,1/2].
\end{equation}
Then, for every $\varepsilon>0$,
\begin{equation}
\label{eq:macroscopic-sharp-gap-uniform}
\sup_{p\in[c\log n/(2m),1]}
\Prob_p\left(
G\text{ is disconnected}
\ \text{or}\
\left|
\frac{\gamma_{\lazy}(G)}{\Gamma_{n,m}}-1
\right|>\varepsilon
\right)
\longrightarrow0.
\end{equation}
\end{proposition}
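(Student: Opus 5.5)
The upper bound is already available: Proposition~\ref{prop:sharp-gap-upper} gives, uniformly in $p$, that with high probability $G$ is connected and $\gamma_{\lazy}(G)\leq(1+\varepsilon)\Gamma_{n,m}$. The work is the matching lower bound. We obtain it from Proposition~\ref{prop:macroscopic-operator-comparison} and Weyl's inequality, as follows.

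\emph{Setting up the operator comparison.} Since $m/n\to\rho>0$, fix $\rho_0\in(0,\min\{\rho,1/2\})$ with $\rho_0<\rho/2$. For all large $n$ we then have $\rho_0n\leq m<n/2$, so the proposition applies. Set
$$
S_G:=D_G^{-1/2}A_GD_G^{-1/2},
$$
which is similar to $P_G$. On the connectedness event, $\gamma_{\lazy}(G)=(1-\lambda_2(S_G))/2$. Similarly, by \eqref{eq:host-gap-Fourier-formula},
$$
\gamma_{\lazy}(C_n^m)=\frac{1-\lambda_2(\mathsf P_{n,m})}{2}.
$$
Fix $\varepsilon'>0$. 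Proposition~\ref{prop:macroscopic-operator-comparison} gives, uniformly in $p$, with probability tending to one,
$$
\lVert S_G-\mathsf P_{n,m}\rVert_{\op}\leq\varepsilon'.
$$
Weyl's inequality for symmetric matrices then gives $|\lambda_2(S_G)-\lambda_2(\mathsf P_{n,m})|\leq\varepsilon'$, and hence
$$
\gamma_{\lazy}(G)\geq\gamma_{\lazy}(C_n^m)-\frac{\varepsilon'}{2}.
$$

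\emph{Converting to a relative error.} By Lemma~\ref{lem:macroscopic-Fourier-gap},
$$
\gamma_{\lazy}(C_n^m)=(1+o(1))\Gamma_{n,m},\qquad \Gamma_{n,m}\to g_\rho:=\frac12\Bigl(1-\frac{\sin(2\pi\rho)}{2\pi\rho}\Bigr).
$$
The limit $g_\rho$ is strictly positive, since $\sin x<x$ for $x>0$. Choose $\varepsilon'=\varepsilon g_\rho/2$. Then for large $n$,
$$
\gamma_{\lazy}(G)\geq(1-\varepsilon)\Gamma_{n,m}
$$
with high probability, uniformly in $p$. Combining this with the upper bound and the uniform connectivity estimate \eqref{eq:connectivity-supercritical-uniform} proves \eqref{eq:macroscopic-sharp-gap-uniform}.

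\emph{Where care is needed.} The only delicate point is that Weyl's inequality produces an additive error. This is harmless here only because $\Gamma_{n,m}$ stays bounded away from zero, which is exactly what the hypothesis $\rho>0$ provides. One should also check that the event $\min_xd_G(x)>0$ in Proposition~\ref{prop:macroscopic-operator-comparison} is compatible with the connectedness event. It is, since connectivity implies the minimum degree is positive.
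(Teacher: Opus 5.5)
Your proposal is correct and follows the paper's proof. You apply Proposition~\ref{prop:macroscopic-operator-comparison} and use Weyl's inequality to compare $\lambda_2$ of $D_G^{-1/2}A_GD_G^{-1/2}$ with that of $\mathsf P_{n,m}$. You then turn the additive error into a relative one via Lemma~\ref{lem:macroscopic-Fourier-gap} and $\Gamma_{n,m}\to\frac12\bigl(1-\sin(2\pi\rho)/(2\pi\rho)\bigr)>0$.

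The only difference is that you take the upper bound from Proposition~\ref{prop:sharp-gap-upper}. The paper notes that Weyl's inequality is two-sided and already gives $|\gamma_{\lazy}(G)-\gamma_{\lazy}(C_n^m)|\leq\frac12\lVert D_G^{-1/2}A_GD_G^{-1/2}-\mathsf P_{n,m}\rVert_{\op}$, so that extra appeal is harmless but unnecessary.
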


\begin{proof}
By \eqref{eq:connectivity-supercritical-uniform}, $G$ is connected with high probability,
uniformly in $p$. For large $n$, apply Proposition~\ref{prop:macroscopic-operator-comparison}
with $\rho_0=\rho/2$. On the connectedness event, set
$\mathcal A_G:=D_G^{-1/2}A_GD_G^{-1/2}$. If $\lambda_j(M)$ denotes the $j$th largest
eigenvalue of a real symmetric matrix $M$, Weyl's inequality gives
\begin{equation}
\label{eq:macroscopic-Weyl-comparison}
\max_{1\leq j\leq n}
\left|
\lambda_j(\mathcal A_G)-\lambda_j(\mathsf P_{n,m})
\right|
\leq
\left\lVert\mathcal A_G-\mathsf P_{n,m}\right\rVert_{\mathrm{op}}
=o_{\Prob}(1)
\end{equation}
uniformly in $p$.

On the connectedness event,
\begin{align}
\left|
\gamma_{\lazy}(G)-\gamma_{\lazy}(C_n^m)
\right|
&=
\frac12
\left|
\lambda_2(\mathcal A_G)-\lambda_2(\mathsf P_{n,m})
\right|
\notag\\
&\leq
\frac12
\left\lVert\mathcal A_G-\mathsf P_{n,m}\right\rVert_{\mathrm{op}}.
\label{eq:macroscopic-gap-additive-comparison}
\end{align}
Lemma~\ref{lem:macroscopic-Fourier-gap} also gives
\begin{equation}
\label{eq:macroscopic-reference-positive-limit}
\Gamma_{n,m}
\longrightarrow
\Gamma_\rho
:=
\frac12\left(
1-\frac{\sin(2\pi\rho)}{2\pi\rho}
\right)
>0
\end{equation}
and
$\gamma_{\lazy}(C_n^m)/\Gamma_{n,m}\to1$. Dividing
\eqref{eq:macroscopic-gap-additive-comparison} by $\Gamma_{n,m}$ gives
\begin{align}
\left|
\frac{\gamma_{\lazy}(G)}{\Gamma_{n,m}}-1
\right|
&\leq
\frac{
\left\lVert\mathcal A_G-\mathsf P_{n,m}\right\rVert_{\mathrm{op}}
}{2\Gamma_{n,m}}
\notag\\
&\quad+
\left|
\frac{\gamma_{\lazy}(C_n^m)}{\Gamma_{n,m}}-1
\right|.
\label{eq:macroscopic-gap-relative-comparison}
\end{align}
The first term tends to zero in probability uniformly in $p$; the second tends to zero
deterministically. This proves \eqref{eq:macroscopic-sharp-gap-uniform}.
\end{proof}

We finish by combining Proposition~\ref{prop:sublinear-sharp-gap}, for $m=o(n)$, with
Proposition~\ref{prop:macroscopic-sharp-gap}, for $m/n$ tending to a positive limit.

\begin{proof}[Proof of Theorem~\ref{thm:spectral-gap}]
For $\varepsilon>0$, let $\mathcal B_{n,p}(\varepsilon)$ be the event
$$
G\text{ is disconnected}
\ \text{or}\
\left|
\frac{\gamma_{\lazy}(G)}{\Gamma_{n,m}}-1
\right|>\varepsilon.
$$
Fix any sequence $p_n\in[c\log n/(2m),1]$. Every subsequence has a further subsequence
on which $m/n\to\rho$ for some $\rho\in[0,1/2]$. If $\rho=0$, apply
Proposition~\ref{prop:sublinear-sharp-gap}; if $\rho>0$, apply
Proposition~\ref{prop:macroscopic-sharp-gap}. In either case,
$\Prob_{p_n}(\mathcal B_{n,p_n}(\varepsilon))\to0$ on the further subsequence. Hence
$\Prob_{p_n}(\mathcal B_{n,p_n}(\varepsilon))\to0$ for every admissible sequence $(p_n)$.

This also proves uniformity. Otherwise, along a subsequence one could choose $p_n$ whose
failure probability is at least half the supremum in \eqref{eq:gap-main-uniform-conclusion},
contradicting the convergence just proved for every admissible sequence $(p_n)$.
\end{proof}

\begin{proof}[Proof of Corollary~\ref{cor:explicit-spectral-gap}]
Combine Theorem~\ref{thm:spectral-gap} with
\eqref{eq:Gamma-sublinear-asymptotic} and
\eqref{eq:Gamma-macroscopic-asymptotic}.
\end{proof}

Finally, combine Theorem~\ref{thm:spectral-gap} with
Proposition~\ref{prop:host-spectral-gap} to obtain \eqref{eq:gap-host-comparison}.
On the connectedness event, which has high probability,
$$
\frac{\gamma_{\lazy}(G)}{\gamma_{\lazy}(C_n^m)}
=
\frac{\gamma_{\lazy}(G)}{\Gamma_{n,m}}
\frac{\Gamma_{n,m}}{\gamma_{\lazy}(C_n^m)},
$$
and both factors on the right converge in probability to one.

\section{Sharp hitting-time asymptotics}
\label{sec:hitting-time}

Fix $c>1$ and assume \eqref{eq:sharp-hitting-assumptions}. All estimates in this section are
uniform for $p\in[c\log n/(2m),1]$. Our aim is to separate the two contributions to the hitting time: the target-degree term $V_G/d_G(y)$ and the spatial term $r_{xy}(n-r_{xy})/\kappa_m$. Indeed, we prove
Theorem~\ref{thm:sharp-hitting} by extracting $V_G/d_G(y)$ exactly from the hitting-time equation and estimating the remainder through a deterministic cycle power on the blocks.

\subsection{Deterministic cycle powers}

We first collect the deterministic estimates needed for the block reduction. In this
subsection, let $1\leq m<n/2$ be arbitrary integers. We will later apply these estimates
to the quotient cycle power $C_q^R$.

All hitting times here refer to simple random walk. The volume of $C_n^m$ is $2mn$, and
$r_{ab}$ denotes cyclic distance, as in Subsection~\ref{sec:band-random-graphs-model}.
For a real symmetric matrix $Q$, let $Q^+$ denote its Moore--Penrose pseudoinverse.
The transition eigenvalues of simple random walk on $C_n^m$ are
\begin{equation}
\label{eq:cycle-transition-eigenvalues}
 \theta_{k,n}
 :=
 \frac1m\sum_{r=1}^m
 \cos\left(\frac{2\pi kr}{n}\right),
 \qquad 0\leq k<n.
\end{equation}

To control the Fourier formula for hitting times, we need a quadratic lower bound on
$1-\theta_{k,n}$ at low frequencies and a bound on the contribution of the remaining frequencies.
The following estimates provide both.

\begin{lemma}[Fourier bounds for the transition eigenvalues of $C_n^m$]
\label{lem:fourier-estimates}
There are absolute constants $a,A>0$ such that, for
$1\leq k\leq n/2$,
\begin{align}
\label{eq:fourier-gap-lower}
 1-\theta_{k,n}
 &\geq
 a\min\left\{1,\frac{m^2k^2}{n^2}\right\},\\
\label{eq:fourier-coefficient-upper}
 |\theta_{k,n}|
 &\leq
 A\min\left\{1,\frac{n}{mk}\right\}.
\end{align}
Consequently,
\begin{equation}
\label{eq:fourier-ratio-upper}
 \left|\frac{\theta_{k,n}}{1-\theta_{k,n}}\right|
 \leq
 A
 \begin{cases}
 \displaystyle\frac{n^2}{m^2k^2},
 &1\leq k\leq n/m,\\[1.2ex]
 \displaystyle\frac{n}{mk},
 &n/m<k\leq n/2.
 \end{cases}
\end{equation}
\end{lemma}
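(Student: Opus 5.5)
The plan is to reduce both bounds to the estimates already proved inside Lemma~\ref{lem:sublinear-Fourier-gap} and Lemma~\ref{lem:macroscopic-Fourier-gap}, and then combine them for the ratio bound.

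\emph{Lower bound \eqref{eq:fourier-gap-lower}.} Write $1-\theta_{k,n}=\frac1m\sum_{r=1}^m(1-\cos(r\theta))$ with $\theta:=2\pi k/n\in(0,\pi]$, using $1\leq k\leq n/2$. The coercivity estimate \eqref{eq:cosine-sum-coercivity} gives
\[
\sum_{r=1}^m\bigl(1-\cos(r\theta)\bigr)\geq a\,m\min\{1,(m\theta)^2\}
\]
for $m\geq8$. Dividing by $m$ and absorbing $(2\pi)^2$ into the constant yields the claim in that range. For $1\leq m<8$, only the $r=1$ term is needed: $1-\cos\theta\geq(2/\pi^2)\theta^2\geq c\,k^2/n^2$, and this is at least a constant times $\min\{1,m^2k^2/n^2\}$, since $m^2<64$ and $\theta\leq\pi$. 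Shrinking $a$ covers both ranges.

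\emph{Upper bound \eqref{eq:fourier-coefficient-upper}.} The trivial bound is $|\theta_{k,n}|\leq1$. For the other term, use the geometric-sum bound
\[
\Bigl|\sum_{r=1}^m e^{ir\theta}\Bigr|\leq\frac1{\sin(\theta/2)}\leq\frac{\pi}{\theta}=\frac{n}{2k},
\]
where $\sin(\theta/2)\geq\theta/\pi$ holds because $\theta/2\in(0,\pi/2]$. Taking real parts and dividing by $m$ gives $|\theta_{k,n}|\leq n/(2mk)$, so $A=1$ works for this part.

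\emph{Ratio bound \eqref{eq:fourier-ratio-upper}.} Split at $k=n/m$.
\begin{itemize}
\item If $k\leq n/m$, then $mk/n\leq1$, so \eqref{eq:fourier-gap-lower} gives $1-\theta_{k,n}\geq a m^2k^2/n^2$. Bounding $|\theta_{k,n}|\leq1$ gives the ratio bound $a^{-1}n^2/(m^2k^2)$.
\item If $k>n/m$, then $1-\theta_{k,n}\geq a$ and $|\theta_{k,n}|\leq n/(2mk)$, giving the bound $(2a)^{-1}n/(mk)$.
\end{itemize}
Enlarging $A$ to $\max\{A,a^{-1}\}$ finishes the proof.

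No step is a real obstacle. The only care needed is verifying that \eqref{eq:cosine-sum-coercivity} was proved for all $\theta\in[0,\pi]$ and $R\geq8$ without any asymptotic hypothesis, so it applies uniformly in $k$. The small-$m$ case handled above fills the remaining gap.
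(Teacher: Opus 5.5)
Your proof is correct. The upper bound \eqref{eq:fourier-coefficient-upper} and the ratio bound \eqref{eq:fourier-ratio-upper} match the paper's argument: the geometric-sum bound with $\sin(\pi k/n)\geq 2k/n$, then a split at $k=n/m$.

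The lower bound \eqref{eq:fourier-gap-lower} is where your route differs. The paper proves it from scratch inside this lemma:
\begin{itemize}
\item for $mk/n\leq 1/4$ it applies $|\sin t|\geq 2|t|/\pi$ termwise;
\item for $mk/n>1/4$ it uses the closed form \eqref{eq:sine-square-sum} for $\sum_r\sin^2(rx)$ and splits on whether $m\sin x\geq 2$;
\item in the case $m\sin x<2$ it counts the indices $\lceil m/4\rceil\leq r\leq\lfloor m/2\rfloor$ for large $m$, and uses a compactness argument for the finitely many remaining $m$.
\end{itemize}
You instead note that the coercivity estimate \eqref{eq:cosine-sum-coercivity} from Lemma~\ref{lem:sublinear-Fourier-gap} is nonasymptotic. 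It holds for every $R\geq 8$ and every $\theta\in[0,\pi]$. Taking $R=m$ and $\theta=2\pi k/n$ then gives the claim directly. You handle $m<8$ by keeping only the $r=1$ term. You leave the factor $1/m\geq 1/8$ implicit there, which is harmless.

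Your version is shorter, does not repeat an argument the paper already made, and replaces the compactness step with an explicit constant. The paper's version has the advantage of being self-contained. This subsection is stated for arbitrary $1\leq m<n/2$, and the paper's proof keeps it independent of the spectral-gap section.
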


\begin{proof}
From \eqref{eq:cycle-transition-eigenvalues},
$$
 1-\theta_{k,n}
 =
 \frac2m\sum_{r=1}^m
 \sin^2\left(\frac{\pi kr}{n}\right).
$$
If $mk/n\leq1/4$, then $|\sin t|\geq 2|t|/\pi$ applies to every term
with $1\leq r\leq m$. Hence
$$
 1-\theta_{k,n}
 \geq
 \frac{8k^2}{mn^2}\sum_{r=1}^m r^2
 \geq a\frac{m^2k^2}{n^2}.
$$
If $mk/n>1/4$, put $x=\pi k/n\in(0,\pi/2]$ and use
\begin{equation}
\label{eq:sine-square-sum}
 \sum_{r=1}^m\sin^2(rx)
 =
 \frac m2-
 \frac{\sin(mx)\cos((m+1)x)}{2\sin x}.
\end{equation}
When $m\sin x\geq2$, the right-hand side is at least $m/4$.
When $m\sin x<2$, the inequalities
$2x/\pi\leq\sin x\leq x$ show that
$$
 \frac{\pi}{4}<mx<\pi.
$$
For $\lceil m/4\rceil\leq r\leq\lfloor m/2\rfloor$, we consequently
have
$$
 \frac{\pi}{16}<rx<\frac{\pi}{2}.
$$
For sufficiently large $m$, there are at least $m/5$ such indices, contributing at least
$\frac m5\sin^2(\pi/16)$. For each of the remaining finitely many values of $m$, the
continuous function
$$
x\longmapsto \frac1m\sum_{r=1}^m\sin^2(rx)
$$
has a positive minimum on
$\{x\in[0,\pi/2]:mx\geq\pi/4\}$. Decreasing $a$ gives
$$
 \frac1m\sum_{r=1}^m\sin^2(rx)\geq a
$$
with an absolute $a>0$, proving \eqref{eq:fourier-gap-lower}.

The geometric-sum identity gives
$$
 \left|\sum_{r=1}^m e^{2\pi ikr/n}\right|
 \leq
 \min\left\{m,\frac1{|\sin(\pi k/n)|}\right\}.
$$
For $1\leq k\leq n/2$,
$\sin(\pi k/n)\geq2k/n$, which proves
\eqref{eq:fourier-coefficient-upper}. Combining the two estimates proves
\eqref{eq:fourier-ratio-upper}.
\end{proof}

We now evaluate the Fourier formula uniformly in the two vertices. The low frequencies
produce the quadratic distance term; Lemma~\ref{lem:fourier-estimates} controls the tail.

\begin{lemma}[Uniform deterministic hitting-time asymptotics]
\label{lem:deterministic-hitting}
Suppose that $m\to\infty$ and $m<n/2$. Uniformly in distinct
$a,b\in\Z_n$,
\begin{equation}
\label{eq:deterministic-hitting-asymptotic}
 H_{C_n^m}(a,b)
 =
 n+
 \frac{3r_{ab}(n-r_{ab})}{m^2}
 +
 o\left(n+\frac{n^2}{m^2}\right).
\end{equation}
Moreover, for an absolute constant $C$,
\begin{align}
\label{eq:deterministic-hitting-global-upper}
 \max_{a,b}H_{C_n^m}(a,b)
 &\leq
 C\left(n+\frac{n^2}{m^2}\right),\\
\label{eq:deterministic-hitting-local-upper}
 \max_{r_{ab}\leq m+4}H_{C_n^m}(a,b)
 &\leq Cn.
\end{align}
\end{lemma}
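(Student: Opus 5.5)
The plan is to use the spectral (Fourier) formula for hitting times on a vertex-transitive regular graph. For simple random walk on the $2m$-regular circulant $C_n^m$, the stationary distribution is uniform. The Green-function representation $H(a,b)=n\,(Z(b,b)-Z(a,b))$, with $Z=\sum_{t\ge0}(P^t-\pi)$, then gives
\begin{equation*}
H_{C_n^m}(a,b)=\sum_{k=1}^{n-1}\frac{1-\cos(2\pi k(a-b)/n)}{1-\theta_{k,n}}.
\end{equation*}
I will split each summand using
\begin{equation*}
\frac{1}{1-\theta}=1+\frac{\theta}{1-\theta}.
\end{equation*}
The constant part sums exactly to $\sum_{k=1}^{n-1}(1-\cos(2\pi kd/n))=n$ for $d\not\equiv0$. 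This produces the leading $n$. It remains to show
\begin{equation*}
S(d):=\sum_{k=1}^{n-1}\bigl(1-\cos(2\pi kd/n)\bigr)\frac{\theta_{k,n}}{1-\theta_{k,n}}=\frac{3d(n-d)}{m^2}+o\Bigl(n+\frac{n^2}{m^2}\Bigr)
\end{equation*}
uniformly in $d=r_{ab}$. By the symmetry $k\leftrightarrow n-k$, I only need to handle $1\le k\le n/2$ and double.

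\emph{High frequencies.} Fix a large constant $K_0$ and a slowly growing cutoff. For $K_0 n/m<k\le n/2$, bound \eqref{eq:fourier-ratio-upper} gives a contribution of at most
\begin{equation*}
2A\sum_{k>K_0n/m}\frac{n}{mk}\le\frac{Cn}{m}\log m.
\end{equation*}
This is $o(n)$ because $m\to\infty$. For $n/m<k\le K_0 n/m$ the same bound gives $O((n/m)\log K_0)=o(n)$. For the intermediate low range $K_1\le k\le n/m$, with $K_1$ a large constant, \eqref{eq:fourier-ratio-upper} gives at most
\begin{equation*}
C\frac{n^2}{m^2}\sum_{k\ge K_1}k^{-2}\le C\frac{n^2}{m^2}K_1^{-1}.
\end{equation*}
This is small relative to $n^2/m^2$ after letting $K_1\to\infty$ slowly.

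\emph{Fixed low frequencies.} For each fixed $k\le K_1$, use the uniform Taylor expansion as in \eqref{eq:fixed-Fourier-limit}. When $m=o(n)$ it gives
\begin{equation*}
1-\theta_{k,n}=(1+o(1))\frac{2\pi^2k^2\kappa_m}{n^2},\qquad \theta_{k,n}\to1.
\end{equation*}
So these terms equal $(1+o(1))\,n^2(1-\cos(2\pi kd/n))/(2\pi^2k^2\kappa_m)$. Summing over $k$ up to $K_1$, doubling for $n-k$, and comparing with the full Fourier series
\begin{equation*}
\sum_{k\ge1}\frac{1-\cos(2\pi kx)}{\pi^2k^2}=x(1-x),\qquad x\in[0,1],
\end{equation*}
yields $d(n-d)/\kappa_m+O(n^2/(m^2K_1))$. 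The remaining step converts $\kappa_m$ to $m^2/3$; this conversion costs only $O(n^2/m^3)=o(n^2/m^2)$.

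If instead $m/n$ is bounded below, all of $n^2/m^2$, $3d(n-d)/m^2$ and the fixed-$k$ terms are $O(1)=o(n)$. In that case the claim reduces to $S(d)=o(n)$, which follows directly from \eqref{eq:fourier-coefficient-upper} together with $1-\theta_{k,n}\ge a$. A subsequence argument on $m/n$, as in Proposition~\ref{prop:host-spectral-gap}, combines the two regimes and gives uniformity in $d$, since every bound above is uniform in $d$.

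\emph{The upper bounds.} The global bound \eqref{eq:deterministic-hitting-global-upper} follows from the same split with absolute-value bounds, and needs no asymptotics. For the local bound \eqref{eq:deterministic-hitting-local-upper}, use $1-\cos(2\pi kd/n)\le 2\pi^2k^2d^2/n^2$ with $d\le m+4\le 2m$ in the low range $k\le n/m$. Then \eqref{eq:fourier-gap-lower} bounds these terms by $C\sum_{k\le n/m}k^2d^2/(m^2k^2)\le Cn/m\cdot(d/m)^2=O(n)$, while the tail is $O(n)$ as before. I expect the main obstacle to be the bookkeeping that keeps the error $o(n+n^2/m^2)$ uniform across both regimes. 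In particular, the $\log m$ factor in the high-frequency tail must be absorbed by $n$ rather than by $n^2/m^2$, which is what the coefficient bound \eqref{eq:fourier-coefficient-upper} permits.
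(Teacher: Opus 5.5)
Your proposal is correct and follows the paper's proof. Both use the same Fourier formula, the split $1/(1-\theta)=1+\theta/(1-\theta)$ that produces the leading $n$, a Taylor expansion at fixed frequencies combined with $\sum_{k\geq1}(1-\cos 2\pi kt)/k^2=\pi^2t(1-t)$, tail control from Lemma~\ref{lem:fourier-estimates}, and a subsequence argument. The only difference is bookkeeping: the paper splits regimes at $m\asymp\sqrt n$ and bounds the whole correction by $o(n)$ when $m\gg\sqrt n$, whereas you split at $m\asymp n$, absorbing frequencies above $n/m$ into $o(n)$ and those between $K_1$ and $n/m$ into $o(n^2/m^2)$, which works equally well.
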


\begin{proof}
We first derive the Fourier formula, then estimate its low frequencies and tail.
Fix $b\in\mathbb Z_n$ and put $h_b(x):=H_{C_n^m}(x,b)$. Since $C_n^m$ is regular, the
hitting-time equations give
$$
L_{C_n^m}h_b
=
2m\mathbf 1-2mn e_b
=
2mn(\mathfrak u_n-e_b),
$$
where $\mathfrak u_n:=n^{-1}\mathbf 1$. Since constants span the kernel of
$L_{C_n^m}$, for every $a\neq b$,
$$
H_{C_n^m}(a,b)
=
(e_a-e_b)^{\mathsf T}L_{C_n^m}^+\,2mn(\mathfrak u_n-e_b).
$$
Let
$$
\psi_k(x):=n^{-1/2}e^{2\pi ikx/n},
\qquad 0\leq k<n.
$$
The vectors $\psi_k$ form an orthonormal eigenbasis of $L_{C_n^m}$, with eigenvalues
$2m(1-\theta_{k,n})$. Since $L_{C_n^m}^+\mathfrak u_n=0$, expanding in this basis and
pairing $k$ with $n-k$ gives
\begin{equation}
\label{eq:exact-cycle-hitting-fourier}
 H_{C_n^m}(a,b)
 =
 \sum_{k=1}^{n-1}
 \frac{1-\cos(2\pi k r_{ab}/n)}{1-\theta_{k,n}}.
\end{equation}
Since
$$
 \sum_{k=1}^{n-1}
 \left(1-\cos\left(\frac{2\pi k r_{ab}}{n}\right)\right)=n,
$$
we may rewrite \eqref{eq:exact-cycle-hitting-fourier} as
\begin{equation}
\label{eq:cycle-hitting-base-plus-correction}
 H_{C_n^m}(a,b)
 =
 n+
 \sum_{k=1}^{n-1}
 \left(1-\cos\left(\frac{2\pi k r_{ab}}{n}\right)\right)
 \frac{\theta_{k,n}}{1-\theta_{k,n}}.
\end{equation}
Below, we always pair $k$ with $n-k$; in frequency ranges, $k$ denotes
$\min\{k,n-k\}\in[1,n/2]$.

\medskip
\noindent\emph{The quadratic term.}
First consider a subsequence on which $m/\sqrt n$ is bounded. For fixed $k\geq1$,
Taylor expansion gives
\begin{align*}
 1-\theta_{k,n}
 &=
 \frac1m\sum_{r=1}^m
 \left[
 \frac12\left(\frac{2\pi kr}{n}\right)^2
 +O\left(\frac{k^4r^4}{n^4}\right)
 \right]\\
 &=
 \frac{2\pi^2k^2}{mn^2}\sum_{r=1}^m r^2
 +
 O\left(\frac{k^4}{mn^4}\sum_{r=1}^m r^4\right)\\
 &=
 \frac{2\pi^2k^2m^2}{3n^2}\bigl(1+o(1)\bigr).
\end{align*}
Here $m\to\infty$, and the relative error is $O(k^2m^2/n^2)=o(1)$. Thus
$\theta_{k,n}\to1$ and
\begin{equation}
\label{eq:fixed-fourier-ratio-limit}
 \frac{m^2}{n^2}\frac{\theta_{k,n}}{1-\theta_{k,n}}
 \longrightarrow
 \frac{3}{2\pi^2k^2}.
\end{equation}
By \eqref{eq:fourier-ratio-upper}, the paired contribution from
$J<\min\{k,n-k\}\leq n/2$, after multiplication by $m^2/n^2$, is at most
\begin{equation}
\label{eq:fourier-tail-control}
 C\sum_{J<k\leq n/m}\frac1{k^2}
 +
 C\frac mn\sum_{n/m<k\leq n/2}\frac1k
 \leq
 \frac{C}{J}+C\frac{m}{n}\log m.
\end{equation}
The second term tends to zero when $m=O(\sqrt n)$. We may therefore let $n\to\infty$
and then $J\to\infty$. The tail bound is independent of $r_{ab}$, so convergence is
uniform over distinct $a,b$. The Fourier identity
\begin{equation}
\label{eq:quadratic-distance-fourier-series}
 \sum_{k=1}^{\infty}
 \frac{1-\cos(2\pi kt)}{k^2}
 =
 \pi^2t(1-t),
 \qquad 0\leq t\leq1,
\end{equation}
now yields
$$
 \frac{m^2}{n^2}
 \left(H_{C_n^m}(a,b)-n\right)
 =
 3\frac{r_{ab}}n\left(1-\frac{r_{ab}}n\right)+o(1)
$$
uniformly over distinct $a,b$. This proves
\eqref{eq:deterministic-hitting-asymptotic} when
$m=O(\sqrt n)$.

On a subsequence for which $m/\sqrt n\to\infty$,
Lemma~\ref{lem:fourier-estimates} gives
\begin{equation}
\label{eq:fast-cycle-correction-bound}
 \max_{a\ne b}\left|H_{C_n^m}(a,b)-n\right|
 \leq
 C\left(
 \frac{n^2}{m^2}
 +
 \frac nm\log m
 \right)
 =
 o(n).
\end{equation}
The quadratic term in \eqref{eq:deterministic-hitting-asymptotic} is also $o(n)$,
which proves the expansion when $m/\sqrt n\to\infty$. Every subsequence has a further
subsequence of one of these two types, so the expansion holds for the full sequence.

\medskip
\noindent\emph{Global and local bounds.}
Equations~\eqref{eq:cycle-hitting-base-plus-correction} and
\eqref{eq:fourier-ratio-upper} also give
$$
 \max_{a\ne b}\left|H_{C_n^m}(a,b)-n\right|
 \leq
 C\left(
 \frac{n^2}{m^2}\sum_{k\geq1}\frac1{k^2}
 +
 \frac nm\sum_{n/m<k\leq n/2}\frac1k
 \right)
 \leq
 C\left(\frac{n^2}{m^2}+\frac nm\log m\right).
$$
Since $\log m\leq m$, this is at most $C(n+n^2/m^2)$, proving
\eqref{eq:deterministic-hitting-global-upper}.

For the local bound, suppose that $r_{ab}\leq m+4$. The frequencies $k\leq n/m$ in
\eqref{eq:exact-cycle-hitting-fourier} contribute at most
$$
 C\sum_{k\leq n/m}
 \frac{\min\{1,k^2r_{ab}^2/n^2\}}
 {m^2k^2/n^2}
 \leq
 C\frac{nr_{ab}^2}{m^3}
 \leq Cn.
$$
For $k>n/m$, the denominator is bounded away from zero, and the numerators sum to at
most $Cn$. This proves \eqref{eq:deterministic-hitting-local-upper}.
\end{proof}

We will use effective resistance to turn energy estimates into pointwise estimates for
the block equation. The commute-time identity converts the preceding hitting-time bounds
into the resistance bounds we need.

\begin{corollary}[Resistance bounds for a cycle power]
\label{cor:cycle-resistance}
Suppose that $m\to\infty$ and $m<n/2$, and let $\mathcal R_{n,m}$ be the maximum effective
resistance in the unweighted graph $C_n^m$. Then
\begin{equation}
\label{eq:cycle-resistance-global}
 \mathcal R_{n,m}
 \leq
 C\left(\frac1m+\frac{n}{m^3}\right).
\end{equation}
If $r_{ab}\leq m+4$, then
\begin{equation}
\label{eq:cycle-resistance-local}
 R_{\eff}^{C_n^m}(a,b)\leq\frac Cm.
\end{equation}
\end{corollary}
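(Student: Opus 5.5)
The plan is to read both bounds off the commute-time identity (Theorem~\ref{thm:commute-time}) together with the hitting-time bounds of Lemma~\ref{lem:deterministic-hitting}. The graph $C_n^m$ is $2m$-regular with $nm$ edges, so $\vol_{C_n^m}(\mathbb Z_n)=2mn$. For distinct $a,b$, Theorem~\ref{thm:commute-time} gives
\begin{equation*}
R_{\eff}^{C_n^m}(a,b)=\frac{H_{C_n^m}(a,b)+H_{C_n^m}(b,a)}{2mn}
\leq\frac{2\max_{x\neq y}H_{C_n^m}(x,y)}{2mn}.
\end{equation*}
The case $a=b$ is trivial, since $R_{\eff}(a,a)=0$.

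\emph{Global bound.} Insert \eqref{eq:deterministic-hitting-global-upper} into the display above. This gives
\begin{equation*}
R_{\eff}^{C_n^m}(a,b)\leq\frac{C}{mn}\left(n+\frac{n^2}{m^2}\right)=C\left(\frac1m+\frac n{m^3}\right).
\end{equation*}
Taking the maximum over all pairs proves \eqref{eq:cycle-resistance-global}.

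\emph{Local bound.} The condition $r_{ab}\leq m+4$ is symmetric in $a$ and $b$. Hence \eqref{eq:deterministic-hitting-local-upper} bounds both $H_{C_n^m}(a,b)$ and $H_{C_n^m}(b,a)$ by $Cn$. The commute-time identity then gives $R_{\eff}^{C_n^m}(a,b)\leq 2Cn/(2mn)=C/m$, which is \eqref{eq:cycle-resistance-local}.

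There is no real obstacle here, since all the analytic work is contained in Lemma~\ref{lem:deterministic-hitting}. The only point to check is that its bounds are uniform over ordered pairs, so that they apply to both directions of the commute time. Both \eqref{eq:deterministic-hitting-global-upper} and \eqref{eq:deterministic-hitting-local-upper} are maxima over pairs with a symmetric condition, so this holds.
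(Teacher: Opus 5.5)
Your proof is correct and follows essentially the paper's argument: the commute-time identity with $\vol(C_n^m)=2mn$, combined with the bounds \eqref{eq:deterministic-hitting-global-upper} and \eqref{eq:deterministic-hitting-local-upper} of Lemma~\ref{lem:deterministic-hitting}. The one difference is that the paper first uses the reflection $x\mapsto a+b-x$ to get $H_{C_n^m}(a,b)=H_{C_n^m}(b,a)$, whereas you bound the commute time by twice the maximum over ordered pairs, which makes that symmetry step unnecessary.
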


\begin{proof}
The map
$$
x\longmapsto a+b-x\pmod n
$$
is an automorphism of $C_n^m$ that interchanges $a$ and $b$. Hence
$H_{C_n^m}(a,b)=H_{C_n^m}(b,a)$. Theorem~\ref{thm:commute-time} and
$\vol(C_n^m)=2mn$ give
$$
 R_{\eff}^{C_n^m}(a,b)
 =
 \frac{H_{C_n^m}(a,b)}{mn}.
$$
\eqref{eq:cycle-resistance-global} and
\eqref{eq:cycle-resistance-local} follow from the corresponding bounds in
Lemma~\ref{lem:deterministic-hitting}.
\end{proof}

The block reduction retains all interactions up to block distance $R$. Edges of the
original graph may also join a bounded number of layers beyond this range. The next
lemma shows that these layers contribute only $O(R^{-1})$ times the energy from
distances $1,\ldots,R$.

\begin{lemma}[Boundary layers of a cycle power]
\label{lem:boundary-layer-energy}
For every fixed integer $J\geq1$, there is $C_J<\infty$ such that, for
$R\geq2$, $R<q/2$, and every $z\in\R^q$,
\begin{equation}
\label{eq:boundary-layer-energy}
 \sum_{\substack{1\leq a\leq J\\R+a\leq\lfloor q/2\rfloor}}
 \sum_{i=0}^{q-1}
 (z_i-z_{i+R+a})^2
 \leq
 \frac{C_J}{R}
 \sum_{r=1}^{R}\sum_{i=0}^{q-1}
 (z_i-z_{i+r})^2.
\end{equation}
All indices are read modulo $q$.
\end{lemma}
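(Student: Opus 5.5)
Write $E_r:=\sum_{i=0}^{q-1}(z_i-z_{i+r})^2$ for the energy at lag $r$, so that the right-hand side of \eqref{eq:boundary-layer-energy} is $(C_J/R)\sum_{r=1}^R E_r$. Since the left-hand side has at most $J$ terms, it suffices to show that, for each fixed $a\in\{1,\ldots,J\}$ with $R+a\leq\lfloor q/2\rfloor$,
\begin{equation*}
E_{R+a}\leq\frac{C_J'}{R}\sum_{r=1}^{R}E_r .
\end{equation*}
The tool is a two-step decomposition of a long lag into shorter lags, combined with averaging over the splitting point. This avoids Fourier analysis, where the boundary lags at $R+a$ could be awkward.

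For each $s$ with $a\leq s\leq R$, write $R+a=s+(R+a-s)$; both parts lie in $\{a,\ldots,R\}\subseteq\{1,\ldots,R\}$. By the triangle inequality and $(x+y)^2\leq2x^2+2y^2$,
\begin{equation*}
(z_i-z_{i+R+a})^2\leq2(z_i-z_{i+s})^2+2(z_{i+s}-z_{i+R+a})^2 .
\end{equation*}
Summing over $i$ and using translation invariance modulo $q$ gives $E_{R+a}\leq2E_s+2E_{R+a-s}$. To make this useful we need $R+a-s\geq1$, which holds for every $s\leq R$ because $a\geq1$.

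Now average over $s\in\{a,a+1,\ldots,R\}$, which contains $R-a+1$ values. As $s$ runs over this range, both $s$ and $R+a-s$ run over $\{a,\ldots,R\}$ exactly once. Therefore
\begin{equation*}
(R-a+1)E_{R+a}\leq4\sum_{r=a}^{R}E_r\leq4\sum_{r=1}^{R}E_r .
\end{equation*}
For $R\geq2J$ we have $R-a+1\geq R/2$, so $E_{R+a}\leq(8/R)\sum_{r\leq R}E_r$.

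The remaining case is $2\leq R<2J$, where $R$ is bounded in terms of $J$. Here apply the same decomposition repeatedly: split $R+a$ into at most $\lceil(R+a)/R\rceil\leq J+1$ steps, each of length at most $R$. Cauchy--Schwarz then gives $E_{R+a}\leq(J+1)^2\max_{r\leq R}E_r\leq(J+1)^2\sum_{r\leq R}E_r$, and since $R<2J$ this is at most $2J(J+1)^2R^{-1}\sum_{r\leq R}E_r$. This bounded case is the only mild obstacle. Taking $C_J:=J\max\{8,2J(J+1)^2\}$ proves \eqref{eq:boundary-layer-energy}. The constraint $R+a\leq\lfloor q/2\rfloor$ is not needed beyond making the lags meaningful.
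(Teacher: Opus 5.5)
Your proof is correct and rests on the same idea as the paper's: pass through an intermediate lag using $(x+y)^2\leq 2x^2+2y^2$, then average over the splitting point. The only difference is the averaging window. The paper averages the first step over all $r\in\{1,\ldots,R\}$, so the lags $R+1,\ldots,R+a-1$ reappear and it must induct on $a$, with constants growing in $a$. Your symmetric window $s\in\{a,\ldots,R\}$ keeps both pieces in $\{1,\ldots,R\}$, which removes the induction and gives a per-layer constant $8$ when $R\geq 2J$. The price is your separate, trivial bounded-$R$ case.
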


\begin{proof}
Write
$$
 E_t(z):=\sum_{i=0}^{q-1}(z_i-z_{i+t})^2,
 \qquad
 \mathcal E_R(z):=\sum_{t=1}^R E_t(z).
$$
Fix $R+1\leq s\leq\lfloor q/2\rfloor$. For $1\leq r\leq R$,
$$
 (z_i-z_{i+s})^2
 \leq
 2(z_i-z_{i+r})^2
 +
 2(z_{i+r}-z_{i+s})^2.
$$
Summing in $i$ and averaging in $r$ gives
\begin{equation}
\label{eq:boundary-layer-recursion}
 E_s(z)
 \leq
 \frac2R\mathcal E_R(z)
 +
 \frac2R\sum_{r=1}^R E_{s-r}(z).
\end{equation}
For $s=R+1$, every index in the second sum lies in
$\{1,\ldots,R\}$, proving
$E_{R+1}(z)\leq4\mathcal E_R(z)/R$.
Suppose inductively that
$E_{R+b}(z)\leq C_b\mathcal E_R(z)/R$ for $1\leq b<a$.
For $s=R+a$, the second sum in \eqref{eq:boundary-layer-recursion} has lengths in
$\{a,\ldots,R+a-1\}$. Those at most $R$ contribute at most $\mathcal E_R(z)$;
the induction hypothesis controls the remaining $a-1$ terms. Since $a\leq J$ is fixed,
this gives $E_{R+a}(z)\leq C_a\mathcal E_R(z)/R$. Summing over
$R+a\leq\lfloor q/2\rfloor$ proves the lemma.
\end{proof}

The next lemma supplies two tools for passing from quadratic forms to potentials.
The first bounds a potential in terms of the source mass and effective resistance.
The second controls the change in the inverse when the Dirichlet form changes by a
small relative error.

\begin{lemma}[Resistance duality and stability of inverses]
\label{lem:resistance-duality-stability}
Let $Q$ be a positive semidefinite $q\times q$ matrix with
$\ker Q=\operatorname{span}\{r\}$, where every coordinate of $r$ is
positive. For any positive semidefinite matrix $T$, write
$\|z\|_T:=(z^{\mathsf T}Tz)^{1/2}$. Put
$$
 \ell_{ij}:=\frac{e_i}{r_i}-\frac{e_j}{r_j},
 \qquad
 \mathcal R_Q(i,j):=\ell_{ij}^{\mathsf T}Q^+\ell_{ij},
 \qquad
 \mathcal R_Q^{\max}:=\max_{i,j}\mathcal R_Q(i,j).
$$
If $B\in\R^q$ satisfies $\sum_iB_i=0$ and
$$
 g:=\sum_{i=0}^{q-1}B_i\frac{e_i}{r_i},
 \qquad
 \mathsf m(B):=\sum_i(B_i)_+,
$$
then
\begin{align}
\label{eq:source-resistance-dual-bound}
 g^{\mathsf T}Q^+g
 &\leq
 \mathsf m(B)^2\mathcal R_Q^{\max},\\
\label{eq:resistance-cauchy-schwarz}
 \left|\ell_{ij}^{\mathsf T}Q^+g\right|
 &\leq
 \sqrt{\mathcal R_Q(i,j)}
 \sqrt{g^{\mathsf T}Q^+g}.
\end{align}

Suppose, in addition, that $\mathcal S$ is positive semidefinite,
$\ker\mathcal S=\ker Q$, and, for some $0\leq\delta<1$,
\begin{equation}
\label{eq:abstract-form-comparison}
 (1-\delta)Q\preceq\mathcal S\preceq(1+\delta)Q.
\end{equation}
Then every $h\perp r$ satisfies
\begin{align}
\label{eq:abstract-inverse-stability}
 \left\|\mathcal S^+h-Q^+h\right\|_Q
 &\leq
 \frac{\delta}{1-\delta}\left\|h\right\|_{Q^+},\\
\label{eq:abstract-inverse-bound}
 \left\|\mathcal S^+h\right\|_Q
 &\leq
 \frac1{1-\delta}\left\|h\right\|_{Q^+}.
\end{align}
\end{lemma}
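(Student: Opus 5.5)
The plan is to treat $Q$ as a positive definite form on the hyperplane $\{z: z\perp r\}$ and to note a fact used throughout: every vector of the form $g=\sum_i B_ie_i/r_i$ with $\sum_iB_i=0$ is orthogonal to $r$. Indeed $r^{\mathsf T}g=\sum_iB_i=0$. In particular each $\ell_{ij}$ is orthogonal to $r$. On the range $r^\perp$, $Q^+$ is the genuine inverse of $Q$, and $\langle x,y\rangle_{Q^+}:=x^{\mathsf T}Q^+y$ is an inner product there. Inequality \eqref{eq:resistance-cauchy-schwarz} is then just Cauchy--Schwarz for this semi-inner product, since $\mathcal R_Q(i,j)=\|\ell_{ij}\|_{Q^+}^2$.

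For \eqref{eq:source-resistance-dual-bound}, I would decompose the source $B$ as a flow, exactly as for a unit current. Write $B=B_+-B_-$ with $\sum_iB_i^+=\sum_iB_i^-=\mathsf m(B)$. If $\mathsf m(B)=0$ then $g=0$ and there is nothing to prove. Otherwise, choose a transport plan $\pi_{ij}\geq0$ with row sums $B_i^+$ and column sums $B_j^-$, for instance the product plan $\pi_{ij}=B_i^+B_j^-/\mathsf m(B)$. This gives
\[
g=\sum_{i,j}\pi_{ij}\ell_{ij},\qquad \sum_{i,j}\pi_{ij}=\mathsf m(B).
\]
The triangle inequality in the norm $\|\cdot\|_{Q^+}$ then yields $\|g\|_{Q^+}\leq\sum\pi_{ij}\sqrt{\mathcal R_Q(i,j)}\leq\mathsf m(B)\sqrt{\mathcal R_Q^{\max}}$. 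Squaring gives the bound. Convexity of the squared norm works equally well.

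For the stability statements, I would restrict both forms to $r^\perp$. Since $\ker\mathcal S=\ker Q=\operatorname{span}\{r\}$, both are invertible there, and $\mathcal S^+h$ and $Q^+h$ lie in $r^\perp$ for $h\perp r$. Set $x:=Q^+h$ and $y:=\mathcal S^+h$, so that $Qx=\mathcal Sy=h$. By the Schur-product style argument, $(1-\delta)Q\preceq\mathcal S$ on $r^\perp$ implies $\mathcal S^{-1}\preceq(1-\delta)^{-1}Q^{-1}$ there. Write $\mathcal S=Q^{1/2}(I+E)Q^{1/2}$ on $r^\perp$, where $E$ is symmetric with $\|E\|_{\op}\leq\delta$. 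Then
\[
Q^{1/2}(y-x)=\bigl((I+E)^{-1}-I\bigr)Q^{-1/2}h=-(I+E)^{-1}EQ^{-1/2}h .
\]
Taking Euclidean norms, and using $\|(I+E)^{-1}\|_{\op}\leq(1-\delta)^{-1}$ together with $\|Q^{-1/2}h\|_2=\|h\|_{Q^+}$, gives \eqref{eq:abstract-inverse-stability}. Similarly, $Q^{1/2}y=(I+E)^{-1}Q^{-1/2}h$ gives \eqref{eq:abstract-inverse-bound}. The only care needed is to justify that $E$ is well defined and that the pseudoinverses coincide with the restricted inverses on $r^\perp$; this follows from the kernel assumption. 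No step seems a serious obstacle; the mildly delicate point is the flow decomposition, which the product plan handles.
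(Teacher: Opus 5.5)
Your proposal is correct and follows essentially the same route as the paper. It decomposes $g$ as a transport plan over the dipoles $\ell_{ij}$ and then applies the triangle and Cauchy--Schwarz inequalities in the $Q^+$ seminorm. For the stability bounds, it conjugates to $Q^{-1/2}\mathcal S Q^{-1/2}=I+E$ on $r^\perp$, whose spectrum lies in $[1-\delta,1+\delta]$; your product plan $\pi_{ij}=B_i^+B_j^-/\mathsf m(B)$ is simply a concrete choice of the plan the paper leaves unspecified.
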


\begin{proof}
Choose nonnegative numbers $\gamma_{ij}$, with $B_i>0$ and $B_j<0$,
whose row sums are $(B_i)_+$ and whose column sums are $(-B_j)_+$.
Then
$$
 g=\sum_{\substack{i:B_i>0\\j:B_j<0}}\gamma_{ij}\ell_{ij},
 \qquad
 \sum_{i,j}\gamma_{ij}=\mathsf m(B).
$$
The triangle inequality for the seminorm induced by $Q^+$ gives
$$
 \|g\|_{Q^+}
 \leq
 \sum_{i,j}\gamma_{ij}\|\ell_{ij}\|_{Q^+}
 \leq
 \mathsf m(B)\sqrt{\mathcal R_Q^{\max}},
$$
which proves \eqref{eq:source-resistance-dual-bound}.
\eqref{eq:resistance-cauchy-schwarz} is the
Cauchy--Schwarz inequality for the same inner product.

For the inverse estimates, restrict to $r^\perp$, where $Q$ and $\mathcal S$ are positive
definite, and set
$$
 \mathcal A:=Q^{-1/2}\mathcal S Q^{-1/2}.
$$
\eqref{eq:abstract-form-comparison} implies
$\operatorname{spec}(\mathcal A)\subseteq[1-\delta,1+\delta]$.
Consequently,
$$
 \|\mathcal A^{-1}-I\|_{\op}
 \leq\frac{\delta}{1-\delta},
 \qquad
 \|\mathcal A^{-1}\|_{\op}
 \leq\frac1{1-\delta}.
$$
Apply these inequalities to $Q^{-1/2}h$. On $r^\perp$, we have
$$
\begin{aligned}
\|\mathcal S^{-1}h-Q^{-1}h\|_Q
&=
\|(\mathcal A^{-1}-I)Q^{-1/2}h\|_2,\\
\|\mathcal S^{-1}h\|_Q
&=
\|\mathcal A^{-1}Q^{-1/2}h\|_2.
\end{aligned}
$$
These identities prove \eqref{eq:abstract-inverse-stability} and
\eqref{eq:abstract-inverse-bound}; the pseudoinverses give the same statements on $\R^q$.
\end{proof}

\subsection{Exact decomposition of the hitting-time equation}

We next remove the target-degree term $V_G/d(y)$ exactly. The remainder satisfies a
Laplacian equation with a mean-zero source, which is the form needed for the block
argument.

For this step, let $G$ be any connected finite graph. Write $A,D$, and $L=D-A$ for its
adjacency, degree, and combinatorial Laplacian matrices, and put
$V_G:=\sum_{x\in V(G)}d(x)$.

\begin{lemma}[Exact extraction of the target-degree term]
\label{lem:exact-local-removal}
Fix a target $y$. Define
\begin{equation}
\label{eq:local-step-function}
 \phi_y(x):=
 \begin{cases}
 V_G/d(y),&x\neq y,\\
 0,&x=y,
 \end{cases}
\end{equation}
and put $u_y(x):=H_G(x,y)-\phi_y(x)$. Then
\begin{equation}
\label{eq:remainder-poisson-equation}
 Lu_y=b_y,
 \qquad
 b_y:=d-\frac{V_G}{d(y)}Ae_y,
 \qquad
 \sum_xb_y(x)=0,
\end{equation}
where $d=(d(x))_{x\in V(G)}$. In particular, for $x\neq y$,
\begin{equation}
\label{eq:exact-target-decomposition}
 H_G(x,y)=\frac{V_G}{d(y)}+u_y(x)-u_y(y).
\end{equation}
\end{lemma}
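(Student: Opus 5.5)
The plan is to verify the identity by direct computation from the hitting-time equations. Write $h_y(x):=H_G(x,y)$. The first-step analysis gives $h_y(y)=0$ and, for $x\neq y$, $d(x)h_y(x)-\sum_{z}A(x,z)h_y(z)=d(x)$, that is, $(Lh_y)(x)=d(x)$. At the target we do not know $(Lh_y)(y)$ directly. Since $\mathbf 1^{\mathsf T}L=0$, we have $\sum_x(Lh_y)(x)=0$, so $(Lh_y)(y)=-\sum_{x\neq y}d(x)=d(y)-V_G$. Hence
\[
Lh_y=d-V_G e_y .
\]
The value at $y$ is the only step that requires an argument: the sum rule for $L$ determines it. It can be cross-checked against the return-time identity $\E_y[\tau_y^+]=V_G/d(y)$.

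Next I would compute $L\phi_y$. Write $\phi_y=\frac{V_G}{d(y)}(\mathbf 1-e_y)$. Since $L\mathbf 1=0$,
\[
L\phi_y=-\frac{V_G}{d(y)}Le_y=-\frac{V_G}{d(y)}\bigl(d(y)e_y-Ae_y\bigr)=-V_Ge_y+\frac{V_G}{d(y)}Ae_y .
\]
Subtracting this from $Lh_y$ gives
\[
Lu_y=d-V_Ge_y+V_Ge_y-\frac{V_G}{d(y)}Ae_y=d-\frac{V_G}{d(y)}Ae_y=b_y .
\]
This proves the first part of \eqref{eq:remainder-poisson-equation}.

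For the zero-sum property, note that $\sum_x d(x)=V_G$ and $\mathbf 1^{\mathsf T}Ae_y=d(y)$. Therefore $\sum_x b_y(x)=V_G-V_G=0$. Equivalently, this follows from $\mathbf 1^{\mathsf T}L=0$ applied to $Lu_y=b_y$.

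Finally, \eqref{eq:exact-target-decomposition} follows from the definitions. We have $u_y(y)=h_y(y)-\phi_y(y)=0$. For $x\neq y$, $u_y(x)=H_G(x,y)-V_G/d(y)$. Hence $H_G(x,y)=V_G/d(y)+u_y(x)-u_y(y)$.

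Stating the identity with the difference $u_y(x)-u_y(y)$ makes it invariant under adding constants to $u_y$. This matters because $u_y$ is determined by $Lu_y=b_y$ only up to an additive constant on the connected graph $G$. Consequently, any solution of $Lu_y=b_y$, for instance $L^+b_y$, may be used in place of $u_y$ in \eqref{eq:exact-target-decomposition}.
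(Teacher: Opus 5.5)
Your proposal is correct and follows essentially the same route as the paper: you get $LH_G(\cdot,y)=d-V_Ge_y$ from the first-step equations together with the zero column sums of $L$, and you compute $L\phi_y=-V_Ge_y+\frac{V_G}{d(y)}Ae_y$, which agrees coordinatewise with the paper's formula. You then subtract and read off the zero sum and the decomposition from $u_y(y)=0$, and your closing remark that, on the connected graph, any solution of $Lu=b_y$ gives the same potential differences is also the paper's final observation.
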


\begin{proof}
The hitting-time equations give
$$
 (LH_G(\,\cdot\,,y))(x)=d(x)
 \quad\text{for }x\neq y.
$$
The coordinates of $LH_G(\,\cdot\,,y)$ sum to zero, so its value at $y$ is
$d(y)-V_G$. Also,
$$
 (L\phi_y)(x)
 =
 \frac{V_G}{d(y)}A(x,y)
 \quad (x\neq y),
 \qquad
 (L\phi_y)(y)=-V_G.
$$
Subtracting gives \eqref{eq:remainder-poisson-equation}. Since $Ae_y$ has coordinate sum
$d(y)$, the source $b_y$ has sum zero. By definition, $u_y(y)=0$, and thus
$H_G(x,y)=V_G/d(y)+u_y(x)$ for $x\neq y$. Any other solution of $Lu=b_y$ differs by a
constant, so the potential difference $u_y(x)-u_y(y)$ gives
\eqref{eq:exact-target-decomposition} for every choice of solution.
\end{proof}

\subsection{Block decomposition of the Laplacian}

Return to $G(n,p,m)$ under \eqref{eq:sharp-hitting-assumptions}. We need blocks short
enough that their interaction range tends to infinity, but large enough for edge counts
and volumes to concentrate even at $p=c\log n/(2m)$. Set
\begin{equation}
\label{eq:block-parameters}
 K:=\left\lfloor(\log n)^{1/4}\right\rfloor,
 \qquad
 q:=\left\lceil\frac{nK}{m}\right\rceil.
\end{equation}
There will be $q$ blocks of size asymptotic to $m/K$, with interaction range asymptotic
to $K$. Since $nK/m\geq2K\to\infty$ and $K/m\to0$, rounding $q$ introduces a relative
error $o(1)$, and $q/n\to0$. In particular, the blocks are nonempty for large $n$.
With $t_i=\lfloor in/q\rfloor$, partition $\Z_n$ into
$$
 B_i:=\{t_i,t_i+1,\ldots,t_{i+1}-1\},
 \qquad 0\leq i<q.
$$
Write
$$
 s_i:=|B_i|,
 \qquad
 \bar s:=\frac nq,
 \qquad
 s_-:=\min_i s_i,
 \qquad
 s_+:=\max_i s_i,
$$
and set
\begin{equation}
\label{eq:block-range}
 R:=\left\lfloor\frac{m}{s_+}\right\rfloor-1.
\end{equation}
Then
\begin{equation}
\label{eq:block-parameter-asymptotics}
 \bar s\sim\frac mK,
 \qquad
R\sim K,
 \qquad
 \frac{s_+}{s_-}\longrightarrow1,
 \qquad
 \frac{R\bar s}{m}\longrightarrow1.
\end{equation}
Thus $R\to\infty$, and
$$
 R<\frac m{s_+}<\frac n{2s_+}\leq\frac q2,
$$
so $R<q/2$ for all sufficiently large $n$.

We separate block averages from fluctuations within blocks. Let $U\subseteq\R^n$ be
the space of block-constant functions, and put $W:=U^\perp$. Thus every function in
$W$ sums to zero on each block. Identify $\R^q$ isometrically with $U$ by
\begin{equation}
\label{eq:block-isometry-note}
 (\mathsf Ua)(x):=\frac{a_i}{\sqrt{s_i}},
 \qquad x\in B_i.
\end{equation}
Relative to $U\oplus W$, write
\begin{equation}
\label{eq:full-laplacian-block-matrix}
 L_G=
 \begin{pmatrix}
 M&N\\
 N^{\mathsf T}&C_W
 \end{pmatrix}.
\end{equation}
Define the deterministic block form
\begin{equation}
\label{eq:reference-block-form}
 a^{\mathsf T}M_0a
 :=
 p\sum_{\substack{0\leq i<j<q\\
   1\leq d_q(i,j)\leq R}}
 s_is_j
 \left(
 \frac{a_i}{\sqrt{s_i}}-\frac{a_j}{\sqrt{s_j}}
 \right)^2.
\end{equation}

The form $M_0$ describes a weighted cycle power on the blocks. The next lemma collects
the estimates needed to replace the random equation by this deterministic form.
Degree and volume bounds control the source. Coercivity of $C_W$ controls fluctuations
within blocks, while the Schur-complement estimate controls the block averages. Together,
the latter two bounds also give $\ker L_G=\operatorname{span}\{\1\}$ and hence
connectedness.

\begin{lemma}[Uniform estimates for the block decomposition]
\label{lem:uniform-block-event}
There are constants $\alpha=\alpha(c)>0$ and $A=A(c)<\infty$, a
deterministic sequence $\kappa_n\to0$, and events
$\mathcal B_{n,p}$ satisfying
$$
 \inf_{p\in[c\log n/(2m),1]}\Prob_p(\mathcal B_{n,p})
 \longrightarrow1
$$
such that the following statements hold on $\mathcal B_{n,p}$:
\begin{align}
\label{eq:block-degree-bounds-note}
 \alpha\mu\leq d_G(x)\leq A\mu
 &\quad\text{for every }x,
 \qquad \mu:=2mp,\\
\label{eq:block-volume-concentration-note}
 \max_i\left|
 \frac{\vol_G(B_i)}{\mu s_i}-1
 \right|
 &\leq\kappa_n,\\
\label{eq:block-W-coercivity}
 C_W&\succeq\alpha\mu I_W,\\
\label{eq:block-Schur-comparison-hitting}
 (1-\kappa_n)M_0
 \preceq
 M-NC_W^{-1}N^{\mathsf T}
 &\preceq
 (1+\kappa_n)M_0,\\
\label{eq:block-coupling-comparison}
 NC_W^{-1}N^{\mathsf T}
 &\preceq\kappa_nM_0.
\end{align}
The constants and $\kappa_n$ are independent of $p$. In particular, $G$ is connected on
$\mathcal B_{n,p}$ for all sufficiently large $n$.
\end{lemma}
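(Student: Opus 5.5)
The plan is to compare $L_G$ with the Laplacian of the truncated graph $G^-$ of Subsection~\ref{subsec:gap-sublinear-lower}, built on the present block partition. The boundary layer of extra edges is then handled by Lemma~\ref{lem:boundary-layer-energy}. Let $H^-$ contain the edges inside blocks and between blocks at block distance at most $R$, and let $G^-$ be its retained subgraph. The proof of Lemma~\ref{lem:sublinear-block-events} used only the block asymptotics \eqref{eq:block-parameter-asymptotics}, $R<q/2$, $q\leq n$ and $\mu\geq c\log n$, not $m=o(n)$. It therefore applies verbatim and gives, with probability tending to one uniformly in $p$:
\begin{itemize}
\item $d_{G^-}\geq\alpha\mu$ and $d_G\leq A\mu$;
\item $\lVert A_{G^-}-pA_{H^-}\rVert_{\op}\leq A\sqrt\mu$;
\item the block-volume estimate \eqref{eq:block-volume-concentration-note};
\item the interface estimate $e_{G^-}(B_i,B_j)=(1+O(\varepsilon_n))ps_is_j$ for $1\leq d_q(i,j)\leq R$.
\end{itemize}
The proof of Proposition~\ref{prop:block-Schur} then yields the $G^-$ versions of \eqref{eq:C-coercivity}, \eqref{eq:M-M0-comparison} and \eqref{eq:N-bound}. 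Since $d_G\geq d_{G^-}$, this already gives \eqref{eq:block-degree-bounds-note}.

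Next write $L_G=L_{G^-}+L'$, where $L'$ is the Laplacian of the retained edges of $H\setminus H^-$. Every such edge lies inside an arc of length at most $m$, while $(R+2)s_-\geq m(1-o(1))$. Hence these edges join blocks at block distance in $\{R+1,R+2,R+3\}$ (a fixed number $J$ of layers). Between any two such blocks there are at most $s_is_j$ admissible pairs, and $ps_-^2/\log n\to\infty$ by \eqref{eq:sublinear-concentration-scales}. Chernoff and a union bound over at most $3q$ pairs therefore give $e_G(B_i,B_j)\leq2ps_is_j$ for all of them. For $u=\mathsf Ua$, write $b_i=a_i/\sqrt{s_i}$. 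Using the balanced block sizes and Lemma~\ref{lem:boundary-layer-energy} with $z=b$, we get
\[
u^{\mathsf T}L'u\leq2p s_+^2\sum_{a\leq J}\sum_i(b_i-b_{i+R+a})^2\leq\frac{C}{R}\,a^{\mathsf T}M_0a.
\]
Compressions preserve the Loewner order, so $C_W\succeq C_W^-\succeq(\alpha\mu/2)I_W$; renaming $\alpha$ gives \eqref{eq:block-W-coercivity}. Likewise,
\[
(1-\varepsilon_n)M_0\preceq M^-\preceq M\preceq M^-+M'\preceq(1+\varepsilon_n+C/R)M_0 .
\]

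For the coupling, $N=N^-+N'$. For $w\in W$, Cauchy--Schwarz for the positive semidefinite form $L'$ gives
\[
|u^{\mathsf T}L'w|\leq\sqrt{u^{\mathsf T}L'u}\,\sqrt{w^{\mathsf T}L'w}.
\]
Since $L'\preceq2D_G$, the second factor is at most $\sqrt{2A\mu}\lVert w\rVert_2$. Hence $\lVert N'^{\mathsf T}a\rVert_2^2\leq(C\mu/R)\,a^{\mathsf T}M_0a$. Combining this with \eqref{eq:N-bound} for $N^-$ gives
\[
\lVert N^{\mathsf T}a\rVert_2^2\leq C(K^2+\mu/R)\,a^{\mathsf T}M_0a .
\]
The coercivity of $C_W$ then gives
\[
NC_W^{-1}N^{\mathsf T}\preceq C\bigl(K^2/\mu+1/R\bigr)M_0 .
\]
Both terms vanish deterministically, since $\mu\geq c\log n$, $K=\lfloor(\log n)^{1/4}\rfloor$ and $R\sim K$. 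This proves \eqref{eq:block-coupling-comparison}. Together with the two-sided bound on $M$ it proves \eqref{eq:block-Schur-comparison-hitting}, with $\kappa_n$ equal to $\varepsilon_n+C(K^2/(c\log n)+1/R)$, independent of $p$.

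For connectedness, complete the square as in \eqref{eq:block-square-completion}. If $L_Gf=0$ with $f=u+w$, then $a^{\mathsf T}(M-NC_W^{-1}N^{\mathsf T})a=0$, hence $a^{\mathsf T}M_0a=0$. Because $C_q^R$ is connected, $b$ is constant. Then $w=-C_W^{-1}N^{\mathsf T}a$, which is $0$ because $N^{\mathsf T}a$ vanishes when $u$ is constant. So $\ker L_G=\operatorname{span}\{\mathbf 1\}$.

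The main obstacle I expect is the boundary layer. One must check that only $O(1)$ extra block distances occur, uniformly up to $m<n/2$ (including the case $m\asymp n$, where $q\asymp K$ and $R<q/2$ is used). One must also make sure that the crude bound $\lVert w\rVert_{L'}^2\lesssim\mu\lVert w\rVert^2$ loses only the factor $1/R$, which still tends to zero.
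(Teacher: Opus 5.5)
Your proposal is correct. Its architecture matches the paper's: the truncated host $H^-$ and subgraph $G^-$, and reuse of the Section~4 block estimates, which indeed use only $R<q/2$, $q\le n$, the block asymptotics and $\mu\geq c\log n$, never $m=o(n)$. It then gets coercivity on $W$ by compression, handles the $O(1)$ extra block distances with Lemma~\ref{lem:boundary-layer-energy}, and proves connectedness by completing the square.

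The genuine difference is how you pass from $G^-$ to $G$ for the Schur complement and the coupling. The paper uses the variational identity $a^{\mathsf T}(M-NC_W^{-1}N^{\mathsf T})a=\min_{w\in W}(\mathsf Ua+w)^{\mathsf T}L_G(\mathsf Ua+w)$:
\begin{itemize}
\item monotonicity under $L_G\succeq L_{G^-}$ gives the lower bound directly from \eqref{eq:subgraph-Schur-comparison};
\item taking $w=0$ gives the upper bound by $M$;
\item \eqref{eq:block-coupling-comparison} then follows by subtraction, $NC_W^{-1}N^{\mathsf T}=M-\mathcal S$.
\end{itemize}
So the boundary layer only ever needs an upper bound on $M$, and the extra off-diagonal block $N'$ is never estimated. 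You instead bound $N'$ directly, by Cauchy--Schwarz in the form $L'$ together with $L'\preceq2D_G$. This costs a factor $\mu$, which coercivity of $C_W$ cancels, leaving the explicit rate $C(K^2/\mu+1/R)$. That is more hands-on but equally valid, with the same order of error. Your domination of boundary-layer edge counts by $\Bin(s_is_j,p)$ is likewise a harmless substitute for the paper's additive Bernstein bound.

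Two small points of precision.
\begin{itemize}
\item The stated inequality $(R+2)s_-\geq m(1-o(1))$ is too weak on its own to force a bounded number of extra layers. What you need, and what holds, is $(R+2)s_->ms_-/s_+\geq m-m/s_+$ with $m/s_+\sim K=o(s_-)$. This gives $(R+3)s_->m$, hence block distance at most $R+3$.
\item When applying Lemma~\ref{lem:boundary-layer-energy}, restrict the layer sum to $R+a\le\lfloor q/2\rfloor$. This matters when $m$ is comparable to $n/2$.
\end{itemize}
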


\begin{proof}
We first prove the estimates for a slightly smaller host graph, then control the omitted
boundary layers.

\medskip
\noindent\emph{A host graph adapted to the blocks.}
Let $H^-$ contain all edges within blocks and all edges between blocks at cyclic block
distance at most $R$. If $1\leq d_q(i,j)\leq R$, the cyclic distance between
$x\in B_i$ and $y\in B_j$ is at most
$$
 (R+1)s_+-1
 =
 \left\lfloor\frac m{s_+}\right\rfloor s_+-1
 <m.
$$
The within-block edges are also admissible because $s_+-1<m$. Thus
$H^-\subseteq C_n^m$; retaining its edges with the same variables gives $G^-\subseteq G$.

The block parameters satisfy
\begin{equation}
\label{eq:hitting-block-required-conditions}
s_-\sim s_+\sim\frac mK,\qquad
R\sim K,\qquad R<\frac q2,\qquad
\mu\geq c\log n,\qquad \frac{m}{K^2}\longrightarrow\infty.
\end{equation}
Moreover, the degrees in $H^-$ satisfy
$$
 (2R+1)s_--1\leq d_{H^-}(x)\leq2m,
 \qquad
 \min_x d_{H^-}(x)=(1-o(1))\,2m.
$$
\medskip
\noindent\emph{Concentration estimates.}
Choose $\alpha=\alpha(c)>0$ with $cI_-(3\alpha)>1$. Since
$p d_{H^-}(x)\geq(1-o(1))\mu$, Chernoff's bound gives, uniformly in $p$,
$$
\Prob_p\bigl(d_{G^-}(x)<2\alpha\mu\bigr)
\leq
\exp\bigl(-(1-o(1))\mu I_-(3\alpha)\bigr).
$$
A union bound gives $\min_xd_{G^-}(x)\geq2\alpha\mu$ with probability tending to one,
uniformly in $p$. The upper degree bound follows from Lemma~\ref{lem:degrees-fixed-c},
and $d_G(x)\geq d_{G^-}(x)$ gives the lower bound in
\eqref{eq:block-degree-bounds-note}, after reducing $\alpha$ if necessary.

The volume $\vol_G(B_i)$ is a sum of independent Bernoulli variables with coefficients
one or two. Its mean is $\mu s_i$ and its variance is at most $C\mu s_i$, so Bernstein's
inequality gives, for $0<\varepsilon\leq1$,
$$
\Prob_p\left(
\left|\vol_G(B_i)-\mu s_i\right|>\varepsilon\mu s_i
\right)
\leq
2\exp(-a\varepsilon^2\mu s_i).
$$
For $1\leq d_q(i,j)\leq R$, every pair in $B_i\times B_j$ is admissible in $H^-$.
Thus $e_{G^-}(B_i,B_j)$ is binomial with mean $ps_is_j$, and
$$
\Prob_p\left(
\left|e_{G^-}(B_i,B_j)-ps_is_j\right|>\varepsilon ps_is_j
\right)
\leq
2\exp(-a\varepsilon^2ps_is_j).
$$
The relations
$$
 \frac{\mu s_-}{\log n}\longrightarrow\infty,
 \qquad
 \frac{ps_-^2}{\log n}
 \geq(1+o(1))\frac{cm}{2K^2}\longrightarrow\infty
$$
follow from \eqref{eq:block-parameter-asymptotics} and $m/\log n\to\infty$.
Since $q\leq n$, choose a deterministic $\tau_n\to0$ slowly enough to take union bounds
over all blocks and block pairs at distance at most $R$. Also, the row-variance sums of
$A_{G^-}-pA_{H^-}$ are at most $\mu$. Lemma~\ref{lem:centered-adjacency-norm}, with a
fixed polynomial failure probability, therefore gives the adjacency estimate below.
For a constant $A=A(c)$, the following hold simultaneously with probability tending to one,
uniformly in $p$:
\begin{align}
\label{eq:minus-degree-bound}
 \min_x d_{G^-}(x)&\geq2\alpha\mu,\\
\label{eq:minus-adjacency-bound}
 \norm{A_{G^-}-pA_{H^-}}_{\op}&\leq A\sqrt\mu,\\
\label{eq:preliminary-block-volume-concentration}
 \max_i\left|
 \frac{\vol_G(B_i)}{\mu s_i}-1
 \right|&\leq\tau_n,\\
\label{eq:short-range-block-edge-concentration}
 \left|
 \frac{e_{G^-}(B_i,B_j)}{ps_is_j}-1
 \right|
 &\leq\tau_n
 \quad (1\leq d_q(i,j)\leq R),
\end{align}
These hold together with \eqref{eq:block-degree-bounds-note}; the edge-count estimate is
simultaneous over all indicated block pairs.

Let
$$
 L_{G^-}=
 \begin{pmatrix}
 M^-&N^-\\
 (N^-)^{\mathsf T}&C_W^-
 \end{pmatrix}.
$$
\medskip
\noindent\emph{Coercivity and the Schur complement.}
Put
$$
Z^-:=A_{G^-}-pA_{H^-}.
$$
If $w\in W$, every off-diagonal block of $A_{H^-}$ annihilates $w$: these blocks are
either zero or all-ones matrices, and $w$ sums to zero on each block. The diagonal block
on $B_i$ is $J_{s_i}-I_{s_i}$, so $A_{H^-}w=-w$. It follows from
\eqref{eq:minus-degree-bound}--\eqref{eq:minus-adjacency-bound} that
$$
\begin{aligned}
w^{\mathsf T}C_W^-w
&=w^{\mathsf T}D_{G^-}w-w^{\mathsf T}(pA_{H^-}+Z^-)w\\
&\geq
\left(2\alpha\mu+p-\lVert Z^-\rVert_{\op}\right)\lVert w\rVert_2^2
\geq
\alpha\mu\lVert w\rVert_2^2
\end{aligned}
$$
for all sufficiently large $n$. Thus $C_W^-\succeq\alpha\mu I_W$.

For the coupling term, let $u=\mathsf Ua$ and $b_i:=a_i/\sqrt{s_i}$. Again, the
deterministic all-ones blocks cancel against the zero block sums of $w$, leaving only the
rectangular blocks of $Z^-$ in $u^{\mathsf T}L_{G^-}w$. Applying Cauchy--Schwarz twice
gives
$$
|a^{\mathsf T}N^-w|
\leq
C\lVert Z^-\rVert_{\op}\sqrt{s_+R}\,\lVert w\rVert_2
\left(
\sum_{i=0}^{q-1}
\sum_{\substack{0\leq j<q\\1\leq d_q(i,j)\leq R}}
(b_i-b_j)^2
\right)^{1/2}.
$$
Since
$$
a^{\mathsf T}M_0a
\geq
\frac{ps_-^2}{2}
\sum_{i=0}^{q-1}
\sum_{\substack{0\leq j<q\\1\leq d_q(i,j)\leq R}}
(b_i-b_j)^2,
$$
\eqref{eq:minus-adjacency-bound} implies
$$
\lVert (N^-)^{\mathsf T}a\rVert_2^2
\leq
C\frac{\mu s_+R}{ps_-^2}a^{\mathsf T}M_0a
\leq
CK^2a^{\mathsf T}M_0a,
$$
where the final inequality follows from \eqref{eq:hitting-block-required-conditions} and
$\mu=2mp$. Therefore
$$
a^{\mathsf T}N^-(C_W^-)^{-1}(N^-)^{\mathsf T}a
\leq
C\frac{K^2}{\mu}a^{\mathsf T}M_0a
=o(1)a^{\mathsf T}M_0a.
$$
On the other hand, \eqref{eq:short-range-block-edge-concentration} gives
$$
(1-\tau_n)M_0\preceq M^-\preceq(1+\tau_n)M_0.
$$
Combining the last two estimates yields
\begin{equation}
\label{eq:subgraph-Schur-comparison}
 (1-o(1))M_0
 \preceq
 M^--N^-(C_W^-)^{-1}(N^-)^{\mathsf T}
 \preceq
 (1+o(1))M_0.
\end{equation}
All errors are deterministic and uniform in $p$. Restricting $L_G\succeq L_{G^-}$ to $W$
gives $C_W\succeq C_W^-\succeq\alpha\mu I_W$, proving \eqref{eq:block-W-coercivity}.

\medskip
\noindent\emph{The boundary layers.}
We now obtain the upper comparison for the full graph. If an edge of $C_n^m$ joins
blocks at cyclic block distance $h$, then $(h-1)s_-\leq m$, so
$$
 h\leq\left\lceil\frac m{s_-}\right\rceil+1.
$$
Moreover,
$s_+-s_-\leq1$ and
$$
 m\left(\frac1{s_-}-\frac1{s_+}\right)
 \leq\frac{m}{s_-s_+}
 =
 O\left(\frac{K^2}{m}\right)=o(1).
$$
Together with $R=\lfloor m/s_+\rfloor-1$, these estimates show that
$h\leq R+4$ for all sufficiently large $n$.
For a block pair with at least one admissible edge, put
$h_{ij}:=e_{C_n^m}(B_i,B_j)\leq s_+^2$. Lemma~\ref{lem:bernstein} gives
$$
 \Prob_p\left(
 |e_G(B_i,B_j)-ph_{ij}|>\varepsilon ps_-^2
 \right)
 \leq
 2\exp(-a\varepsilon^2ps_-^2)
$$
for $\varepsilon\leq1$: the variance is at most $ps_+^2$, the deviation is
$\varepsilon ps_-^2$, and $s_+/s_-$ is bounded. Choose $\tau_n\to0$ slowly enough that
$\tau_n^2ps_-^2/\log n\to\infty$, while retaining
\eqref{eq:preliminary-block-volume-concentration} and
\eqref{eq:short-range-block-edge-concentration}. A union bound gives, for every interacting
block pair,
\begin{equation}
\label{eq:additive-block-edge-concentration}
 \left|
 e_G(B_i,B_j)-p\,e_{C_n^m}(B_i,B_j)
 \right|
 \leq
 \tau_np s_-^2.
\end{equation}
Let $\mathcal B_{n,p}$ be the intersection of the degree, volume,
centered-adjacency, and block-edge events used in
\eqref{eq:block-degree-bounds-note},
\eqref{eq:minus-degree-bound},
\eqref{eq:minus-adjacency-bound},
\eqref{eq:preliminary-block-volume-concentration},
\eqref{eq:short-range-block-edge-concentration}, and
\eqref{eq:additive-block-edge-concentration}. The six failure probabilities have sum
$o(1)$, so
$$
\inf_{p\in[c\log n/(2m),1]}\Prob_p(\mathcal B_{n,p})\longrightarrow1.
$$
We work on $\mathcal B_{n,p}$ for the remainder of the proof.

At block distances at most $R$, all $s_is_j$ pairs are admissible. At distances
$R<d_q(i,j)\leq R+4$, there are at most $s_+^2$ admissible pairs, so
\eqref{eq:additive-block-edge-concentration} gives, for large $n$,
$$
e_G(B_i,B_j)
\leq
p s_+^2+\tau_nps_-^2
\leq
2ps_+^2.
$$
For $a\in\mathbb R^q$, write
$$
z_i:=\frac{a_i}{\sqrt{s_i}},
\qquad 0\leq i<q.
$$
The function $\mathsf Ua$ equals $z_i$ on $B_i$ and has no energy on within-block edges.
Thus
$$
 a^{\mathsf T}Ma
 =
 \sum_{\substack{0\leq i<j<q\\d_q(i,j)\leq R+4}}
 e_G(B_i,B_j)(z_i-z_j)^2.
$$
Distances $d_q(i,j)\leq R$ contribute at most $(1+o(1))a^{\mathsf T}M_0a$.
By \eqref{eq:additive-block-edge-concentration} and the boundary-layer estimate
\eqref{eq:boundary-layer-energy}, the remaining terms are at most
\begin{align*}
 Cp s_+^2
 \sum_{\substack{1\leq a\leq4\\R+a\leq\lfloor q/2\rfloor}}
 \sum_{i=0}^{q-1}(z_i-z_{i+R+a})^2
 &\leq
 \frac{Cp s_+^2}{R}
 \sum_{r=1}^R\sum_{i=0}^{q-1}(z_i-z_{i+r})^2\\
 &\leq
 \frac CR\,a^{\mathsf T}M_0a.
\end{align*}
Consequently,
$$
 M\preceq
 \left(1+o(1)+O(R^{-1})\right)M_0
 =
 (1+o(1))M_0.
$$

The Schur complement has the variational characterization
$$
 a^{\mathsf T}(M-NC_W^{-1}N^{\mathsf T})a
 =
 \min_{w\in W}
 (\mathsf Ua+w)^{\mathsf T}L_G(\mathsf Ua+w).
$$
The same identity holds for $G^-$. Thus $L_G\succeq L_{G^-}$ and
\eqref{eq:subgraph-Schur-comparison} give the lower Schur-complement bound.
Choosing $w=0$ and using $M\preceq(1+o(1))M_0$ gives the upper bound. Finally,
$$
 NC_W^{-1}N^{\mathsf T}
 =
 M-(M-NC_W^{-1}N^{\mathsf T}),
$$
so the bounds for $M$ and its Schur complement give \eqref{eq:block-coupling-comparison}.
Choose $\kappa_n$ as a sufficiently large fixed multiple of the largest deterministic
error below:
$$
 \tau_n,\qquad
 R^{-1},\qquad \frac{K^2}{c\log n},\qquad
 \max_i\left|\frac{s_i}{\bar s}-1\right|,
 \qquad
 \left|\frac{R\bar s}{m}-1\right|.
$$
Each tends to zero independently of $p$. Since $\kappa_n\geq\tau_n$, the volume estimate
also holds with $\kappa_n$, giving
\eqref{eq:block-volume-concentration-note}--\eqref{eq:block-coupling-comparison} with a
single error sequence.

\medskip
\noindent\emph{Connectedness.}
Put $\mathfrak r=(\sqrt{s_0},\ldots,\sqrt{s_{q-1}})^{\mathsf T}$, so
$\mathsf U\mathfrak r=\1$. The weighted graph for $M_0$ contains the nearest-neighbour
cycle because $R\geq1$ and every $s_i$ is positive. Hence
$\ker M_0=\operatorname{span}\{\mathfrak r\}$. Also,
$N^{\mathsf T}\mathfrak r=0$ since $L_G\1=0$. Completing the square, for $a\in\R^q$
and $w\in W$, gives
\begin{align}
\label{eq:block-completed-square}
(\mathsf Ua+w)^{\mathsf T}L_G(\mathsf Ua+w)
&=
a^{\mathsf T}\mathcal Sa + \left(w+C_W^{-1}N^{\mathsf T}a\right)^{\mathsf T}
C_W\left(w+C_W^{-1}N^{\mathsf T}a\right),
\end{align}
where $\mathcal S=M-NC_W^{-1}N^{\mathsf T}$. By \eqref{eq:block-W-coercivity} and
\eqref{eq:block-Schur-comparison-hitting}, this vanishes exactly when
$a\in\operatorname{span}\{\mathfrak r\}$ and $w=0$. Thus
$\ker L_G=\operatorname{span}\{\1\}$, proving connectedness.
\end{proof}

To estimate potential differences between individual vertices, we need a bound on the maximal effective resistance of the reference block network. The following lemma gives this bound and the two estimates used to control the errors.

\begin{lemma}[Effective-resistance bounds for the reference block network]
\label{lem:block-resistance}
For block indices $i,j$, let
$$
 \mathcal R_0(i,j)
 :=
 \left(
 \frac{e_i}{\sqrt{s_i}}-\frac{e_j}{\sqrt{s_j}}
 \right)^{\mathsf T}
 M_0^+
 \left(
 \frac{e_i}{\sqrt{s_i}}-\frac{e_j}{\sqrt{s_j}}
 \right).
$$
Then
\begin{equation}
\label{eq:block-resistance-bound}
 \mathcal R_0^{\max}
 :=
 \max_{i,j}\mathcal R_0(i,j)
 \leq
 C\left(
 \frac{K}{pm^2}
 +
 \frac{n}{pm^3}
 \right).
\end{equation}
Furthermore,
\begin{align}
\label{eq:block-resistance-times-volume}
 V_G\mathcal R_0^{\max}&\leq CT_{n,m},\\
\label{eq:block-resistance-negligible-error}
 \frac{n^2\mathcal R_0^{\max}}{T_{n,m}^2}&=o(1)
\end{align}
on the degree and volume event in
Lemma~\ref{lem:uniform-block-event}.
\end{lemma}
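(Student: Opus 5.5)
The plan is to compare $M_0$ with $p\bar s^2$ times the combinatorial Laplacian of the unweighted cycle power $C_q^R$, in the coordinates $b_i=a_i/\sqrt{s_i}$, and then invoke Corollary~\ref{cor:cycle-resistance}. In these coordinates,
$$
a^{\mathsf T}M_0a=p\sum_{\substack{i<j\\1\le d_q(i,j)\le R}}s_is_j(b_i-b_j)^2 .
$$
Since $s_is_j\ge s_-^2=(1-o(1))\bar s^2$, we have $M_0\succeq(1-o(1))p\bar s^2\,\mathsf T^{\mathsf T}L_{C_q^R}\mathsf T$, where $\mathsf T=\diag(s_i^{-1/2})$ is the change of variables $a\mapsto b$.

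The vector $\ell_{ij}=e_i/\sqrt{s_i}-e_j/\sqrt{s_j}$ is paired with $a$ through $\ell_{ij}^{\mathsf T}a=b_i-b_j$. We therefore use the variational (Thomson/dual) formula
$$
\mathcal R_0(i,j)=\sup_a\frac{(\ell_{ij}^{\mathsf T}a)^2}{a^{\mathsf T}M_0a}.
$$
The supremum runs over $a$ with $a^{\mathsf T}M_0a>0$. This is legitimate because $\ell_{ij}\perp\mathfrak r$ and $\ker M_0=\operatorname{span}\{\mathfrak r\}$. Inserting the form comparison gives
$$
\mathcal R_0(i,j)\le(1+o(1))\frac{1}{p\bar s^2}\sup_b\frac{(b_i-b_j)^2}{b^{\mathsf T}L_{C_q^R}b}=\frac{1+o(1)}{p\bar s^2}R_{\eff}^{C_q^R}(i,j).
$$

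Next I apply Corollary~\ref{cor:cycle-resistance} to $C_q^R$. This requires $R\to\infty$ and $R<q/2$, both of which follow from \eqref{eq:block-parameter-asymptotics}. The corollary gives $R_{\eff}^{C_q^R}\le C(1/R+q/R^3)$. With $\bar s\sim m/K$, $R\sim K$, and $q\sim nK/m$, we get
$$
\frac{1}{p\bar s^2R}\asymp\frac{K}{pm^2},\qquad \frac{q}{p\bar s^2R^3}\asymp\frac{nK\cdot K^2}{m\,p\,m^2K^3}=\frac{n}{pm^3},
$$
which is \eqref{eq:block-resistance-bound}. For \eqref{eq:block-resistance-times-volume}, on the event of Lemma~\ref{lem:uniform-block-event} we have $V_G\le A\mu n=2Amnp$. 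Hence
$$
V_G\mathcal R_0^{\max}\le C\left(\frac{nK}{m}+\frac{n^2}{m^2}\right).
$$

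This is where the main obstacle sits. The first term is $nK/m$, not $n$, so it is bounded by $CT_{n,m}$ only because $K\le m$, which holds since $K=\lfloor(\log n)^{1/4}\rfloor\ll m$. This is fine for the stated bound. I should double-check, though, that the global resistance bound $1/R$ from the corollary is not lossy in a way that matters later.

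For \eqref{eq:block-resistance-negligible-error}, write
$$
n^2\mathcal R_0^{\max}\le C\left(\frac{n^2K}{pm^2}+\frac{n^3}{pm^3}\right).
$$
Using $p\ge c\log n/(2m)$, this is at most
$$
C\left(\frac{n^2K}{m\log n}+\frac{n^3}{m^2\log n}\right).
$$
Compare with $T_{n,m}^2\ge\max\{n^2,\,2n^3/m^2\}$, using $T^2\ge 2\cdot n\cdot n^2/m^2$ by AM--GM. The first ratio is then at most $K/(m\log n)\to0$, and the second is at most $C/\log n\to0$. All bounds are deterministic given the degree event and are uniform in $p$.
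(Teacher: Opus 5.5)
Your proof is correct and follows the paper's argument. You bound every conductance $ps_is_j$ below by $ps_-^2$, compare with the unweighted $C_q^R$ (you via the dual variational formula, the paper via Rayleigh monotonicity), apply Corollary~\ref{cor:cycle-resistance}, and then use $V_G\le A\mu n$; your use of $T_{n,m}^2\geq\max\{n^2,\,2n^3/m^2\}$ for the last estimate simply replaces the paper's case split at $m=\sqrt n$.
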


\begin{proof}
In the coordinates $z_i=a_i/\sqrt{s_i}$, the form $M_0$ describes the network on $\Z_q$
with conductance $ps_is_j$ on each edge $\{i,j\}$ satisfying $1\leq d_q(i,j)\leq R$.
The vector $e_i/\sqrt{s_i}-e_j/\sqrt{s_j}$ injects unit current at $i$ and removes it
at $j$, so $\mathcal R_0(i,j)$ is the effective resistance in this network.
Each conductance is at least $ps_-^2$. Since $R\to\infty$ and $R<q/2$, Rayleigh
monotonicity and Corollary~\ref{cor:cycle-resistance} give
$$
 \mathcal R_0^{\max}
 \leq
 \frac{C}{ps_-^2}
 \left(\frac1R+\frac q{R^3}\right)
 \leq
 C\left(\frac{K}{pm^2}+\frac{n}{pm^3}\right).
$$
Since $V_G=\Theta(\mu n)=\Theta(pmn)$,
$$
 V_G\mathcal R_0^{\max}
 \leq
 C\left(\frac{nK}{m}+\frac{n^2}{m^2}\right)
 \leq CT_{n,m},
$$
because $K/m\to0$. Finally,
$p\geq c\log n/(2m)$ gives
$$
 \mathcal R_0^{\max}
 \leq
 C\left(
 \frac{K}{m\log n}
 +
 \frac{n}{m^2\log n}
 \right).
$$
We use $T_{n,m}\geq n$ when $m\geq\sqrt n$ and $T_{n,m}\geq n^2/m^2$ otherwise.
In the first case,
$$
 \frac{n^2\mathcal R_0^{\max}}{T_{n,m}^2}
 \leq
 C\left(
 \frac{K}{\sqrt n\log n}+\frac1{\log n}
 \right)
 =
 o(1).
$$
In the second case,
$$
 \frac{n^2\mathcal R_0^{\max}}{T_{n,m}^2}
 \leq
 C\left(
 \frac{Km^3}{n^2\log n}
 +
 \frac{m^2}{n\log n}
 \right)
 \leq
 C\left(
 \frac{K}{\sqrt n\log n}+\frac1{\log n}
 \right)
 =
 o(1).
$$
This proves
\eqref{eq:block-resistance-negligible-error}.
\end{proof}

\subsection{Reduction to a deterministic cycle power}

We now carry out the reduction in two steps. First, we replace the Schur complement by
$M_0$ and show that fluctuations within blocks contribute only $o(T_{n,m})$ to every
potential difference. Second, we replace the block-averaged source by a uniform measure
minus a point mass and solve the resulting equation on $C_q^R$.

\begin{lemma}[Removal of within-block fluctuations]
\label{lem:within-block-elimination}
On $\mathcal B_{n,p}$, fix $y\in\Z_n$ and let $u_y$ and $b_y$ be as in
Lemma~\ref{lem:exact-local-removal}. Decompose
$$
 u_y=\mathsf Ua+w,
 \qquad
 b_y=\mathsf U\beta+\eta,
 \qquad
 w,\eta\in W.
$$
Let $a_0:=M_0^+\beta$. Then, uniformly over targets $y\in B_j$ and vertices
$x\in B_i$,
\begin{equation}
\label{eq:within-block-elimination-conclusion}
 u_y(x)-u_y(y)
 =
 \frac{(a_0)_i}{\sqrt{s_i}}
 -
 \frac{(a_0)_j}{\sqrt{s_j}}
 +
 o(T_{n,m}).
\end{equation}
\end{lemma}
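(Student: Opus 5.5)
The plan is to compare $u_y$ with the block-constant function $\mathsf Ua_0$ directly through the Green operator $L_G^+$. Potential differences are then converted into pointwise bounds by the Cauchy--Schwarz inequality for effective resistance, as in Lemma~\ref{lem:resistance-duality-stability}. All constants depend only on $c$, and everything is done on $\mathcal B_{n,p}$.

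Since $L_Gu_y=b_y$ and $b_y\perp\1$, we have $u_y(x)-u_y(y)=(e_x-e_y)^{\mathsf T}L_G^+b_y$. Split $b_y=\mathsf U\beta+\eta$ and treat the two pieces separately.

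\emph{The fluctuation source $\eta$.} The Cauchy--Schwarz inequality for $L_G^+$ gives
\begin{equation*}
\bigl|(e_x-e_y)^{\mathsf T}L_G^+\eta\bigr|\leq\sqrt{R^G_{\eff}(x,y)}\,\sqrt{\eta^{\mathsf T}L_G^+\eta}.
\end{equation*}
\begin{itemize}
\item For $\eta^{\mathsf T}L_G^+\eta$, use the block inverse formula. The $W$-block of $L_G^+$ is the inverse of the Schur complement $C_W-N^{\mathsf T}M^+N$, taken on the appropriate complement. Combining \eqref{eq:block-W-coercivity} with \eqref{eq:block-coupling-comparison} should give $C_W-N^{\mathsf T}M^+N\succeq(1-o(1))\alpha\mu I_W$, hence $\eta^{\mathsf T}L_G^+\eta\lesssim\lVert\eta\rVert_2^2/\mu$.
\item Since $\eta$ is the $W$-projection of $b_y$, we have $\lVert\eta\rVert_2\leq\lVert b_y\rVert_2\leq\lVert d\rVert_2+(V_G/d(y))\sqrt{d(y)}$. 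With \eqref{eq:block-degree-bounds-note} this gives $\lVert\eta\rVert_2^2\lesssim n^2\mu$, so $\eta^{\mathsf T}L_G^+\eta\lesssim n^2$.
\item The resistance bound $R^G_{\eff}(x,y)\lesssim 1/\mu+\mathcal R_0^{\max}$ follows from the same completed-square decomposition \eqref{eq:block-completed-square}. Test with $h=e_x-e_y$: its $W$-part costs $O(1/\mu)$ by coercivity, and its $U$-part costs $O(\mathcal R_0^{\max})$ by \eqref{eq:block-Schur-comparison-hitting}.
\item The $\eta$-contribution is therefore $\lesssim\sqrt{n^2/\mu+n^2\mathcal R_0^{\max}}$. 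This is $o(T_{n,m})$ by \eqref{eq:block-resistance-negligible-error} and $\mu\geq c\log n$.
\end{itemize}

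\emph{The averaged source $\mathsf U\beta$.} Solving the block system gives
\begin{equation*}
L_G^+\mathsf U\beta=\mathsf U\mathcal S^+\beta-C_W^{-1}N^{\mathsf T}\mathcal S^+\beta,
\qquad \mathcal S:=M-NC_W^{-1}N^{\mathsf T},
\end{equation*}
modulo constants.
\begin{itemize}
\item The source $\beta$ has block coordinates $B_i=\sqrt{s_i}\beta_i=\sum_{x\in B_i}b_y(x)$, so its mass is $\mathsf m(B)\leq\lVert b_y\rVert_1\leq 2V_G$. By \eqref{eq:source-resistance-dual-bound} with $Q=M_0$ and $r=\mathfrak r$, $\lVert\beta\rVert_{M_0^+}\lesssim V_G\sqrt{\mathcal R_0^{\max}}$.
\item Apply \eqref{eq:abstract-inverse-stability} with $\delta=\kappa_n$ from \eqref{eq:block-Schur-comparison-hitting}, followed by \eqref{eq:resistance-cauchy-schwarz}. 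This bounds the change in the block potential difference from replacing $\mathcal S^+$ by $M_0^+$ by
\begin{equation*}
\kappa_n\sqrt{\mathcal R_0^{\max}}\cdot V_G\sqrt{\mathcal R_0^{\max}}\lesssim\kappa_nT_{n,m},
\end{equation*}
using \eqref{eq:block-resistance-times-volume}.
\item For the correction $w_0:=C_W^{-1}N^{\mathsf T}\mathcal S^+\beta$, bound its supremum by its $\ell^2$ norm. Coercivity and \eqref{eq:block-coupling-comparison} give
\begin{equation*}
\lVert w_0\rVert_2^2\leq\frac{\kappa_n}{\alpha\mu}\lVert\mathcal S^+\beta\rVert_{M_0}^2\lesssim\frac{\kappa_nV_G^2\mathcal R_0^{\max}}{\mu},
\end{equation*}
where the last step uses \eqref{eq:abstract-inverse-bound}.
\item With $V_G\asymp n\mu$ and \eqref{eq:block-resistance-bound}, this yields $\lVert w_0\rVert_\infty\lesssim\sqrt{\kappa_n}\,n\sqrt{K/m+n/m^2}$. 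Here $n\sqrt{K/m}=o(n)$, and $n^{3/2}/m\leq T_{n,m}$ by AM--GM, so the correction is $o(T_{n,m})$.
\end{itemize}

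Summing the contributions gives \eqref{eq:within-block-elimination-conclusion}. All errors are deterministic multiples of $\kappa_n$, $K/m$ and $1/\log n$, hence uniform in $x$, $y$ and $p$.

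The step I expect to be the main obstacle is the $\eta$-contribution, specifically the coercive lower bound for the $W$-block of $L_G^+$. If the inverse-block argument proves awkward, I would instead write $\eta^{\mathsf T}L_G^+\eta=\sup_f\bigl(2\langle\eta,f\rangle-f^{\mathsf T}L_Gf\bigr)$. Then decompose $f=\mathsf Ua+w$ and use $\langle\eta,\mathsf Ua\rangle=0$ together with \eqref{eq:block-completed-square}, which bounds the supremum by $\lVert\eta\rVert_2^2/(\alpha\mu)$ directly.
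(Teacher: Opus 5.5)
Your proof is correct, but it organizes the argument differently from the paper. The paper never splits $L_G^+b_y$ by linearity. It eliminates $w$ from the actual solution, obtaining $\mathcal S a=\beta-NC_W^{-1}\eta$ and $w=C_W^{-1}(\eta-N^{\mathsf T}a)$. It then bounds $\lVert C_W^{-1}\eta\rVert_2\lesssim n/\sqrt\mu$ and $\lVert NC_W^{-1}\eta\rVert_{M_0^+}\lesssim n\sqrt{\kappa_n}$, and carries the latter through $\mathcal S^+$ together with $\beta$. As a result it only ever uses block resistances $\mathcal R_0$ and $\ell^2$ bounds on fluctuations.

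Your handling of the $\mathsf U\beta$ piece matches the paper's handling of $\beta$. The difference is that you dispose of the entire $\eta$-response in one Cauchy--Schwarz step in the $L_G^+$ geometry. That step costs two facts the paper never needs: a coercive lower bound on the complementary Schur complement $C_W-N^{\mathsf T}M^+N$, and a vertex-level resistance bound $R^G_{\eff}(x,y)\lesssim\mu^{-1}+\mathcal R_0^{\max}$.

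Both facts do follow from Lemma~\ref{lem:uniform-block-event}, with two small corrections to how you cite it. First, the complementary Schur complement bound also needs $M\succeq\mathcal S\succeq(1-\kappa_n)M_0$; with it, $w^{\mathsf T}N^{\mathsf T}M^+Nw\le\frac{\kappa_n}{1-\kappa_n}w^{\mathsf T}C_Ww$. Second, your variational fallback does not give $\lVert\eta\rVert_2^2/(\alpha\mu)$ directly. It gives $\eta^{\mathsf T}C_W^{-1}\eta+\lVert NC_W^{-1}\eta\rVert_{\mathcal S^+}^2\le(1-\kappa_n)^{-1}\eta^{\mathsf T}C_W^{-1}\eta$, which is harmless.

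The paper's route is more economical, since it stays in block coordinates throughout. Yours separates the two sources cleanly and yields, as a by-product, a pointwise effective-resistance estimate for $G$ itself.
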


\begin{proof}
Work on $\mathcal B_{n,p}$, where $G$ is connected by Lemma~\ref{lem:uniform-block-event}.
We first bound the source, then control the Schur-complement error and the within-block
fluctuations.

\medskip
\noindent\emph{Source bounds.}
The degree estimates give
$$
 \frac{V_G}{d_G(y)}\leq C_cn,
 \qquad
 \alpha\mu n\leq V_G\leq A\mu n.
$$
Moreover,
\begin{align}
\label{eq:source-norm-bounds}
 \norm{b_y}_1&\leq2V_G,\\
 \norm{b_y}_2
 &\leq
 \norm{d}_2+\frac{V_G}{d_G(y)}\norm{Ae_y}_2
 \leq
 C_c\mu\sqrt n+C_cn\sqrt\mu
 \leq C_cn\sqrt\mu.
\end{align}
For the first bound, both nonnegative terms defining $b_y$ have mass $V_G$.
For the second, use $\norm{Ae_y}_2=\sqrt{d_G(y)}$, the degree bounds, and
$\mu\leq2m<n$. Orthogonal projection also gives $\norm\eta_2\leq\norm{b_y}_2$.

Put
$$
 \mathcal S:=M-NC_W^{-1}N^{\mathsf T},
 \qquad
 \widetilde\beta:=\beta-NC_W^{-1}\eta.
$$
\medskip
\noindent\emph{The block equation.}
Eliminating $w$ gives
\begin{equation}
\label{eq:block-poisson-equations}
 \mathcal Sa=\widetilde\beta,
 \qquad
 w=C_W^{-1}(\eta-N^{\mathsf T}a).
\end{equation}
Write
$\norm z_Q^2:=z^{\mathsf T}Qz$ and
$\norm h_{Q^+}^2:=h^{\mathsf T}Q^+h$, and let
$$
 \mathfrak r:=(\sqrt{s_0},\ldots,\sqrt{s_{q-1}})^{\mathsf T}.
$$
We have $\mathsf U\mathfrak r=\1$ and $\ker M_0=\operatorname{span}\{\mathfrak r\}$.
Since $L_G\1=0$, also $M\mathfrak r=0$ and $N^{\mathsf T}\mathfrak r=0$.
The Schur-complement comparison gives $\ker\mathcal S=\operatorname{span}\{\mathfrak r\}$.
Moreover,
$$
 \mathfrak r^{\mathsf T}\beta
 =
 \1^{\mathsf T}b_y=0,
 \qquad
 \mathfrak r^{\mathsf T}NC_W^{-1}\eta=0.
$$
Thus $\beta$, $NC_W^{-1}\eta$, and $\widetilde\beta$ lie in $\mathfrak r^\perp$.
For large $n$, we have $\kappa_n<1$, so Lemma~\ref{lem:resistance-duality-stability}
and \eqref{eq:block-Schur-comparison-hitting} give
\begin{equation}
\label{eq:inverse-form-comparison}
 \norm{\mathcal S^+g-M_0^+g}_{M_0}
 \leq
 C\kappa_n\norm g_{M_0^+}
 \qquad(g\perp\mathfrak r).
\end{equation}

\medskip
\noindent\emph{Energy bounds.}
We claim that the block-averaged source satisfies
\begin{equation}
\label{eq:beta-dual-bound}
 \norm\beta_{M_0^+}^2
 \leq
 C_cV_GT_{n,m}.
\end{equation}
Indeed, put
$$
 \mathfrak b_i:=\sum_{x\in B_i}b_y(x).
$$
Then $\sum_i\mathfrak b_i=0$ and
$$
 \beta=\sum_i\mathfrak b_i\frac{e_i}{\sqrt{s_i}},
 \qquad
 \sum_i(\mathfrak b_i)_+\leq\frac12\norm{b_y}_1\leq V_G.
$$
Lemma~\ref{lem:resistance-duality-stability} gives
$$
 \beta^{\mathsf T}M_0^+\beta
 \leq
 V_G^2\mathcal R_0^{\max},
$$
and \eqref{eq:beta-dual-bound} follows from
\eqref{eq:block-resistance-times-volume}.

For every block vector $z$,
\begin{align}
\label{eq:schur-source-error-duality}
 |z^{\mathsf T}NC_W^{-1}\eta|
 &\leq
 \left(z^{\mathsf T}NC_W^{-1}N^{\mathsf T}z\right)^{1/2}
 \left(\eta^{\mathsf T}C_W^{-1}\eta\right)^{1/2}\notag\\
 &\leq
 C_cn\sqrt{\kappa_n}\,\norm z_{M_0}.
\end{align}
The second inequality uses
$$
 \eta^{\mathsf T}C_W^{-1}\eta
 \leq
 \frac1{\alpha\mu}\norm\eta_2^2
 \leq
 C_cn^2
$$
and \eqref{eq:block-coupling-comparison}. Using the dual characterization
$$
 \norm h_{M_0^+}
 =
 \sup_{\substack{z\perp\mathfrak r\\\norm z_{M_0}>0}}
 \frac{|z^{\mathsf T}h|}{\norm z_{M_0}}
 \qquad(h\perp\mathfrak r)
$$
we obtain
\begin{equation}
\label{eq:schur-source-error-dual-norm}
 \norm{NC_W^{-1}\eta}_{M_0^+}
 \leq C_cn\sqrt{\kappa_n}.
\end{equation}

By \eqref{eq:block-poisson-equations},
$a=\mathcal S^+\widetilde\beta+\lambda\mathfrak r$ for some $\lambda\in\R$.
The inverse bound in Lemma~\ref{lem:resistance-duality-stability}, together with
\eqref{eq:beta-dual-bound} and \eqref{eq:schur-source-error-dual-norm}, yields
$$
 \norm a_{M_0}
 \leq
 C\norm{\widetilde\beta}_{M_0^+}
 \leq
 C_c\left(\sqrt{V_GT_{n,m}}+n\sqrt{\kappa_n}\right).
$$
Since $V_G\geq\alpha\mu n$ and $T_{n,m}\geq n$,
$$
\frac{n^2\kappa_n}{V_GT_{n,m}}
\leq
\frac{\kappa_n}{\alpha\mu}
\longrightarrow0.
$$
Consequently,
\begin{equation}
\label{eq:block-solution-energy}
 a^{\mathsf T}M_0a
 \leq
 C_cV_GT_{n,m}.
\end{equation}

\medskip
\noindent\emph{Within-block fluctuations.}
Coercivity, the coupling estimate, and the bounds on the source and block solution give
$$
 \norm{C_W^{-1}\eta}_2
 \leq
 \frac1{\alpha\mu}\norm\eta_2
 \leq
 C_c\frac n{\sqrt\mu},
$$
and
\begin{align*}
 \norm{C_W^{-1}N^{\mathsf T}a}_2^2
 &\leq
 \frac1{\alpha\mu}
 a^{\mathsf T}NC_W^{-1}N^{\mathsf T}a\\
 &\leq
 \frac{\kappa_n}{\alpha\mu}a^{\mathsf T}M_0a
 \leq
 C_c\kappa_n nT_{n,m}.
\end{align*}
It follows from the second equation in
\eqref{eq:block-poisson-equations} that
\begin{align}
\label{eq:w-norm-bound}
 \norm w_\infty
 &\leq\norm w_2\notag\\
 &\leq
 C_c\frac n{\sqrt\mu}
 +
 C_c\sqrt{\kappa_n nT_{n,m}}
 =
 o(T_{n,m}).
\end{align}
Here $\mu\geq c\log n$, $T_{n,m}\geq n$, and $\kappa_n\to0$.

Since $a_0=M_0^+\beta$,
$$
 a-a_0
 =
 (\mathcal S^+-M_0^+)\beta
 -
 \mathcal S^+NC_W^{-1}\eta
$$
up to a multiple of $\mathfrak r$. This multiple affects neither the $M_0$-seminorm nor
potential differences, since $M_0\mathfrak r=0$ and $\mathfrak r_i/\sqrt{s_i}=1$.
Equations~\eqref{eq:inverse-form-comparison} and \eqref{eq:schur-source-error-dual-norm},
with the inverse bound in Lemma~\ref{lem:resistance-duality-stability}, give
\begin{equation}
\label{eq:block-solution-comparison-energy}
 \norm{a-a_0}_{M_0}
 \leq
 C_c\kappa_n\sqrt{V_GT_{n,m}}
 +
 C_cn\sqrt{\kappa_n}.
\end{equation}

\medskip
\noindent\emph{Pointwise comparison.}
For $x\in B_i$ and $y\in B_j$, Cauchy--Schwarz in the resistance metric gives
\begin{align}
\label{eq:block-solution-comparison-pointwise}
 \left|
 \frac{a_i-(a_0)_i}{\sqrt{s_i}}
 -
 \frac{a_j-(a_0)_j}{\sqrt{s_j}}
 \right|
 &\leq
 \sqrt{\mathcal R_0^{\max}}\,
 \norm{a-a_0}_{M_0}\notag\\
 &=o(T_{n,m}).
\end{align}
Indeed, \eqref{eq:block-resistance-times-volume} bounds the first contribution from
\eqref{eq:block-solution-comparison-energy} by $C_c\kappa_nT_{n,m}$. The second,
divided by $T_{n,m}$, is at most
$$
 C_c\sqrt{\kappa_n}
 \left(
 \frac{n^2\mathcal R_0^{\max}}{T_{n,m}^2}
 \right)^{1/2}
 =
 o(1)
$$
by \eqref{eq:block-resistance-negligible-error}. Finally,
$|w(x)-w(y)|\leq2\norm w_\infty=o(T_{n,m})$. Together with
\eqref{eq:block-solution-comparison-pointwise}, this proves
\eqref{eq:within-block-elimination-conclusion}.
\end{proof}

It remains to solve $M_0a_0=\beta$. The source comes from two probability measures on
the blocks: the stationary mass and the distribution of the block reached in one step
from $y$. The first is asymptotically uniform; the second is supported within block
distance $R+4$ of the target block. We replace them by the uniform measure and a point
mass, respectively, and evaluate the resulting cycle-power equation.

\begin{lemma}[Identification of the block-averaged equation]
\label{lem:block-source-identification}
Let $x\in B_i$ and $y\in B_j$, and define $\beta$ by
$\mathsf U\beta=\operatorname{proj}_U b_y$, as in
Lemma~\ref{lem:within-block-elimination}. On $\mathcal B_{n,p}$, the vector
$a_0:=M_0^+\beta$ satisfies, uniformly in $x,y$,
\begin{equation}
\label{eq:block-source-identification-conclusion}
 \frac{(a_0)_i}{\sqrt{s_i}}
 -
 \frac{(a_0)_j}{\sqrt{s_j}}
 =
 \frac{3r_{xy}(n-r_{xy})}{m^2}
 +
 o(T_{n,m}).
\end{equation}
\end{lemma}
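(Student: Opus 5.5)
We first write $\beta$ explicitly. Since $\mathsf U$ is an isometry, $\beta_i=s_i^{-1/2}\mathfrak b_i$ with $\mathfrak b_i=\sum_{x\in B_i}b_y(x)=\vol_G(B_i)-\frac{V_G}{d_G(y)}e_G(\{y\},B_i)$. Thus $\mathfrak b=V_G(\pi_{\mathrm{blk}}-\nu_y)$, where $\pi_{\mathrm{blk}}(i):=\vol_G(B_i)/V_G$ and $\nu_y(i):=e_G(\{y\},B_i)/d_G(y)$ are probability vectors on $\Z_q$. By the boundary-layer argument in Lemma~\ref{lem:uniform-block-event}, $\nu_y$ is supported on blocks at block distance at most $R+4$ from $j$.

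We then split $\mathfrak b=V_G(\bar\pi-\delta_j)+V_G(\pi_{\mathrm{blk}}-\bar\pi)+V_G(\delta_j-\nu_y)$, where $\bar\pi(i):=s_i/n$, and treat the three pieces separately using the duality bounds in Lemma~\ref{lem:resistance-duality-stability}.

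\emph{Third piece.} This source has mass at most $V_G$ and is supported on nearby blocks. Instead of the crude bound \eqref{eq:source-resistance-dual-bound}, I would route the flow from $j$ only to blocks $i'$ with $d_q(j,i')\leq R+4$. Each $\mathcal R_0(j,i')$ is then controlled by the local resistance \eqref{eq:cycle-resistance-local}, applied after scaling conductances by $ps_-^2$, giving $\mathcal R_0(j,i')\leq C/(ps_-^2R)$. The resulting potential difference between blocks $i,j$ is therefore at most $V_G\sqrt{\mathcal R_0^{\max}}\sqrt{C/(ps_-^2R)}$. Since $V_G\asymp pmn$, $s_-\asymp m/K$ and $R\asymp K$, this is $O\big(\sqrt{V_GT_{n,m}}\cdot\sqrt{nK/m}\big)$. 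That is not obviously $o(T_{n,m})$, so this is where I expect the main obstacle.

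The better route is to take the potential $\phi_{j}=M_0^+(\text{unit source at }j\text{ minus }\bar\pi)$ and bound its oscillation over a ball of block radius $R+4$ directly. Because $M_0$ is nearly a scaled circulant on $C_q^R$, $\phi_j(i')-\phi_j(j)$ equals the scaled hitting-time difference, which by \eqref{eq:deterministic-hitting-local-upper} is $O(q)$ in $C_q^R$ hitting units. After multiplying by $V_G$ and dividing by the conductance scale $2Rqps_-^2\asymp V_G$, this is $O(n)\cdot O(K/m\cdot\ldots)$, which I expect to be $o(T_{n,m})$ once the factor $1/(ps_-^2)$ is tracked.

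\emph{Second piece.} This piece has mass at most $\kappa_nV_G$ by \eqref{eq:block-volume-concentration-note}. By \eqref{eq:source-resistance-dual-bound} and \eqref{eq:resistance-cauchy-schwarz}, its contribution to the potential difference is at most $\kappa_nV_G\mathcal R_0^{\max}\leq C\kappa_nT_{n,m}$ by \eqref{eq:block-resistance-times-volume}.

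\emph{First piece.} This is the main term: $V_G\,M_0^+(\bar\pi-\delta_j)$, with weights $\sqrt{s}$ absorbed. Replace $M_0$ by $p\bar s^2$ times the Laplacian of $C_q^R$, and the weights $s_i$ by $\bar s$. The relative form error is $\max|s_i/\bar s-1|=o(1)$, so Lemma~\ref{lem:resistance-duality-stability} (inverse stability) together with the resistance Cauchy--Schwarz gives an error of $o(1)\cdot V_G\mathcal R_0^{\max}=o(T_{n,m})$. For the circulant, the potential difference equals $\frac{V_G}{p\bar s^2\cdot 2Rq}H_{C_q^R}(i,j)$, exactly as in the proof of Lemma~\ref{lem:deterministic-hitting}. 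Since $V_G=(1+o(1))2pmn$ and $R\bar s\sim m$, $q\bar s=n$, this prefactor is $1+o(1)$ times $\bar s$-scaled appropriately, namely $(1+o(1))$ after the identity $\frac{2pmn}{2p\bar s^2Rq}=\frac{m}{\bar sR}\to1$.

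We now apply \eqref{eq:deterministic-hitting-asymptotic} on $C_q^R$:
\[
H=q+\frac{3d(q-d)}{R^2}+o\!\left(q+\frac{q^2}{R^2}\right),\qquad d=d_q(i,j).
\]
The $q$ term and the $o(\cdot)$ term are $o(n)$, since $q=o(n)$; the additive constant $q$ is absorbed. For the quadratic term, $d\bar s=r_{xy}+O(s_+)$ and $R\bar s=m(1+o(1))$, so $\frac{3d(q-d)}{R^2}=\frac{3r_{xy}(n-r_{xy})}{m^2}(1+o(1))+O(n/m)$. This is the main term up to $o(n^2/m^2)+o(n)=o(T_{n,m})$, uniformly in $x,y$.
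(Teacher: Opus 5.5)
Your architecture matches the paper's: the identity $\mathfrak b=V_G(\widehat\pi-\nu_y)$, the support of $\nu_y$ within block distance $R+4$ of $j$, replacing the conductances $ps_ks_\ell$ by $p\bar s^2$ via inverse stability, the representation through $H_{C_q^R}(i,j)$ with prefactor $m/(R\bar s)\to1$, and the conversion $\bar s\,d_q(i,j)=r_{xy}+O(\bar s)$. You split the source before equalizing and measure errors with $\mathcal R_0$, whereas the paper equalizes first and uses $\mathcal R_*$ of $C_q^R$; this difference is immaterial.

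\textbf{The gap.} The one step you leave open is the smeared-target piece $V_G(\delta_j-\nu_y)$. Your own first estimate already closes it; you mis-evaluated it. Since $C/(ps_-^2R)\asymp K/(pm^2)\asymp (nK/m)/V_G$,
\[
V_G\sqrt{\mathcal R_0^{\max}}\sqrt{\frac{C}{ps_-^2R}}
=\sqrt{V_G\mathcal R_0^{\max}}\,\sqrt{\frac{CV_G}{ps_-^2R}}
\leq C\sqrt{T_{n,m}}\sqrt{\frac{nK}{m}} .
\]
You replaced $1/(ps_-^2R)$ by $nK/m$ instead of $(nK/m)/V_G$, which produced the spurious factor $\sqrt{V_G}$ in your $\sqrt{V_GT_{n,m}}\sqrt{nK/m}$. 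Now $nK/m\sim q$. For $m\leq\sqrt n$ one has $q/T_{n,m}\leq CKm/n\leq CK/\sqrt n$, and for $m>\sqrt n$ one has $q/T_{n,m}\leq CK/m$. So the bound is $C\sqrt{qT_{n,m}}=o(T_{n,m})$. This is exactly the paper's estimate \eqref{eq:smeared-target-error}, written there in $C_q^R$ units as $C\sqrt{(q+q^2/R^2)q}$.

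\textbf{The ``better route''.} It is therefore unnecessary, and as stated it is incomplete. Let $Q_s$ be the Laplacian of the block network with conductances $ps_ks_\ell$. The quantity to control is $\sum_k\nu_{y,k}(e_i-e_j)^{\mathsf T}Q_s^+(e_j-e_k)$. By symmetry this is the $\nu_y$-average of the oscillation near $j$ of the potential $Q_s^+(e_i-e_j)$ of a unit current from $i$ to $j$, i.e.\ of $\phi_i-\phi_j$ in your notation. The local hitting bound controls $\phi_j$ but says nothing directly about $\phi_i$, whose source may lie at block distance $q/2$. It can be rescued via $|H(k,i)-H(j,i)|\leq\max\{H(j,k),H(k,j)\}$, plus another stability step because $Q_s$ is not circulant.

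A cleaner route to an $O(q)$ bound is the maximum principle. The potential $Q_s^+(e_j-e_k)$ attains its extremes at $j$ and $k$, so $|(e_i-e_j)^{\mathsf T}Q_s^+(e_j-e_k)|\leq R_{\eff}^{Q_s}(j,k)\leq C/(ps_-^2R)$. The whole piece is then at most $CV_G/(ps_-^2R)\asymp q=o(T_{n,m})$.

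\textbf{Two smaller corrections.}
\begin{itemize}
\item The remainder $o(q+q^2/R^2)$ in \eqref{eq:deterministic-hitting-asymptotic} is $o(T_{n,m})$, not $o(n)$: when $m\ll\sqrt n$, $q^2/R^2\sim n^2/m^2\gg n$.
\item The case $i=j$ must be handled separately, because that expansion concerns distinct vertices. There $z_i-z_j=0$ and $r_{xy}(n-r_{xy})/m^2\leq s_+n/m^2=o(T_{n,m})$.
\end{itemize}
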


\begin{proof}
Write
\begin{equation}
\label{eq:block-probability-measures}
 \widehat\pi_\ell
 :=
 \frac{\vol_G(B_\ell)}{V_G},
 \qquad
 \nu_{y,\ell}
 :=
 \frac{e_G(y,B_\ell)}{d_G(y)}.
\end{equation}
Here $e_G(y,B_\ell)$ counts retained edges from $y$ to $B_\ell$. Both vectors are
probability measures on $\Z_q$. Summing \eqref{eq:remainder-poisson-equation} over
$B_\ell$ gives
\begin{equation}
\label{eq:block-source-mass}
 \mathfrak b_\ell
 :=
 \sum_{z\in B_\ell}b_y(z)
 =
 V_G(\widehat\pi_\ell-\nu_{y,\ell}).
\end{equation}
The projection defining $\beta$ gives $\beta_\ell=\mathfrak b_\ell/\sqrt{s_\ell}$.
Thus, in the coordinates $z_\ell=(a_0)_\ell/\sqrt{s_\ell}$, the equation $M_0a_0=\beta$
has quadratic form
$$
 p\sum_{\substack{k<\ell\\1\leq d_q(k,\ell)\leq R}}
 s_ks_\ell(z_k-z_\ell)^2
$$
and source
$\mathfrak b=(\mathfrak b_\ell)_{\ell\in\Z_q}$.

\medskip
\noindent\emph{Equalizing the conductances.}
We replace $ps_ks_\ell$ by $p\bar s^2$. Their relative difference is $o(1)$ uniformly
in $k$ and $\ell$; the inverse-stability estimate controls the resulting potential error.
In the block-value coordinates $z$, define $Q_s$ and $\overline Q$ by
$$
 z^{\mathsf T}Q_sz
 =
 p\sum_{\substack{k<\ell\\1\leq d_q(k,\ell)\leq R}}
 s_ks_\ell(z_k-z_\ell)^2,
 \qquad
 \overline Q:=p\bar s^2L_{C_q^R}.
$$
Since $\sum_\ell\mathfrak b_\ell=0$, the equation
$Q_sz=\mathfrak b$ is solvable and
$$
 z=Q_s^+\mathfrak b+\lambda\1
$$
for some $\lambda\in\R$. The additive constant cancels from every
difference $z_i-z_j$.
If
$\rho_n:=\max_\ell|s_\ell/\bar s-1|$, then
$\rho_n\leq1/\bar s=o(1)$ and
$$
 (1-C\rho_n)\overline Q
 \preceq Q_s\preceq
 (1+C\rho_n)\overline Q.
$$
For large $n$, $C\rho_n<1$, and both forms have kernel $\operatorname{span}\{\1\}$.
Their resistances are therefore uniformly comparable. Under
$a_\ell=\sqrt{s_\ell}z_\ell$, the resistance for $Q_s$ is exactly $\mathcal R_0$,
so both maximal resistances are at most $C\mathcal R_0^{\max}$.

Lemma~\ref{lem:resistance-duality-stability}, applied with
$Q=\overline Q$ and $\mathcal S=Q_s$ gives
$$
 \|(Q_s^+-\overline Q^+)\mathfrak b\|_{\overline Q}
 \leq
 C\rho_n\|\mathfrak b\|_{\overline Q^+}.
$$
The total positive mass of $\mathfrak b$ is at most $V_G$, so
\eqref{eq:source-resistance-dual-bound} gives
$$
 \|\mathfrak b\|_{\overline Q^+}
 \leq
 CV_G\sqrt{\mathcal R_0^{\max}}.
$$
Applying \eqref{eq:resistance-cauchy-schwarz} to the evaluation
$e_i-e_j$ now bounds the pointwise error by
$$
 C\rho_nV_G\mathcal R_0^{\max}
 =
 o(T_{n,m}),
$$
where the last equality follows from
\eqref{eq:block-resistance-times-volume}. Consequently,
\begin{equation}
\label{eq:block-pseudoinverse-representation}
 z_i-z_j
 =
 \frac{V_G}{p\bar s^2}
 (e_i-e_j)^{\mathsf T}
 L_{C_q^R}^+
 (\widehat\pi-\nu_y)
 +
 o(T_{n,m}).
\end{equation}

\medskip
\noindent\emph{The stationary measure and volume.}
We next show that $\widehat\pi$ is asymptotically uniform and that the coefficient in
\eqref{eq:block-pseudoinverse-representation} is asymptotic to $2qR$, the volume of
$C_q^R$. Let $\mathfrak u_q$ be uniform on $\Z_q$. Block balance gives
$$
 \sum_\ell\left|\frac{s_\ell}{n}-\frac1q\right|
 \leq\frac qn=\frac1{\bar s}.
$$
Write the block-volume estimate as
$$
 \vol_G(B_\ell)=\mu s_\ell(1+\xi_\ell),
 \qquad
 \max_\ell|\xi_\ell|\leq\kappa_n.
$$
Then
$V_G=\mu n(1+\bar\xi)$, where
$\bar\xi:=\sum_\ell(s_\ell/n)\xi_\ell$ and
$|\bar\xi|\leq\kappa_n$. Consequently,
$$
 \sum_\ell
 \left|\widehat\pi_\ell-\frac{s_\ell}{n}\right|
 \leq C\kappa_n.
$$
The triangle inequality, $\kappa_n\to0$, and $\bar s\to\infty$ now give
\begin{equation}
\label{eq:block-stationary-total-variation}
 \norm{\widehat\pi-\mathfrak u_q}_1=o(1).
\end{equation}
Furthermore,
\begin{equation}
\label{eq:block-volume-coefficient}
 \frac{V_G}{p\bar s^2}
 =
 (1+o(1))\,2qR.
\end{equation}
Indeed, summing \eqref{eq:block-volume-concentration-note} over the
blocks gives $V_G=(1+o(1))\,2mpn$, while
$n=q\bar s$ and $m=(1+o(1))R\bar s$. Therefore
$$
 \frac{V_G}{2pqR\bar s^2}
 =
 \frac{V_G}{2pmn}\,
 \frac{m}{R\bar s}
 =
 1+o(1).
$$
A neighbour of $y\in B_j$ in $B_k$ satisfies $(d_q(j,k)-1)s_-\leq m$.
As before, $s_+-s_-\leq1$, $m/(s_-s_+)=o(1)$, and
$R=\lfloor m/s_+\rfloor-1$ imply
\begin{equation}
\label{eq:neighbour-block-support}
 \operatorname{supp}(\nu_y)
 \subseteq
 \{\ell:d_q(\ell,j)\leq R+4\}.
\end{equation}

For the regular graph $C_q^R$,
\begin{equation}
\label{eq:cycle-power-hitting-pseudoinverse}
 H_{C_q^R}(i,j)
 =
 2qR(e_i-e_j)^{\mathsf T}
 L_{C_q^R}^+
 (\mathfrak u_q-e_j).
\end{equation}
This follows from the hitting-time equation with source
$2R\1-2qR e_j=2qR(\mathfrak u_q-e_j)$, after subtracting the potential at $j$.
We compare this expression with \eqref{eq:block-pseudoinverse-representation}.
The positive mass of $\widehat\pi-\nu_y$ is at most one; that of
$\widehat\pi-\mathfrak u_q$ is $\tfrac12\norm{\widehat\pi-\mathfrak u_q}_1=o(1)$.
Put
$$
 \mathcal R_*:=
 \max_{k,\ell}R_{\eff}^{C_q^R}(k,\ell)
 \leq
 C\left(\frac1R+\frac q{R^3}\right),
$$
where the inequality follows from
Corollary~\ref{cor:cycle-resistance}. For any signed measure $\rho$ of
total mass zero, Lemma~\ref{lem:resistance-duality-stability} gives
\begin{equation}
\label{eq:quotient-source-potential-bound}
 \left|
 (e_i-e_j)^{\mathsf T}L_{C_q^R}^+\rho
 \right|
 \leq
 \left(\sum_k(\rho_k)_+\right)\mathcal R_*.
\end{equation}
The coefficient error in \eqref{eq:block-pseudoinverse-representation} is $o(qR)$.
Apply \eqref{eq:quotient-source-potential-bound} to $\rho=\widehat\pi-\nu_y$ and
$\rho=\widehat\pi-\mathfrak u_q$. The errors from the coefficient and stationary
measure are at most
$$
 o(qR)\mathcal R_*+2qR\,o(1)\mathcal R_*
 =
 o\left(q+\frac{q^2}{R^2}\right)
 =
 o(T_{n,m}),
$$
uniformly in $i,j$, and $y$.

\medskip
\noindent\emph{Concentrating the target measure.}
By \eqref{eq:neighbour-block-support}, every $k$ with $\nu_{y,k}>0$ has
$d_q(k,j)\leq R+4$. The local resistance bound in Corollary~\ref{cor:cycle-resistance}
therefore gives
$$
 \max_{d_q(k,j)\leq R+4}
 R_{\eff}^{C_q^R}(k,j)
 \leq\frac CR.
$$
Since $\nu_y$ is a probability measure,
$$
\nu_y-e_j
=
\sum_{k\in\mathbb Z_q}\nu_{y,k}(e_k-e_j).
$$
The triangle inequality for the $L_{C_q^R}^+$-seminorm gives
$$
\begin{aligned}
\lVert\nu_y-e_j\rVert_{L_{C_q^R}^+}
&\leq
\sum_k\nu_{y,k}\lVert e_k-e_j\rVert_{L_{C_q^R}^+}\\
&=
\sum_k\nu_{y,k}\sqrt{R_{\eff}^{C_q^R}(k,j)}
\leq
\sqrt{\frac CR}.
\end{aligned}
$$
Consequently,
$$
(\nu_y-e_j)^{\mathsf T}L_{C_q^R}^+(\nu_y-e_j)
\leq\frac CR.
$$
Applying \eqref{eq:resistance-cauchy-schwarz} to $e_i-e_j$ and $\nu_y-e_j$, with the
preceding bound and the definition of $\mathcal R_*$, yields
\begin{align}
\label{eq:smeared-target-error}
 &2qR\left|
 (e_i-e_j)^{\mathsf T}L_{C_q^R}^+(\nu_y-e_j)
 \right|\notag\\
 &\qquad\leq
 2qR\sqrt{\frac{C\mathcal R_*}{R}}
 \leq
 C\sqrt{\left(q+\frac{q^2}{R^2}\right)q}
 =
 o(T_{n,m}).
\end{align}
For the last equality, note that
\begin{equation}
\label{eq:q-negligible-against-T}
 \frac q{T_{n,m}}
 \leq
 \begin{cases}
 CKm/n,&m\leq\sqrt n,\\
 CK/m,&m>\sqrt n,
 \end{cases}
 \longrightarrow0.
\end{equation}
Moreover,
$$
 \frac{q^2}{R^2}
 =
 (1+o(1))\frac{n^2}{m^2}
 =
 O(T_{n,m}).
$$
Thus $q+q^2/R^2=O(T_{n,m})$ and $q=o(T_{n,m})$, which justify the last equality in
\eqref{eq:smeared-target-error}. We have proved
\begin{equation}
\label{eq:block-to-cycle-power-hitting}
 z_i-z_j
 =
 H_{C_q^R}(i,j)+o(T_{n,m}).
\end{equation}

\medskip
\noindent\emph{Returning to vertex coordinates.}
It remains to replace block distance by the cyclic distance between $x$ and $y$.
If $i=j$, then $z_i-z_j=0$ and $r_{xy}\leq s_+$, so
$$
 \frac{r_{xy}(n-r_{xy})}{m^2}
 \leq
 C\frac{n}{mK}
 =
 o(T_{n,m});
$$
indeed, dividing $n/(mK)$ by $T_{n,m}$ gives at most $Cm/(nK)$ when
$m\leq\sqrt n$ and $C/(mK)$ when $m>\sqrt n$. This proves
\eqref{eq:block-source-identification-conclusion} for $i=j$.

Now suppose $i\neq j$. Since $R\to\infty$ and $R<q/2$, Lemma~\ref{lem:deterministic-hitting}
gives
\begin{equation}
\label{eq:quotient-quadratic-distance-asymptotic}
 H_{C_q^R}(i,j)
 =
 q+
 \frac{3d_q(i,j)(q-d_q(i,j))}{R^2}
 +
 o\left(q+\frac{q^2}{R^2}\right).
\end{equation}
The first term is negligible because $q=o(T_{n,m})$. Put $h:=d_q(i,j)\leq q/2$.
The two arcs between $B_i$ and $B_j$ contain $h$ and $q-h$ block intervals. An arc of
$h$ blocks beginning at $t_k$ has boundary-to-boundary length $t_{k+h}-t_k$, with
$t_{k+q}=t_k+n$, and
$$
\left|t_{k+h}-t_k-h\bar s\right|\leq2
$$
because $t_\ell=\lfloor \ell n/q\rfloor$. Replacing the boundary points by $x\in B_i$
and $y\in B_j$ changes the arc length by at most $2s_+$. The complementary arc differs
from $(q-h)\bar s$ by the same order. Taking the shorter arc and using $h\leq q/2$ gives
$$
 |r_{xy}-\bar s h|\leq C\bar s,
 \qquad
 n=q\bar s,
 \qquad
 m=(1+o(1))R\bar s.
$$
The first two relations imply
$$
 \left|
 r_{xy}(n-r_{xy})-\bar s^2h(q-h)
 \right|
 \leq Cn\bar s+C\bar s^2.
$$
Moreover,
$$
 \frac{n\bar s/m^2}{T_{n,m}}
 \leq
 \begin{cases}
 C m/(nK),&m\leq\sqrt n,\\
 C/(mK),&m>\sqrt n,
 \end{cases}
 \longrightarrow0.
$$
Also, $\bar s^2/m^2=O(K^{-2})=o(T_{n,m})$. Since $R\bar s/m\to1$ and
$h(q-h)/R^2=O(q^2/R^2)=O(T_{n,m})$, replacing $R^{-2}$ by $\bar s^2/m^2$ costs
only $o(T_{n,m})$. Therefore
\begin{equation}
\label{eq:quadratic-distance-coordinate-conversion}
 \frac{3d_q(i,j)(q-d_q(i,j))}{R^2}
 =
 \frac{3r_{xy}(n-r_{xy})}{m^2}
 +
 o(T_{n,m}).
\end{equation}
Equations~\eqref{eq:block-to-cycle-power-hitting}--\eqref{eq:quadratic-distance-coordinate-conversion}
prove \eqref{eq:block-source-identification-conclusion}.
\end{proof}

We can now combine the exact target-degree decomposition with the block reduction.
This gives the point-to-point expansion, whose maximum yields the final assertion.

\begin{proof}[Proof of Theorem~\ref{thm:sharp-hitting}]
Set $\Omega_{n,p}:=\mathcal B_{n,p}$. By Lemma~\ref{lem:uniform-block-event}, these events
have probability tending to one uniformly in $p$, and $G$ is connected on each event.

For distinct $x,y$, Lemma~\ref{lem:exact-local-removal} gives
$H_G(x,y)=V_G/d_G(y)+u_y(x)-u_y(y)$. Applying
Lemmas~\ref{lem:within-block-elimination} and~\ref{lem:block-source-identification} to
the potential difference, with their deterministic errors uniform over
$p\in[c\log n/(2m),1]$ and distinct $x,y$, gives a deterministic
$\widetilde\varepsilon_n\to0$, independent of $p$, such that on $\Omega_{n,p}$,
$$
\left|
H_G(x,y)
-\frac{V_G}{d_G(y)}
-\frac{3r_{xy}(n-r_{xy})}{m^2}
\right|
\leq\widetilde\varepsilon_nT_{n,m}.
$$
By \eqref{eq:step-variance-scale},
$$
\frac1{\kappa_m}=\frac3{m^2}+O(m^{-3}),
$$
and hence, uniformly in $x,y$,
$$
\left|
\frac{r_{xy}(n-r_{xy})}{\kappa_m}
-
\frac{3r_{xy}(n-r_{xy})}{m^2}
\right|
\leq C\frac{n^2}{m^3}
=o(T_{n,m}).
$$
Absorbing this error into the deterministic sequence gives
$$
\left|
H_G(x,y)
-\frac{V_G}{d_G(y)}
-\frac{r_{xy}(n-r_{xy})}{\kappa_m}
\right|
\leq\varepsilon_nT_{n,m},
\qquad \varepsilon_n\to0.
$$
On $\Omega_{n,p}$, define $R_{n,p}(x,y)$ as the expression inside the absolute value,
and set $R_{n,p}(x,y):=0$ on $\Omega_{n,p}^c$. This gives
\eqref{eq:pointwise-hitting-expansion}--\eqref{eq:pointwise-hitting-remainder}.

It remains to maximize the expansion. On $\mathcal B_{n,p}$,
\begin{equation}
\label{eq:degree-term-comparable-to-n}
n\leq \frac{V_G}{d_{\min}(G)}\leq C_cn,
\end{equation}
The lower bound uses $d_{\min}(G)\leq V_G/n$; the upper bound follows from
\eqref{eq:block-degree-bounds-note}. Since $r(n-r)\leq n^2/4$, the expansion gives
$$
H_{\max}(G)\leq \frac{V_G}{d_{\min}(G)}+\frac{n^2}{4\kappa_m}+o(T_{n,m}).
$$
For the matching lower bound, choose a minimum-degree target $y_\star$ and a starting
vertex $x_\star$ with $r_{x_\star y_\star}=\lfloor n/2\rfloor$. Then
$r_{x_\star y_\star}(n-r_{x_\star y_\star})=n^2/4+O(1)$, and the same expansion yields
$$
H_{\max}(G)=\frac{V_G}{d_{\min}(G)}+\frac{n^2}{4\kappa_m}+o(T_{n,m}).
$$
By \eqref{eq:degree-term-comparable-to-n}, the leading expression is comparable to
$T_{n,m}$, so the additive error is also a relative $o(1)$ error. The event probabilities
come from Lemma~\ref{lem:uniform-block-event}, completing the proof.
\end{proof}

\begin{proof}[Proof of Corollary~\ref{cor:explicit-maximal-hitting}]
By Lemma~\ref{lem:degrees-fixed-c}, uniformly over the permitted retention probabilities,
$$
V_G=\bigl(1+o_{\Prob}(1)\bigr)n(2mp),
\qquad
d_{\min}(G)
=
\bigl(\delta_{\min}(\theta_n)+o_{\Prob}(1)\bigr)(2mp).
$$
Since $\delta_{\min}(\theta_n)\geq\delta_{\min}(c)>0$, substituting these estimates into
\eqref{eq:sharp-hitting-conclusion} proves \eqref{eq:explicit-maximal-hitting}.
Continuity of the minimum-degree profile on $(1,\infty]$ gives
\eqref{eq:explicit-maximal-hitting-limit} when $\theta_n\to\theta\in[c,\infty]$.
For $p=1$, we have $G=C_n^m$ and $\theta_n=2m/\log n\to\infty$, so
$\delta_{\min}(\theta_n)\to1$. This proves \eqref{eq:host-maximal-hitting}.
\end{proof}

\section{Open problems}
\label{sec:conclusion}

\paragraph{The small-bandwidth connectivity transition.}
When $m/\log n\to\infty$, isolated vertices are the only asymptotically
relevant obstruction to connectivity in the critical window. What
happens when $m=O(\log n)$? In this range, deleting the boundary edges
of a cyclic interval may create a nontrivial component. Determine the
critical parametrization of $p$, the joint limiting laws of isolated
vertices and nontrivial interval components, and the resulting
connectivity probability. Related one-dimensional phenomena appear in
bounded-range and soft random geometric graphs \cite{WilsherDettmannGanesh2020,WilsherDettmannGanesh2023,Federico2024BoundedRange}.

\paragraph{Distance-dependent retention.}
Suppose edges are retained independently, with probability $p_r$ for
an edge of cyclic length $r$. The isolation intensity is
$$
n\prod_{r=1}^m(1-p_r)^2,
$$
whereas transport should depend on
$$
s_0:=\sum_{r=1}^m p_r,
\qquad
\kappa:=\frac{\sum_{r=1}^m p_rr^2}{s_0}.
$$
Assume that $\{r:p_r>0\}$ generates $\mathbb Z_n$. What additional regularity and nondegeneracy assumptions ensure a Poisson connectivity window, with every non-isolated vertex in the largest component? Under suitable assumptions, one also expects, when $\sqrt\kappa=o(n)$,
$$
\gamma_{\lazy}(G)=\left(\pi^2+o_{\Prob}(1)\right)\frac{\kappa}{n^2},
\qquad
H_G(x,y)=\frac{V_G}{d_G(y)}+\frac{r_{xy}(n-r_{xy})}{\kappa}
+o_{\Prob}\left(n+\frac{n^2}{\kappa}\right),
$$
uniformly in $x\ne y$. Related results on connectivity and random walks on spatial graphs have been developed 
\cite{Penrose2016Soft,Penrose2018Poisson,Iyer2018RandomConnection,MaoAnderson2011RandomConnection,BenjaminiBergerYadin2008}.

\paragraph{Higher-dimensional tori.}
Take the vertex set $(\mathbb Z/n\mathbb Z)^d$, with admissible
displacements in a symmetric finite-range set
$\mathcal N_m\subset\mathbb Z^d\setminus\{0\}$. Writing
$D_{d,m}=|\mathcal N_m|$, the natural connectivity window is
$n^d(1-p)^{D_{d,m}}\to e^{-t}$. Under what geometric assumptions on $\mathcal N_m$ does this window have a Poisson limit for isolated vertices, with every other vertex in the largest component?

Above connectivity, the covariance matrix of the uniform measure on $\mathcal N_m$ should determine the sharp spectral constant. The quadratic hitting-time term should become a difference of torus Green functions. Existing results on geometric graphs and supercritical
percolation clusters provide qualitative comparisons
\cite{MeesterRoy1996Continuum,Penrose2003RGG,BenjaminiMossel2003,Barlow2004,MathieuPiatnitski2007}; the challenge is to obtain finite-volume constants and uniform point-to-point formulas down to a fixed factor above the connectivity threshold.

\paragraph{Quantum dynamics.}
Quantum walks, the quantum analogue of classical random walks, are a universal model of quantum computation, and exploring the fundamental differences between quantum and classical random walks on different graphs is of widespread interest. For instance, let $M\subsetneq\mathbb Z_n$ be a nonempty marked set of vertices on graph $G$, and let $\mathrm{HT}(G,M)$ be the classical mean hitting time when the
initial vertex is sampled from $\pi_G$ conditioned on being unmarked. Given a coherent encoding of $\pi_G$, quantum-walk search algorithms find a marked vertex with constant probability in time
$$
\mathrm{QHT}(G,M)=\widetilde O(\sqrt{\mathrm{HT}(G,M)}),
$$ 
known as the quantum hitting time \cite{AmbainisGilyenJefferyKokainis2020, ApersChakrabortyNovoRoland2022}. Our results, namely Theorem~\ref{thm:spectral-gap} and Theorem~\ref{thm:sharp-hitting}, therefore prove that the quantum hitting time on band random graphs is
$$
\mathrm{QHT}(G(n,p,m),M)=\widetilde O\left(
\min\left\{
\sqrt n+\frac nm,\,
\frac{n^{3/2}}{m\sqrt{|M|}},
\right\}
\right)
$$
with high probability whenever $\log n\ll m<n/2$ and $p\geq c\log n/(2m)$, for fixed $c>1$. The quantum hitting time for Erd\H{o}s-R\'enyi graphs is known to scale as $\sqrt{n}$, whenever $p$ is above the threshold of connectivity \cite{ChakrabortyNovoAmbainisOmar2016}.

On the other hand, the time-averaged mixing time of quantum walks on band random graphs requires further information about spectral spacings throughout the spectrum. Can one obtain such estimates in this model, as has been done for Erd\H{o}s--R\'enyi graphs \cite{ChakrabortyLuhRoland2020Mixing}?

\section*{Acknowledgments}
SC and RPS acknowledge support from IIIT Hyderabad. SC also acknowledges funding from the National Quantum Mission, an initiative of the Department of Science and Technology, Govt. of India. SC also acknowledges funding from Fujitsu Ltd., Japan. KL was supported in part by Simons Grant MP-TSM-00001988 and DOE Office of Science Advanced Scientific Computing Research (ASCR) program, funding opportunity DE-FOA-0003432.  The authors would like to thank Vishesh Jain for helpful discussions in the early stages of this project.

\paragraph{Generative AI disclosure.}
The authors conceived of the project three years ago and developed all of the mathematical proofs over the years, without using any AI. Large language models were only used to assist with checking the proofs and editing the exposition. The authors take responsibility for the final manuscript.

\bibliographystyle{unsrt}
\bibliography{refgeom2}

@book{MeesterRoy1996Continuum,
  author    = {Meester, Ronald and Roy, Rahul},
  title     = {Continuum Percolation},
  series    = {Cambridge Tracts in Mathematics},
  volume    = {119},
  publisher = {Cambridge University Press},
  address   = {Cambridge},
  year      = {1996},
  doi       = {10.1017/CBO9780511895357},
  isbn      = {9780521475044}
}

@inproceedings{Bourgade2018RandomBandMatrices,
  author    = {Bourgade, Paul},
  title     = {Random Band Matrices},
  booktitle = {Proceedings of the International Congress of Mathematicians---Rio de Janeiro 2018},
  volume    = {4},
  pages     = {2759--2783},
  publisher = {World Scientific},
  year      = {2018},
  doi       = {10.1142/9789813272880_0159}
}

@book{BollobasRandomGraphs,
  author    = {B{\'e}la Bollob{\'a}s},
  title     = {Random Graphs},
  edition   = {2},
  series    = {Cambridge Studies in Advanced Mathematics},
  volume    = {73},
  publisher = {Cambridge University Press},
  address   = {Cambridge},
  year      = {2001},
  isbn      = {9780521809207}
}

@book{JLR2000,
  author    = {Svante Janson and Tomasz {\L}uczak and Andrzej Ruci{\'n}ski},
  title     = {Random Graphs},
  publisher = {Wiley},
  address   = {New York},
  year      = {2000},
  series    = {Wiley-Interscience Series in Discrete Mathematics and Optimization}
}

@book{BarbourHolstJanson1992,
  author    = {Barbour, Andrew D. and Holst, Lars and Janson, Svante},
  title     = {Poisson Approximation},
  series    = {Oxford Studies in Probability},
  volume    = {2},
  publisher = {Oxford University Press},
  address   = {Oxford},
  year      = {1992},
  doi       = {10.1093/oso/9780198522355.001.0001}
}

@book{hofstad2017randomgraphs,
  author    = {Remco van der Hofstad},
  title     = {Random Graphs and Complex Networks},
  volume    = {1},
  series    = {Cambridge Series in Statistical and Probabilistic Mathematics},
  publisher = {Cambridge University Press},
  address   = {Cambridge},
  year      = {2017},
  doi       = {10.1017/9781316779422}
}

@book{levin2017markov,
  author    = {Levin, David A. and Peres, Yuval},
  title     = {Markov Chains and Mixing Times},
  edition   = {Second},
  publisher = {American Mathematical Society},
  address   = {Providence, RI},
  year      = {2017}
}

@book{vershynin2018high,
  author    = {Vershynin, Roman},
  title     = {High-Dimensional Probability: An Introduction with Applications in Data Science},
  series    = {Cambridge Series in Statistical and Probabilistic Mathematics},
  volume    = {47},
  publisher = {Cambridge University Press},
  address   = {Cambridge},
  year      = {2018},
  doi       = {10.1017/9781108231596}
}

@inproceedings{ChandraRaghavanRuzzoSmolenskyTiwari1989,
  author    = {Chandra, Ashok K. and Raghavan, Prabhakar and Ruzzo, Walter L. and Smolensky, Roman and Tiwari, Prasoon},
  title     = {The Electrical Resistance of a Graph Captures Its Commute and Cover Times},
  booktitle = {Proceedings of the Twenty-First Annual ACM Symposium on Theory of Computing},
  pages     = {574--586},
  publisher = {ACM},
  year      = {1989},
  doi       = {10.1145/73007.73062}
}

@article{Tetali1991,
  author  = {Tetali, Prasad},
  title   = {Random Walks and the Effective Resistance of Networks},
  journal = {Journal of Theoretical Probability},
  volume  = {4},
  number  = {1},
  pages   = {101--109},
  year    = {1991},
  doi     = {10.1007/BF01046996}
}

@article{VonLuxburgRadlHein2014,
  author  = {von Luxburg, Ulrike and Radl, Agnes and Hein, Matthias},
  title   = {Hitting and Commute Times in Large Random Neighborhood Graphs},
  journal = {Journal of Machine Learning Research},
  volume  = {15},
  number  = {52},
  pages   = {1751--1798},
  year    = {2014},
  url     = {https://jmlr.org/papers/v15/vonluxburg14a.html}
}

@article{Sylvester2021,
  author  = {Sylvester, John},
  title   = {Random Walk Hitting Times and Effective Resistance in Sparsely Connected Erd{\H{o}}s--R{\'e}nyi Random Graphs},
  journal = {Journal of Graph Theory},
  volume  = {96},
  number  = {1},
  pages   = {44--84},
  year    = {2021},
  doi     = {10.1002/jgt.22551}
}

@book{Penrose2003RGG,
  author    = {Penrose, Mathew D.},
  title     = {Random Geometric Graphs},
  series    = {Oxford Studies in Probability},
  volume    = {5},
  publisher = {Oxford University Press},
  address   = {Oxford},
  year      = {2003}
}

@article{Penrose2016Soft,
  author  = {Penrose, Mathew D.},
  title   = {Connectivity of Soft Random Geometric Graphs},
  journal = {The Annals of Applied Probability},
  volume  = {26},
  number  = {2},
  pages   = {986--1028},
  year    = {2016},
  doi     = {10.1214/15-AAP1110}
}

@article{Penrose2018Poisson,
  author  = {Penrose, Mathew D.},
  title   = {Inhomogeneous Random Graphs, Isolated Vertices, and {Poisson} Approximation},
  journal = {Journal of Applied Probability},
  volume  = {55},
  number  = {1},
  pages   = {112--136},
  year    = {2018},
  doi     = {10.1017/jpr.2018.9}
}

@article{PenroseYang2025,
  author  = {Penrose, Mathew D. and Yang, Xiaochuan},
  title   = {Fluctuations of the Connectivity Threshold and Largest Nearest-Neighbour Link},
  journal = {The Annals of Applied Probability},
  volume  = {35},
  number  = {6},
  pages   = {3906--3941},
  year    = {2025},
  doi     = {10.1214/25-AAP2210}
}

@article{WilsherDettmannGanesh2020,
  author  = {Wilsher, Michael and Dettmann, Carl P. and Ganesh, Ayalvadi J.},
  title   = {Connectivity in One-Dimensional Soft Random Geometric Graphs},
  journal = {Physical Review E},
  volume  = {102},
  pages   = {062312},
  year    = {2020},
  doi     = {10.1103/PhysRevE.102.062312}
}

@article{WilsherDettmannGanesh2023,
  author  = {Wilsher, Michael and Dettmann, Carl P. and Ganesh, Ayalvadi J.},
  title   = {The Distribution of the Number of Isolated Nodes in the One-Dimensional Soft Random Geometric Graph},
  journal = {Statistics \& Probability Letters},
  volume  = {193},
  pages   = {109695},
  year    = {2023},
  doi     = {10.1016/j.spl.2022.109695}
}

@inproceedings{GuptaLeeLi2021,
  author    = {Gupta, Anupam and Lee, Euiwoong and Li, Jason},
  title     = {The Connectivity Threshold for Dense Graphs},
  booktitle = {Proceedings of the 2021 ACM--SIAM Symposium on Discrete Algorithms},
  pages     = {89--105},
  publisher = {SIAM},
  year      = {2021},
  doi       = {10.1137/1.9781611976465.7}
}

@article{FedericoHofstadHulshof2016,
  author  = {Federico, Lorenzo and van der Hofstad, Remco and Hulshof, Tim},
  title   = {Connectivity Threshold for Random Subgraphs of the Hamming Graph},
  journal = {Electronic Communications in Probability},
  volume  = {21},
  pages   = {1--8},
  year    = {2016},
  note    = {Paper No. 27},
  doi     = {10.1214/16-ECP4479}
}

@inproceedings{BoydGhoshPrabhakarShah2005,
  author    = {Boyd, Stephen P. and Ghosh, Arpita and Prabhakar, Balaji and Shah, Devavrat},
  title     = {Mixing Times for Random Walks on Geometric Random Graphs},
  booktitle = {Proceedings of the Seventh Workshop on Algorithm Engineering and Experiments and the Second Workshop on Analytic Algorithmics and Combinatorics},
  pages     = {240--249},
  publisher = {SIAM},
  year      = {2005}
}

@article{AvinErcal2007,
  author  = {Avin, Chen and Ercal, Gunes},
  title   = {On the Cover Time and Mixing Time of Random Geometric Graphs},
  journal = {Theoretical Computer Science},
  volume  = {380},
  number  = {1--2},
  pages   = {2--22},
  year    = {2007},
  doi     = {10.1016/j.tcs.2007.02.065}
}

@article{BenjaminiBergerYadin2008,
  author  = {Benjamini, Itai and Berger, Noam and Yadin, Ariel},
  title   = {Long-Range Percolation Mixing Time},
  journal = {Combinatorics, Probability and Computing},
  volume  = {17},
  number  = {4},
  pages   = {487--494},
  year    = {2008},
  doi     = {10.1017/S0963548308008948}
}

@article{MullerPenrose2020,
  author  = {M{\"u}ller, Tobias and Penrose, Mathew D.},
  title   = {Optimal Cheeger Cuts and Bisections of Random Geometric Graphs},
  journal = {The Annals of Applied Probability},
  volume  = {30},
  number  = {3},
  pages   = {1458--1483},
  year    = {2020},
  doi     = {10.1214/19-AAP1534}
}

@article{ChungHorn2007,
  author  = {Chung, Fan R. K. and Horn, Paul},
  title   = {The Spectral Gap of a Random Subgraph of a Graph},
  journal = {Internet Mathematics},
  volume  = {4},
  number  = {2--3},
  pages   = {225--244},
  year    = {2007},
  doi     = {10.1080/15427951.2007.10129296}
}

@article{BandeiraVanHandel2016,
  author  = {Bandeira, Afonso S. and van Handel, Ramon},
  title   = {Sharp Nonasymptotic Bounds on the Norm of Random Matrices with Independent Entries},
  journal = {The Annals of Probability},
  volume  = {44},
  number  = {4},
  pages   = {2479--2506},
  year    = {2016},
  doi     = {10.1214/15-AOP1025}
}

@article{ErdosKnowlesYauYin2013Delocalization,
  author  = {Erd{\H{o}}s, L{\'a}szl{\'o} and Knowles, Antti and Yau, Horng-Tzer and Yin, Jun},
  title   = {Delocalization and Diffusion Profile for Random Band Matrices},
  journal = {Communications in Mathematical Physics},
  volume  = {323},
  number  = {1},
  pages   = {367--416},
  year    = {2013},
  doi     = {10.1007/s00220-013-1773-3}
}

@article{Federico2024BoundedRange,
  author  = {Federico, Lorenzo},
  title   = {Connecting Probability for Random Bounded-Range One-Dimensional Network},
  journal = {Annali dell'Universit{\`a} di Ferrara},
  volume  = {70},
  pages   = {1389--1403},
  year    = {2024},
  doi     = {10.1007/s11565-024-00508-6}
}

@article{Gilbert1959,
  author  = {Gilbert, Edgar N.},
  title   = {Random Graphs},
  journal = {The Annals of Mathematical Statistics},
  volume  = {30},
  number  = {4},
  pages   = {1141--1144},
  year    = {1959},
  doi     = {10.1214/aoms/1177706098}
}

@article{ErdosRenyi1959,
  author  = {Erd\H{o}s, Paul and R\'enyi, Alfr\'ed},
  title   = {On Random Graphs. {I}},
  journal = {Publicationes Mathematicae Debrecen},
  volume  = {6},
  pages   = {290--297},
  year    = {1959}
}

@article{ErdosRenyi1960,
  author  = {Erd\H{o}s, Paul and R\'enyi, Alfr\'ed},
  title   = {On the Evolution of Random Graphs},
  journal = {Publications of the Mathematical Institute of the Hungarian Academy of Sciences},
  volume  = {5},
  pages   = {17--61},
  year    = {1960}
}

@article{Gilbert1961,
  author  = {Gilbert, Edgar N.},
  title   = {Random Plane Networks},
  journal = {Journal of the Society for Industrial and Applied Mathematics},
  volume  = {9},
  number  = {4},
  pages   = {533--543},
  year    = {1961},
  doi     = {10.1137/0109045}
}

@article{Penrose1999KConnectivity,
  author  = {Penrose, Mathew D.},
  title   = {On $k$-Connectivity for a Geometric Random Graph},
  journal = {Random Structures \& Algorithms},
  volume  = {15},
  number  = {2},
  pages   = {145--164},
  year    = {1999}
}

@article{GoelRaiKrishnamachari2005,
  author  = {Goel, Ashish and Rai, Sanatan and Krishnamachari, Bhaskar},
  title   = {Monotone Properties of Random Geometric Graphs Have Sharp Thresholds},
  journal = {The Annals of Applied Probability},
  volume  = {15},
  number  = {4},
  pages   = {2535--2552},
  year    = {2005},
  doi     = {10.1214/105051605000000575}
}

@article{AlonBenjaminiStacey2004,
  author  = {Alon, Noga and Benjamini, Itai and Stacey, Alan},
  title   = {Percolation on Finite Graphs and Isoperimetric Inequalities},
  journal = {The Annals of Probability},
  volume  = {32},
  number  = {3A},
  pages   = {1727--1745},
  year    = {2004},
  doi     = {10.1214/009117904000000414}
}

@article{BorgsChayesHofstadSladeSpencer2005,
  author  = {Borgs, Christian and Chayes, Jennifer T. and van der Hofstad, Remco and Slade, Gordon and Spencer, Joel},
  title   = {Random Subgraphs of Finite Graphs: {I}. The Scaling Window under the Triangle Condition},
  journal = {Random Structures \& Algorithms},
  volume  = {27},
  number  = {2},
  pages   = {137--184},
  year    = {2005},
  doi     = {10.1002/rsa.20051}
}

@article{Joos2015,
  author  = {Joos, Felix},
  title   = {Random Subgraphs in Sparse Graphs},
  journal = {SIAM Journal on Discrete Mathematics},
  volume  = {29},
  number  = {4},
  pages   = {2350--2360},
  year    = {2015},
  doi     = {10.1137/140976340}
}

@article{Barlow2004,
  author  = {Barlow, Martin T.},
  title   = {Random Walks on Supercritical Percolation Clusters},
  journal = {The Annals of Probability},
  volume  = {32},
  number  = {4},
  pages   = {3024--3084},
  year    = {2004},
  doi     = {10.1214/009117904000000748}
}

@article{BenjaminiMossel2003,
  author  = {Benjamini, Itai and Mossel, Elchanan},
  title   = {On the Mixing Time of a Simple Random Walk on the Super Critical Percolation Cluster},
  journal = {Probability Theory and Related Fields},
  volume  = {125},
  number  = {3},
  pages   = {408--420},
  year    = {2003},
  doi     = {10.1007/s00440-002-0246-y}
}

@article{MathieuPiatnitski2007,
  author  = {Mathieu, Pierre and Piatnitski, Andrey L.},
  title   = {Quenched Invariance Principles for Random Walks on Percolation Clusters},
  journal = {Proceedings of the Royal Society A: Mathematical, Physical and Engineering Sciences},
  volume  = {463},
  number  = {2085},
  pages   = {2287--2307},
  year    = {2007},
  doi     = {10.1098/rspa.2007.1876}
}

@book{LawlerLimic2010,
  author    = {Lawler, Gregory F. and Limic, Vlada},
  title     = {Random Walk: A Modern Introduction},
  series    = {Cambridge Studies in Advanced Mathematics},
  volume    = {123},
  publisher = {Cambridge University Press},
  address   = {Cambridge},
  year      = {2010},
  doi       = {10.1017/CBO9780511750854}
}

@book{LyonsPeres2016,
  author    = {Lyons, Russell and Peres, Yuval},
  title     = {Probability on Trees and Networks},
  series    = {Cambridge Series in Statistical and Probabilistic Mathematics},
  volume    = {42},
  publisher = {Cambridge University Press},
  address   = {Cambridge},
  year      = {2016},
  doi       = {10.1017/9781316672815}
}

@article{DiskinGeisler2025,
  author  = {Diskin, Sahar and Geisler, Anna},
  title   = {Minimum Degree $k$ and $k$-Connectedness Usually Arrive Together},
  journal = {Discrete Mathematics},
  volume  = {348},
  number  = {6},
  pages   = {114453},
  year    = {2025},
  doi     = {10.1016/j.disc.2025.114453}
}

@article{DiskinErdeKangKrivelevich2025,
  author  = {Diskin, Sahar and Erde, Joshua and Kang, Mihyun and Krivelevich, Michael},
  title   = {Percolation on High-Dimensional Product Graphs},
  journal = {Random Structures \& Algorithms},
  volume  = {66},
  number  = {1},
  pages   = {e21268},
  year    = {2025},
  doi     = {10.1002/rsa.21268}
}

@article{ErdosRiabov2025,
  author        = {Erd{\H{o}}s, L{\'a}szl{\'o} and Riabov, Volodymyr},
  title         = {The Zigzag Strategy for Random Band Matrices},
  journal       = {arXiv preprint arXiv:2506.06441},
  year          = {2025},
  eprint        = {2506.06441},
  archivePrefix = {arXiv},
  primaryClass  = {math.PR}
}

@article{Drogin2025,
  author        = {Drogin, Reuben},
  title         = {Localization of One-Dimensional Random Band Matrices},
  journal       = {arXiv preprint arXiv:2508.05802},
  year          = {2025},
  eprint        = {2508.05802},
  archivePrefix = {arXiv},
  primaryClass  = {math.PR}
}

@article{YauYin2025Delocalization,
  author        = {Yau, Horng-Tzer and Yin, Jun},
  title         = {Delocalization of One-Dimensional Random Band Matrices},
  journal       = {arXiv preprint arXiv:2501.01718},
  year          = {2025},
  eprint        = {2501.01718},
  archivePrefix = {arXiv},
  primaryClass  = {math.PR}
}

@misc{AldousFill2002,
  author = {David Aldous and James Allen Fill},
  title  = {Reversible {M}arkov Chains and Random Walks on Graphs},
  year   = {2002},
  note   = {Unfinished monograph, recompiled 2014,
            available at \url{https://www.stat.berkeley.edu/~aldous/RWG/book.html}}
}

@article{Iyer2018RandomConnection,
  author  = {Iyer, Srikanth K.},
  title   = {The Random Connection Model: Connectivity, Edge Lengths, and Degree Distributions},
  journal = {Random Structures \& Algorithms},
  volume  = {52},
  number  = {2},
  pages   = {283--300},
  year    = {2018},
  doi     = {10.1002/rsa.20741}
}

@inproceedings{MaoAnderson2011RandomConnection,
  author    = {Mao, Guoqiang and Anderson, Brian D. O.},
  title     = {On the Asymptotic Connectivity of Random Networks under the Random Connection Model},
  booktitle = {2011 Proceedings IEEE INFOCOM},
  pages     = {631--639},
  publisher = {IEEE},
  year      = {2011},
  doi       = {10.1109/INFCOM.2011.5935242}
}

@inproceedings{AmbainisGilyenJefferyKokainis2020,
  author    = {Ambainis, Andris and Gily{\'e}n, Andr{\'a}s
               and Jeffery, Stacey and Kokainis, Martins},
  title     = {{Quadratic Speedup for Finding Marked Vertices
                by Quantum Walks}},
  booktitle = {Proceedings of the 52nd Annual ACM SIGACT
               Symposium on Theory of Computing},
  series    = {STOC 2020},
  pages     = {412--424},
  publisher = {Association for Computing Machinery},
  year      = {2020},
  doi       = {10.1145/3357713.3384252}
}

@article{ApersChakrabortyNovoRoland2022,
  author  = {Apers, Simon and Chakraborty, Shantanav
             and Novo, Leonardo and Roland, J{\'e}r{\'e}mie},
  title   = {{Quadratic Speedup for Spatial Search
              by Continuous-Time Quantum Walk}},
  journal = {Physical Review Letters},
  volume  = {129},
  number  = {16},
  pages   = {160502},
  year    = {2022},
  doi     = {10.1103/PhysRevLett.129.160502}
}

@article{ChakrabortyNovoAmbainisOmar2016,
  author  = {Chakraborty, Shantanav and Novo, Leonardo
             and Ambainis, Andris and Omar, Yasser},
  title   = {{Spatial Search by Quantum Walk is Optimal
              for Almost all Graphs}},
  journal = {Physical Review Letters},
  volume  = {116},
  number  = {10},
  pages   = {100501},
  year    = {2016},
  doi     = {10.1103/PhysRevLett.116.100501}
}

@article{ChakrabortyLuhRoland2020Mixing,
  author  = {Chakraborty, Shantanav and Luh, Kyle
             and Roland, J{\'e}r{\'e}mie},
  title   = {{How Fast Do Quantum Walks Mix?}},
  journal = {Physical Review Letters},
  volume  = {124},
  number  = {5},
  pages   = {050501},
  year    = {2020},
  doi     = {10.1103/PhysRevLett.124.050501}
}

\end{document}